\numberwithin{figure}{section}
\numberwithin{equation}{section}
\newcommand{\eps}{\varepsilon}
\newcommand{\calS}{\mathcal{S}}
\newcommand{\calH}{\mathcal{H}}
\newcommand{\calB}{\mathcal{B}}
\newcommand{\calO}{\mathcal{O}}
\newcommand{\calR}{\mathcal{R}}
\newcommand{\la}{\lambda}
\newcommand{\s}{\sigma}
\newcommand{\G}{\Gamma}
\newcommand{\g}{\gamma}
\newcommand{\D}{\Delta}
\renewcommand{\L}{\mathcal{L}}
\renewcommand{\S}{\mathscr{S}}
\newcommand{\M}{\mathscr{M}}
\def\Xint#1{\mathchoice
{\XXint\displaystyle\textstyle{#1}}%
{\XXint\textstyle\scriptstyle{#1}}%
{\XXint\scriptstyle\scriptscriptstyle{#1}}%
{\XXint\scriptscriptstyle\scriptscriptstyle{#1}}%
\!\int}
\def\XXint#1#2#3{{\setbox0=\hbox{$#1{#2#3}{\int}$}
\vcenter{\hbox{$#2#3$}}\kern-.5\wd0}}
\def\dashint{\Xint-}
\newcommand{\diam}{\mathrm{diam}}
\renewcommand{\d}{\delta}
\newcommand{\dist}{\mathrm{dist}}
\newcommand{\m}{\mathrm{mod}}
\begin{document}


\title[QS co-Hopfian Menger curves \& Sierpi\'nski spaces  ]{Quasisymmetrically co-Hopfian \\Menger Curves and Sierpi\'nski spaces}
\address{Department of Mathematics, Kansas State University, Manhattan, KS, 66506-2602}



\author{Hrant Hakobyan}


              \email{hakobyan@math.ksu.edu}           


\maketitle

\setcounter{tocdepth}{1}
\newtheorem{theorem}{Theorem}[section]
\newtheorem{lemma}[theorem]{Lemma}
\newtheorem{corollary}[theorem]{Corollary}
\newtheorem{problem}[theorem]{Problem}
\newtheorem{conjecture}[theorem]{Conjecture}
\newtheorem{question}[theorem]{Question}
\newtheorem*{question*}{Question}
\theoremstyle{definition}
\newtheorem{definition}[theorem]{Definition}
\newtheorem{example}[theorem]{Example}
\newtheorem{xca}[theorem]{Exercise}
\renewcommand{\thefootnote}{\alph{footnote}}
\theoremstyle{remark}
\newtheorem{remark}[theorem]{Remark}


%

\begin{abstract}
A metric space $X$ is quasisymmetrically co-Hopfian if every quasisymmetric embedding of $X$ into itself is onto.
We construct the first examples of metric spaces homeomorphic to the universal Menger curve and higher dimensional Sierpi\'nski spaces, which are quasisymmetrically co-Hopfian. We also show that the collection of quasisymmetric equivalence classes of spaces homeomorphic to the Menger curve is uncountable.
These results answer a problem and generalize results of Merenkov from \cite{Mer:coHopf}.
\end{abstract}

\tableofcontents

\newpage

\section{Introduction}


\subsection{QS co-Hopfian Menger curves}

In recent years quasiconformal geometry of fractal spaces has been investigated extensively, see for instance \cite{Bonk:ICM,Bonk:SCUnif,BonkMerenkov,BKM,BLM,Kleiner:ICM,MTW,Mer:coHopf}. Much of this interest is rooted in questions arising in geometric group theory and Mostow type rigidity results, cf. \cite{Bonk:ICM,Kleiner:ICM}. In particular, motivated by questions in geometry of Gromov hyperbolic groups, Merenkov \cite{Mer:coHopf}
recently studied metric spaces having a co-Hopfian property. A metric space $X$ is said to be \textit{quasisymmetrically (QS) co-Hopfian}
if every quasisymmetric embedding of $X$ into itself is onto. If a metric space $X$ satisfies the stronger property that every continuous one-to-one map of $X$ into itself is onto (e.g. finite sets, $\mathbb{S}^n, n\geq 1$, etc.), $X$ is \emph{topologically co-Hopfian}. Classical fractals, such as the Sierpi\'nski carpet and the Menger curve, cf. Fig. \ref{fig:carpet&sponge}, are self similar spaces and therefore are neither topologically nor QS co-Hopfian.

\begin{figure}[h]
\centerline{
\includegraphics[width=0.6\textwidth]{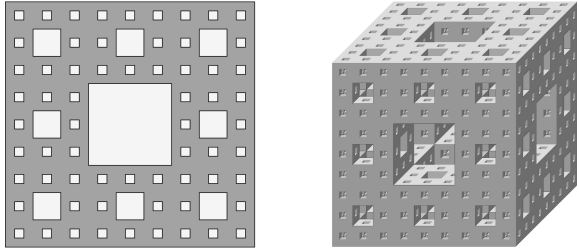}}
\caption{Sierpi\'nski carpet (left) and Menger curve (right).}\label{fig:carpet&sponge}
\end{figure}

Until recently no examples were known of compact metric spaces which were QS co-Hopfian but not topologically co-Hopfian. In \cite{Mer:coHopf} Merenkov constructed the first such example by showing that there is a metric space homeomorphic to the standard Sierpi\'nski carpet $\mathscr{S}_1\subset\mathbb{R}^2$ that is QS co-Hopfian. In the same paper Merenkov asked if there is a QS co-Hopfian metric space that is homeomorphic to the Menger curve. We answer this affirmatively.
\begin{theorem}\label{thm:menger-co-Hopf}
There is a metric space homeomorphic to the Menger curve which is QS co-Hopfian.
\end{theorem}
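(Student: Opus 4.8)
The plan is to realize $M$ as an explicit fractal inside a cube, obtained by a ``deformed Menger sponge'' procedure that destroys the exact self-similarity of the standard sponge $\M_3^1$ in a combinatorially rigid way, and then to show that any quasisymmetric self-embedding of $M$ is forced, by this rigidity, to be onto. Concretely, fix a sequence of integers $m_k\ge 3$; starting from $Q_0=[0,1]^3$, let $M_1$ be the union of the closed subcubes obtained by dividing $Q_0$ into $m_1^3$ congruent subcubes, indexed by $\{1,\dots,m_1\}^3$, and keeping those having at most one coordinate in the interior range $\{2,\dots,m_1-1\}$ (for $m_1=3$ this is the classical rule, retaining $20$ subcubes and deleting the central plus-shaped region). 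Iterating inside every retained cube, with ratio $m_k$ at generation $k$, yields a decreasing sequence $M_1\supset M_2\supset\cdots$ of compact polyhedra, and we set $M=\bigcap_k M_k$, equipped with the Euclidean metric --- or, if the modulus estimates below require it, with the intrinsic path metric --- so that $M$ is doubling and the retained cubes of generation $k$ have diameter comparable to $(m_1\cdots m_k)^{-1}$. A direct verification of Anderson's topological characterization of the Menger curve --- compact, connected, locally connected, one-dimensional, with no local cut points and no nonempty planar open subset --- shows that $M$ is homeomorphic to the Menger curve for \emph{every} sequence $(m_k)$; the freedom in this choice is what will also produce the uncountable family of quasisymmetry classes.

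The core of the argument is a rigidity statement for the peripheral structure of $M$: every quasisymmetric embedding $f\colon M\to M$ must carry \emph{peripheral sets} to peripheral sets and preserve their nesting and adjacency, where a peripheral set is a set of the form $M\cap\partial U$ for a component $U$ of the complement of $M$ --- equivalently, $M$ intersected with the boundary of one of the regions deleted during the construction. Each peripheral set is itself a deformed lower-dimensional Sierpi\'nski-type space (a Sierpi\'nski carpet when $\dim M=1$) and carries a well-defined generation. I would establish this by giving an intrinsic, quasisymmetry-stable characterization of peripheral sets --- for instance as the local separators of $M$ through a given point that have least combinatorial (``carpet'') modulus, in the sense of Bonk--Merenkov and adapted to the cube decomposition of $M$ --- or, alternatively, by exploiting that the peripheral sets are carpets and invoking the carpet rigidity from \cite{Mer:coHopf}. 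I expect this to be the main obstacle: the modulus machinery in question was developed for planar carpets, and transplanting it to the higher-dimensional yet one-dimensional space $M$ --- proving the necessary transboundary/combinatorial modulus inequalities, the non-degeneracy of the local separators, and the passage from this local data to a global statement --- is where the real work lies.

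Granting peripheral rigidity, the theorem follows by a combinatorial argument in the spirit of \cite{Mer:coHopf}. A quasisymmetric embedding $f\colon M\to M$ induces, through its action on peripheral sets, a map on the nesting/adjacency poset of the retained cubes that preserves inclusion and adjacency; the sequence $(m_k)$ is chosen so that the tails $(m_k,m_{k+1},\dots)$ determine pairwise quasisymmetrically distinct Menger curves (e.g.\ by letting each $m_{k+1}$ outgrow every quantity built from $m_1,\dots,m_k$), which simultaneously pins down the generation of each retained cube by the quasisymmetry type of the piece of $M$ it contains, and forbids $f(M)$ from lying inside any proper subpiece of this type. Hence the induced poset map is surjective at every generation, $f(M)$ is dense in $M$, and compactness of $M$ forces $f(M)=M$; a concrete way to organize the last steps is to note that the outermost peripheral set is a copy of a co-Hopfian carpet as in \cite{Mer:coHopf}, that $f$ must fix it, and that the rigid adjacency relations then propagate surjectivity inward. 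The identical construction carried out with $m_k^{2n+1}$-subdivisions of $[0,1]^{2n+1}$ and the analogous deletion rule produces quasisymmetrically co-Hopfian $n$-dimensional Sierpi\'nski spaces, and distinct sequences $(m_k)$ give Menger curves with incomparable ``peripheral spectra'', hence pairwise non-quasisymmetric, which yields the uncountability statement as well.
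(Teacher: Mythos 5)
Your proposal takes a genuinely different route from the paper, but it has a real gap at the step you yourself flag as the ``main obstacle'': the peripheral rigidity. In your construction, peripheral sets are Sierpi\'nski-carpet pieces of a deformed Menger sponge. To conclude that a quasisymmetric self-embedding $f$ of $M$ preserves them, you appeal either to an unstated ``least combinatorial modulus of local separators'' characterization, or to the carpet rigidity machinery of Bonk--Merenkov and Merenkov's co-Hopfian carpet. Neither option works out of the box. The Bonk--Merenkov rigidity theory is developed for round (or square) carpets sitting in $\mathbb{S}^2$, where one has a well-behaved notion of carpet modulus and group-theoretic rigidity; your peripheral carpets sit inside a one-topological-dimensional, Ahlfors-regular space of Hausdorff dimension close to $3$, and $f$ is only a QS map of the ambient Menger curve $M$, not of a carpet into itself. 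There is no a priori reason that $f$ should carry a peripheral carpet into a peripheral carpet at all: a QS homeomorphism of $M$ need not respect the cube decomposition, and precisely this freedom is what makes the problem hard. Merenkov's positive result is also about a \emph{slit} carpet, whose geometry is very different from the square-deletion carpets your peripheral sets are; so the sentence asserting ``the outermost peripheral set is a copy of a co-Hopfian carpet as in \cite{Mer:coHopf}'' is simply not true of your construction.

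The paper sidesteps exactly this obstruction. Instead of trying to prove QS-invariance of a topologically defined peripheral structure, it builds the Menger curve $\mathfrak{M}$ by a \emph{slit} procedure in $[0,1]^3$, so that the double $D\mathfrak{M}$ is fibered over a slit carpet $\mathcal{B}$ with almost every fiber a circle. Ahlfors $3$-regularity (including regularity of the image, Lemma~\ref{lemma:image-Ahlfors-regular}), Tyson's theorem, and a modulus estimate (Lemma~\ref{lemma:curves_in_porous_carpets} via Lemma~\ref{lemma:main_estimate}) show that the family of non-$z$-parallel curves has zero $3$-modulus. This is the quantitative core your sketch lacks: it forces any QS self-embedding to be fiber-preserving (Lemmas~\ref{lemma:menger-parallel-to-parallel}, \ref{lemma:surject-on-fibers}), which in turn induces a self-homeomorphism of the base slit carpet. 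Surjectivity is then deduced by a combinatorial analysis of the \emph{topological types} of fibers over slit points (the graphs $L_i$ and $Y_i$, Lemma~\ref{lemma:menger-fibres} and Corollary~\ref{lemma:double-menger-fibres}), which is a finer and more robust invariant than the nesting poset of cubes you propose. So the genuine missing idea in your write-up is the transition from ``topological co-Hopficity obstructions'' (which do not hold here, since $M$ is not topologically co-Hopfian) to a \emph{modulus-theoretic} obstruction that QS maps must respect; without that, the peripheral rigidity you need is not available, and the combinatorial endgame has nothing to act on.
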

The construction of the metric space in Theorem \ref{thm:menger-co-Hopf} 
is given in Section \ref{section:slit-menger-definition}. Theorem \ref{thm:menger-co-Hopf} follows from Theorem \ref{thm:slit-menger-co-Hopf}, which is proved in Section \ref{Section:menger-coHopf-proof}.

The metric space in Theorem \ref{thm:menger-co-Hopf}, which will be denoted by $D\mathfrak{M}$, is a ``double" of a metric space $\mathfrak{M}$, which we will call a \emph{slit Menger curve} and is a self-similar fractal space of Hausdorff dimension $3$ and topological dimension $1$. The proof of Theorem \ref{thm:menger-co-Hopf} is quite different from that in \cite{Mer:coHopf} and uses new topological and analytic techniques. The main topological idea is to construct $\mathfrak{M}$ in such a way that $D\mathfrak{M}$ is ``fibered" over a base Sierpi\'nski carpet $\mathcal{B}$ (of Hausdorff dimension $2$) in a way that almost every fiber is a topological circle, cf. Section \ref{Sec:menger-fibered-over-slit-carpet}.  A QS mapping of $D\mathfrak{M}$ into itself then induces a mapping of the carpet $\mathcal{B}$ into itself and we show that this induced mapping is surjective, cf. Section \ref{Section:menger-coHopf-proof}. This requires a careful analysis of the topology of fibers over the peripheral circles of $\mathcal{B}$ and is the core of the argument.

Geometry of metric spaces homeomorphic to the classical Sierpi\'nski carpet has recently been studies in \cite{Bonk:SCUnif,BonkMerenkov,BKM}. In particular, from the rigidity results of Bonk, Kleiner and Merenkov \cite{BKM} it follows that the collection of quasisymmetric equivalence classes of carpets (as well as of higher dimensional Sierpi\'nski spaces) is uncountable, see e.g. the discussion in \cite[Page ~593]{BonkMerenkov}. We show that a similar result also holds for the Menger curve. 

\begin{theorem}\label{thm:uncountable-menger}
The set of quasisymmetric equivalence classes of metric spaces homeomorphic to the Menger curve is uncountable.
\end{theorem}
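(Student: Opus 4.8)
The plan is to transfer the known uncountability result for Sierpi\'nski spaces to the Menger curve setting using the fibered construction already employed in the proof of Theorem \ref{thm:menger-co-Hopf}. The starting point is the rigidity machinery of Bonk--Kleiner--Merenkov \cite{BKM}, which produces an uncountable family $\{\mathcal{B}_t\}_{t\in T}$ of pairwise quasisymmetrically non-equivalent metric spaces, each homeomorphic to the standard Sierpi\'nski carpet $\mathscr{S}_1$; one may take these to be round carpets obtained by varying the radii of the removed disks, so that each $\mathcal{B}_t$ carries a well-defined collection of peripheral circles whose quasisymmetric geometry is a quasisymmetry invariant. Over each such base carpet I would run the doubling-and-fibering construction of Section \ref{Sec:menger-fibered-over-slit-carpet} to obtain a metric space $D\mathfrak{M}_t$ homeomorphic to the Menger curve, fibered over $\mathcal{B}_t$ with circle fibers over almost every point and the prescribed fiber behavior over the peripheral circles.

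Next I would establish the key implication: if $D\mathfrak{M}_t$ and $D\mathfrak{M}_s$ are quasisymmetrically equivalent, then $\mathcal{B}_t$ and $\mathcal{B}_s$ are quasisymmetrically equivalent. This is precisely the mechanism already developed for the co-Hopfian theorem, where a quasisymmetric self-embedding of $D\mathfrak{M}$ is shown to descend to a self-map of the base carpet $\mathcal{B}$; here I would run the same argument with two different spaces, checking that a quasisymmetric homeomorphism $F\colon D\mathfrak{M}_t\to D\mathfrak{M}_s$ respects the fibered structure (the circle fibers are characterized topologically, and the non-circle fibers sit over the peripheral circles) and therefore induces a quasisymmetric homeomorphism $f\colon \mathcal{B}_t\to\mathcal{B}_s$. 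Contrapositively, since the $\mathcal{B}_t$ are pairwise quasisymmetrically distinct, so are the $D\mathfrak{M}_t$, giving uncountably many quasisymmetric equivalence classes of Menger curves.

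The main obstacle is the descent step: showing that a quasisymmetric homeomorphism between the two doubled spaces is compatible with the fibrations in a quantitative (quasisymmetric), not merely topological, way. One must verify that the topological characterization of the fibers is preserved by homeomorphisms, that the induced base map is well-defined and a homeomorphism, and crucially that it is quasisymmetric rather than merely a homeomorphism --- this requires controlling the distortion of the base map in terms of that of $F$, presumably by relating distances in $\mathcal{B}_t$ to distances in $D\mathfrak{M}_t$ via the fiber structure (e.g. distances realized along short transversal paths). Since the fibered construction and the analysis of fibers over peripheral circles are already carried out in Sections \ref{Sec:menger-fibered-over-slit-carpet} and \ref{Section:menger-coHopf-proof}, I expect this step to reuse those estimates with only routine modifications; the remaining points --- invoking \cite{BKM} for the uncountable family of carpets and assembling the contrapositive --- are then straightforward.
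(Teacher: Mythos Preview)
Your approach differs substantially from the paper's, and it carries two genuine gaps that are not merely ``routine modifications'' of what is already proved.

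First, the paper does not give a construction of a Menger curve fibered over an \emph{arbitrary} Sierpi\'nski carpet. The space $D\mathfrak{M}$ is built from a very specific pattern of flat slits in $[0,1]^3$, and the base carpet $\mathcal{B}$ emerges as a particular \emph{slit} carpet determined by that pattern. There is no mechanism in Sections \ref{section:slit-menger-definition}--\ref{Sec:menger-fibered-over-slit-carpet} for starting with a round carpet from \cite{BKM} and producing a Menger curve fibered over it with controlled fiber topology; in particular, the modulus estimate $\m_3(\G_{nz})=0$ (Lemma \ref{lemma:menger-non-z-parallel}), which is the engine forcing fibers to fibers, uses the slit geometry in an essential way and has no analogue over a round base.

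Second, even granting a fibered construction, the descent step you flag is a real problem, not a routine one. In the paper the induced map $f_{\mathcal{B}}$ is only shown to be a \emph{homeomorphism} (Lemma \ref{lemma:induced-map-surjective}); quasisymmetry of $f_{\mathcal{B}}$ is never established and never needed. The paper sidesteps this entirely by distinguishing its family of Menger curves \emph{topologically at the fiber level}: for each infinite $A\subset\mathbb{N}$ one builds $D\mathfrak{M}(A)$ by placing slits only in generations $i\in A$, and the topological types $Y_j$ of the non-circle fibers over the peripheral circles of $\mathcal{B}(A)$ encode the set $A$. A quasisymmetric equivalence $D\mathfrak{M}(A)\to D\mathfrak{M}(B)$ sends fibers onto fibers (via the modulus argument), hence matches the sets of fiber types, forcing $A=B$. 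No appeal to \cite{BKM} and no quasisymmetry of the base map are required. Your route, by contrast, needs the induced base map to be quasisymmetric in order to invoke the QS-inequivalence of the round carpets, and that step is neither in the paper nor obviously true.
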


To obtain Theorem \ref{thm:uncountable-menger} we will show that the construction of the slit Menger curve from Theorem \ref{thm:menger-co-Hopf} is flexible enough to allow for an uncountable class of slit carpets where quasisymetric rigidity holds, i.e. if two members in the class are quasisymmetrically equivalent then they are isometric. Note, that it was already known that there are countably many Menger curves which are not quasisymetrically equivalent. Indeed, it follows from the work of Bourdon and Pajot \cite{BourdonPajot} that there are countably many Menger curves of distinct conformal dimensions. In our examples however, all the inequivalent Menger curves are of Hausdorff and conformal dimension $3$. 

\subsection{QS co-Hopfian Sierpi\'nski spaces}
Both, Sierpi\'nski carpet and Menger curve have topological dimension $1$. Thus, the following question is quite natural.


\begin{question}
Is there a metric space of topological dimension greater than $1$ which is QS co-Hopfian but not topologically co-Hopfian?
\end{question}

To answer this question it seems quite natural to try to generalize the methods in \cite{Mer:coHopf}. A crucial part of these methods are moduli estimates for curve families in multiply connected slit domains, see Section \ref{Section:slit-spaces}. However the technique in \cite{Mer:coHopf} works only for quite special and symmetric planar domains and uses conformal mappings, thus does not generalize to higher dimensions.

In this paper we develop a new method for estimating moduli of families of curves in multiply connected ``slit" domains for quite general configurations of slits and in all dimensions, see Lemma \ref{lemma:main_estimate}, and its consequences, Lemmas \ref{lemma:curves_in_porous_carpets} and \ref{lemma:curves-in-diadic-carpets}. In particular, it allows us to answer the above question affirmatively.

\begin{theorem}\label{thm:Sierpinski-co-hopf}
For every $n\geq1$ there is a metric space homeomorphic to the standard Sierpi\'nski space of topological dimension $n$ which is quasisymmetrically co-Hopfian.
\end{theorem}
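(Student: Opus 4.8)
The plan is to run, in every dimension $n\ge 1$, the program behind Theorem~\ref{thm:menger-co-Hopf}: build an explicit self-similar ``slit'' model, double it so that it acquires the combinatorial pattern of $\mathscr{S}_n$, realize the double as fibered over a base carpet, and then invoke the new all-dimensional moduli estimates to force every quasisymmetric self-embedding to be surjective. For $n=1$ this recovers (a variant of) Merenkov's slit carpet from \cite{Mer:coHopf}; the genuinely new case is $n\ge 2$, where conformal mappings are unavailable and the fibering, together with Lemma~\ref{lemma:main_estimate}, must do what explicit planar estimates do in \cite{Mer:coHopf}.

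First I would construct, for each $n$, a \emph{slit Sierpi\'nski space} $\mathfrak{S}_n$ by an iterated scheme on the cube $[0,1]^{n+1}$, in the spirit of Section~\ref{section:slit-menger-definition}: at each generation one subdivides every surviving cube, removes from its interior a codimension-one ``slit'', and chooses the sizes of the slits in an aperiodic fashion, so that no two peripheral pieces are quasisymmetrically interchangeable. Doubling $\mathfrak{S}_n$ across the union of its slits produces a compact, Ahlfors regular, self-similar metric space $D\mathfrak{S}_n$ of topological dimension $n$ whose ``holes'' are the doubled slits, now topological $n$-spheres; recording the horizontal coordinates yields a projection $\pi\colon D\mathfrak{S}_n\to\mathcal{B}$ onto a base carpet $\mathcal{B}$ (of Hausdorff dimension $2$, paralleling the Menger case), whose generic fiber is an embedded topological $(n-1)$-sphere and whose fibers over the peripheral circles of $\mathcal{B}$ degenerate to disks, arcs, or points. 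That $D\mathfrak{S}_n$ is homeomorphic to the standard Sierpi\'nski space $\mathscr{S}_n$ is built into the construction: its limiting nested-cube pattern is exactly that of $\mathscr{S}_n$, so the identifying homeomorphism is assembled level by level, as in the carpet and Menger cases treated in Section~\ref{Sec:menger-fibered-over-slit-carpet}.

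Now let $f\colon D\mathfrak{S}_n\to D\mathfrak{S}_n$ be a quasisymmetric embedding. The first step is topological: the peripheral $n$-spheres of $D\mathfrak{S}_n$ can be characterized intrinsically (as in \cite{Mer:coHopf} for carpets, e.g.\ as the $n$-spheres that do not separate the space), hence are preserved by any homeomorphism onto its image; since $f(D\mathfrak{S}_n)$ is again a copy of $\mathscr{S}_n$, this severely constrains the images of the peripheral spheres. A careful analysis of the $\pi$-fibers over these spheres --- the higher-dimensional analogue of ``the topology of fibers over the peripheral circles of $\mathcal{B}$,'' which the excerpt flags as the core of the Menger argument --- then shows that $f$ cannot collapse or twist fibers in a way that destroys the projection, so $f$ descends to a quasisymmetric embedding $\bar{f}\colon\mathcal{B}\to\mathcal{B}$ with $\pi\circ f=\bar{f}\circ\pi$. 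In the second step one uses Lemma~\ref{lemma:main_estimate} and its consequences, Lemmas~\ref{lemma:curves_in_porous_carpets} and \ref{lemma:curves-in-diadic-carpets}, to show that $\bar{f}$ maps peripheral circles of $\mathcal{B}$ to peripheral circles, that the outer peripheral circle $C_0$ --- characterized as the one across which the carpet (transboundary) modulus of the ``whole'' curve family is maximal --- must go to itself, and then, by a scale-by-scale induction exploiting that the metric data of the level-$k$ subcarpets are pairwise distinct by the aperiodic construction, that $\bar{f}$ carries level-$k$ subcarpets bijectively onto level-$k$ subcarpets for every $k$; hence $\bar{f}(\mathcal{B})=\mathcal{B}$. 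Finally, since $\bar{f}$ then preserves the complement of the peripheral circles and each generic fiber is a topological $(n-1)$-sphere, hence topologically co-Hopfian, $f$ restricts to a surjection of every generic fiber onto its image fiber; thus $f(D\mathfrak{S}_n)$ is dense, and being compact it equals $D\mathfrak{S}_n$, so $D\mathfrak{S}_n$ is quasisymmetrically co-Hopfian.

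The hardest part will be the topological bookkeeping of the first step: classifying how an abstract quasisymmetric embedding can act on the peripheral $n$-spheres of $D\mathfrak{S}_n$ and, above all, on the $(n-1)$-sphere fibers over the peripheral circles of $\mathcal{B}$, so as to exclude every collapsing or shuffling of fibers incompatible with descending to the base --- in particular, ruling out that a peripheral sphere of $D\mathfrak{S}_n$ maps to a sphere which fails to be peripheral in the ambient (non-image) copy. This is strictly harder than in \cite{Mer:coHopf} and in the Menger case, both because the fibers are now higher-dimensional spheres --- so the possible images and degenerations of a single fiber, and the separation and linking patterns among neighboring fibers, must be classified rather than read off from planar topology --- and because the surjectivity input is no longer a conformal computation but the combinatorial estimate of Lemma~\ref{lemma:main_estimate}, which has to be applied uniformly across all scales and all of the (now genuinely higher-dimensional) slit configurations produced by the construction. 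Running parallel to this is the delicate point of the construction itself: the aperiodic choice of slit sizes must be simultaneously topologically harmless, rich enough to destroy every nontrivial quasisymmetric identification, and compatible at every scale with the hypotheses of Lemmas~\ref{lemma:curves_in_porous_carpets} and \ref{lemma:curves-in-diadic-carpets}.
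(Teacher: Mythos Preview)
Your plan follows the Menger-curve blueprint too closely and thereby misses the much simpler structure the paper actually uses for Sierpi\'nski spaces. In the paper, the slit Sierpi\'nski $(n-1)$-space $M(\calS)$ is built from a box $I\subset\mathbb{R}^n$ with \emph{all} slits parallel to the single hyperplane $\{x_1=0\}$, and the relevant projection is the one-dimensional map $\vartheta=\pi_1\circ\tau_0:DM(\calS)\to[a_1,b_1]\subset\mathbb{R}$, not a projection onto a two-dimensional base carpet. The generic fibers $\sigma_x=\vartheta^{-1}(x)$ are codimension-one spheres, and the entire co-Hopficity argument reduces to Theorem~\ref{thm:mod0-co-hopf}: once the modulus estimate $\m_n(\G_{nv})=0$ is in hand (Lemmas~\ref{lemma:curves_in_porous_carpets} or \ref{lemma:curves-in-diadic-carpets}), Tyson's theorem forces $f$ to send vertical spheres to vertical spheres (Lemma~\ref{lemma:vertical-spheres}), and then the induced map on the \emph{interval} $[a_1,b_1]$ is surjective for the elementary reason that the ``left'' and ``right'' faces $\mathcal{L},\mathcal{R}$ must be permuted (Lemma~\ref{lemma:vert-spheres-cohopf}). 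There is no fiber-topology classification, no aperiodic choice of slit sizes, and no scale-by-scale rigidity induction: porosity (or the $\ell^n$ condition on $\mathbf{r}$) alone suffices.

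By contrast, your proposal imports the full two-dimensional-base machinery of Sections~\ref{Sec:menger-fibered-over-slit-carpet}--\ref{Section:menger-coHopf-proof}, which in the paper is reserved for the Menger curve precisely because that space \emph{cannot} be fibered over an interval with sphere fibers. Your plan to fiber $D\mathfrak{S}_n$ over a carpet of Hausdorff dimension $2$, classify degenerate $(n-1)$-sphere fibers over peripheral circles, and use aperiodic slit data to force a permutation of subcarpets is not obviously wrong, but it is both substantially harder and unnecessary here; moreover, several of its steps (the intrinsic characterization of peripheral $n$-spheres inside a possibly proper image, the descent of $f$ to a well-defined $\bar f$ on $\mathcal{B}$, and the claim that transboundary modulus singles out the outer circle) are asserted rather than argued and would each require real work. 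The paper's one-dimensional base short-circuits all of this: the ``hardest part'' you flag simply does not arise.
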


\begin{wrapfigure}{tr}{0.4\textwidth}
  \begin{center}
    \includegraphics[width=0.3\textwidth]{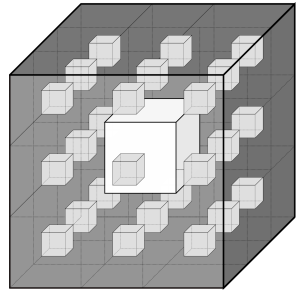}
  \end{center}
  \caption{Sierpi\'nski space \small{$\mathscr{S}_2\subset\mathbb{R}^3$.}}\label{fig:2d-Sierpinski}
\end{wrapfigure}

Here the {standard Sierpi\'nski space} is a compact subset of the Euclidean space constructed as follows. Let $F_0=[0,1]^{n}$. Let $F_1$ be the subset of $F_0$ obtained by dividing it  into $3^{n}$ disjoint, congruent triadic cubes of side-length $1/3$ and removing the middle one. Thus, $F_1$ is a union of $3^{n}-1$ (closed) triadic cubes of generation $1$. Suppose $F_i$ has been defined and is a union of triadic cubes. To define $F_{i+1}$ divide each triadic cube contained in $F_i$ into $3^{n}$ subcubes of generation $(i+1)$ and remove the (open) central subcube. The closed set $ \S_{n-1}=\bigcap_{i=0}^{\infty} F_i \subset \mathbb{R}^n$ is called the \textit{standard $(n-1)$-dimensional Sierpi\'nski set}. Note, that $\S_0$ is the standard middle-thirds Cantor set $C_3\subset\mathbb{R}$, while $\S_1$ is the Sierpi\'nski carpet in the plane, cf. Figures \ref{fig:carpet&sponge} and \ref{fig:2d-Sierpinski} for $\S_1$ and $\S_2$. 
The space $\S_n$ has topological dimension $n$ and in fact every compact subset of $\mathbb{R}^{n+1}$ of topological dimension $n$ can be embedded in $\S_n$, see \cite{Stanko}.

From the definition above we see that we can write $\S_{n-1} = [0,1]^{n} \setminus \bigcup_{i=1}^{\infty} D_i,$ for a sequence $D_1,D_2,\ldots$ of open  triadic cubes in $\mathbb{R}^{n}$. For every $i\geq 1$ we have that $\partial D_i\subset\S_{n}$ is a topological sphere of dimension $n-1$, which we call a \emph{peripheral sphere}.


We say that a metric space $X$ is a \emph{Sierpi\'nski carpet or Sierpi\'nski $n$-space} if it is homeomorphic to $\S_1$ or to $\S_n$ for some $n\geq1$, respectively. A \emph{peripheral sphere} of a Sierpi\'nski $n$-space $X$ is a non-separating subset of $X$ which is homeomorphic to $\mathbb{S}^{n-1}$. Equivalently a peripheral sphere is the image in $X$ of a peripheral sphere in $\S_n$.

To prove Theorem \ref{thm:Sierpinski-co-hopf} we introduce and study a class of spaces which we call \emph{slit Sierpi\'nski spaces}, cf. Section \ref{Section:slit-spaces}. Essentially, a slit Sierpi\'nski $(n-1)$-space is a Gromov-Hausdorff limit of a sequence of multiply connected ``slit domains" in $\mathbb{R}^n$.


Here a \emph{slit domain} is a finitely connected domain in $\mathbb{R}^n$ of the form $I\setminus \cup_{i=1}^k s_i$, where $I$ is a box, i.e. $I=(a_1,b_1)\times\ldots\times(a_n,b_n)$, and the slits $s_i\subset I$ are pairwise disjoint $(n-1)$ dimensional hypercubes which are all contained in planes parallel to a fixed $(n-1)$-dimensional plane, e.g. the coordinate hyperplane $\{x_1=0\}$. See Fig. \ref{fig:diadic-slits} for an example of a slit domain in the plane. The topological dimension of an $(n-1)$-dimensional slit space is $n-1$.

Given a slit Sierpi\'nski $n$-space $X$ we consider the \emph{double of X}, denoted by $DX$, which is obtained by identifying two slit Sierpi\'nski spaces along the boundary of the outer box $\partial{I}$. One important feature of doubles of slit $n$-spaces is that they can be thought of as being ``fibered" over an interval $[a,b]\subset\mathbb{R}$ with almost all fibers being homeomorphic to a sphere $\mathbb{S}^{n-1}$ of codimension $1$. This is in contrast to the slit Menger curve, which is fibered over a Sierpi\'nski carpet of Hausdorff dimension $2$, with almost all fibers being $1$-dimensional circles.

In \cite{Mer:coHopf} it was shown  that if $S$ is a slit carpet corresponding to a  very particular sequence of slits in the unit square $[0,1]^2$, then the double of $S$ is QS co-Hopfian.  One important property of the slit carpet in \cite{Mer:coHopf} is porosity. Here we say that a Sierpi\'nski carpet $X$ is \emph{porous} if peripheral circles appear in all locations and scales. This means that for every $x\in X$ and $0<r<\diam X$ there is a peripheral sphere contained in the ball $B(x,r)$ of diameter comparable to $r$. It turns out that porosity alone implies that doubles of slit spaces are QS co-Hopfian.




\begin{theorem}\label{thm:main-cohopf}
If $X$ is a porous slit Sierpi\'nski space whose peripheral spheres are uniformly relatively separated then the double of $X$ is QS co-Hopfian.
\end{theorem}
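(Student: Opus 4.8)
The plan is to argue by contradiction: I assume $f\colon DX\to DX$ is a quasisymmetric embedding with $f(DX)\subsetneq DX$ and derive a contradiction with the slit modulus estimates of Section~\ref{Section:slit-spaces}. Equip $DX$ with its natural limit metric and measure, so that it is Ahlfors $Q$-regular for the relevant exponent $Q$, and recall the ``projection'' $\pi\colon DX\to[a,b]$ whose generic fibre $\Sigma_t=\pi^{-1}(t)$ is a codimension-one topological sphere, with extreme fibres $E_a=\pi^{-1}(a)$ and $E_b=\pi^{-1}(b)$ lying in the gluing locus $\partial I$. Since a quasisymmetry between Ahlfors $Q$-regular spaces quasi-preserves the (transboundary) $Q$-modulus of curve families, with constants depending only on its distortion function $\eta$, the map $f$ — being a quasisymmetry onto $f(DX)$ — satisfies $\m_Q(f(\Gamma'))\asymp_\eta\m_Q(\Gamma')$ for every curve family $\Gamma'$ in $DX$. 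The argument will be run against the family $\Gamma$ of curves joining $E_a$ to $E_b$ and against its ``per-level'' analogues inside individual fibres. The overall plan is to reduce, by topology, to surjectivity of an induced self-map of $[a,b]$, and then rule out non-surjectivity by a modulus computation exploiting that $DX$ is, by construction, not self-similar.

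First I would carry out the topological reduction. Using the quasisymmetry of $f$ together with porosity — porosity is what excludes ``spurious'' peripheral-looking spheres in $DX$, so that the image of a peripheral sphere can only be a genuine one — one shows that $f$ carries peripheral spheres of $DX$ to peripheral spheres of $DX$, respecting the order of their levels $\ell(P)=\pi(P)$. Consequently $f$ induces a monotone injection $\bar f\colon[a,b]\to[a,b]$ with $f(\Sigma_t)\subseteq\Sigma_{\bar f(t)}$, whence $f(DX)\subseteq\pi^{-1}(\bar f([a,b]))$; thus $f$ is onto if and only if $\bar f$ is onto and $f$ is fibrewise onto over $[a,b]$. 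Moreover, were $f(DX)$ to omit a ball, then, by porosity, it would omit a peripheral sphere of $DX$ of definite relative size, and one checks that this forces $\bar f$ to skip a subinterval; so it suffices to prove that $\bar f$ is onto and, granting that, that $f$ restricts to a surjection of each fibre, the latter being a lower-dimensional instance of the same phenomenon handled by the per-level estimates.

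The next step, and the heart of the proof, is the modulus computation. Approximating $DX$ by the doubles $DX_k$ of the finite-stage slit domains defining $X$ and applying Lemma~\ref{lemma:main_estimate} through its consequences Lemmas~\ref{lemma:curves_in_porous_carpets} and~\ref{lemma:curves-in-diadic-carpets} to the crossing family of each $DX_k$, one obtains
\[
0<m_0\le\m_Q(\Gamma)\le M_0<\infty,
\]
where porosity guarantees that the slits never block crossing at a definite scale, and the uniform relative separation of the peripheral spheres is exactly what keeps $m_0$ bounded away from $0$ as the approximation is refined. Crucially, the same lemmas yield matching two-sided estimates, with the same structural constants, for the crossing family of any sub-slab $\pi^{-1}([c,d])$ and for the per-level crossing families, exhibiting these moduli as explicit and genuinely non-constant functions of the slit data between the relevant levels — this is where the non-self-similar choice of slit sizes in the construction of $DX$ enters. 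Feeding into these estimates the peripheral spheres of definite relative size that a non-surjective $f$ must omit, one then shows that $f$ cannot compress $DX$ into a proper sub-slab, nor miss part of a fibre, without changing the corresponding modulus by more than quasisymmetry permits; this contradiction forces $\bar f$ to be onto and $f$ to be fibrewise onto, hence onto.

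I expect the last point to be the main obstacle: converting the purely qualitative failure of surjectivity into a quantitative modulus discrepancy that survives the \emph{a priori} uncontrolled quasisymmetric distortion of $f$. This is precisely why one needs genuinely new, two-sided, dimension-free slit modulus estimates — Lemma~\ref{lemma:main_estimate} and its corollaries — rather than the planar conformal-mapping arguments of \cite{Mer:coHopf}; and the hypotheses that $X$ be porous and have uniformly relatively separated peripheral spheres are exactly the conditions under which those estimates hold with constants depending only on the data of $X$ and not on the scale or on the finite-stage approximation.
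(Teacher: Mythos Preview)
Your proposal has a fundamental misreading of what the modulus estimates say and how they are used. Lemma~\ref{lemma:curves_in_porous_carpets}, which you cite, asserts that when the slits are porous the crossing family $\G_{\calS}$ and more generally the family $\G_{nv}$ of all non-vertical curves has modulus \emph{zero}, not that it is trapped between positive constants. Porosity does not ``guarantee that the slits never block crossing at a definite scale''; it is precisely what forces the slits to block horizontal motion so effectively that $\m_n\G_{nv}=0$. So the two-sided estimate $0<m_0\le\m_Q(\G)\le M_0$ you aim for is simply false for the family you describe, and the entire ``modulus discrepancy'' strategy in your last two paragraphs collapses.

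The paper's route is almost the reverse of yours. One first proves $\m_n\G_{nv}=0$ from porosity (Lemma~\ref{lemma:curves_in_porous_carpets}), then uses Tyson's theorem together with Ahlfors regularity of the image (Corollary~\ref{lemma:image-Ahlfors-regular}) to conclude that $f$ sends $\m_n$-almost every vertical curve to a vertical curve; a continuity argument upgrades this to: every vertical sphere goes to a vertical sphere (Lemma~\ref{lemma:vertical-spheres}). The induced map $\bar f$ on $[a_1,b_1]$ is then shown to be onto by a short topological argument (Lemma~\ref{lemma:vert-spheres-cohopf}): the ``left face'' $\mathcal{L}$ is a topological ball, while any proper vertical slab of $DM(\calS)$ is a Sierpi\'nski space, so $f(\mathcal{L})$ cannot land inside such a slab and must equal $\mathcal{L}$ or $\mathcal{R}$. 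No quantitative modulus comparison at this stage, and no ``non-self-similar choice of slit sizes'' --- that phrase belongs to Theorem~\ref{thm:non-self-similar-cohopf-1}, which is a different statement showing porosity is not necessary; it plays no role in the present theorem. Also, the fibre-preservation is obtained for vertical spheres $\sigma_x=\vartheta^{-1}(x)$ via the $\m_n\G_{nv}=0$ mechanism, not by arguing that $f$ must send peripheral spheres to peripheral spheres.
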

We refer to Section \ref{Section:slit-spaces} for the precise definition of uniform relative separation used above, which loosely speaking means that the peripheral spheres are not too close to each other. Theorem \ref{thm:main-cohopf} is sharp in the following sense.

\begin{theorem}\label{thm:non-self-similar-cohopf-1}
For every $n\geq 1$ there is an $n$-dimensional slit Sierpi\'nski space which is not porous, but is QS co-Hopfian.
\end{theorem}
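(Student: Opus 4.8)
The plan is to build, for each $n\geq 1$, an $n$-dimensional slit Sierpi\'nski space $X$ that fails porosity on purpose — by leaving a nested sequence of regions with no slits, so that a whole ``tube'' of $X$ is isometric to a product of a segment with a lower-dimensional cube and contains no peripheral spheres at all — yet still forces every quasisymmetric self-embedding of $DX$ to be onto. The key point is that Theorem \ref{thm:main-cohopf} is proved via local modulus estimates coming from Lemma \ref{lemma:main_estimate}, so porosity is used only to detect peripheral spheres \emph{at the relevant scales near the image of a would-be missed point}; if the non-porous part is confined to a region whose image under any quasisymmetric embedding is geometrically controlled, we can still run the argument on the complement. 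So the construction should be: start from a porous, uniformly relatively separated slit space $X_0$ as in Theorem \ref{thm:main-cohopf}, then surgically replace the slits in one shrinking sequence of dyadic sub-boxes $Q_1\supset Q_2\supset\cdots$ converging to a point $p$ by ``empty'' boxes (no slits), keeping uniform relative separation of the remaining peripheral spheres. This $X$ is manifestly not porous (no peripheral sphere lives deep inside the $Q_k$'s near $p$), but away from $p$ it is porous and relatively separated.

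First I would set up the modified construction precisely and verify $X$ is a slit Sierpi\'nski $n$-space homeomorphic to $\S_{n-1}$: the empty boxes still leave a space homeomorphic to the sponge because one can re-subdivide them with slits at much finer scales if needed for the homeomorphism type, or simply note that deleting slits only enlarges the set and a direct Anderson/Whyburn-type characterization of $\S_{n-1}$ still applies. Then I would record the two geometric facts that survive: (i) every peripheral sphere of $X$ has a definite ``collar'' of empty space on one side with diameter comparable to its own (uniform relative separation), and (ii) there is $c>0$ with: for every $x\in DX$ and every $r<\operatorname{dist}(x,\text{fiber over }p)/2$, there is a peripheral sphere in $B(x,r)$ of diameter $\geq c r$. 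Fact (ii) is the ``porosity away from $p$'' statement and is exactly what the proof of Theorem \ref{thm:main-cohopf} needs at a point whose image is not $p$.

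Next, suppose $f\colon DX\to DX$ is a quasisymmetric embedding that is not onto, and let $y_0\in DX\setminus f(DX)$, an open set, so a whole ball $B(y_0,\rho)$ is omitted. As in \cite{Mer:coHopf} and Section \ref{Section:menger-coHopf-proof}, $f$ induces a map on the ``base'' (here the segment $[a,b]$ over which $DX$ is fibered, via the codimension-one sphere fibers), and one shows the induced base map is a quasisymmetric embedding of $[a,b]$ — hence, since $[a,b]$ is topologically co-Hopfian, \emph{surjective}. That forces the fiber over $y_0$'s base point to be hit, so the omitted ball must project into an interval where fibers are genuine $(n-1)$-spheres and, crucially, where $X$ is porous — i.e. $y_0$'s base point is bounded away from the base point of $p$ (if $y_0$ itself projected to $p$'s base point, the empty tube would still be foliated by spheres coming from the doubling, and a separate elementary argument using the product structure there rules out an omitted ball). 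Now apply the porous-case argument of Theorem \ref{thm:main-cohopf} verbatim at $y_0$: using fact (ii) we find peripheral spheres $\Sigma_k\subset B(y_0,\rho)$ of definite relative size, pull them back by $f^{-1}$ to peripheral spheres in $DX$ of controlled size and separation (peripheral spheres are preserved by quasisymmetric homeomorphisms onto the image since they are the non-separating embedded spheres, a topological invariant), and derive the modulus contradiction from Lemma \ref{lemma:main_estimate} exactly as before — the omitted ball gives a ``fat'' gap that violates the upper modulus bound forced by $f$'s quasisymmetry.

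The main obstacle I anticipate is the boundary case where the omitted region sits over, or limits onto, the base point of the non-porous tube: there fact (ii) degenerates, so one cannot harvest peripheral spheres of definite size, and the clean modulus contradiction fails. Handling it requires exploiting the explicit product geometry of the empty tube in $DX$ (it is bi-Lipschitz to $[0,\ell]\times K$ for a cube-like factor $K$ doubled across $\partial I$, hence to a genuine $n$-sphere-fibered cylinder), showing directly that a quasisymmetric self-embedding of $DX$ cannot swallow an open set of this cylinder — either via a dimension/measure argument (the image would have to contain a full neighborhood, contradicting that $f$ is an embedding combined with the base-surjectivity already established) or by yet another modulus estimate adapted to the product region using Lemma \ref{lemma:curves-in-diadic-carpets}. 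Once this localized case is dispatched, assembling the pieces is routine, and the same scheme runs uniformly in $n$, giving the theorem for all $n\geq 1$.
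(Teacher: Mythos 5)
Your proposal takes a genuinely different route from the paper, and unfortunately it has two serious gaps, one in the construction and one in the co-Hopficity argument.

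\textbf{The construction.} The paper obtains Theorem~\ref{thm:non-self-similar-cohopf-1} as an immediate corollary of Theorem~\ref{thm:main1}, case~$(ii)$: take the standard dyadic slit family $\calS_{\mathbf{r}}$ with $r_i\to 0$ but $\mathbf{r}\notin\ell^n$. The decay $r_i\to 0$ gives non-porosity, while the divergence $\sum r_i^n=\infty$ makes the residual-set computation in Lemma~\ref{lemma:curves-in-diadic-carpets} work, so $\m_n\G_{nv}=0$ and Theorem~\ref{thm:mod0-co-hopf} applies. Crucially, the slits are still dense in $I$: every dyadic cube carries a slit, it is only the \emph{relative} size that shrinks. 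Your proposal instead removes slits entirely from a nested sequence of boxes accumulating at a point $p$. That object fails condition $(\calS_3)$ (density of slits) and is not a slit Sierpi\'nski space in the sense of Section~\ref{Section:slit-spaces}; more to the point, it cannot be homeomorphic to $\S_{n-1}$ because Cannon's/Whyburn's characterization requires the peripheral spheres to be dense. Your two suggested fixes do not resolve this: ``re-subdivide with slits at much finer scales'' eliminates the empty tube and reduces you to a non-porous but dense configuration (i.e.\ essentially the paper's construction, for which your ``product geometry of the empty tube'' fallback no longer exists), while ``a direct Anderson/Whyburn-type characterization still applies'' is simply false when the peripheral spheres are not dense.

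\textbf{The co-Hopficity argument.} Your key step asserts that the induced map on the base interval $[a,b]$ is a quasisymmetric embedding and ``hence, since $[a,b]$ is topologically co-Hopfian, surjective.'' This is incorrect: a closed interval is neither topologically nor quasisymmetrically co-Hopfian (for instance $x\mapsto x/2$ on $[0,1]$ is a bi-Lipschitz, injective, non-surjective self-map). The paper's surjectivity step (Lemma~\ref{lemma:vert-spheres-cohopf}) is more delicate and does not reduce to any co-Hopf property of the interval: it uses the topological distinction between the left/right faces $\L,\mathcal{R}$ (which are $(n-1)$-balls) and the closures of components cut off by interior vertical spheres (which are copies of $\S_{n-1}$), to force $f(\L)\in\{\L,\mathcal{R}\}$ and thereby conclude surjectivity. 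Your ``missed ball $B(y_0,\rho)$'' contradiction scheme is also structurally different from the paper's argument (which proves $\m_n\G_{nv}=0$ globally and never needs to locate or analyze a missed point); it is closer in spirit to \cite{Mer:coHopf}, but even that route does not pass through co-Hopficity of $[a,b]$. As written, both the construction and the core surjectivity step of your proof would fail.
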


Theorems \ref{thm:main-cohopf} and \ref{thm:non-self-similar-cohopf-1} follow from Theorem \ref{thm:main1}. The examples in Theorem \ref{thm:non-self-similar-cohopf-1} are given by a class of slit spaces which we call \textit{standard (or diadic)} non self-similar slit Sierpi\'nski spaces, see  Section \ref{section:diadic-slit-spaces}. These spaces correspond to slit domains where the slits are placed at the centers of the diadic cubes in $[0,1]^n$, cf. Figure \ref{fig:diadic-slits}. We provide a sufficient condition guaranteeing  QS co-Hopfian property for the doubles of diadic Sierpi\'nski spaces, which includes many non-porous examples. In fact we show that there are examples of QS co-Hopfian spaces such that the diameter of the largest peripheral sphere in any ball $B(x,r)$ is of the order $o(r)$ as $r\to0$. This means that a metric space can ``look like" $\mathbb{R}^n$ on small scales, i.e. have Gromov-Hausdorff tangent spaces isometric (or quasisymmetric) to $\mathbb{R}^n$, but still be QS co-Hopfian. This is very different from the case of the slit carpet considered in \cite{Mer:coHopf} since its tangents cannot be quasisymmetrically embedded in $\mathbb{R}^2$, cf. \cite{MerWildrick}.


\subsection{Gromov hyperbolic spaces, groups and their boundaries}
The property of being quasi-symmetrically co-Hopfian is important for boundaries of Gromov hyperbolic spaces and in particular for Gromov hyperbolic groups, see \cite{BSchramm,delaHarpe,Mer:coHopf} and references therein for the background on these topics. In particular, QS co-Hopficity is related to the quasi-isometric co-Hopfian property of unbounded metric spaces.

A map $f:(X,d_X)\to (Y,d_Y)$ is a \textit{quasi-isometric embedding} if there are constants $L\geq 1$ and $C>0$ such that
\begin{align*}
  L^{-1}d_X(x,y) - C \leq d_Y(f(x),f(y)) \leq L d_X(x,y) + C
\end{align*}
for all $x,y\in X$. The spaces $X$ and $Y$ are called \emph{quasi-isometric} if there is a quasi-isometric embedding $f:X\to Y$, which  is a \emph{quasi-isometry}, i.e. if there is a constant $C_1<\infty$ such that for every point $z\in Y$ there is a point $x\in X$ such that $d_Y(f(x),z)< C_1$. A metric space $X$ is \emph{quasi-isometrically co-Hopfian} if every quasi-isometric embedding of $X$ into itself is in fact a quasi-isometry.

It turns out that if $X$ is a roughly geodesic Gromov hyperbolic space then it is quasi-isometrically co-Hopfian if its boundary at infinity $\partial_{\infty} X$ is quasisymmetrically co-Hopfian, cf. \cite{Mer:coHopf}. Moreover, if $(Z,d_Z)$ is a compact metric space then there is a visual roughly geodesic Gromov hyperbolic space $X$ such that $\partial_{\infty} X$ is bi-Lipschitz (and therefore also quasisymmetric) to $(Z,d_Z)$.  Combining this with Theorems \ref{thm:menger-co-Hopf} and \ref{thm:Sierpinski-co-hopf} we obtain the following results.

\begin{theorem}
There is a quasi-isometrically co-Hopfian visual roughly geodesic Gromov hyperbolic space $X$ whose boundary at infinity is homeomorphic to the Menger curve.
\end{theorem}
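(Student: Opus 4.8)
The plan is to combine Theorem~\ref{thm:menger-co-Hopf} with the two general facts about Gromov hyperbolic spaces that the excerpt has already recalled. Let $D\mathfrak{M}$ be the metric space homeomorphic to the Menger curve produced in Theorem~\ref{thm:menger-co-Hopf}; it is compact. First I would invoke the hyperbolic filling (or ``cone'') construction: for any compact metric space $(Z,d_Z)$ there is a visual, roughly geodesic Gromov hyperbolic space $X$ with $\partial_\infty X$ bi-Lipschitz to $(Z,d_Z)$, hence in particular quasisymmetric to it. Applying this with $Z = D\mathfrak{M}$ yields a visual roughly geodesic Gromov hyperbolic space $X$ whose boundary at infinity is quasisymmetric — and therefore homeomorphic — to the Menger curve, giving the topological conclusion of the theorem immediately.

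The remaining point is quasi-isometric co-Hopficity of this $X$. Here I would use the stated implication: a roughly geodesic Gromov hyperbolic space is quasi-isometrically co-Hopfian provided its boundary at infinity is quasisymmetrically co-Hopfian. So the key step is to transfer the QS co-Hopfian property from $D\mathfrak{M}$ to $\partial_\infty X$. Since $\partial_\infty X$ is quasisymmetric (even bi-Lipschitz) to $D\mathfrak{M}$, and Theorem~\ref{thm:menger-co-Hopf} asserts $D\mathfrak{M}$ is QS co-Hopfian, one needs only the routine observation that the QS co-Hopfian property is a quasisymmetry invariant: if $\varphi\colon D\mathfrak{M}\to\partial_\infty X$ is a quasisymmetric homeomorphism and $g\colon\partial_\infty X\to\partial_\infty X$ is a quasisymmetric embedding, then $\varphi^{-1}\circ g\circ\varphi$ is a quasisymmetric embedding of $D\mathfrak{M}$ into itself, hence surjective, hence $g$ is surjective. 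Therefore $\partial_\infty X$ is QS co-Hopfian, and the cited implication upgrades this to the statement that $X$ is quasi-isometrically co-Hopfian.

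Putting the pieces together: $X$ is a visual roughly geodesic Gromov hyperbolic space, it is quasi-isometrically co-Hopfian, and $\partial_\infty X$ is homeomorphic to the Menger curve. This is exactly the assertion of the theorem. I do not expect any serious obstacle here — the proof is essentially a bookkeeping assembly of Theorem~\ref{thm:menger-co-Hopf} and two black-box results about hyperbolic spaces and their boundaries, both of which are recorded (with references) in the preceding subsection. The only mildly delicate point is making sure the filling construction really produces a \emph{roughly geodesic} and \emph{visual} space so that the hypotheses of the co-Hopficity criterion are met; this is standard and is precisely what the cited literature (e.g. \cite{BSchramm}, \cite{Mer:coHopf}) provides.
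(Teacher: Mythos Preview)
Your proposal is correct and follows exactly the approach the paper indicates: the paper does not give a separate proof but simply states that the theorem follows by combining Theorem~\ref{thm:menger-co-Hopf} with the two black-box facts about Gromov hyperbolic spaces recalled in the preceding paragraph (the hyperbolic filling construction and the implication from QS co-Hopficity of the boundary to quasi-isometric co-Hopficity of the space). Your write-up is in fact more detailed than the paper's own treatment, which merely says ``Combining this with Theorems~\ref{thm:menger-co-Hopf} and~\ref{thm:Sierpinski-co-hopf} we obtain the following results.''
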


\begin{theorem}
For every $n\geq 1$ there is a quasi-isometrically co-Hopfian visual roughly geodesic Gromov hyperbolic space $X$ whose boundary at infinity is homeomorphic to the $n$-dimensional Sierpi\'nski space $\S_n$.
\end{theorem}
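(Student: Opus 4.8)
The plan is to assemble the statement from Theorem~\ref{thm:Sierpinski-co-hopf} together with the two standard facts about Gromov hyperbolic spaces recalled above, so that no genuinely new construction is required.

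First, fix $n\geq 1$ and let $(Z,d_Z)$ be a compact metric space homeomorphic to the standard Sierpi\'nski $n$-space $\S_n$ which is quasisymmetrically co-Hopfian; such a space exists by Theorem~\ref{thm:Sierpinski-co-hopf}. Concretely one may take $Z=DX$ for a suitable porous slit Sierpi\'nski $n$-space $X$ as in Theorem~\ref{thm:main-cohopf}, which is compact since it arises as a Gromov--Hausdorff limit of bounded slit domains in $\mathbb{R}^{n+1}$ and its double is then compact as well. Next, apply the hyperbolic cone (equivalently, hyperbolic filling) construction of Bonk--Schramm \cite{BSchramm} to $(Z,d_Z)$: this produces a visual, roughly geodesic Gromov hyperbolic space $X_Z=\mathrm{Con}(Z)$ together with an identification of $\partial_\infty X_Z$, equipped with a visual metric, that is bi-Lipschitz to $(Z,d_Z)$.

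Since a bi-Lipschitz homeomorphism is in particular a quasisymmetry, $\partial_\infty X_Z$ lies in the same quasisymmetric equivalence class as $(Z,d_Z)$. Being QS co-Hopfian depends only on this equivalence class: a quasisymmetric embedding of $\partial_\infty X_Z$ into itself conjugates, via the bi-Lipschitz identification, to a quasisymmetric embedding of $Z$ into itself, which is onto by the choice of $Z$, hence the original map is onto as well. Therefore $\partial_\infty X_Z$ is QS co-Hopfian. Finally invoke the implication recalled above (cf. \cite{Mer:coHopf}): for a roughly geodesic Gromov hyperbolic space, quasisymmetric co-Hopficity of the boundary at infinity implies quasi-isometric co-Hopficity of the space. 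Applying this to $X_Z$ shows that $X_Z$ is quasi-isometrically co-Hopfian, and by construction $\partial_\infty X_Z$ is homeomorphic to $Z$, hence to $\S_n$, which completes the proof.

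The only steps that are genuinely ours to check are the compactness of $Z$ and the quasisymmetry-invariance of QS co-Hopficity, both of which are routine; the substantive inputs are the two cited ``black boxes.'' Accordingly, the place where one should expect the real work to live — and where the hypotheses \emph{visual} and \emph{roughly geodesic} are actually used — is the last implication: one must know that a quasi-isometric self-embedding $f\colon X_Z\to X_Z$ induces a well-defined quasisymmetric boundary map $\partial f\colon \partial_\infty X_Z\to\partial_\infty X_Z$, and that surjectivity of $\partial f$ forces $f$ to be coarsely surjective, hence a quasi-isometry. In the present paper this is precisely the statement imported from \cite{Mer:coHopf}, so here it functions as a citation rather than an obstacle.
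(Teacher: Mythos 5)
Your proposal is correct and takes exactly the route the paper has in mind: the paper itself states this theorem as an immediate consequence of Theorem \ref{thm:Sierpinski-co-hopf} together with the two cited facts (QS co-Hopficity of $\partial_\infty X$ implies QI co-Hopficity of a roughly geodesic Gromov hyperbolic $X$, and the Bonk--Schramm hyperbolic cone realizing any compact metric space up to bi-Lipschitz as a boundary at infinity), and gives no further argument. Your only additions — checking compactness of the double slit space and that QS co-Hopficity is a quasisymmetric invariant — are correct and worth making explicit.
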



An important class of Gromov hyperbolic spaces arises in the theory of Gromov hyperbolic groups, cf. \cite{delaHarpe}. For every finitely generated group $G$ and a symmetric generating subset $S\subset G$ one may consider the Cayley graph $\G(G,S)$. The latter is the graph whose vertex set is $G$ and two vertices $a,b\in G$ are connected by an edge if and only if $a^{-1} b\in S\cup S^{-1}$. A natural metric on the Cayley graph is then obtained by defining the length of each edge to be equal to $1$. A finitely generated group $G$ is \emph{Gromov hyperbolic} if its Cayley graph $\G(G,S)$ is a Gromov hyperbolic metric space for some choice of the generating set $S$. It turns out that if $\G(G,S)$ is hyperbolic for one choice of $S$ then it is hyperbolic for any other choice of a generating set in $G$.

A Gromov hyperbolic group $G$ is said to be \emph{quasi-isometrically co-Hopfian} if $\G(G,S)$ is quasi-isometrically co-Hopfian. From the discussion above it follows that a Gromov hyperbolic group is quasi-isometrically co-Hopfian if the boundary at infinity of its Cayley graph, denoted simply by $\partial_{\infty}G$, is QS co-Hopfian when equipped with a visual metric.


If $G$ is a Gromov hyperbolic group then $\partial_{\infty}G$ is either homeomorphic to a sphere $\mathbb{S}^n$  for some $n\geq 1$ (hence is topologically co-Hopfian) or is a bounded complete metric space with no manifold points, cf. \cite[Theorem 4.4]{KapBen}. Besides the spheres, the only spaces known to occur as boundaries of Gromov hyperbolic groups include Sierpi\'nski spaces $\S_n, n\geq1$, universal Menger compacta of topological dimension $n=1,2,3$, Pontriagin surfaces and trees of manifolds, see \cite{BourdonPajot,Dranishnikov,DymaraOsajda,KapKl,Lafont,PrzytyckiSwiatkowski}. 

Menger curve and Sierpi\'nski carpets often occur as boundaries of groups. For instance, if $G$ is indecomposable and its boundary  is connected, has no local cut points and has topological dimension one then $\partial_{\infty}G$ is homeomorphic to either a circle, the Sierpi\'nski carpet, or the Menger curve, cf. \cite{KapKl}. In fact, the boundary of a ``generic" Gromov hyperbolic group is homeomorphic to the Menger curve, cf. \cite{Dahmani:random-groups}. Higher dimensional Sirpi\'nski spaces also appear as boundaries of groups. If $G$ is the fundamental group of a compact negatively curved $(n+2)$ -
dimensional Riemannian manifold $M$, $n\geq 1$, with nonempty and totally geodesic boundary, then $\partial_{\infty}G\subset\mathbb{S}^{n+1}$ is homeomorphic to $\S_n$, cf. \cite{Lafont}. 

It is not known if there is a Gromov hyperbolic group $G$ which is quasi-isometrically co-Hopfian, or equivalently $\partial_{\infty}G$ is QS co-Hopfian, unless $\partial_{\infty}G$ is a sphere. In particular it is not known if there are group boundaries homeomorphic to the Sierpi\'nski carpet or the Menger curve which are quasisymmetrically co-Hopfian, cf. Problem 1.11 in \cite{KapLuk} and also \cite{Mer:coHopf}. In the positive direction, Kapovich and Lukyanenko \cite{KapLuk} showed that if $M$ is a complete  non-compact hyperbolic manifold of dimension $n \geq 3$ of finite volume then $\pi_1(M)$ is quasi-isometrically co-Hopfian.




This paper is organized as follows. In Section \ref{section:prelims} we provide some of the background material. In Section \ref{Section:slit-spaces} we define the slit Sierpi\'nski spaces and formulate some of their properties. In Section \ref{section:mod0-co-hopf} we formulate and prove Theorem \ref{thm:mod0-co-hopf}, which is a general result linking modulus and QS co-Hopfian properties of slit Sierpi\'nski spaces. In Sections \ref{section:main-estimate} and \ref{Section:main-estimate-proof} we formulate and prove our main modulus estimates. Sections \ref{section:slit-menger-definition} through \ref{Section:menger-coHopf-proof} are devoted to the proof of Theorem \ref{thm:menger-co-Hopf}. In Section \ref{section:slit-menger-definition} we give the construction of the slit Menger curve, its double and prove some of their properties. In Sections \ref{Sec:menger-fibered-over-slit-carpet} and \ref{Section:QS-maps-fiber-preserving} we show that the double of the slit Menger curve $D\mathfrak{M}$ is ``fibered" over a slit carpet and QS maps of $D\mathfrak{M}$ are ``fiber preserving". Theorem \ref{thm:menger-co-Hopf} is finally proved in Section \ref{Section:menger-coHopf-proof} by combining the results of the previous sections. A reader interested only in the proof of Theorem \ref{thm:menger-co-Hopf}, can skip most of the material from Sections \ref{Section:slit-spaces} through \ref{Section:main-estimate-proof}. The main ingredients from these sections used in the proof of Theorem \ref{thm:menger-co-Hopf} are the definition of slit carpets, Lemma \ref{lemma:image-Ahlfors-regular} and Lemma \ref{lemma:curves_in_porous_carpets}. In Section \ref{Section:uncountable-menger} we prove Theorem \ref{thm:uncountable-menger}.
In Section \ref{section:remarks-problems} we state several corollaries of our results and formulate some open problems.

\section{Background and Preliminaries}\label{section:prelims}

Given a metric space $(X,d_X)$, a point $x\in X$ and $0<r<\diam X$ we will denote by $B=B(x,r)$ the open ball of radius $r$ and center at $x$. For a constant $C>0$ and a ball $B=B(x,r)$ we let $CB=B(x,Cr)$.

If $E$ and $F$ are subsets of $X$, we define the distance between $E$ and $F$ as follows:
\begin{align*}
  \dist(E,F) = \inf\{d_X(x,y) : x\in E, y\in F\}.
\end{align*}

For $t>0$ we will denote by $\mathcal{H}^t$ the $t$-dimensional Hausdorff measure on $X$. Thus for every $E\subset X$ we have
\begin{align*}
  \mathcal{H}^t(E) = \lim_{\delta\to0}\inf \left\{ \sum_{i=1}^{\infty} r_i^t \, :  \, E\subset\bigcup_{i=1}^{\infty} B(x_i, r_i), \, r_i<\delta\right\}
\end{align*}

Recall, that a metric measure space $(X,\mu)$ is \textit{{Ahlfors $Q$-regular}} for some $Q>0$ if there is a
constant $C\geq 1$ such that for every $x\in X$ and $0<r<\diam X$ the
following inequalities hold
\begin{equation}\label{ineq:Ahlfors}
  \frac{1}{C} r^Q \leq \mu(B(x,r))\leq C r^Q.
\end{equation}

It is well known and easy to see that in (\ref{ineq:Ahlfors}) the measure $\mu$ can be replaced by the Hausdorff measure $\calH^Q$. See \cite{Hein:book} for the background on Haudorff measures, dimension and Ahlfors regularity.

\subsection{Modulus}\label{section:modulus}

Given a metric measure space $(X,\mu)$ and a family of curves $\G$ in $X$
we say that a Borel measurable function $\rho:X\to[0,\infty)$ is
\emph{$\G$-admissible} if $\int_{\g}\rho d s\geq 1$ for every locally
rectifiable curve $\gamma\in\G$, where $ds$ denotes the arclength element.
The $p$-\emph{modulus} of $\G$ for $p\geq1$  is defined as
$$\m_p \G = \inf_{\rho} \int_X \rho^p d\mu,$$
where the infimum is taken over all $\G$-admissible functions $\rho$.

From the definition it follows that every admissible function for $\G$ gives an upper estimate for modulus.

\begin{lemma}[See Lemma 5.3.1 in \cite{HKST}]
Let $\G$ be a family of curves in a Borel subset $A$ of $(X,\mu)$ such that $\mathrm{length}(\g)\geq L>0$ for every $\g\in\G$. Then 
\begin{align}\label{ineq:modulus-basic-estimate}
\m_p(\G)\leq \mu(A)L^{-p}.
\end{align}
\end{lemma}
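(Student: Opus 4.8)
The plan is to produce one explicit admissible function and invoke the definition of $p$-modulus, which (as noted just before the statement) bounds $\m_p(\G)$ from above by $\int_X\rho^p\,d\mu$ for \emph{any} $\G$-admissible $\rho$. The cheapest natural candidate here is the normalized indicator of $A$,
\[
\rho \;=\; \frac{1}{L}\,\chi_A,
\]
which is Borel measurable because $A$ is Borel.

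First I would verify admissibility. Let $\g\in\G$ be locally rectifiable. Since the image of $\g$ is contained in $A$, we have $\int_\g\rho\,ds = L^{-1}\int_\g\chi_A\,ds = L^{-1}\,\mathrm{length}(\g)$, and the hypothesis $\mathrm{length}(\g)\geq L$ then gives $\int_\g\rho\,ds\geq 1$. (If $\g$ is not locally rectifiable the admissibility requirement is vacuous, and if $\g$ is locally rectifiable of infinite length the integral is trivially infinite, so both degenerate cases are harmless.) Hence $\rho$ is $\G$-admissible.

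Then I would simply compute, straight from the definition of modulus,
\[
\m_p(\G)\;\leq\;\int_X\rho^p\,d\mu\;=\;\frac{1}{L^p}\int_X\chi_A\,d\mu\;=\;\frac{\mu(A)}{L^p},
\]
which is exactly the inequality (\ref{ineq:modulus-basic-estimate}).

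There is no genuine obstacle here; the content of the lemma is precisely the observation that every admissible function yields an upper estimate for modulus, applied to the most economical choice of $\rho$. The one point worth flagging is the role of the containment of each $\g\in\G$ in $A$: it is what lets us replace $\int_\g\chi_A\,ds$ by the full length of $\g$; dropping this assumption would only give a bound in terms of the portion of each curve lying inside $A$.
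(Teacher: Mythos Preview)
Your proof is correct and is exactly the standard argument. The paper does not supply its own proof of this lemma; it simply cites Lemma~5.3.1 in \cite{HKST}, where the argument is precisely the one you give: take $\rho = L^{-1}\chi_A$, check admissibility using $\mathrm{length}(\g)\geq L$ and $\g\subset A$, and compute $\int_X \rho^p\,d\mu = \mu(A)L^{-p}$.
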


Some of the most important properties of modulus are listed in the following lemma and we will often use these just by referring to the name of the appropriate property.

If $\G_1$ and $\G_2$ are curve families in $X$, we will say that $\G_2$ overflows $\G_1$ and will write $\G_1 < \G_2$ if every curve $\g_2\in \G_2$ contains some curve $\g_1 \in \G_1$.

\begin{lemma}[See \cite{Hein:book}]\label{lemma:modulus-properties}
For every $p\geq 1$ the following properties hold.
  \begin{enumerate}
    \item $\mathrm{\textsc{(Monotonicity)}}$ $\m_p\G \leq \m_p\G'$,
        if $\G\subset\G'$
    \item $\mathrm{\textsc{(Subadditivity)}}$ $\m_p\G \leq
        \sum_i\m_p\G_i$, if $\G=\bigcup_{i=1}^{\infty}\G_i$.
    \item $\mathrm{\textsc{(Overflowing)}}$ If $\G_1 < \G_2$ then $\m_p(G_1)\geq \m_p(G_2)$.
  \end{enumerate}
\end{lemma}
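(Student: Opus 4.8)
The plan is to derive all three items straight from the definition of $\m_p\G$ as an infimum over the class of $\G$-admissible densities, by checking that each operation on curve families moves that class in the appropriate direction.

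\emph{Monotonicity.} If $\G\subset\G'$, then any $\G'$-admissible $\rho$ is also $\G$-admissible, since the defining inequality $\int_\g\rho\,ds\geq 1$ is demanded for every $\g\in\G'$ and hence in particular for every $\g\in\G$. Thus the infimum defining $\m_p\G$ ranges over a superset of the competitors for $\m_p\G'$, and so $\m_p\G\leq\m_p\G'$.

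\emph{Subadditivity.} Here I would use the standard $\eps 2^{-i}$ exhaustion. We may assume every $\m_p\G_i<\infty$, otherwise there is nothing to prove. Fix $\eps>0$ and pick, for each $i$, a $\G_i$-admissible $\rho_i$ with $\int_X\rho_i^p\,d\mu\leq\m_p\G_i+\eps 2^{-i}$. Set $\rho=\bigl(\sum_i\rho_i^p\bigr)^{1/p}$; this is Borel, being a countable combination of Borel functions, and satisfies $\rho\geq\rho_i$ pointwise, so it is admissible for each $\G_i$ and therefore for $\G=\bigcup_i\G_i$. By monotone convergence $\int_X\rho^p\,d\mu=\sum_i\int_X\rho_i^p\,d\mu\leq\sum_i\m_p\G_i+\eps$, and letting $\eps\to0$ gives the claim.

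\emph{Overflowing.} Suppose $\G_1<\G_2$ and let $\rho$ be any $\G_1$-admissible function. Given $\g_2\in\G_2$, choose $\g_1\in\G_1$ with $\g_1\subset\g_2$; then $\int_{\g_2}\rho\,ds\geq\int_{\g_1}\rho\,ds\geq 1$, using $\rho\geq0$. Hence $\rho$ is $\G_2$-admissible, so every competitor for $\m_p\G_1$ is a competitor for $\m_p\G_2$, whence $\m_p\G_2\leq\m_p\G_1$. None of these steps is a genuine obstacle — the argument is purely formal; the only points worth a line of care are the summability bookkeeping and Borel measurability of the $\ell^p$-combination in Subadditivity, and the use of nonnegativity of $\rho$ when comparing path integrals over $\g_1\subset\g_2$ in Overflowing.
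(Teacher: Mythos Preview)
Your argument is correct and is the standard textbook proof of these properties. The paper does not actually supply a proof here: the lemma is stated with a citation to Heinonen's book and used freely thereafter, so there is no ``paper's own proof'' to compare against beyond the implicit reference to the same elementary argument you have written out.
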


Thus modulus can be thought of as an outer measure on families of curves in $X$. For this reason one often says that a property holds for $\m_p$-{\emph{almost every
$\g\in\G$}} if it fails only for a family $\G_0\subset\G$ such
that $\m_{p}(\G_0)=0$. We refer to ~\cite{Hein:book,HKActa,Vaisala:lectures} for further details on modulus of curve families
including the definitions of rectifiability and arclength in $\mathbb{R}^n$ as well as in general metric
spaces.

On several occasions we will also need the following result.

\begin{lemma}\label{lemma:modulus-under-projections}
Suppose $\tau:(X,d_X,\mu)\to (Y,d_Y,\nu)$ is an $L$-Lipschitz map of metric measure spaces, and there is a constant $C\geq 1$ such that for every Borel set $E\subset X$ we have 
\begin{align}\label{ineq:measure-non-decreasing}
\mu(E)\leq C\nu(\tau(E)).
\end{align} 
Then for every family of curves $\G$ in $X$ we have 
\begin{align}\label{ineq:modulus-under-lipschitz}
  \m_p \G \leq C L^p \m_p \tau(\G).
\end{align}
\end{lemma}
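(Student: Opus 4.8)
The plan is to prove Lemma~\ref{lemma:modulus-under-projections} by pulling back an admissible function for $\tau(\G)$ to an admissible function for $\G$. Suppose $\rho\colon Y\to[0,\infty)$ is admissible for $\tau(\G)$, so that $\int_{\sigma}\rho\,ds\geq 1$ for every locally rectifiable $\sigma\in\tau(\G)$. I would then set $\tilde\rho = L\,(\rho\circ\tau)\colon X\to[0,\infty)$; this is Borel measurable since $\tau$ is continuous (being Lipschitz) and $\rho$ is Borel. The main point is to check that $\tilde\rho$ is $\G$-admissible.

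First I would verify admissibility. Fix a locally rectifiable curve $\g\in\G$, say $\g\colon[a,b]\to X$ (or a general parameter interval), and let $\sigma = \tau\circ\g$, which lies in $\tau(\G)$. Since $\tau$ is $L$-Lipschitz, $\sigma$ is also locally rectifiable and the standard change-of-variables / length-comparison inequality for Lipschitz maps gives
\begin{align*}
  \int_{\sigma}\rho\,ds \;\leq\; L\int_{\g}(\rho\circ\tau)\,ds \;=\; \int_{\g}\tilde\rho\,ds.
\end{align*}
Here the inequality $\int_{\sigma}\rho\,ds\leq L\int_{\g}(\rho\circ\tau)\,ds$ follows because, parametrizing $\g$ by arclength, $\sigma$ has metric speed at most $L$ almost everywhere, so $ds_\sigma \leq L\,ds_\g$ under the correspondence of parameters; one integrates $\rho$ pulled back along $\g$ against this. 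Since $\rho$ is admissible for $\tau(\G)$ and $\sigma\in\tau(\G)$, the left side is $\geq 1$, hence $\int_{\g}\tilde\rho\,ds\geq 1$. Thus $\tilde\rho$ is $\G$-admissible.

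Next I would estimate the $p$-energy of $\tilde\rho$ using hypothesis~(\ref{ineq:measure-non-decreasing}). The natural tool is the coarea/pushforward inequality: for a nonnegative Borel function $g$ on $Y$, condition~(\ref{ineq:measure-non-decreasing}) implies $\int_X (g\circ\tau)\,d\mu \leq C\int_Y g\,d\nu$. One sees this first for $g = \mathbf{1}_F$ with $F\subset Y$ Borel, since $\int_X \mathbf{1}_F\circ\tau\,d\mu = \mu(\tau^{-1}(F)) \leq C\,\nu(\tau(\tau^{-1}(F))) \leq C\,\nu(F)$; then extend to simple functions by linearity and to general nonnegative Borel $g$ by monotone convergence. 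Applying this with $g=\rho^p$,
\begin{align*}
  \int_X \tilde\rho^p\,d\mu \;=\; L^p\int_X (\rho^p\circ\tau)\,d\mu \;\leq\; C\,L^p\int_Y \rho^p\,d\nu.
\end{align*}
Taking the infimum over all $\tau(\G)$-admissible $\rho$ yields $\m_p\G \leq \int_X\tilde\rho^p\,d\mu \leq C L^p\,\m_p\tau(\G)$, which is~(\ref{ineq:modulus-under-lipschitz}); one should also note the trivial case where $\m_p\tau(\G)=\infty$, for which there is nothing to prove.

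I expect the only real subtlety to be the length-comparison step $\int_{\sigma}\rho\,ds\leq L\int_\g(\rho\circ\tau)\,ds$ for curves in abstract metric spaces — one must be slightly careful about how arclength and line integrals are defined for (only locally) rectifiable curves, and that the image under a Lipschitz map is again rectifiable with controlled speed. This is standard (see the references to \cite{Hein:book,HKActa,Vaisala:lectures} for line integrals along curves in metric spaces), but it is the place where a careful write-up is needed; everything else is a routine application of monotone convergence and the definition of modulus.
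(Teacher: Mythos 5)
Your proof is correct and follows essentially the same route as the paper's: pull back an admissible $\rho$ for $\tau(\G)$ to $\tilde\rho = L(\rho\circ\tau)$, check admissibility via the Lipschitz length-comparison along curves, estimate $\int_X\tilde\rho^p\,d\mu$ using the measure hypothesis, and take an infimum. You spell out the pushforward inequality $\int_X (g\circ\tau)\,d\mu\le C\int_Y g\,d\nu$ in more detail than the paper (which just invokes the hypothesis directly), but the underlying argument is the same.
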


\begin{proof}
Let $\rho$ be an admissible metric for $\tau(\G)$. Define a metric $\tilde{\rho}=(\rho\circ\tau) \cdot L$ on $X$. Since, $\tau$ is $L$-Lipschitz, we have that for every locally rectifiable $\g\in\G$ the image $f\circ \gamma$ is also locally rectifiable and moreover
$$\int_{\g} \tilde{\rho} \, ds = \int_{\g} (\rho\circ \tau) \cdot L \,ds \geq  \int_{\tau(\g)} \rho ds \geq 1,$$
cf. \cite[Page 12]{Vaisala:lectures} or \cite{HKST}. Since $\m_p$-almost every $\g\in \G$  (and $\g'\in\tau(G)$) is locally rectifiable, it follows that $\tilde{\rho}$ satisfies the admissibility condition for $\m_p$-almost every $\g\in\G$. By (\ref{ineq:measure-non-decreasing})  we have
$\int_{X} (\tilde{\rho})^p d\mu \leq C L^p \int_{Y} \rho^p d\nu$. Taking an infimum over all admissible $\rho$'s completes the proof.
\end{proof} 

\subsection{QS mappings and Tyson's Theorem}
A homeomorphism $f$  between metric spaces $(X,d_X)$ and $(Y,d_Y)$ is called
\textit{quasisymmetric} (or QS) if for all distinct triples $x,y,z\in{X}$ we have
\begin{equation}\label{QS}
\frac{d_Y(f(x),f(y))}{d_Y(f(y),f(z))}\leq{\eta\left(\frac{d_X(x,y)}{d_X(y,z)}\right)},
\end{equation}
for some fixed increasing function $\eta:[0,\infty)\rightarrow[0,\infty)$.

Below, we will need the following result of Tyson, who showed that in quite general spaces quasisymmetry implies quasi-invariance of the moduli of families of curves.

\begin{theorem}[Tyson, \cite{Tyson}]\label{thm:Tyson}
  If $f:X\to Y$ is a quasisymmetric mapping between locally compact, connected Ahlfors $Q$-regular spaces, with $Q>1$, then there is a constant $K\geq 1$ such that
\begin{eqnarray}\label{ineq:geom-QC}
  {K^{-1}}\m_Q f(\G) \leq \m_Q \G \leq K \m_Q f(\G),
\end{eqnarray}
for every curve family $\G\subset X$.
\end{theorem}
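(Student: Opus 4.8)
The plan is to prove the single inequality $\m_Q\G\le K\,\m_Q f(\G)$ for every curve family $\G$ in $X$: since $f^{-1}\colon Y\to X$ is again a quasisymmetric homeomorphism between locally compact, connected, Ahlfors $Q$-regular spaces, applying this inequality to $f^{-1}$ and to the family $f(\G)$ gives $\m_Q f(\G)\le K\,\m_Q\G$, and together the two estimates are \eqref{ineq:geom-QC}. To prove $\m_Q\G\le K\,\m_Q f(\G)$ I would, given an arbitrary function $\rho$ admissible for $f(\G)$, build a function on $X$ of the form $\tilde\rho=(\rho\circ f)\cdot L_f$ which is admissible for $\m_Q$-almost every $\g\in\G$ and satisfies $\int_X\tilde\rho^Q\,d\mu_X\le K\int_Y\rho^Q\,d\mu_Y$; here $\mu_X,\mu_Y$ are the Ahlfors $Q$-regular measures and $L_f(x)=\limsup_{r\to0}r^{-1}\sup_{d_X(x,y)\le r}d_Y(f(x),f(y))$ is the upper pointwise dilatation of $f$.

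The first step is a pointwise \emph{Jacobian estimate}. Let $\nu$ be the Borel measure on $X$ with $\nu(E)=\mu_Y(f(E))$; it is Radon, since $f$ is a homeomorphism and $\eta$-quasisymmetry bounds $\sup_{d_X(x,y)\le r}d_Y(f(x),f(y))$ by $\eta(1)\,d_Y(f(x),f(z))$ for any single $z$ with $d_X(x,z)\ge r$, so bounded sets have bounded images. I claim that $L_f(x)^Q\le C_1\,\overline D\nu(x)$ for all $x\in X$, where $\overline D\nu(x)=\limsup_{r\to0}\nu(B(x,r))/\mu_X(B(x,r))$ is the upper derivative of $\nu$ with respect to $\mu_X$. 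This I would prove by combining quasisymmetry with Ahlfors regularity: $\eta$-quasisymmetry forces $f(B(x,r))$ to be comparable to a round ball — it contains $B\bigl(f(x),\,c\,\sup_{d_X(x,y)\le r/2}d_Y(f(x),f(y))\bigr)$ for some $c=c(\eta)>0$ — so $\nu(B(x,r))\gtrsim\bigl(\sup_{d_X(x,y)\le r/2}d_Y(f(x),f(y))\bigr)^Q$ while $\mu_X(B(x,r))\lesssim r^Q$, and dividing and passing to the $\limsup$ gives the claim.

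Next, since $X$ is Ahlfors $Q$-regular the measure $\mu_X$ is doubling, and therefore the differentiation theorem for Radon measures applies: $\overline D\nu$ agrees $\mu_X$-a.e.\ with the density of the absolutely continuous part of $\nu$ with respect to $\mu_X$; in particular $\int_E\overline D\nu\,d\mu_X\le\nu(E)=\mu_Y(f(E))$ for every Borel $E\subset X$. Combined with the Jacobian estimate this gives $L_f\in L^Q_{\mathrm{loc}}(\mu_X)$ and, as measures on $X$, $L_f^Q\,d\mu_X\le C_1\,d\nu$. The hard part is the last ingredient: that $L_f$ is a $Q$-integrable \emph{upper gradient} of $f$, i.e.\ that for $\m_Q$-almost every curve $\g$ in $X$ the composition $f\circ\g$ is rectifiable and $d_Y\bigl(f(\g(a)),f(\g(b))\bigr)\le\int_{\g|_{[a,b]}}L_f\,ds$ on every subarc. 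This is the metric-space analogue of the fact that a quasiconformal map is absolutely continuous on almost every line, and it is precisely where connectedness, local compactness and the assumption $Q>1$ enter — for $Q=1$ it fails, since quasisymmetric homeomorphisms of $\mathbb R$ may be singular. I would establish it by first discarding, via Fuglede's lemma, the $\m_Q$-null family of $\g$ with $\int_\g L_f\,ds=\infty$, and then running a covering-plus-modulus argument — using the measure bound $\mu_Y(f(B))\gtrsim(\diam f(B))^Q$ that quasisymmetry and Ahlfors regularity supply — to show that the curves along which $f$ fails to be absolutely continuous have $Q$-modulus zero.

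Granting the upper-gradient property, the rest is routine. For $\rho$ admissible for $f(\G)$ put $\tilde\rho=(\rho\circ f)\cdot L_f$. For $\m_Q$-almost every $\g\in\G$, parametrized by arclength, $f\circ\g$ is rectifiable with metric speed at most $L_f\circ\g$, so $\int_\g\tilde\rho\,ds\ge\int_{f\circ\g}\rho\,ds\ge1$; as an exceptional family of $\m_Q$-measure zero does not affect the modulus, $\tilde\rho$ may be used as a $\G$-admissible function. Since $f$ pushes $\nu$ forward to $\mu_Y$ (because $f_*\nu(A)=\nu(f^{-1}A)=\mu_Y(A)$), the inequality $L_f^Q\,d\mu_X\le C_1\,d\nu$ yields
$$\int_X\tilde\rho^Q\,d\mu_X=\int_X(\rho\circ f)^Q\,L_f^Q\,d\mu_X\le C_1\int_X(\rho\circ f)^Q\,d\nu=C_1\int_Y\rho^Q\,d\mu_Y ,$$
and taking the infimum over admissible $\rho$ gives $\m_Q\G\le C_1\,\m_Q f(\G)$; with the reduction of the first paragraph this is \eqref{ineq:geom-QC} with $K=C_1$. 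The one genuine difficulty in this scheme is the upper-gradient step.
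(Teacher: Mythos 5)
The paper does not prove this theorem; it is quoted directly from Tyson's 1998 paper with a citation, so there is no in-paper proof to compare against. What follows is a comparison with the argument Tyson actually gives.

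Your sketch is the classical ``analytic'' route to geometric quasiconformality: push forward the test function, multiply by the pointwise dilatation $L_f$, control $\int L_f^Q\,d\mu_X$ by a Jacobian estimate plus the Lebesgue differentiation theorem, and use the upper-gradient inequality on $\m_Q$-almost every curve. This is the proof scheme from Euclidean quasiconformal theory (V\"ais\"al\"a) and from the Heinonen--Koskela framework. Tyson's actual proof of the cited theorem takes a genuinely different route: he introduces a \emph{discrete} modulus over maximal $\eps$-separated nets, shows that on Ahlfors $Q$-regular spaces the discrete modulus is comparable to the continuous $Q$-modulus uniformly as $\eps\to0$, observes that quasisymmetry distorts $\eps$-nets in a quantitatively controlled way (so the discrete modulus is quasi-preserved by elementary combinatorics), and then passes to the limit. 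The discrete approach completely sidesteps the step you correctly flag as the ``one genuine difficulty,'' namely the claim that $L_f$ is an upper gradient of $f$ along $\m_Q$-almost every curve. That ACL-type statement is delicate precisely because the hypotheses here are only local compactness, connectedness and Ahlfors $Q$-regularity with $Q>1$ --- there is no Loewner or Poincar\'e assumption, and these are the hypotheses one normally invokes to produce ``many'' curves along which a map must behave well. Tyson's 1998 paper does establish absolute continuity of $f$ \emph{in measure}, but absolute continuity along almost every curve is strictly stronger and does not obviously follow from measure absolute continuity alone. So while your Jacobian estimate, the use of the differentiation theorem, and the reduction to one inequality by symmetry are all correct, you should treat the upper-gradient step not merely as the hardest step but as a genuine gap: it is unclear that a covering-plus-modulus argument can close it under only the stated hypotheses, and this is likely the reason Tyson proved the theorem via discretization rather than via upper gradients. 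The two approaches buy different things: yours, when it is available (e.g.\ under a Poincar\'e inequality), yields sharper pointwise information about $f$; Tyson's, being combinatorial, works in the bare $Q$-regular setting and is what the theorem as stated actually needs.
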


The constant $K$ in (\ref{ineq:geom-QC}) depends only on the distortion function $\eta$ and the Ahlfors regularity constants of $X$ and $Y$.

\subsection{Sierpi\'nski spaces and Cannon's Theorem}\label{subsection:menger_definitions} 
A classical theorem of Whyburn states that every compact set obtained by removing a sequence of Jordan domains from the sphere $\mathbb{S}^2$ satisfying  certain properties is homeomorphic to the standard Sierpi\'nski carpet $\S_1$. We will need the following theorem of Cannon \cite{Cannon}, which generalizes Whyburn's characterization to higher dimensions. Note, that Cannon stated his theorem for all $n\geq 2$ except for $n=4$. However, it is known now that the theorem holds for $n=4$ as well, see for instance the discussion in Section 2 of \cite{LafontTshishiku}.

\begin{theorem}[Cannon \cite{Cannon}]\label{thm:Cannon}
  Let $n\geq 2$. Suppose $D_i\subset\mathbb{S}^{n}, i\geq 0,$ is a sequence of topological $n$-balls satisfying the conditions
  \begin{itemize}
  \item[($\S \it{1}$)] $\overline{D}_i \cap\overline{D}_j =\emptyset, \mbox{ for } i\neq j$,
  \item[($\S \it{2}$)] $\diam(D_i)\to 0$ as $i\to\infty$,
  \item[($\S \it{3}$)] $\overline{\left(\bigcup_i D_i \right)}=\mathbb{S}^{n}$.
\end{itemize} Then the compact set $\S=\mathbb{S}^{n} \setminus \bigcup_{i}D_i$ is homeomorphic to $\S_{n-1}$.
\end{theorem}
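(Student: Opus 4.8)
The plan is to prove Cannon's characterization by the standard "shrinking/swelling" method from geometric topology, verifying that $\S = \mathbb{S}^n \setminus \bigcup_i D_i$ is homeomorphic to $\S_{n-1}$ through the abstract characterization of the Sierpi\'nski $(n-1)$-space. Since the statement is quoted as a theorem of Cannon, in the paper itself one would simply cite \cite{Cannon} and the remark about $n=4$ from \cite{LafontTshishiku}; but if one were to reconstruct the argument, here is how I would proceed. First I would recall that the standard set $\S_{n-1}$ admits an intrinsic topological characterization (for $n \neq 4$, and now all $n \ge 2$): a compact, connected, locally connected metric space $Z$ of topological dimension $n-1$ is homeomorphic to $\S_{n-1}$ provided it embeds in $\mathbb{S}^n$, has empty interior there, the complementary components $\{U_i\}$ are topological $n$-balls with pairwise disjoint closures whose diameters tend to zero, and $Z$ satisfies the appropriate "$n$-dimensional no-local-cut" condition — i.e. no subset homeomorphic to $\mathbb{S}^{n-2}$ separates $Z$ locally, equivalently the $\partial U_i$ are "wild nowhere" and $Z$ is $(n-2)$-connected in the suitable sense. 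The hypotheses $(\S 1)$, $(\S 2)$, $(\S 3)$ are exactly tailored so that $\S$ sits in $\mathbb{S}^n$ with empty interior (by $(\S 3)$), has complementary balls with disjoint closures ($(\S 1)$) of vanishing diameter ($(\S 2)$).

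Second, I would establish the topological regularity properties of $\S$ from these hypotheses: connectedness and local connectedness follow because $\S$ is the complement of a null sequence of disjoint open balls in $\mathbb{S}^n$ with $n \ge 2$ (removing a small ball from a connected, locally connected space of dimension $\ge 2$ preserves both properties, and one passes to the limit using $(\S 2)$); topological dimension exactly $n-1$ follows since $\S$ is a subset of $\mathbb{S}^n$ with empty interior, hence $\dim \S \le n-1$, while the boundary sphere $\partial D_0 \subset \S$ already has dimension $n-1$. The subtle step is showing $\S$ has the correct local homotopy/connectivity type so that Cannon's (or the $\S_{n-1}$) characterization applies: one must verify that no $(n-1)$-sphere in $\mathbb{S}^n$ sitting inside $\S$ can separate $\S$, and that $\S$ is homogeneously embedded. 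This is where the full strength of Cannon's work enters and where, for $n=4$, the original argument had a gap later filled using subsequent developments in $4$-manifold/ANR theory, as noted via \cite{LafontTshishiku}.

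Third, having checked the hypotheses of the abstract characterization, I would conclude $\S \cong \S_{n-1}$. In practice, within this paper, the honest route is: invoke Theorem~\ref{thm:Cannon} itself as a black box whenever a concrete set of $n$-balls $D_i \subset \mathbb{S}^n$ is produced satisfying $(\S 1)$–$(\S 3)$; this is precisely how it will be used to identify the double $DX$ of a slit Sierpi\'nski space and the slit Menger curve's relatives as honest Sierpi\'nski spaces, since those spaces are built by removing explicit triadic/dyadic cubes (which are topological $n$-balls) with shrinking diameters and disjoint closures, and whose union is dense. The main obstacle in a self-contained proof would be the dimension-$4$ case and, more generally, the verification of the intrinsic local-connectivity and non-separation conditions — i.e. showing the removed balls are "everywhere" in the right quantitative-topological sense; but since Theorem~\ref{thm:Cannon} is cited, for our purposes the only work is checking $(\S 1)$–$(\S 3)$ for each explicitly constructed family, which is immediate from the triadic/dyadic combinatorics of the construction together with the diameter bounds forced by the nested cube structure.
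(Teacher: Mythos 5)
Your bottom line is correct: the paper does not prove this theorem --- it is stated as a cited result of Cannon \cite{Cannon}, with the remark that the $n=4$ case is covered by later developments via \cite{LafontTshishiku}, exactly as you say, and within the paper the only work ever done with Theorem \ref{thm:Cannon} is verifying $(\S\it{1})$--$(\S\it{3})$ for explicit families of balls. One caution about your speculative ``if one had to prove it'' sketch: the ``abstract/intrinsic characterization of $\S_{n-1}$'' you invoke is not a separate tool --- for $n \geq 3$ there is no clean Whyburn/Anderson-style purely intrinsic characterization to appeal to, and what you describe is essentially a reformulation of Cannon's positional theorem itself, so the sketch is circular as written; but since the theorem is used as a black box here, this does not affect the logic of the paper.
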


We will call a set $\S\subset\mathbb{S}^n$ as in Theorem \ref{thm:Cannon} an \emph{$(n-1)$-dimensional Sierpi\'nski space (Sierpi\'nski carpet for $n=2$)} or just a \emph{Sierpi\'nski space} if the dimension is clear from the context. The spheres $\partial(D_i)\cong \mathbb{S}^{n-1}$ will be called the \emph{peripheral spheres} (or circles if they are of dimension $1$) of $\S$.

More generally, a metric space $X$ is a \emph{metric Sierpi\'nski $n$-space} if it is homeomorphic to the standard Sierpinski set $\S_n\subset\mathbb{R}^{n+1}$. An $(n-1)$ dimensional sphere $S$ embedded in a metric Sierpi\'nski $n$-space $X$ is called a \emph{peripheral sphere} if $X\setminus S$ is connected. This is equivalent to the fact that $S=f(\partial {D_i})$ for some homeomorphism $f:\S_n\to X$ and some peripheral sphere $\partial D_i$ of $\S_n$.

\section{Slit Sierpi\'nski spaces}\label{Section:slit-spaces}
In this section we generalize the construction of the slit Sierpi\'nski carpet from \cite{Mer:coHopf} and define slit Sierpi\'nski spaces. These spaces are constructed using sequences of slit domains in $\mathbb{R}^n, n\geq 2$. Unlike \cite{Mer:coHopf} though we do not impose conditions on the geometry or the location of the slits. Our main condition is uniform relative separation described below.

\subsection{Slit domains in $\mathbb{R}^n$}\label{Sec:slit-domains}
For $n\geq 1$ and $1\leq i\leq n$ we denote by $\pi_i:\mathbb{R}^n\to\mathbb{R}$ the projection map onto the $i$-th coordinate.
Let
$I=(a_1,b_1)\times\ldots \times (a_n,b_n)$ be a bounded open box in $\mathbb{R}^n, n\geq 2$. The \emph{center of $I$} is the point
$$c(I)=\left(\frac{b_1-a_1}{2},\ldots,\frac{b_n-a_n}{2}\right).$$

We say that a subset $s\subset I$ is a \emph{slit or a vertical hypercube in $I$} if
\begin{align*}
  s=\{x\}\times[c_2,d_2]\times \ldots \times[c_n,b_n],
\end{align*}
such that $ l(s):= d_2-c_2 = \ldots = d_n-c_n.$
%
Thus, a slit $s\subset I$ is an $(n-1)$-dimensional closed box contained in the hyperplane $\pi_1^{-1}(x)$ for some $x\in\pi_1(I)$ all the sides of which have equal lengths, see Fig. \ref{fig:slit-in-3d}. We will call $l(s)$ the \emph{sidelength of $s$}.

Given a sequence of disjoint vertical
slits $\mathcal{S}=\{s_i\}_{i=1}^{\infty}$ compactly contained in $I$, for
every $i\geq 1$ we define the \textit{slit domain} $S_i\subset I$ and the
\textit{infinite slit set} $S\subset I$ by letting
\begin{align*}
  S_i = I\setminus \bigcup_{j=1}^i s_j
\quad
\mbox{ and }
\quad
  S = I \setminus \bigcup_{j=1}^{\infty} s_j.
\end{align*}

Throughout the paper we will impose some conditions on the sequence of slits $\calS=\{s_i\}\subset I$ analogous to ($\S \it{1}$),($\S \it{2}$),($\S \it{3}$) of Cannon's theorem.

First, we need to quantify the notion of disjointness. Recall, that \emph{relative distance} between two subsets $E$ and $F$ of a metric space $X$ is given by
\begin{align*}
  \Delta(E,F) = \frac{\dist(E,F)}{\min (\diam E, \diam F)}.
\end{align*}
Now, a sequence of subsets $\{X_i\}_{i\in\mathbb{N}}$ of $X$ is \textit{uniformly relatively
separated} if there is a constant $\sigma>0$ such that $\D(X_i,X_j) \geq \sigma$ if $i\neq j$.
%
The notion of uniform relative separation of peripheral spheres of metric carpets is crucial
in the study of their quasiconformal geometry, cf. \cite{Bonk:SCUnif,BonkMerenkov}.

We will say that the slits $\calS=\{s_i\}\subset I$ \emph{are uniformly relatively separated in $I$} if the collection $\{\partial I, \calS\}$ is uniformly separated, i.e. there is a constant $\sigma>0$ such that for all distinct $i,j\in\mathbb{N}$ the following inequalities hold,
\begin{align} \label{uniform-rel-separation}
  \D(s_i,\partial I) \geq \sigma \quad \mbox{ and } \quad \D(s_i,s_j) \geq \sigma.
\end{align}

Everywhere below we will assume that the slits $s_i\subset I$ satisfy the following three conditions:

\begin{itemize}
  \item[($\calS_1$)] $s_i$'s are uniformly relatively separated in $I$,
  \item[($\calS_2$)] $\diam (s_i) \to 0$ as $i\to\infty$,
  \item[($\calS_3$)] $\calS$ is dense in $I$.
  \end{itemize}
Property $(\calS_1)$ above may be thought of as a quantitative version of $(\S \it 1)$.

Often we will assume another property, which is related to the notion of porosity. We say that the slits $s_i\in I$ \emph{occur on all locations and scales in} $I$ if the following condition is satisfied.
\begin{itemize}
  \item[($\calS_4$)] There is a constant $C\geq 1$ such that for every ball $B=B(x,r)\subset I$ there is a slit $s_i\subset B$ such that $\diam s_i \geq r/C$.
\end{itemize}
Note that $(\calS_4)$ implies $(\calS_3)$ but the reverse implication is not true in general.

\subsection{Slit Sierpi\'nski spaces and their doubles}\label{subsection:slit-spaces}

%
Given a domain $D\subset\mathbb{R}^n, n\geq 1$ the \emph{inner or path metric $d_D$} is defined by
\begin{align*}
  d_{D}(x,y) = \inf \{l(\g)\, | \, \g \mbox{ connects $x$ and $y$ } \}\,
\end{align*}
where $x,y\in D$, $l(\g)$ is the Euclidean length (or $\calH^1$ measure) of $\g$ and the infimum is over all the curves $\g\subset D$ connecting $x$ and $y$.

For a sequence of slits $\calS\subset I \subset \mathbb{R}^n$ satisfying the properties $(\calS_1)-(\calS_3)$, we will construct a metric space $M(\calS)$ corresponding to $\calS$ such that $M(\calS)$ will have topological dimension $n-1$ and which may be (homeomorphically) embedded in $\mathbb{R}^n$. The presentation here follows \cite{Mer:coHopf}.

 Let $\overline{M}_0=\bar I$. For $i\geq 1$ let $\overline{M}_i$ denote the completion of the domain $S_i$ in the path metric $d_{{M}_i}$. The new metric on the completion will be denoted by $d_{\overline{M}_i}$. Note that the boundary components of $\overline{M}_i$ corresponding to the slits of $S_i$ are homeomorphic to $(n-1)$ dimensional sphere $\mathbb{S}^{n-1}$, and so we call them \emph{peripheral spheres} of $\overline{M}_i$ and the remaining boundary component - the \emph{outer peripheral sphere} or \emph{outer boundary}. For every $i,j\in\mathbb{N}\cup\{0\}$ such that $i\leq j$ there is a $1$-Lipschitz map $  \tau_{i,j}: \overline{M}_j\to\overline{M}_i$, 
which identifies the points on the slits of ${\overline{M}_j}$ which correspond to the same point in ${\overline{M}_i}$. Equivalently,
\begin{align*}
  \tau_{i,j}(p)=\tau_{i,j}(q) \quad \mbox{ whenever } \quad d_{\overline{M}_i}(p,q)=0.
\end{align*}
Thus, we obtain an inverse system of topological spaces $(\overline{M}_i,\tau_{i,j})$. We denote
 \begin{align*}
   M(\calS) = \lim_{\longleftarrow}(\overline{M}_i,\tau_{i,j})
 \end{align*}
 and call $M(\calS)$ the \emph{slit Sierpi\'nski space (or carpet) corresponding to $\calS$}. More explicitly, the points in the slit space $M(\calS)$ are sequences $(p_1,p_2,\ldots)$, such that for every $i\geq0$ we have $p_i\in M_i$ and $p_i=\tau_{i,i+1}(p_{i+1}).$

 Note, that $M(\calS)$ is a compact Hausdorff topological space. For $i\geq 0$ we will denote by $  \tau_i:M(\calS)\to \overline{M}_i$
 the natural projections. The \emph{slits} and the \emph{outer boundary} of $M(\calS)$ are defined as the inverse limits of the slits and the outer boundary of $\overline{M}_i$ and as such are topological spheres of dimension $n-1$. From the fact that the slits $\calS$ are dense in $I$ it follows that the slits (or peripheral spheres) are dense in $M(\calS)$.

Given $p=(p_0,p_1,\ldots),q=(q_0,q_1,\ldots)\in M(\calS)$ the sequence $\{ d_{\overline{M}_i}(p_i,q_i)\}$ is non-decreasing, bounded and therefore convergent. Hence the limit of the sequence is independent of the enumeration of the sequence of the slits $\{s_i\}$ and we may unambiguously define a distance function on $M(\calS)$ as follows
\begin{align*}
  d_{\calS}(p,q)=\lim_{n\to\infty} d_{\overline{M}_i}(p_i,q_i).
\end{align*}

Since the metric on $M(\calS)$ is independent of the enumeration of the sequence of slits $s_i$, from now on without loss of generality we will assume that for every $i\geq 0$, we have
\begin{align*}
l(s_i)\geq l(s_{i+1}).
\end{align*}

Recall that a curve $\g$ in a metric space $X$ is a \emph{geodesic} if for every pair of points $p$ and $q$ on $\g$ the distance between them is equal to the length of $\g$ between $p$ and $q$. A metric space is said to be geodesic if every pair of points $p$ and $q$ in $X$ can be connected by a geodesic.

It was shown in \cite{Mer:coHopf} that the slit carpet defined in that paper was a geodesic metric space. The same proof works for $M(\calS)$.

%

The following result is a generalization of Lemma 2.1 in \cite{Mer:coHopf}, where it is proved for $n=2$ and for a very symmetric and self similar situation.

\begin{lemma}\label{lemma:homeo}
Suppose $\calS$ is a sequence of slits in $I\subset\mathbb{R}^n$ satisfying $(\calS_1)-(\calS_3)$. Then the metric space $(M(\calS),d_{\calS})$ is homeomorphic to the $(n-1)$-dimensional Sierpi\'nski space $\S_{n-1}$ whose peripheral spheres are the slits together with the outer boundary of $M(\calS)$.
\end{lemma}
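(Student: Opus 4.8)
The plan is to verify that the metric space $(M(\calS),d_{\calS})$ satisfies the hypotheses of Cannon's Theorem \ref{thm:Cannon} after producing a suitable embedding into $\mathbb{S}^n$, and in fact I would argue in two stages: first establish a topological model, then identify it with $M(\calS)$. The key preliminary observation is that each completion $\overline{M}_i$ is homeomorphic to $\bar I$ with $i$ disjoint open round balls removed --- indeed, $\overline{M}_i$ is obtained from $\bar I$ by ``cutting along'' the codimension-one slit $s_j$ for $j\le i$, and each such cut replaces the two-sided slit by its boundary sphere; concretely one can exhibit a homeomorphism $h_i:\overline{M}_i\to \bar I\setminus\bigcup_{j\le i} B_j$ onto a Euclidean box with $i$ disjoint open balls removed, chosen compatibly with the bonding maps $\tau_{i,i+1}$ in the sense that shrinking $B_{i+1}$ to the slit $s_{i+1}$ corresponds to $\tau_{i,i+1}$. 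Passing to the inverse limit, the ball $B_j$ has diameter controlled by $\diam s_j$, so by $(\calS_2)$ these diameters tend to $0$, by $(\calS_1)$ the closures are pairwise disjoint (uniform relative separation is far more than needed), and by $(\calS_3)$ their union is dense. Thus $M(\calS)$ is homeomorphic to $\bar I\setminus\bigcup_j B_j$, which after adding the point at infinity (or just observing $\bar I\subset\mathbb{S}^n$) is exactly a set of the form $\mathbb{S}^n\setminus\bigcup_j D_j$ satisfying $(\S 1)$--$(\S 3)$, so Theorem \ref{thm:Cannon} identifies it with $\S_{n-1}$.

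I would organize the argument as follows. \emph{Step 1:} Describe, for a single slit, the homeomorphism between the path-metric completion of $I\setminus s$ and the box with one open ball removed; this is the local model and the essential geometric input. A slit $s$ is a round $(n-1)$-cube in a hyperplane; cutting $I$ along $s$ glues two copies of an $(n-1)$-disk along their boundary, producing an $(n-1)$-sphere boundary component, and one checks by an explicit coordinate map that a collar neighborhood of this new boundary sphere in $\overline{M}_1$ is homeomorphic to $\mathbb{S}^{n-1}\times[0,1)$, hence to a punctured-ball collar. \emph{Step 2:} Iterate and take compatible collars to get homeomorphisms $h_i:\overline{M}_i\to \bar I\setminus\bigcup_{j\le i}\overline{B}_j^{(i)}$ where the balls $\overline{B}_j^{(i)}$ decrease (as $i$ grows) down to round balls $\overline{B}_j$ with $\diam \overline{B}_j\asymp l(s_j)$, intertwining $\tau_{i,i+1}$ with the inclusion-type maps on the box side. \emph{Step 3:} Take the inverse limit: the $h_i$ assemble to a homeomorphism $h:M(\calS)\to \bar I\setminus\bigcup_j B_j$, because both sides are inverse limits of the compatible systems (on the box side the inverse limit of ``box minus finitely many balls, with the obvious bonding maps'' is just the box minus the union of the balls, since the bonding maps are eventually identities on any fixed compact piece). \emph{Step 4:} Verify $(\S 1)$--$(\S 3)$ for $\{D_j=B_j\}$ from $(\calS_1)$--$(\calS_3)$ and invoke Theorem \ref{thm:Cannon}; finally note that under $h$ the slits and outer boundary of $M(\calS)$ correspond to the peripheral spheres $\partial B_j$ and $\partial I$, giving the last assertion of the lemma.

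The main obstacle, I expect, is making Steps 1 and 2 genuinely rigorous rather than merely plausible: one must produce the homeomorphisms $h_i$ \emph{compatibly with the bonding maps $\tau_{i,j}$}, and the subtlety is that $\overline{M}_i$ carries the path metric of the already-slit domain $S_i$, so when the next slit $s_{i+1}$ is introduced it sits inside $S_i$, possibly near previously created peripheral spheres. One must check that $s_{i+1}$, being compactly contained in $I$ and uniformly relatively separated from the earlier slits by $(\calS_1)$, lies in a region of $\overline{M}_i$ that is still bi-Lipschitz to a Euclidean ball, so the single-slit local model of Step 1 applies verbatim there; the uniform separation constant $\sigma$ is exactly what prevents degeneration. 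A secondary technical point is verifying that the metric $d_\calS$ actually induces the inverse-limit topology (so that ``homeomorphic as topological spaces'' is the right conclusion) --- this follows because each $\tau_i:M(\calS)\to\overline M_i$ is $1$-Lipschitz and $d_\calS = \sup_i d_{\overline M_i}(p_i,q_i)$, so the $d_\calS$-topology refines the inverse-limit topology, while compactness of the inverse limit forces equality. I would present Steps 3 and 4 briskly, since once the compatible homeomorphisms of Step 2 are in hand the inverse-limit identification and the application of Cannon's theorem are routine.
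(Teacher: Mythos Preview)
Your overall strategy coincides with the paper's: embed $M(\calS)$ into $\bar I\subset\mathbb{R}^n$ by ``opening up'' each slit $s_i$ into the boundary of a small topological ball $D_i$, then apply Cannon's theorem to $\bar I\setminus\bigcup_i D_i$. The paper carries this out by building, for each $i$, a $(1+2^{-i})$-Lipschitz map $\lambda_i:\overline M_i\to\overline M_{i-1}$ that opens only the newest slit and is the identity outside its $\frac{\sigma}{2}l(s_i)$-neighborhood (uniform separation guarantees this neighborhood misses all earlier slits); the compositions $L_n=\lambda_1\circ\cdots\circ\lambda_n$ then have a uniform Lipschitz bound $\prod(1+2^{-i})$, so Arzel\`a--Ascoli produces the limiting embedding $L_\infty:M(\calS)\to I$, whose injectivity is checked directly.

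The substantive difference is in how you pass to the limit. You propose to set up an inverse system on the target side (boxes minus finitely many balls, with bonding maps that re-collapse the newest ball to a slit) and invoke the universal property of inverse limits; the paper instead keeps analytic control via Lipschitz constants and extracts a convergent subsequence. Both are valid; the paper's route has the minor bonus that $L_\infty$ is Lipschitz rather than merely continuous, while your route avoids the Arzel\`a--Ascoli step at the cost of having to verify that the target inverse system really has limit $\bar I\setminus\bigcup_j B_j$ (which is straightforward since the bonding maps are eventually the identity on any fixed compact subset, as you note). Your identification of the main obstacle --- arranging compatibility of the $h_i$ with the bonding maps using that $s_{i+1}$ lies in a Euclidean region of $\overline M_i$ by $(\calS_1)$ --- is exactly what the paper's ``identity outside $U_i$'' construction addresses.
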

\begin{proof}
The idea is to construct a Lipschitz embedding of $L: M(\calS) \to \mathbb{R}^n$ so that the conditions of Theorem \ref{thm:Cannon} are satisfied. Let $\s>0$ be the constant of uniform relative separation of $\calS$ in $I$ and for $i\geq 1$ denote   $\eps_i:={\s l(s_i)}/{2}.$
Furthermore, let $U_i$ be the $\eps_i$ neighborhood of the boundary component corresponding to $s_i$ in $\overline{M}_i$,
\begin{align*}
  U_i = \{ p\in\overline{M}_i : \dist(p,\tau_{0,i}^{-1}(s_i))< \eps_i\}.
\end{align*}
By the definition of $\sigma$ we have that $U_i\cap s_j =\emptyset$ for $j<i$, since $l(s_i)\leq l(s_j)$.

We construct the map $L$ by induction. Let $\la_1:\overline{M}_1\to \overline{M}_0 = I_0 \subset\mathbb{R}^n$, be defined so that it ``opens up" the vertical slit $s_1$ to a topological $(n-1)$-sphere which bounds a topological $n$-ball $D_i\subset I$, and is equal to the identity outside of the $\eps_1$ neighborhood of $s_1$. Moreover, $\la_1$ can be chosen to be $C_1$ Lipschitz for any constant $C_1>1.$ Indeed, one way of defining $\la_1$  is as follows. For $p=(x_1,\ldots,x_n)\in s_1$ let $p^{+},p^{-}\in\overline{M}_1$ be the two (``right and left") preimages of $p$ under $\tau_{0,1}$, and define
\begin{align}
  \la_1(p^{\pm}) = (x_1 \pm \eps \dist(p,\partial{s_1}), x_2,\ldots,x_n),
\end{align}
where $\partial s_1$ denotes the ``$(n-2)$-dimensional boundary of the slit", i.e. the boundary of $s_2$ in the hyperplane $\pi_1^{-1}(x_1)$. It is easy to see that $\la_1\mid_{s_1}$ is $(1+\eps)$-Lipschitz and can be extended to a Lipschitz map which agrees with $\tau_{0,1}$ (or identity) outside of $U_1$.

For $i\geq 1$ let $C_i:=(1+2^{-i})$. Then, because $\tau_{i-1,i}(U_i)$ does not intersect any of the boundary components of $\overline{M}_{i}$, there is a one-to-one $C_i$-Lipschitz map
$\la_i:\overline{M}_{i} \to \overline{M}_{i-1},$ such that
\begin{itemize}
 \item[\emph{(a).}] $\la_i$ agrees with $\tau_{i-,i+}$ on $\partial U_i$, and
 \item[\emph{(b).}] $\la_i(s_i)$ is a topological sphere such that
$\dist(\la_i(s_i), \la_i(\partial U_i))\geq \eps_i/2.$
\end{itemize}
In other words $\la_i$ ``opens up" the slit $s_i$ into a topological sphere which is bounded away from $\partial U_i$. The map $\la_i$ can be constructed like $\la_1$ above, but $\eps$ has to be small enough so that condition $(b)$ above is satisfied.

Next, for $n\geq 1$ let $L_n= \la_1\circ\ldots\circ \la_n: \overline{M}_n \to I \subset\mathbb{R}^n.$ Then $L_n$ is Lipschitz with the Lipschitz constant $C_1\ldots C_n\leq\prod_{i=1}^{\infty}(1+2^{-i})\leq e^2.$ By the Arzel\`a - Ascoli theorem the sequence of maps $L_n\circ{\tau_n}:M(\calS)\to I$  has a subsequence converging to a $C$-Lipschitz map $L_{\infty}: M(\calS) \to M_0 \subset\mathbb{R}^3$.

To see that $L_{\infty}$ is injective, note that by construction it is injective on the set of points not belonging to the slits of $M(\calS)$. Moreover, for a slit $J\subset \overline{M}_m$ and every $n>m$ we have that the maps $L_n$ and  $L_m\circ \tau^{-1}_{m,n}$ coincide on $\tau^{-1}_{m,n}(J)$. Therefore $L_{\infty}$ is an embedding of $M(\calS)$ into $I$, which maps every pericheral sphere $\tau_0^{-1}(J)$ onto a topological sphere in $I$ which bounds a topological ball $D_i\subset I$. Thus $M(\calS)$ is homeomorphic to the set $M=I\setminus \bigcup_{i=1}^{\infty} D_i$ under $L_{\infty}$. The conditions $\calS_1,\calS_2$ and $\calS_3$ imply that $M\subset I$ satisfies conditions $(\mathscr{S}1),(\mathscr{S}2)$ and $(\mathscr{S}1)$ of Cannon's theorem, and applying the latter shows that $L_{\infty}(M(\calS))$ is homeomorphic to the Sierpi\'nski space $\mathscr{S}_{n-1}$.
\end{proof}
%
%

If $M(\calS)$ is an $n$-dimensional slit Sierpi\'nski space then the metric space obtained by gluing two copies of $M(\calS)$ along the outer peripheral spheres by the identity map is called the \emph{double of} $M(\calS)$ and is denoted by $DM(\calS)$.
%
From Cannon's theorem it follows that $DM(\calS)$ is homeomorphic to $\S_n$ as well.

We say that a metric Sierpi\'nski space $X$ is \emph{porous} if for every ball $B=B(x,r)
\subset X$ there is a peripheral sphere $S$ contained in $B$ such that the
diameter of $S$ is comparable to the radius of the ball, i.e. $\diam(S)\geq
{r}/{A}$ where $A<\infty$ is independent of $x$ and $r$. Note that $M(\calS)$ is porous if and only if $(\calS_4)$ holds, i.e. if the slits $s_i\in \calS$ occur on all locations and scales.

The following result is analogous to a similar result for the slit Sierpi\'nski carpet, in \cite{Mer:coHopf}. The proof in \cite{Mer:coHopf} immediately generalizes to our case, so we omit it. The main difference is that even though the slits in our case are not placed in a self-similar fashion they are nevertheless uniformly relatively separated.

%

\begin{lemma}\label{lemma:Merenkov-ahlfors-regular}
There is a constant $C\geq 1$, independent of $i\geq 1$ such that for every Borel set $E\subset M(\calS)$ we have
  \begin{align}\label{ineq:Merenkov_measures_comparable}
    \frac{1}{C}\calH^n(\tau_i(E)) \leq \calH^n(E) \leq C \calH^{n}(\tau_i(E)).
  \end{align}
Furthermore, the metric spaces $M(\calS)$ and $DM(\calS)$ equipped with the Hausdorff $n$-measure $\calH^n$ are $(n-1)$-dimensional Sierpi\'nski spaces which are compact, path connected, and Ahlfors $n$-regular. Moreover, if the slits $s_i\in\calS$ appear on all locations and scales then $M(\calS)$ are $DM(\calS)$ are porous.
\end{lemma}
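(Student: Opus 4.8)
The plan is to follow the structure of the corresponding argument in \cite{Mer:coHopf}, adapting it to the present (possibly non-self-similar) setting, where the only geometric input is uniform relative separation $(\calS_1)$. The proof breaks into three parts: (i) the measure comparison \eqref{ineq:Merenkov_measures_comparable}; (ii) the topological and metric properties of $M(\calS)$ and $DM(\calS)$ together with Ahlfors $n$-regularity; (iii) porosity under $(\calS_4)$.

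First I would establish \eqref{ineq:Merenkov_measures_comparable}. The projection $\tau_i:M(\calS)\to\overline M_i$ is $1$-Lipschitz, so the inequality $\calH^n(\tau_i(E))\leq \calH^n(E)$ is immediate; the content is the reverse bound $\calH^n(E)\leq C\,\calH^n(\tau_i(E))$ with $C$ independent of $i$. The point is that $\tau_i$ is ``almost injective'': the only non-injectivity comes from the identifications on the boundary spheres produced by opening up the slits $s_{i+1},s_{i+2},\dots$. A given fiber $\tau_i^{-1}(p)$ has at most two points, and the set where $\tau_i$ fails to be injective, once unfolded, sits in controlled neighborhoods of these peripheral spheres. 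Concretely I would decompose $M(\calS)$ according to generations, use that for each $j>i$ the ``doubling'' along $s_j$ happens inside the $\eps_j$-neighborhood of $\tau_{0,j}^{-1}(s_j)$, where $\eps_j\asymp \sigma\, l(s_j)$, and that by uniform relative separation these neighborhoods are comparable to the boxes that the slit $s_j$ replaces. Summing the volume of the doubled part over $j>i$ against the volume of the corresponding ``hole'' in $\overline M_i$, and using $(\calS_1)$ to keep all ratios bounded by a constant depending only on $\sigma$ and $n$, yields $\calH^n(E)\leq C\calH^n(\tau_i(E))$ with $C=C(\sigma,n)$. This step is the main obstacle: making the bookkeeping of the overlapping unfolded neighborhoods precise, and checking that the constant does not degenerate as $i\to\infty$ or as $l(s_j)\to0$, is exactly where self-similarity was used in \cite{Mer:coHopf} and must here be replaced by the quantitative separation hypothesis. (As the excerpt notes, the cited proof ``immediately generalizes,'' so I would present this as an adaptation rather than reprove it from scratch.)

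Next, the topology: Lemma \ref{lemma:homeo} already gives that $M(\calS)$ is homeomorphic to $\S_{n-1}$ and, via Cannon's theorem applied to the double of the embedded model $I\setminus\bigcup D_i$, that $DM(\calS)\cong \S_n$ (here I would note that gluing two copies of $I\setminus\bigcup D_i$ along $\partial I$ produces, after a homeomorphism of the resulting space with a sphere $\mathbb S^n$ with the same family of removed balls, a configuration to which Theorem \ref{thm:Cannon} applies). Compactness of $M(\calS)$ follows since it is an inverse limit of compact Hausdorff spaces, and $DM(\calS)$ is a finite union of compact sets; path connectedness follows because each $\overline M_i$ is path connected (indeed geodesic) and $M(\calS)$ is geodesic as remarked just before Lemma \ref{lemma:homeo}, with $DM(\calS)$ path connected because the two halves meet along the outer sphere. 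For Ahlfors $n$-regularity of $M(\calS)$: using \eqref{ineq:Merenkov_measures_comparable} with the index $i$ chosen so that $l(s_i)\asymp r$, together with the fact that a metric ball $B(p,r)$ in $M(\calS)$ projects under $\tau_i$ into a set comparable to a Euclidean ball of radius $\asymp r$ in $\overline M_i\subset\mathbb R^n$ (and conversely contains such a set), transfers the Lebesgue-measure regularity of $\overline M_i$ to $\calH^n$ on $M(\calS)$; uniform relative separation guarantees the comparison constants are independent of $p$ and $r$. Regularity of $DM(\calS)$ then follows since it is locally bi-Lipschitz to $M(\calS)$ away from the outer sphere, and near the outer sphere a ball meets both copies in comparable amounts, so the estimates add.

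Finally, for porosity assume $(\calS_4)$. Given a ball $B(p,r)\subset M(\calS)$, apply $(\calS_4)$ at a suitable scale in $\overline M_i$ (for $i$ with $l(s_i)\asymp r$) to find a slit $s_j$ with $\diam s_j\gtrsim r$ contained in a Euclidean ball that corresponds to a metric ball inside $B(p,r)$; the peripheral sphere $\tau_0^{-1}(s_j)$ of $M(\calS)$ then lies in $B(p,r)$ and has diameter $\asymp \diam s_j\gtrsim r$, using that $\tau_0$ distorts diameters of peripheral spheres by a bounded factor (again by $(\calS_1)$, as in Lemma \ref{lemma:homeo}). This is exactly the definition of porosity; the converse implication (porous $\Rightarrow (\calS_4)$) was already recorded in the paragraph preceding the lemma, so nothing further is needed. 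The case of $DM(\calS)$ is identical since it contains isometric copies of $M(\calS)$.
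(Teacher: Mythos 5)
The paper offers no proof of this lemma, stating only that the argument of \cite{Mer:coHopf} ``immediately generalizes'' once self-similarity is replaced by uniform relative separation. Your plan follows that stated strategy faithfully, and the parts covering the topology (via Lemma~\ref{lemma:homeo} and Cannon's theorem), compactness, path-connectedness, Ahlfors regularity from the measure comparison plus Euclidean regularity of the $\overline M_i$, and porosity from $(\calS_4)$ are all in order.

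Where I would be more careful is the description of the proof of the reverse bound $\calH^n(E)\leq C\,\calH^n(\tau_i(E))$. You frame this as an ``almost-injectivity'' count and propose summing ``the volume of the doubled part'' against ``the volume of the corresponding hole,'' but that is not quite the right picture: the non-injectivity set of $\tau_i$ is exactly the union of the peripheral spheres, which is $\calH^n$-null, and opening a slit creates no positive $n$-volume, so there is nothing to sum in this way. The real content is metric. A ball $B(q,r)\subset\overline M_i$ with $q$ on (or near) a slit $s_j$ for some $j>i$ and $r\ll l(s_j)$ has $\tau_i$-preimage of diameter $\gtrsim l(s_j)\gg r$, because in $M(\calS)$ the two sides of the opened slit are joined only by a detour of length $\gtrsim l(s_j)$. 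Uniform relative separation is what keeps this distortion from compounding: each slit's metric distortion is localized to a collar neighborhood of definite relative size, and these collars are pairwise well-separated, so preimages of small balls under $\tau_i$ still have controlled $\calH^n$ and a covering argument yields \eqref{ineq:Merenkov_measures_comparable}. You do gesture at this localization (the $\eps_j$-neighborhoods) and you invoke the correct ball-projection estimate in your Ahlfors-regularity paragraph, so the needed ingredients are present in your sketch --- but I would replace the ``doubled volume vs.\ holes'' bookkeeping with the localized-distortion argument, which is the mechanism \cite{Mer:coHopf} actually uses.
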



\subsection{Standard (or diadic) non-self-similar slit Sierpi\'nski spaces} \label{section:diadic-slit-spaces} A particular class of slit Sierpi\'nski spaces which we will consider below may be defined as follows.

\begin{figure}[h]
\centerline{
\includegraphics[width=0.25\textwidth]{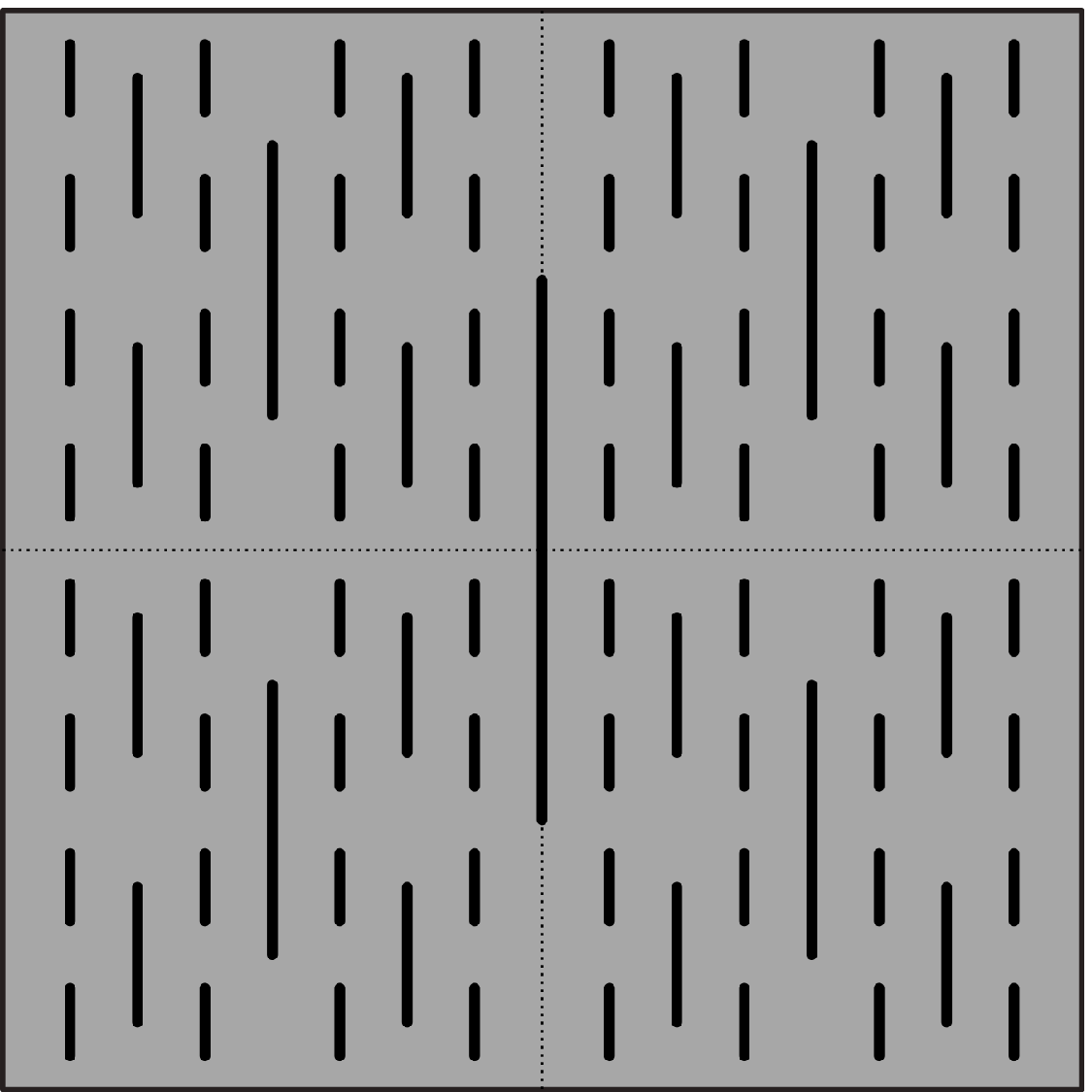} \qquad \qquad
\includegraphics[width=0.25\textwidth]{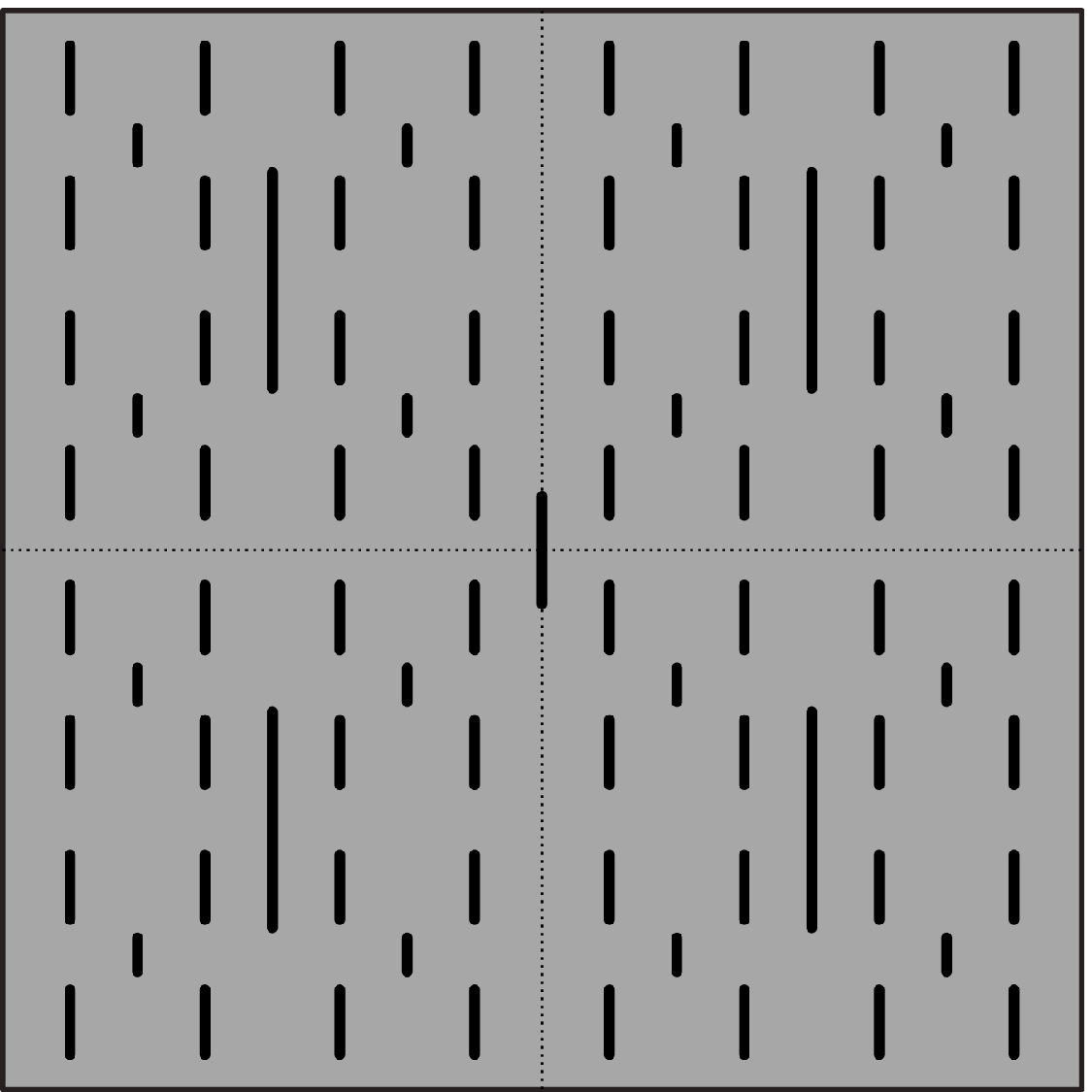}}
\caption{{Standard slit domains in $\mathbb{R}^2$. In the picture on the left relative lengths of the slits are constant multiples of the sidelength of the corresponding diadic square and the domain corresponds to $(1/2,1/2,1/2,1/2)$. In the right picture the domain corresponds to  $(1/10,2/5,1/8,1/2)$.}}\label{fig:diadic-slits}
\end{figure}


Let $\Delta_i$ be the collection of dyadic cubes of generation $i$ and $\Delta=\cup_{i=1}^{\infty}\Delta_i$ be the collection of all dyadic cubes in $[0,1]^n, n\geq 2$. For every $Q\in\Delta$ we will denote by $l(Q)$ the sidelength of $Q$. Given a sequence $\textbf{r}=\{r_i\}_{i=0}^{\infty}$, such that $0\leq r_i<1, \forall i\geq0$ we define a sequence of slits $\calS_{\mathbf{r}}=\{s(Q)\}_{Q\in\D}$ in $[0,1]^n$ corresponding to $\textbf{r}$ as follows. If $Q=Q_0=[0,1]^n$ we define $s(Q)$ as the vertical hypercube of sidelength
$l(s(Q_0))=r_0 = {r_0}/{2^0}$
with the same center as the center of $Q_0$. If $r_0=0$ we define $s(Q)$ to be the empty set.

In general, if $Q\in\D_i$ is an $i$-th generation diadic cube then $s(Q)$ is the slit of sidelength $l(s(Q))=r_i l(s(Q))={r_i}/{2^i}$,
%
such that the center of $s(Q)$ is the same as the center of the cube $Q$. Again, $s(Q)=\emptyset$ if $r_i=0.$

If $\textbf{r}$ is a sequence as above, we will call the space $M(\calS_r)$ a \emph{standard (or diadic) non-self-similar slit Sierpi\'nski space}. 
Note that if $r_i\to0$ then $M(\calS_{\textbf{r}})$ is not porous.


\section{Modulus and QS co-Hopfian spaces}\label{section:mod0-co-hopf}
Suppose $M(\calS)$ is a Sierpi\'nski $(n-1)$-space corresponding to a sequence of slits $\calS\subset I\subset \mathbb{R}^n$. We will denote by $\vartheta$ the ``projection" map from $M(\calS)$ to the first coordinate axis in $\mathbb{R}^n$, i.e.
\begin{align*}
  \vartheta : = \pi_1 \circ \tau_0 : M(\calS) \to \mathbb{R}.
\end{align*}

We say that a subset (e.g. a curve or a sphere) $E$ of $M(\calS)$ of $DM(\calS) $ is \emph{vertical} if $\vartheta(E)$ is a point in $\mathbb{R}$.
Now, if $X$ is a Sierpi\'nski $(n-1)$-space $M(\calS)$ of $DM(\calS) $ we denote by $\G_{v}(X)$ and $\G_{nv}(X)$ the families of vertical and non-vertical curves in $X$, respectively, i.e.
\begin{align*}
\G_{v}(X)&:=\{ \g \subset X : \vartheta (\g)= \{p\}\in\mathbb{R}\},\\
\G_{nv}(X)&:= \{ \g \subset X : \g\notin \G_{nv}(X) \}\\
&\,\,=\{ \g \subset X : \overline{\vartheta(\g)} = [a,b] \mbox{ for some } a<b \mbox{ in } \mathbb{R}\}.
\end{align*}
If the underlying space $X$ is clear from the context we will suppress it from the notation and will simply denote the families by $\G_{v}$ and $\G_{nv}$.

It turns out that the study of the families of vertical and non-vertical curves is essential in determining if a slit Sirpi\'nski space is quasisymmetrically co-Hopfian or not. This is manifested in the following result. 

\begin{theorem}\label{thm:mod0-co-hopf}
If $n>1$  and $M(\calS)$ is a slit Sierpi\'nski space of topological dimension $n-1$ such that
$\m_n(\G_{nv}(M(\calS))=0$ then the double of $M(\calS)$ is quasisymmetrically co-Hopfian.
\end{theorem}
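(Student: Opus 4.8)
The plan is to show that a quasisymmetric embedding $f \colon DM(\calS) \to DM(\calS)$ must be surjective, exploiting the hypothesis $\m_n(\G_{nv}) = 0$ to control the behavior of $f$ on the ``horizontal'' structure coming from the projection $\vartheta$. First I would record the basic setup: by Lemma \ref{lemma:Merenkov-ahlfors-regular}, $DM(\calS)$ is compact, path connected and Ahlfors $n$-regular with $n > 1$, so Tyson's Theorem \ref{thm:Tyson} applies and $f$ quasi-preserves $\m_n$ of every curve family. Since $\m_n(\G_{nv}) = 0$ and $\G_v \cup \G_{nv}$ exhausts all nonconstant curves, the complement $\G_v$ carries all the modulus; in particular, for $\m_n$-a.e. curve $\g$ in $DM(\calS)$, both $\g$ and $f(\g)$ are vertical. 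The key structural consequence to extract is that $f$ is \emph{fiber-preserving}: it maps $\vartheta$-fibers into $\vartheta$-fibers, up to a set of fibers of measure zero, and hence (by continuity and connectedness of $DM(\calS)$) descends to a well-defined continuous map $\bar f$ of the base interval $[a,b] = \vartheta(DM(\calS))$ to itself. This is because a vertical curve has image under $f$ that is again vertical for a.e. curve, and vertical curves through a given fiber are plentiful enough (the fibers are spheres $\mathbb{S}^{n-1}$ of positive diameter, path connected) to pin down that the whole fiber maps into a single fiber.

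Next I would analyze $\bar f \colon [a,b] \to [a,b]$. The map $\bar f$ is continuous; I want to show it is surjective, equivalently that $f$ hits every fiber. The mechanism is a modulus/measure argument: the pushforward of $\calH^n$ under $\vartheta$ is (comparable to) Lebesgue measure on $[a,b]$ by Lemma \ref{lemma:Merenkov-ahlfors-regular} together with the product-like structure of $M(\calS)$ near generic fibers, and $f$ being a quasisymmetric embedding between Ahlfors $n$-regular spaces of equal dimension is quasi-measure-preserving in the sense that $\calH^n(f(E)) \asymp \calH^n(E)$ with uniform constants (this follows from Tyson plus the Loewner-type or direct ball-distortion estimates; more simply, a QS embedding of an Ahlfors $Q$-regular space into an Ahlfors $Q$-regular space with the same $Q$ cannot decrease $\calH^Q$-measure by more than a fixed factor, since it distorts ball ratios boundedly). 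Consequently $\bar f$ cannot collapse a set of positive length to a null set, so $\bar f([a,b])$ has full measure in $[a,b]$; being closed (continuous image of a compact), it equals $[a,b]$, so $\bar f$ is onto. A separate small argument handles the endpoints: the two fibers $\vartheta^{-1}(a)$ and $\vartheta^{-1}(b)$ are exactly the two copies of the outer peripheral sphere glued together — actually $a$ and $b$ correspond to the two ``caps'' — and one checks $f$ maps the outer sphere structure appropriately.

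The remaining step is to upgrade surjectivity of $\bar f$ to surjectivity of $f$ itself. Over a generic fiber $F_t = \vartheta^{-1}(t)$, which is a topological sphere $\mathbb{S}^{n-1}$, the restriction $f|_{F_s} \colon F_s \to F_{\bar f(s)}$ is a quasisymmetric embedding of a sphere-like space, but more importantly it is a continuous injection of $\mathbb{S}^{n-1}$ into a space homeomorphic to $\mathbb{S}^{n-1}$; by invariance of domain (Borsuk/Brouwer), such a map is automatically surjective onto $F_{\bar f(s)}$. Running this over all $s$ and using that $\bar f$ is onto $[a,b]$, we conclude $f(DM(\calS)) \supseteq \bigcup_t F_t = DM(\calS)$, i.e. $f$ is surjective, which is precisely QS co-Hopficity. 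I expect the main obstacle to be the first paragraph: rigorously deducing ``fiber-preserving'' from ``$\m_n(\G_{nv}) = 0$ and Tyson''. One must ensure that enough vertical curves survive — that the exceptional family of modulus zero cannot conspire to leave every fiber's worth of good vertical curves degenerate — and that the fibers themselves are regular enough (not just a.e.\ but genuinely, via the sphere structure from Cannon's theorem and Lemma \ref{lemma:homeo}) for the a.e.\ statement to force a genuine topological fibered map. Handling the measure-zero set of ``bad'' fibers and the endpoints carefully, and verifying that $f$ does not map an open piece of $DM(\calS)$ outside the union of generic fibers, is where the technical care concentrates.
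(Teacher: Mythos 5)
Your overall architecture — show $f$ is fiber-preserving, deduce an induced map $\bar f$ on the base interval $[a_1,b_1]$, show $\bar f$ is surjective, then upgrade to surjectivity of $f$ — mirrors the structure of the paper's proof (Lemmas \ref{lemma:vert-to-vert}, \ref{lemma:vertical-spheres}, \ref{lemma:vert-spheres-cohopf}). However there are two genuine gaps.

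First, you invoke Tyson's Theorem \ref{thm:Tyson} after noting that $DM(\calS)$ is Ahlfors $n$-regular. But Tyson requires both the source and the \emph{target} to be Ahlfors $Q$-regular, and the target here is $f(DM(\calS))$, an a priori arbitrary compact subset of $DM(\calS)$. This must be established separately; in the paper it is the content of Lemma \ref{lemma:lower-Ahlfors-regular} and Corollary \ref{lemma:image-Ahlfors-regular}, which use a family of vertical curves of definite diameter and modulus inside every ball to force lower Ahlfors regularity of the image. Later, when you assert that a QS embedding between Ahlfors $Q$-regular spaces is quasi-measure-preserving, this again presupposes regularity of the image.

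Second, and more seriously, your argument for surjectivity of $\bar f:[a,b]\to[a,b]$ does not work. You argue that $\bar f$ cannot collapse a positive-length set to a null set, hence $\bar f([a,b])$ has full measure. But a continuous $\bar f$ that is not surjective would have image a proper closed subinterval $[c,d]\subsetneq [a,b]$, and this image can have positive (even nearly full) measure without any ``collapse to a null set''; quasi-measure-preservation of $f$ gives $\calH^n(f(DM(\calS)))\asymp\calH^n(DM(\calS))$ only up to a constant, which is perfectly consistent with $[c,d]$ being a proper subinterval. The paper instead uses a topological argument at the level of the ``sides'' $\L=\tau_0^{-1}(L)$ and $\mathcal{R}=\tau_0^{-1}(R)$ of the double (Lemma \ref{lemma:vert-spheres-cohopf}): since $\L=\bigcap_i\overline{L_i}$ for a nested sequence of components cut out by vertical spheres converging to $a_1$, and each $f(\s_i)$ is again a vertical sphere, one gets $f(\L)=\bigcap_i\overline{f(L_i)}$. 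If $f(\L)$ were not $\L$ or $\mathcal{R}$, it would be the closure of a component of the complement of some interior vertical sphere, hence homeomorphic to a Sierpi\'nski space $\S_{n-1}$; but $f(\L)$ is a homeomorphic image of an $(n-1)$-ball — contradiction. This topological dichotomy is what pins the endpoints of $\bar f$ to the endpoints of $[a_1,b_1]$, and continuity then gives $\bar f$ onto. You mention the endpoint/caps argument only as a ``separate small argument'', but in fact it is the core of the surjectivity step and cannot be replaced by the measure argument.

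Your fiber-preservation sketch (``vertical curves through a fiber are plentiful enough'') is in the right spirit but hides the key step: in Lemma \ref{lemma:vertical-spheres} one works coordinate direction by coordinate direction, shows that for each $k\geq 2$ almost every line in the $X_k$-direction maps to a vertical curve, uses continuity of $\vartheta\circ f$ plus the fact that the exceptional set projects to a null set in $X_k^{\perp}$ to upgrade ``a.e.\ vertical line in direction $k$'' to ``every vertical line in direction $k$'', and then connects any two points of a vertical sphere by a concatenation of coordinate-parallel segments. The invariance-of-domain observation at the end — a continuous injection of $\mathbb{S}^{n-1}$ into $\mathbb{S}^{n-1}$ is onto — is correct and is implicitly what the paper uses to promote ``$f(\s_y)$ is a vertical sphere inside a generic fiber'' to ``$f(\s_y)$ is the entire generic fiber''; just be careful that this is only applied when both the source level $y$ and the target level $\bar f(y)$ are generic.
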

We will see below, that even though non-vertical families can have a vanishing modulus, this is not the case for vertical families, i.e. for \emph{every} slit Sierpi\'nski space $M(\calS)$ we have $\m_n\G_{v}(M(\calS))>0.$ Thus Theorem \ref{thm:mod0-co-hopf} essentially says that if the ``vertical and nonvertical directions" in $M(\calS)$ are very different then $DM(\calS)$ is quasisymmetrically co-Hopfian. 


\subsection{Modulus and quasisymmetric co-Hopficity}\label{subsection:mod0-co-hopf}

In order to prove Theorem \ref{thm:mod0-co-hopf} we first show that if $f$ is a quasisymmetric map of a slit Sierpi\'nski space $M(\calS)$ (or its double) of topological dimension $n-1$ into itself then $f(M(\calS))$ is Ahlfors $n$-regular. This will allow us to use Theorem \ref{thm:Tyson}. In particular, similarly to \cite{Mer:coHopf}, if $\m_n(\G_{\calS})=0$ then $f$ maps almost every vertical curve in $M(\calS)$ to a vertical curve.  We will then show that every vertical $(n-1)$-sphere in $DM(\calS)$ is mapped to a vertical sphere, which will then imply that $f$ induces a mapping of the interval $\vartheta(DM(\calS))=[a_1,b_1]\subset\mathbb{R}$ into itself. Some more work then will show that this induced map is in fact onto $[a_1,b_1]$, implying that $f(DM(\calS))=DM(\calS)$. 


\subsection{Ahlfors regularity of the image}


In order to show that $f(M(\calS))$ is Ahlfors regular we will use the following general result.

\begin{lemma}\label{lemma:lower-Ahlfors-regular}
Let $Q>1$ and $(X,\mu)$ be a bounded Ahlfors $Q$-regular space. Suppose that there is a
constant $\sigma>0$ such that for every ball $B(x,r)\subset X$ there is a
family of curves $\G_{x,r}$ in $B(x,r)$ of diameter at least $r/C$ and such
that $\m_Q(\G_{x,r})\geq \sigma$. If $f$ is a quasisymmetric mapping of $X$
then we have
\begin{align*}
  \calH^Q(B(y,\d)) \geq A \d^Q,
\end{align*}
for every ball $B(y,\d)\subset f(X)$, where $A$ is independent of $y$ and $\d$.
\end{lemma}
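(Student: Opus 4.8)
The plan is to exploit quasisymmetric invariance of $Q$-modulus (Tyson's theorem) together with the basic upper bound for modulus in terms of measure. Since $f$ is a quasisymmetric map of $X$, its image $f(X)$ is a bounded, connected, locally compact space; the key point is that $f(X)$ need not a priori be Ahlfors $Q$-regular, but we do know $X$ is, and Tyson's theorem (Theorem \ref{thm:Tyson}) applies to $f: X\to f(X)$ \emph{provided} $f(X)$ is already known to be Ahlfors $Q$-regular — so actually one should be careful. The cleaner route: we only need the \emph{lower} mass bound $\calH^Q(B(y,\delta))\gtrsim \delta^Q$, and for this we do not need the full strength of Tyson; instead we need a one-sided modulus comparison $\m_Q(f(\G)) \leq K\,\m_Q(\G)$, which holds for quasisymmetric embeddings into Ahlfors-regular-from-below spaces by a direct argument, or we simply first establish that $f(X)$ carries a doubling measure and run Tyson's argument. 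I will take the approach of fixing a ball $B(y,\delta)\subset f(X)$, pulling it back, and comparing moduli.

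First I would fix $y = f(x)$ for some $x\in X$ and $\delta>0$ with $B(y,\delta)\subset f(X)$. By quasisymmetry of $f^{-1}$ (quasisymmetry is preserved under inverse, with controlled distortion function), there is a radius $r>0$, comparable in a controlled way to the "size'' of $f^{-1}(B(y,\delta))$, such that $B(x, r/\lambda) \subset f^{-1}(B(y,\delta))$ for a constant $\lambda$ depending only on $\eta$; here one uses the standard fact that quasisymmetric maps distort the shape of balls in a bounded way (roundness is preserved). Then apply the hypothesis to the ball $B(x, r/\lambda)$: there is a curve family $\G_{x,r/\lambda}$ contained in $B(x,r/\lambda)$, hence contained in $f^{-1}(B(y,\delta))$, with $\diam(\g) \geq r/(\lambda C)$ for every $\g$ in the family, and with $\m_Q(\G_{x,r/\lambda}) \geq \sigma$.

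Next, push this family forward: $f(\G_{x,r/\lambda})$ is a curve family contained in $B(y,\delta)$. Each curve $f(\g)$ has diameter bounded below: again by quasisymmetry, since $\diam(\g)$ is comparable to $\diam B(x,r/\lambda) = 2r/\lambda$, the image $f(\g)$ has diameter comparable to $\diam B(y,\delta) = 2\delta$, say $\diam(f(\g)) \geq \delta/C'$ with $C'$ depending only on $\eta$ and $C$. So $f(\G_{x,r/\lambda})$ is a family of curves in the Borel set $B(y,\delta) \subset f(X)$, each of length at least $\diam(f(\g)) \geq \delta/C'$, so by \eqref{ineq:modulus-basic-estimate},
\begin{align*}
\m_Q\big(f(\G_{x,r/\lambda})\big) \leq \calH^Q(B(y,\delta)) \cdot (\delta/C')^{-Q} = (C')^Q\, \delta^{-Q}\, \calH^Q(B(y,\delta)).
\end{align*}
On the other hand, quasisymmetric quasi-invariance of $Q$-modulus gives $\m_Q(f(\G_{x,r/\lambda})) \geq K^{-1} \m_Q(\G_{x,r/\lambda}) \geq K^{-1}\sigma$. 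Combining the two inequalities yields $\calH^Q(B(y,\delta)) \geq (C')^{-Q} K^{-1}\sigma\, \delta^{Q} =: A\delta^Q$, with $A$ independent of $y$ and $\delta$, as desired.

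The main obstacle is the legitimacy of invoking the $Q$-modulus comparison for $f: X\to f(X)$: Tyson's theorem as stated requires \emph{both} source and target to be Ahlfors $Q$-regular, yet Ahlfors regularity of $f(X)$ is part of what we are trying to establish. The resolution — which I expect the authors to carry out — is that we only need the inequality $\m_Q(f(\G)) \geq K^{-1}\m_Q(\G)$ in one direction, and this one-sided bound can be obtained without a priori regularity of $f(X)$, e.g. via the Lipschitz-type/modulus-distortion estimates for quasisymmetric maps on Loewner-type or merely Ahlfors-regular sources (using that any admissible $\rho$ on $f(\G)$ pulls back, after rescaling by the quasisymmetric distortion of lengths, to an admissible function on $\G$, with the $L^Q$-norm controlled because $\calH^Q$ on $X$ is upper-regular and $f$ distorts annuli in a bounded way). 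Alternatively, one first shows $f(X)$ is doubling (a soft consequence of quasisymmetry and $X$ doubling), equips it with a doubling measure, and then the full Tyson machinery applies. Either way, the analytic heart is this one-sided modulus inequality; the geometric manipulations with balls and diameters are routine quasisymmetry estimates depending only on $\eta$.
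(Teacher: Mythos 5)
Your argument follows essentially the same route as the paper: use quasisymmetry to locate a ball $B(x,r)\subset X$ with $f(B(x,r))\subset B(y,\delta)\subset f(B(x,\eta(1)r))$, invoke the hypothesized curve family in that ball, use the quasisymmetric diameter-distortion estimate (Prop.\ 10.8 in \cite{Hein:book}) to get a lower bound $\diam f(\g)\gtrsim\delta$, combine the basic modulus bound \eqref{ineq:modulus-basic-estimate} with Tyson's comparison $\m_Q f(\G_{x,r})\geq K^{-1}\m_Q\G_{x,r}\geq K^{-1}\sigma$, and conclude. The circularity you flag --- Tyson's theorem as stated in the paper requires $f(X)$ to be Ahlfors $Q$-regular, which is part of what is being proved --- is a genuine subtlety that the paper's proof itself does not address; the intended resolution is indeed along the lines you sketch (upper regularity of $f(X)\subset X$ is immediate, and the one-sided inequality $\m_Q\G\leq K\,\m_Q f(\G)$ can be extracted from the QS/doubling structure without first knowing lower regularity of the image), so your caution is well placed even though the paper glosses over it.
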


\begin{proof}
  Let $B'=B(y,\d)$ be a ball in $f(X)$. Then by quasisymmetry there is a ball
  $B=B(x,r)\subset X$ such that
$f(B(x,r))\subseteq B' \subseteq f(B(x,\eta(1)r)).$  In particular,
$\diam(f^{-1}(B))\leq \diam B(x,\eta(1)r) = 2\eta(1)r.$
Hence for every  curve $\g\in\G_{x,r}$ in $B$ we have
$$\diam \g\geq C^{-1}{r}\geq ({2C\eta(1)})^{-1}{\diam f^{-1}(B')}.$$
By Proposition $10.8$ in \cite{Hein:book} we have that that for every
$\g\in\G_{x,r}$ the following inequality holds
$$\frac{\diam f(\g)}{\diam{B'}} \geq \frac{1}{2\eta\left(\frac{\diam{\g}}{\diam(f^{-1}(B'))} \right)}\geq \frac{1}{2\eta((2C\eta(1))^{-1})}.$$ 
Thus, for each $\g\in\G_{x,r}$ we have 
$\mathrm{length}(\g)\geq\diam f(\g)\geq C_1 \d,$
where $C_1$ depends only on $\eta$ and $C$. Therefore, applying inequality (\ref{ineq:modulus-basic-estimate}) to $f(\G_{x,r})$ with $\mu=\mathcal{H}^Q \lfloor_{A}$ we have
\begin{align*}
\m_Q (f(\G_{x,r})) \leq \frac{\calH^Q(f(B(x,r)))}{(C_1\d)^Q} \leq \frac{1}{C_1^Q} \frac{\calH^Q(B(y,\d))}{\d^Q}.
\end{align*}
Finally, using inequality (\ref{ineq:geom-QC}) of Tyson's theorem we obtain
\begin{align*}
\calH^Q(B(y,\d))\geq C_1^Q K\m_Q(\G_{x,r}) \d^Q \geq A \d^q,
\end{align*}
where $A=C_1^Q K \sigma$.
\end{proof}

\begin{corollary}\label{lemma:image-Ahlfors-regular}
Suppose $\calS=\{s_i\}\subset I$ is a sequence of slits in $I\subset\mathbb{R}^n$  such that the conditions $(\calS_1)-(\calS_3)$ are satisfied. If
$f$ is a quasisymmetric embedding of $M(\calS)$ or $DM(\calS)$ into itself then the image $f(M(\calS))$ or $f(DM(\calS))$ is Ahlfors $n$-regular.
\end{corollary}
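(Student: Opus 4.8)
The plan is to deduce Corollary~\ref{lemma:image-Ahlfors-regular} from Lemma~\ref{lemma:lower-Ahlfors-regular} together with the known Ahlfors $n$-regularity of $M(\calS)$ and $DM(\calS)$ from Lemma~\ref{lemma:Merenkov-ahlfors-regular}. Since $f$ is quasisymmetric and $M(\calS)$ (resp.\ $DM(\calS)$) is compact, Ahlfors $n$-regular, connected, and path connected, one half of the conclusion is automatic: the \emph{upper} bound $\calH^n(B(y,\delta)) \leq A'\delta^n$ for balls in the image follows from the standard fact that quasisymmetric images of doubling, connected spaces are doubling, hence any Ahlfors regular metric on the image (which exists because $f(M(\calS))$ is a compact metric space) has an upper mass bound; more directly, $f$ being a homeomorphism onto its image and quasisymmetry controlling eccentricities, $\calH^n$ on $f(M(\calS))$ inherits the upper regularity bound via the pushforward of $\calH^n$ on $M(\calS)$ and comparison of this measure with $\calH^n$ on the image. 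So the substance is the \emph{lower} bound, and that is exactly the output of Lemma~\ref{lemma:lower-Ahlfors-regular}, applied with $Q = n$ and $X = M(\calS)$ or $DM(\calS)$.

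Thus the main task is to verify the hypothesis of Lemma~\ref{lemma:lower-Ahlfors-regular}: there is $\sigma > 0$ so that every ball $B(x,r)$ in $M(\calS)$ (resp.\ $DM(\calS)$) contains a curve family $\G_{x,r}$ of diameter $\gtrsim r$ with $\m_n(\G_{x,r}) \geq \sigma$. First I would treat $M(\calS)$. The natural candidate for $\G_{x,r}$ is a family of \emph{vertical} curves: fix a slit $s_i \subset I$ with $s_i$ well inside $\vartheta^{-1}(\vartheta(B(x,r)))$ and of sidelength comparable to $r$ — here one uses that, since the slits are dense (condition $(\calS_3)$) and uniformly relatively separated (condition $(\calS_1)$), near any point of $I$ at scale $r$ there is a slit whose presence forces a definite-modulus family of vertical curves to ``wrap around'' it. Concretely, the two copies of $s_i$ in $\overline{M}_i$ have a $\varepsilon_i$-neighborhood (with $\varepsilon_i \asymp l(s_i) \asymp r$, using uniform relative separation) inside which lives a family of vertical curves, each of length $\gtrsim l(s_i) \asymp r$; by the basic lower estimate for moduli of such ``thick'' vertical tube families in an Ahlfors $n$-regular space (a Fuglede/Loewner-type lower bound, which is elementary here because the family fills a set of $\calH^n$-measure $\asymp r^n$ with curves of length $\asymp r$ crossing it), we get $\m_n(\G_{x,r}) \geq \sigma$ with $\sigma$ depending only on $n$ and the relative separation constant. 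I would invoke Lemma~\ref{lemma:curves_in_porous_carpets} (or its proof) for this lower modulus bound rather than reproving it. For $DM(\calS)$ the same family works, since $DM(\calS)$ contains an isometric copy of $M(\calS)$ and modulus is monotone, and near the outer boundary one uses the doubled copy symmetrically.

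With the hypothesis verified, Lemma~\ref{lemma:lower-Ahlfors-regular} gives $\calH^n(B(y,\delta)) \geq A\delta^n$ for all balls $B(y,\delta) \subset f(M(\calS))$ (resp.\ $\subset f(DM(\calS))$), with $A$ depending only on $\eta$, $n$, and the separation constant $\sigma$. Combined with the upper bound this is exactly Ahlfors $n$-regularity of the image. The step I expect to be the main obstacle is the careful verification that the vertical-curve family $\G_{x,r}$ sits \emph{inside} the ball $B(x,r)$ (not merely inside a controlled neighborhood) and simultaneously has diameter bounded below by $r/C$ — this requires quantitatively choosing the slit $s_i$ with $l(s_i) \asymp r$ positioned so that the whole $\varepsilon_i$-neighborhood of its lift stays within $B(x,r)$, which is where conditions $(\calS_1)$ and $(\calS_3)$ must be used with care, and where one must be mindful that the metric $d_\calS$ is an inner (path) metric so that balls are genuinely ``round'' in the relevant sense. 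Once the geometry of this one family is pinned down, everything else is a direct appeal to the cited lemmas.
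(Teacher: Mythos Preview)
Your overall architecture matches the paper: upper regularity is inherited from the ambient space, and lower regularity comes from Lemma~\ref{lemma:lower-Ahlfors-regular}. The problem is in how you propose to verify the hypothesis of Lemma~\ref{lemma:lower-Ahlfors-regular}.

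You build $\G_{x,r}$ by first locating a slit $s_i$ with $l(s_i)\asymp r$ near $x$ and then taking vertical curves in an $\eps_i$-neighborhood of its lift. But the existence of a slit of sidelength comparable to $r$ inside a given ball is precisely condition $(\calS_4)$ (slits at all locations and scales), which is \emph{not} assumed here; only $(\calS_1)$--$(\calS_3)$ are. Density $(\calS_3)$ gives you slits near $x$, but they may all have sidelength $\ll r$, and uniform relative separation $(\calS_1)$ does nothing to prevent this. So the construction as written has a genuine gap. Separately, Lemma~\ref{lemma:curves_in_porous_carpets} is a vanishing result ($\m_p\G_{nv}=0$) for non-vertical curves, not a lower modulus bound for vertical families, so invoking it for the lower bound is a miscitation.

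The paper's route is both simpler and avoids the gap: it does not use slits at all to produce $\G_{x,r}$. Given $B(p,r)\subset M(\calS)$, one uses the measure comparison of Lemma~\ref{lemma:Merenkov-ahlfors-regular} (together with uniform relative separation) to find a Euclidean ball $B'=B(q,cr)\subset \tau_0(B)\subset I$, takes $G_{p,r}$ to be the family of \emph{ordinary vertical segments} in $B'$ of diameter $\geq cr/2$, and then sets $\G_{p,r}=\tau_0^{-1}(G_{p,r})$. The Euclidean modulus $\m_n(G_{p,r})$ is bounded below by a standard product-family estimate (no slits needed), and since $\tau_0$ is $1$-Lipschitz with $\calH^n$-comparable push-forward (inequalities~(\ref{ineq:Merenkov_measures_comparable})), Lemma~\ref{lemma:modulus-under-projections} transfers this to $\m_n(\G_{p,r})\geq 1/C$. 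That is the whole verification. Your slit-based family would work under $(\calS_4)$, but it is unnecessary and, under the stated hypotheses, unavailable.
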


\begin{proof}
The fact that $f(M(\calS))$ is upper $n$-regular follows from the Ahlfors regularity of $M(\calS)$, cf. Lemma \ref{lemma:Merenkov-ahlfors-regular}. To show that $f(M(\calS))$  is lower Ahlfors $n$-regular we need to check that the condition of Lemma \ref{lemma:lower-Ahlfors-regular} is satisfied. For this choose a ball $B=B(p,r)\subset M(\calS)$ and let $B'=B(q,cr)\subset I$ be the ball contained in $\tau_0(B)$ given by Lemma 3.2. Next, let  $\G_{p,r}=\tau_0^{-1}(G_{p,r})$ where $G_{p,r}$ is the family of vertical curves in $B'\subset I$ of diameter at least $cr/2$. Standard modulus estimates imply that $\m_n(G_{p,r})\geq 1$ for every $p\in M(\calS)$ and $r\in(0,\diam(M(\calS)))$. Therefore since $\tau_0$ is Lipschitz, using inequalities (\ref{ineq:Merenkov_measures_comparable}) we obtain that $\m_n(\G_{p,r})\geq 1/C$ for some $C<\infty$.
\end{proof}

\subsection{Most vertical curves are mapped to vertical curves in $DM(\calS)$}

\begin{lemma}\label{lemma:vert-to-vert}
Suppose $\calS=\{s_i\}\subset I$ is a sequence of slits in $I\subset\mathbb{R}^n$  such that
$\m_n(\G_{nv})=0$. If $f$ is a quasisymmetric embedding of $DM(\calS)$ into itself, then
$$\m_n(\{\g\in \G_v : f(\g) \in \G_{nv} \}) = 0.$$
In other words, $f$ maps $\m_n$-almost every closed vertical curve to a closed vertical curve.
\end{lemma}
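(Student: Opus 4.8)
The plan is to argue by contradiction using Tyson's theorem together with the hypothesis $\m_n(\G_{nv})=0$. Suppose the family $\G_0 := \{\g\in\G_v : f(\g)\in\G_{nv}\}$ has positive $n$-modulus. The first step is to get an honest comparison of moduli for $f$: since $f$ is quasisymmetric and $DM(\calS)$ is Ahlfors $n$-regular (Lemma \ref{lemma:Merenkov-ahlfors-regular}), while the image $f(DM(\calS))$ is Ahlfors $n$-regular by Corollary \ref{lemma:image-Ahlfors-regular}, both the source and target are locally compact, connected, Ahlfors $n$-regular spaces with $n>1$. Hence Theorem \ref{thm:Tyson} applies and gives a constant $K$ with $K^{-1}\m_n(f(\G))\leq \m_n(\G)\leq K\,\m_n(f(\G))$ for every curve family $\G$ in $DM(\calS)$. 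Applying this to $\G_0$ we get $\m_n(f(\G_0))\geq K^{-1}\m_n(\G_0)>0$.

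Now $f(\G_0)$ is, by definition of $\G_0$, a subfamily of $\G_{nv}(f(DM(\calS)))$ — that is, a family of non-vertical curves. The subtlety here is that $\G_{nv}$ in the hypothesis $\m_n(\G_{nv})=0$ refers to non-vertical curves inside $DM(\calS)$, whereas $f(\G_0)$ lives in the image $f(DM(\calS))\subseteq DM(\calS)$; but since $f(DM(\calS))\subset DM(\calS)$ and every curve in $f(\G_0)$ is a non-vertical curve of $DM(\calS)$, monotonicity of modulus (Lemma \ref{lemma:modulus-properties}(1)) gives $\m_n(f(\G_0))\leq \m_n(\G_{nv}(DM(\calS)))$. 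It therefore remains to check that $\m_n(\G_{nv}(DM(\calS)))=0$, given that we are told only $\m_n(\G_{nv}(M(\calS)))=0$ for the single copy. This follows by subadditivity: $DM(\calS)$ is the union of two copies $M^+,M^-$ of $M(\calS)$ glued along the outer boundary, and any non-vertical curve $\g$ in $DM(\calS)$ decomposes into countably many subcurves each lying in $M^+$ or $M^-$ (and since $\vartheta$ is continuous, at least one of these pieces is itself non-vertical, hence $\g$ contains a non-vertical subcurve of one copy). By the overflowing property together with subadditivity, $\m_n(\G_{nv}(DM(\calS)))\leq \m_n(\G_{nv}(M^+))+\m_n(\G_{nv}(M^-))=0$. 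Combining the displayed inequalities yields $0<\m_n(f(\G_0))\leq 0$, a contradiction.

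I expect the main obstacle to be the bookkeeping in the last step — carefully justifying that the non-vertical curve family in the double has zero modulus from the corresponding fact for one copy, and correctly matching up the curve families in $f(DM(\calS))$ versus $DM(\calS)$ so that Tyson's theorem and monotonicity can legitimately be chained. The measure-theoretic subtlety is that "$f(\g)\in\G_{nv}$" must be interpreted correctly for $\m_n$-almost every $\g$, using that $\m_n$-almost every curve is locally rectifiable (so that $f$ of it is again a genuine curve on which modulus behaves well, cf.\ the discussion after Lemma \ref{lemma:modulus-properties}); but these are exactly the technical points the paper has already set up to handle. The rest is a direct application of Theorem \ref{thm:Tyson}, Corollary \ref{lemma:image-Ahlfors-regular}, and the basic modulus properties.
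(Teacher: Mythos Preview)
Your proposal is correct and follows essentially the same approach as the paper: use Corollary \ref{lemma:image-Ahlfors-regular} to verify the hypotheses of Tyson's theorem, observe that $f(\G_0)\subset\G_{nv}$, and combine monotonicity with $\m_n(\G_{nv})=0$ to conclude. The paper phrases this as a direct argument rather than by contradiction, and simply takes $\m_n(\G_{nv})=0$ for $DM(\calS)$ as the working hypothesis without separately deriving it from the single copy; your extra paragraph on passing from $M(\calS)$ to $DM(\calS)$ via overflowing and subadditivity is a bit more careful than the paper, but otherwise the arguments are the same.
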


\begin{proof}
Let $\G_{v\to nv}$ be the family of closed vertical curves (circles) in $DM(\calS)$ which is mapped by $f$ to non-vertical ones. Then $f(\G_{v\to nv})\subset \G$, where $\G$ is the family of all closed non-vertical curves in $DM(\calS)$. By monotonicity of the modulus we have $\m_n(f(\G_{v\to nv}))\leq \m_n(\G)=0.$
Now, by Corollary  \ref{lemma:lower-Ahlfors-regular} we have that  $f(DS(\calS))$ is Ahlfors $n$-regular and therefore by Tyson's theorem, $f$ quasipreserves $n$-modulus and therefore $\m_n(\G_{v\to nv})=0.$
\end{proof}

\subsection{Vertical spheres in $DM(\calS)$}

A subset $\s_x$ of $DM(\calS)$ which is homeomorphic to $\mathbb{S}^{n-1}$ and is such that $(\tau_0 \circ \pi_1) (\s_x) = x$ for some $x\in(a_1,b_1)$ will be called a \emph{vertical sphere in $DM(\calS)$}.

Note, that for almost every $x\in(a_1,b_1)$ the set
\begin{align*}
  \s_x = ( \tau_0 \circ \pi_1)^{-1}(x)
\end{align*}
is a well defined subset of $DM(\calS)$ which does not intersect any of the slits and which is obtained by gluing two copies of $n-1$ dimensional cubes along their boundaries, and as such it is homeomorphic to $\mathbb{S}^{n-1}$.

\begin{lemma}\label{lemma:vertical-spheres}
Suppose $\calS=\{s_i\}\subset I\subset\mathbb{R}^n$ is a sequence of slits in $I$ such that $\m_n(\G_{nv})=0$. If $f$ is a quasisymmetric embedding of $DM(\calS)$ into itself then it takes every vertical sphere to a vertical sphere.
\end{lemma}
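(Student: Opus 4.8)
The plan is to use Lemma \ref{lemma:vert-to-vert} to transfer the structural information about vertical curves to vertical spheres. First I would fix a vertical sphere $\s_x$; by the remark preceding the lemma we may assume (after a measure-zero adjustment of which spheres we consider, and then an approximation argument to handle the rest) that $\s_x = (\tau_0\circ\pi_1)^{-1}(x)$ is obtained by gluing two $(n-1)$-cubes along their boundaries and misses every slit. The key observation is that $\s_x$ is \emph{foliated} by a large family of closed vertical curves: for a generic choice of the remaining coordinates, the loop through $\s_x$ that goes ``straight across'' on one sheet and ``straight back'' on the other is a closed vertical curve lying entirely in $\s_x$, and the union of these loops, together with a complementary transversal family, sweeps out a positive-$\m_n$ portion of $\s_x$. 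Using Fubini/coarea-type reasoning on the product structure of $\s_x$, I would show that the family of such closed vertical curves in $\s_x$ has positive $n$-modulus, so by Lemma \ref{lemma:vert-to-vert} at least one of them — in fact $\m_n$-almost every one — is mapped by $f$ to a closed vertical curve in $DM(\calS)$.

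Next I would leverage connectivity and density. The image $f(\s_x)$ is a topological $(n-1)$-sphere. I would argue that $\vartheta\circ f$ restricted to $f(\s_x)$ is constant: pick two points $p,q\in f(\s_x)$; since $\s_x$ contains an abundant family of closed vertical curves whose images are vertical, and these curves are dense enough in $\s_x$ (using $(\calS_3)$, i.e. density of slits, together with the product structure), one can find a chain of such image-vertical curves connecting (points arbitrarily close to) $p$ and $q$ along which $\vartheta$ is locally constant, forcing $\vartheta(f(p)) = \vartheta(f(q))$. More carefully, I would let $c = \vartheta(f(\s_x))$-value along one good vertical curve and consider the set of points of $f(\s_x)$ lying in the vertical slice $\vartheta^{-1}(c)$; this set is closed in $f(\s_x)$, and I would show it is also open using that through $\m_n$-a.e. point of $f(\s_x)$ there passes an image-vertical curve (which stays in the slice), so by connectedness of $f(\s_x)$ it is everything. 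Hence $f(\s_x)\subset \vartheta^{-1}(c)$, i.e. $f(\s_x)$ is a vertical subset.

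Finally I would promote ``vertical subset homeomorphic to $\mathbb{S}^{n-1}$'' to ``vertical sphere'' in the sense of the definition: a set contained in a single vertical slice $\vartheta^{-1}(c)$ of $DM(\calS)$ that is homeomorphic to $\mathbb{S}^{n-1}$ must be (a slit sphere or) a slice sphere $\s_c$ — this uses that the vertical slice, being either a glued pair of cubes or a slit sphere, contains a topological $(n-1)$-sphere as a non-separating subset only in the expected way. I would also need to rule out the degenerate possibility that $f(\s_x)$ collapses to something of the wrong type, which is automatic since $f$ is a homeomorphism onto its image.

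The main obstacle I anticipate is the openness step in the middle paragraph: knowing that $\m_n$-a.e. \emph{closed vertical curve} maps to a closed vertical curve does not a priori tell us about the image of a \emph{single} sphere, since a single sphere could in principle be $\m_n$-exceptional. The resolution must be a genuinely $2$-parameter (or $(n{-}1)$-parameter) argument: one shows that the union over $x$ in a positive-measure set of spheres $\s_x$, each equipped with its foliation by vertical loops, still sees only finitely much modulus lost, so for a.e. $x$ the sphere $\s_x$ is ``good'', and then a separate approximation/limiting argument (using that $f$ is continuous and vertical spheres are limits of vertical spheres) upgrades this to \emph{every} vertical sphere. Making this limiting argument precise — ensuring the limit of vertical spheres under $f$ is again contained in a single vertical slice — is where the real care is needed, and I expect it will use uniform continuity of $f$ together with the fact that nearby vertical slices are close in $DM(\calS)$.
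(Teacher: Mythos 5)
The central step of your plan does not work, and it is precisely the point you flag as ``the main obstacle.'' You want to show that the family of closed vertical loops lying inside a single vertical sphere $\s_x$ has positive $\m_n$-modulus, so that Lemma \ref{lemma:vert-to-vert} forces almost every such loop to map to a vertical curve. But $\s_x$ is a codimension-one subset of $DM(\calS)$, hence $\calH^n(\s_x)=0$, and \emph{any} family of curves of uniformly positive length contained in an $\calH^n$-null set has $\m_n$-modulus zero: take $\rho = L_0^{-1}\chi_{\s_x}$ where $L_0$ is a lower bound on the lengths. So the Fubini/coarea reasoning you describe cannot produce a positive lower bound on $\m_n$, and Lemma \ref{lemma:vert-to-vert} gives you no information about the loops in one fixed $\s_x$ --- they could all lie in the exceptional family. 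The subsequent ``closed and open'' argument on $f(\s_x)$ inherits the same problem, since ``$\m_n$-a.e.\ point of $f(\s_x)$'' is vacuous when $f(\s_x)$ is $\calH^n$-null.

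The paper sidesteps exactly this by never working within a single sphere. For each $k\in\{2,\ldots,n\}$ it considers the family $\G^k_v$ of curves parallel to the $X_k$-axis running all the way across $I$; this is an $(n-1)$-parameter family spread throughout $DM(\calS)$ and has positive $\m_n$-modulus. Tyson's theorem gives $\m_n(\G^k_{v\to nv})=0$, and since $\G^k_v$ is a product family, projecting along $X_k$ converts this into an $\calH^{n-1}$-null estimate on the bad base points in $\pi_k^\perp(I)$. Density of the good base points plus continuity of $\vartheta\circ f$ then upgrades ``almost every'' to ``every $\g\in\G^k_v$ maps to a vertical curve.'' Only after this global statement is in hand does the paper look at a single sphere $\s_x$: any two points of $\s_x$ are joined by a concatenation $\d_2\cup\cdots\cup\d_n$ of $X_k$-parallel segments, each of which now provably maps to a vertical curve, so $\vartheta(f(\s_x))$ is a point. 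Your instinct that an approximation/continuity argument is needed to pass from ``a.e.'' to ``every'' is correct, but the approximation must happen at the level of base points of a globally spread curve family (with an $\calH^{n-1}$ zero-measure estimate), not at the level of individual spheres (which are $\calH^n$-null from the start).
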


\begin{proof}
Let $2\leq k\leq n$ and $X_k$ be the $k$-th coordinate axis in $\mathbb{R}^n$. Denote by $\G^k_{v}$ the family of curves $\g$ in $M(\calS)$ such that $\tau_0(\g)\subset I$ is parallel to $X_k$ and connects the two faces of $I$ which are perpendicular to $X_k$. Furthermore, let
\begin{align*}
  \G^k_{v\to v}& =\{ \g\in \G^k_v : f(\g)\in\G_v \}, \\
  \G^k_{v \to nv} & = \G^k_v \setminus \G^k_{v\to v}.
\end{align*}
and 
$$G^k_{v\to v}=\tau_0(\G^k_{v\to v}) \mbox{  and  } G^k_{v\to nv}=\tau_0(\G^k_{v\to nv}).$$
Therefore, using monotonicity and Tyson's theorem for every $2\leq k \leq n$ we have
\begin{align*}
  \m_n \G^k_{v\to nv}
  &\leq
  \m_n\G_{v \to nv}
   \leq K \m_n f(\G_{v\to nv})
   \leq  K \m_n \G_{nv} = 0.
\end{align*}
and hence $\m_n \G_{v\to nv}^{k} = 0$ for all $2\leq k\leq n.$

Now, since $G^k_{v\to nv}$ is a family of disjoint parallel intervals each of which is isometric to its preimage in $M(\calS)$ inequalities ( \ref{ineq:Merenkov_measures_comparable}) imply that $\m_n(G^k_{v\to nv}) \asymp \m_n \G^k_{v\to nv}$ and thus
$\m_n(G^k_{v\to nv})=0$.

But since $G^k_{v\to nv}$ is a product family we have
\begin{align*}
  \m_n(G^k_{v\to nv}) = \frac{\calH^{n-1}(\pi_k^{\perp}(G^k_{v\to nv}))}{b_k-a_k},
\end{align*}
where $\pi_k^{\perp}$ is the projection onto the hyperplane $X_k^{\perp}$.
Since $\m_n(G^k_{v\to nv})=0$ it follows that $\calH^{n-1}(\pi_k^{\perp}(G^k_{v\to nv}))=0$ or that $\pi_k^{\perp}(G^k_{v\to v})$ is a full measure set (in particular is dense) in $\pi_k^{\perp}(I)$.
By continuity of $\vartheta\circ f$ it follows that for \emph{every} vertical curve $\g\in \G^k_{v}$ we have that $f(\g)\in\G_v$, i.e. $\vartheta\circ f(\g)$ is a point in $(a_1, b_1)$.

Now, let $x\in(a_1,b_1)$ and $\s_x=\vartheta^{-1}(x)\subset M(\calS)$ be a vertical square. We want to show that $\vartheta (f(\s_x))$ is a point.  Note, that for every two points $p,q \in \s_x$ there is a curve $\d_{p,q}=\d_2\cup\ldots\cup \d_n\subset\s_x$ connecting $p$ and $q$ such that  $\tau_0(\d_k)$ is a closed interval in $I$ parallel to the axis $X_k$, $k=2,\ldots n$. Thus $f(\d_k)$ is a vertical curve for every $k=2,\ldots,n$ and therefore $\vartheta(f(\d_{p,q}))$ is a point in $(a_1,b_1)$. Since this is true for every pair of points in $\s_x$ it follows that for every $x\in(a_1,b_1)$ we have that $\vartheta(f(\s_x))$ is a point in $(a_1,b_1)$.

Therefore if $f$ is now a quasisymmetric mapping of $DM(\calS)$ into itself and $\Sigma$ is a vertical sphere in $DM(\calS)$ then  $f(\Sigma)$ is a point and therefore $f(\Sigma)$ is a vertical sphere.
\end{proof}

Theorem \ref{thm:mod0-co-hopf} now follows from Lemma \ref{lemma:vertical-spheres} and the following result.


\begin{lemma}\label{lemma:vert-spheres-cohopf}
  If $f$ is a quasisymmetric embedding of $DM(\calS)$ into itself which takes vertical spheres to vertical spheres then $f$ is onto.
\end{lemma}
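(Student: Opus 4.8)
Here is the route I would take, in five steps; the last two carry the real content.

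\smallskip
\noindent\textbf{Setting up an induced interval map.} First I would pass from $f$ to a map of the base interval $[a_1,b_1]=\vartheta(DM(\calS))$. Let $E_0\subset(a_1,b_1)$ be the co‑countable (hence dense) set of levels $x$ that meet no slit; for $x\in E_0$ the fiber $\vartheta^{-1}(x)$ is a genuine vertical $(n-1)$–sphere, so by hypothesis $\vartheta\circ f$ is constant on it, say equal to $\bar f(x)$. Since $\bigcup_{x\in E_0}\vartheta^{-1}(x)$ is dense in $DM(\calS)$ and $\vartheta\circ f$ is uniformly continuous, $\bar f$ is uniformly continuous on $E_0$ (nearby levels have nearby fibers, by the local quasiconvexity built into the construction of $M(\calS)$), so it extends to a continuous map $\bar f\colon[a_1,b_1]\to[a_1,b_1]$ with $\vartheta\circ f=\bar f\circ\vartheta$.

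\smallskip
\noindent\textbf{$\bar f$ is a monotone homeomorphism onto a subinterval.} I would first rule out $\bar f$ being constant on any subinterval $[p,q]$: otherwise $f$ maps the open set $\vartheta^{-1}((p,q))$, which contains a ball, into a single fiber $\vartheta^{-1}(y)$, which has $\calH^n$–measure zero by the measure comparison of Lemma~\ref{lemma:Merenkov-ahlfors-regular}; this contradicts Ahlfors $n$–regularity of $f(DM(\calS))$ (Corollary~\ref{lemma:image-Ahlfors-regular}), since a quasisymmetric map carries a ball onto a set containing a ball, of positive $\calH^n$–measure. Next I would show $\bar f$ is monotone: if not, $\bar f$ attains on some $[t_1,t_3]$ an interior value $M$ strictly above the endpoint values, and then for a nondegenerate interval of values $s$ just below $M$ there are preimages on both sides of the maximizing level; a countability argument (the ``bad'' $s$, i.e.\ those outside $E_0$ or all of whose one‑sided preimages miss $E_0$, form a countable set) produces one such $s\in E_0$ with preimages $u,v\in E_0$ on the two sides. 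Then $f(\vartheta^{-1}(u))$ and $f(\vartheta^{-1}(v))$ are embedded $(n-1)$–spheres inside the $(n-1)$–sphere $\vartheta^{-1}(s)$, so by invariance of domain each equals all of $\vartheta^{-1}(s)$, whence $f(\vartheta^{-1}(u))=f(\vartheta^{-1}(v))$, contradicting injectivity of $f$. Thus $\bar f$ is a homeomorphism of $[a_1,b_1]$ onto an interval $[c,d]$. The same ``embedded sphere in a sphere'' principle applied to the dense set of $x\in E_0$ with $\bar f(x)\in E_0$ gives $f(\vartheta^{-1}(x))=\vartheta^{-1}(\bar f(x))$ for those $x$; since $f(DM(\calS))$ is closed and these fibers are dense in $\vartheta^{-1}([c,d])$, we get $f(DM(\calS))=\vartheta^{-1}([c,d])$.

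\smallskip
\noindent\textbf{The crux: $[c,d]=[a_1,b_1]$.} The key observation is a dichotomy among the fibers of $\vartheta$. The extreme fibers $\vartheta^{-1}(a_1)$ and $\vartheta^{-1}(b_1)$ are topological $(n-1)$–cells: the face $\{x_1=a_1\}$ of the box lies entirely on the gluing locus $\partial I$, so doubling leaves it a single cube. In contrast, for \emph{every} $x\in(a_1,b_1)$ the fiber $\vartheta^{-1}(x)$ is \emph{not} a cell — it is an $(n-1)$–sphere when $x\in E_0$, and otherwise an ``annular'' complex deformation retracting onto a nonempty union of copies of $\mathbb S^{n-2}$ (disconnected when $n=2$); in all cases it is not contractible in the way a cell is, hence not homeomorphic to $D^{n-1}$. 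Now, because $\bar f$ is injective and $\vartheta^{-1}(c)\subset f(DM(\calS))$, the identity $\vartheta\circ f=\bar f\circ\vartheta$ forces $f(\vartheta^{-1}(a_1))=\vartheta^{-1}(c)$; as $f$ is a topological embedding, $\vartheta^{-1}(c)$ must then be an $(n-1)$–cell, so $c\in\{a_1,b_1\}$, and monotonicity together with $c<d$ gives $c=a_1$. Symmetrically $d=b_1$. Hence $f(DM(\calS))=\vartheta^{-1}([a_1,b_1])=DM(\calS)$, which is what we want.

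\smallskip
\noindent\textbf{What I expect to be delicate.} The conceptually new input is the cell‑versus‑sphere dichotomy of the extreme fibers in the final step, but this part is short once isolated. The bulk of the work is in the second step: establishing that $\bar f$ is a bona fide homeomorphism of $[a_1,b_1]$ — nondegeneracy of the image genuinely uses the quasisymmetry hypothesis through Ahlfors regularity (Corollary~\ref{lemma:image-Ahlfors-regular}), and monotonicity rests on the ``embedded sphere in a sphere is onto'' rigidity together with a careful countability argument to place the needed preimages in $E_0$. The verification that nearby levels have uniformly nearby fibers (used already in Step~1 and again to identify $f(DM(\calS))$ as a full subslab) is where one must invoke the metric structure of $M(\calS)$ rather than pure topology.
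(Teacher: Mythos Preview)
Your proof is correct and shares with the paper the decisive topological input: the extreme fibers $\vartheta^{-1}(a_1)$ and $\vartheta^{-1}(b_1)$ are $(n-1)$--cells while interior fibers are not, so $f$ must carry $\{a_1,b_1\}$ to itself. The organization, however, is genuinely different. The paper never constructs $\bar f$ or proves monotonicity; instead it takes a nested sequence of half--slabs $L_i$ shrinking down to $\mathcal L=\vartheta^{-1}(a_1)$, observes that $f(\mathcal L)=\bigcap_i\overline{f(L_i)}$, and argues directly that this intersection must be $\mathcal L$ or $\mathcal R$: each $f(L_i)$ sits in one of the two complementary components of the vertical sphere $f(\sigma_i)$, and if the intersection were the closure of such a component it would be homeomorphic to the Sierpi\'nski space $\S_{n-1}$ rather than to an $(n-1)$--ball. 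Once $f(\mathcal L)\in\{\mathcal L,\mathcal R\}$ and likewise for $\mathcal R$, surjectivity follows by density.

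What each approach buys: the paper's route is shorter and avoids both the monotonicity argument and the explicit identification $f(DM(\calS))=\vartheta^{-1}([c,d])$, but its passage from ``$f(L_i)$ is contained in a component'' to the asserted dichotomy for $\bigcap_i\overline{f(L_i)}$ is terse. Your route is longer, but every step is made explicit---nondegeneracy of $\bar f$ via Ahlfors regularity of the image, injectivity via the embedded--sphere--in--a--sphere rigidity, and the full--slab identification via density of ``good'' fibers---and it makes transparent exactly where quasisymmetry (as opposed to mere continuity) enters. The metric fact you flag as delicate, that nearby levels have nearby fibers, is genuinely needed in your argument and is implicit in the paper's as well; it follows from local path--connectedness of $DM(\calS)$ together with the fact that each fiber has $\calH^n$--measure zero.
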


\begin{proof}
%
%
By our assumption there is a sequence of closed vertical
spheres $\s_{i}\subset DM(\mathcal{S})$ with $\pi_1(\tau_0(\s_i))\to a_1$, such that $f(\s_i)$ is a vertical
sphere for each $i\geq 1$. Let
$$\L = \tau^{-1}_0(L) \mbox{ and } \mathcal{R} = \tau^{-1}_0(R).$$
Next we show that either $f(\L)= \L$ or $f(\L)=\mathcal{R}$.

 Since each $\s_i$ separates $DS$, i.e. $DM(\calS)\setminus
\s_i$ is disconnected, we may denote by $L_i$ the connected component of
$DM(\calS)\setminus \s_i$ containing $\L$. Furthermore, we enumerate $\s_i$'s so that
$L_{i+1}\subset L_{i}$. Then
$\L=\overline {\L} = \bigcap_{i=1}^\infty
\overline{L_i}$
and
\begin{align}\label{preserving-vertical-sides}
  f(\L) = \bigcap_{i=1}^{\infty} \overline{f(L_i)}.
\end{align}
Since $f(L_i)$ is a connected component of $DM(\calS)\setminus f(\s_1)$ containing
$f(\L)$ and $f(L_{i+1})\subset f(L_i)$ it follows that either
$\bigcap_{i=1}^{\infty} \overline{f(L_i)} \subset \L\cup \mathcal{R}$ or
$\bigcap_{i=1}^{\infty} \overline{f(L_i)}$ is the closure of a connected
component of $DM(\calS)\setminus\sigma$ for some vertical sphere $\sigma\subset DM(\calS)$.
The latter cannot happen since if $\s$ is a separating sphere in the
Sierpi\'nski space $DM(\calS)$ then the closure of each component of the complement
of $\s$ is homeomorphic to the Sierpi\'nski space $\S_{n-1}$, which would contradict
(\ref{preserving-vertical-sides}) since $f(\L)$ is homeomorphic to an $n-1$ ball.
Thus, since $\L$ is connected, $f(\L)=\L$ or $f(\L)=\mathcal{R}$. The same
argument works for $\mathcal{R}$ and therefore we have that either
\begin{align*}
f(\L) &= \L \quad \mbox{and} \quad f(\mathcal{R})=\mathcal{R}, \mbox{ or}\\
f(\L) &= \mathcal{R} \quad \mbox{and} \quad f(\mathcal{R})=\L.
\end{align*}

In either case we have $\{\pi_1(\tau_0(f(\s_x))): x\in [a,b]\}=(a_1,b_1)$. In particular
for almost every $x\in[0,1]$ the vertical sphere $\s_x$ is contained in
$f(DS)$. In particular $f(DS)$ is dense and since it is closed we obtain that
$f(DS)=\overline{f(DS)}=DS$.
%
%
%
\end{proof}

\section{Modulus estimates in slit domains}\label{section:main-estimate}

In this section we formulate a general condition for the collection of slits  $\calS=\{s_i\}\subset I$ in a box $I\subset\mathbb{R}^n$,  which implies that the the family of non-vertical curves in $M(\calS)$ has a vanishing modulus. Combining with Theorem \ref{thm:mod0-co-hopf} we are able to show QS co-Hopficity for large classes of slit Seirpi\'nski spaces. In particular, we will be interested in collections of  slits $\calS$  satisfying one of the following properties:
\begin{itemize}
 \item[$(i)$] $\calS$ is uniformly relatively separated and occurs on all locations and scales, or 
\item[$(ii)$] $\calS=\calS_{\textbf{r}}$ for some $\textbf{r}\notin\ell^n$.
\end{itemize}
Note that $(i)$ is equivalent to having conditions $(\calS_1)$ and $(\calS_4)$ of Section \ref{Sec:slit-domains}, while  $\mathcal{S}_{\mathbf{r}}$ is defined in Section \ref{section:diadic-slit-spaces}. 

In this section, assuming the modulus estimates proved in Section \ref{Section:main-estimate-proof}, we prove the following.

\begin{theorem}\label{thm:main1}
If $\calS\subset I \subset\mathbb{R}^n$ is a family of slits satisfying either $(i)$ or $(ii)$ then the double $(DM(\calS),d_{\calS},\calH^n)$ of the slit Sirpi\'nski space corresponding to $\calS$  is quasisymmetrically co-Hopfian.
\end{theorem}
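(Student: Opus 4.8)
The plan is to deduce Theorem \ref{thm:main1} from Theorem \ref{thm:mod0-co-hopf}, which reduces the problem to showing that the family of non-vertical curves has vanishing $n$-modulus: we must verify that $\m_n(\G_{nv}(M(\calS)))=0$ whenever $\calS$ satisfies $(i)$ or $(ii)$. Since every non-vertical curve $\g$ has a closed $\vartheta$-image equal to a nondegenerate interval $[a,b]\subset\pi_1(I)$, it suffices to produce, for each dyadic-type scale, admissible test functions for $\G_{nv}$ whose $L^n$-norms tend to $0$. The mechanism is that a non-vertical curve must ``go around'' the slits it encounters: near a slit $s_i$ of sidelength $l(s_i)$, a curve traveling in the horizontal ($x_1$) direction is forced to detour a definite amount transversally, and by pulling back the metric that charges mass near the slits, one can make such curves long while keeping the total $L^n$-mass small. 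This is exactly the content of the modulus estimates promised in Section \ref{Section:main-estimate-proof}, namely Lemma \ref{lemma:main_estimate} and its consequences Lemmas \ref{lemma:curves_in_porous_carpets} and \ref{lemma:curves-in-diadic-carpets}; I would invoke the first of these for case $(i)$ and the second for case $(ii)$.

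More concretely, here are the steps in order. First, observe that it is enough to bound the modulus of non-vertical curves $\g$ with $\vartheta(\g)$ containing a fixed small subinterval $[c,d]$, since $\G_{nv}$ is a countable union of such subfamilies (cover $\pi_1(I)$ by countably many small intervals) and use subadditivity of modulus (Lemma \ref{lemma:modulus-properties}). Second, work in the approximating slit domains $\overline M_k$: by Lemma \ref{lemma:Merenkov-ahlfors-regular} the measures $\calH^n$ on $M(\calS)$ and on $\overline M_k$ are comparable via $\tau_k$, and $\tau_k$ is $1$-Lipschitz, so by Lemma \ref{lemma:modulus-under-projections} it suffices to bound $\m_n$ of the images $\tau_k(\G_{nv})$ in $\overline M_k$ uniformly in $k$ — and then let $k\to\infty$. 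Third, in $\overline M_k$ build an explicit admissible $\rho_k$: put a ``bump'' of height roughly $1$ on an annular region of width $\sim l(s_i)$ around each opened-up slit $\la_k(s_i)$ whose projection meets $[c,d]$, so that any curve crossing $[c,d]$ horizontally accumulates integral $\geq 1$ against $\rho_k$ after passing enough such slits (here is where porosity in case $(i)$, or the $\ell^n$-divergence in case $(ii)$, guarantees there are ``enough'' slits of controlled relative size, making $\rho_k$ admissible). Fourth, compute $\int \rho_k^n\,d\calH^n$: the bump around $s_i$ contributes $\sim l(s_i)^n$, and summing over the relevant slits gives a quantity controlled by $\sum l(s_i)^n$ on the relevant region, which — in case $(ii)$, where $l(s(Q))\sim r_{|Q|}\,2^{-|Q|}$ — is comparable to $\sum_i \#\{\text{cubes of gen } i\}\cdot (r_i 2^{-i})^n \sim \sum_i r_i^n$, divergent precisely because $\mathbf r\notin\ell^n$, which (via a standard normalization/exhaustion trick, splitting $[c,d]$ into many slabs and averaging) forces the modulus to $0$; case $(i)$ is handled the same way using that porosity supplies a definite amount of ``slit mass'' at every scale so the series of contributions along a curve diverges logarithmically. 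Fifth and finally, having established $\m_n(\G_{nv}(M(\calS)))=0$, apply Theorem \ref{thm:mod0-co-hopf} to conclude that $DM(\calS)$ is QS co-Hopfian.

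The main obstacle is the third and fourth steps together: constructing the admissible metric $\rho_k$ so that it is genuinely admissible for \emph{all} non-vertical curves — including wildly oscillating ones that weave back and forth among slits of very different sizes — while simultaneously keeping $\int\rho_k^n$ small, and doing this uniformly in the level $k$ of approximation. This is a serial-rule / length-mass tradeoff of the type familiar from modulus lower bounds for Cantor-like obstacles, and getting the bookkeeping right (which slits to charge, how to average over parallel slabs to beat the "good" horizontal curves, and how to pass to the limit $k\to\infty$ using Lemma \ref{lemma:modulus-under-projections}) is the analytic heart of the argument — which is exactly why the paper isolates it as Lemma \ref{lemma:main_estimate} in Section \ref{Section:main-estimate-proof}. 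Given those lemmas, the remaining deduction of Theorem \ref{thm:main1} is essentially a two-line invocation: verify hypothesis $(i)$ or $(ii)$ implies the hypothesis of Lemma \ref{lemma:curves_in_porous_carpets} or \ref{lemma:curves-in-diadic-carpets}, obtain $\m_n(\G_{nv})=0$, and quote Theorem \ref{thm:mod0-co-hopf}.
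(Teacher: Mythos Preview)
Your proposal is correct and matches the paper's proof exactly: invoke Lemma~\ref{lemma:curves_in_porous_carpets} for case~$(i)$ and Lemma~\ref{lemma:curves-in-diadic-carpets} for case~$(ii)$ to obtain $\m_n(\G_{nv})=0$, then apply Theorem~\ref{thm:mod0-co-hopf}. Your sketch of how those lemmas themselves might be proved (steps 2--4, via bumps around opened-up slits in $\overline M_k$) differs in its details from the paper's actual construction in Section~\ref{Section:main-estimate-proof} --- which uses characteristic functions of explicit ``$\eps$-collar/buffer/omitted'' regions rather than annular bumps --- but that is orthogonal to the proof of Theorem~\ref{thm:main1} itself, which you have right.
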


As mentioned in the Introduction, Theorems \ref{thm:Sierpinski-co-hopf}, \ref{thm:main-cohopf} and \ref{thm:non-self-similar-cohopf-1} follow from Theorem \ref{thm:main1}. Indeed, to obtain Theorem \ref{thm:main-cohopf} note that if $M(\calS)$ is porous then the slits $\calS$ satisfy condition $(i)$ and by Theorem \ref{thm:main1} the double of $M(\calS)$ is co-Hopfian. 
Similarly, to obtain Theorem \ref{thm:non-self-similar-cohopf-1} from Theorem \ref{thm:main1}, suppose $\calS=\calS_{\mathbf{r}}$, with $r_i\notin\ell^n$ such that $r_i\to0$. Since $(ii)$ is satisfied $DM(S)$ is co-Hopfian by Theorem \ref{thm:main1}. However, $DM(S)$ is clearly not porous since $r_i\to0$. 

Theorem \ref{thm:main1} is proved at the end of this section by combining  Theorem \ref{thm:mod0-co-hopf} with the modulus estimates obtained below, Lemmas \ref{lemma:curves_in_porous_carpets} and \ref{lemma:curves-in-diadic-carpets}.



\subsection{Modulus estimates in slit domains.}
Let $I$ be a box in $\mathbb{R}^n$, cf. Section \ref{Sec:slit-domains}, and let $L,R$ be the left and right faces of $I$, respectively, i.e.
\begin{align*}
  L&=\{a_1\}\times (a_2,b_2)\times \ldots \times(a_n,b_n),\\
  R&=\{b_1\} \times (a_2,b_2)\times\ldots \times (a_n,b_n).
\end{align*}

We say that a curve $\g:(0,1)\to X$  connects subsets $E$ and $F$ of $X$ if $E\cup F \cup \overline{\g}$ is connected, where $\overline{\g}$ is the closure of the image of $\g$ in $X$.

Given a collection of slits $\calS\subset I\subset\mathbb{R}^n$ let $\G_i=\G_i(\calS)$ and $\G_{\calS}$ be the family of curves connecting  the left face of
$I$ to the right face in the slit domains $S_i$ and the slit set $S$, respectively. More precisely, we let
\begin{align*}
\G_i = \{\g\subset S_i \,|\, \mbox{$\g$ connects $L$ to $R$ in $\bar{I}$} \}, \quad i\geq 1,
\end{align*}
and
\begin{align*}
  \G_{\calS}= \bigcap_{i=1}^{\infty} \G_i.
\end{align*}
Thus, $\G_{\calS}$ is the collection of curves $\g$ in $I\subset\mathbb{R}^n$ connecting $L$ and $R$, which avoid all the slits $s_i\in\calS$.

 The main result in this section is an estimate  on
the modulus of $\G_{\calS}$. As one may expect the modulus of $\G_{\calS}$ depends on the geometry (i.e. sizes and location) of the slits $s_i\in\calS$. To formulate our main result we need the following notation.
 Given $\eps>0$ and a slit $s\subset I$ of sidelength $l(s)>0$ such that $\pi_1(s)=x$ we let $s^{\eps}$ be the box $(x,x+\eps l(s))\times s\subset\mathbb{R}^n$. 

\begin{figure}[t]\label{fig:slit-in-3d}
\centerline{
\psfig{figure=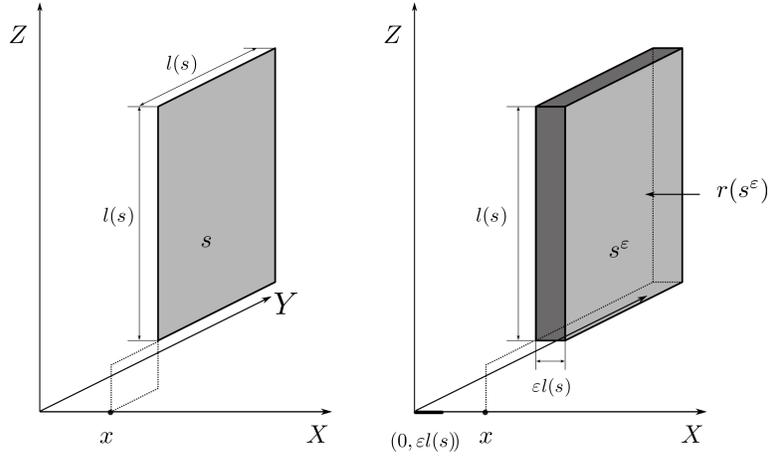,height=6cm}
}
\caption{A slit $s$ in $\mathbb{R}^3$ and its right $\eps$-collar $s^{\eps}$.}
\end{figure}

Equivalently,
\begin{align*}
  s^{\eps}= s + (0,\eps l(s))
  = \{z+t\in\mathbb{R}^n \,\,|\,\, z\in s, \,\, t\in(0,\eps l(s))\},
\end{align*}
where $(0,\eps l(s))$ denotes the corresponding interval in $\mathbb{R}^1 \cong \mathbb{R}^1\times (0,\ldots,0)$. We will call the set $s^{\eps}$ the  \emph{(right) ``$\eps$-collar" of the slit $s$}. Thus $s^{\eps}$ is an $n$-dimensional box with dimensions $\eps l(s)\times l(s) \times \ldots \times l(s)$ the left face of which coincides with $s$, see Figure \ref{fig:slit-in-3d}.

Note also,  that if $\{s_i\}$ is a uniformly relatively separated sequence in $I$ then $s_i^{\eps}\subset I$  for every $i\in\mathbb{N}$ whenever $0<\eps<\sigma$, where $\sigma$ is the separation constant in (\ref{uniform-rel-separation}).

\begin{lemma}[\textbf{Main Estimate}]\label{lemma:main_estimate}
Suppose $\calS\subset (a_1,b_1)\times \ldots (a_n,b_n)$ is a uniformly relatively separated sequence of slits in $\mathbb{R}^n$ for
which there exists $\eps>0$ such that there is a
subsequence $\calS(\eps) = I=\{s_{i_k(\eps)}\}_{k=1}^{\infty}$ such that
\begin{align}\label{condition:disjoint_collars}
s_{i_k(\eps)}^{\eps} \cap s_{i_l(\eps)}^{\eps} =\emptyset,
      \quad k\neq l.
\end{align}
Then there is a constant $C_n>0$, such that for every $p\geq 1$
\begin{align}\label{main_estimate}
\m_p \G_{\calS}\leq {(b_1-a_1)^{-p}}\left[ \calH^n\left(\calR^{\eps}\right) + C_n\calH^n(I)\eps\right],
\end{align}
where $\calR^{\eps}=I \setminus \bigcup_{k=1}^{\infty}s_{i_k(\eps)}^{\eps}.$
\end{lemma}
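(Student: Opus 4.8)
The strategy is to build an explicit admissible function $\rho$ for the curve family $\G_{\calS}$ and then estimate its $L^p$ norm. The natural guess is to take $\rho = (b_1-a_1)^{-1}$ on the ``good'' region $\calR^{\eps}$ and to make $\rho$ large (but controllably so) on the collars $s_{i_k(\eps)}^{\eps}$, so that any curve forced to detour around a slit picks up enough mass. Concretely, I would set $\rho \equiv (b_1-a_1)^{-1}$ on $\calR^{\eps} = I\setminus\bigcup_k s_{i_k(\eps)}^{\eps}$ and, on each collar $s_{i_k(\eps)}^{\eps}$ (an $n$-box of dimensions $\eps l(s_{i_k})\times l(s_{i_k})\times\cdots\times l(s_{i_k})$ whose left face is the slit $s_{i_k}$), put $\rho = c/l(s_{i_k})$ for a dimensional constant $c=c(n)$ to be chosen. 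The disjointness hypothesis \eqref{condition:disjoint_collars} is exactly what makes this well defined and makes the $L^p$ bookkeeping additive over the collars.

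\textbf{Admissibility.} Let $\g\in\G_{\calS}$, so $\g\subset S = I\setminus\bigcup_i s_i$ runs from $L$ to $R$. If $\g$ never enters any collar $s_{i_k(\eps)}^{\eps}$ then $\g$ lies in $\calR^{\eps}$ and $\int_\g \rho\,ds \geq (b_1-a_1)^{-1}\,\ell(\g)\geq (b_1-a_1)^{-1}\cdot\pi_1\text{-extent of }\g = 1$, since $\g$ joins the two faces $\{x_1=a_1\}$ and $\{x_1=b_1\}$. If $\g$ does enter some collar, the key geometric observation is that $\g$ cannot cross the slit $s_{i_k}$ (it avoids all slits), and the slit is the entire left face of the collar $s_{i_k}^{\eps}$; so to get from the left side of $s_{i_k}$ to its right side while staying in $I\setminus s_{i_k}$, the curve $\g$ must travel a Euclidean distance at least comparable to $l(s_{i_k})$ (it has to go ``around'' the $(n-1)$-cube $s_{i_k}$, hence around its boundary which has diameter $\asymp l(s_{i_k})$). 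On any such detour that passes ``behind'' a slit, $\int_\g \rho\,ds \geq (c/l(s_{i_k}))\cdot (\text{length of the detour portion}) \geq (c/l(s_{i_k}))\cdot c' l(s_{i_k}) = cc'$, which is $\geq 1$ for $c$ large enough depending only on $n$. One must be slightly careful: either $\g$ reaches $R$ using only pieces in $\calR^{\eps}$ accumulating total $x_1$-progress $b_1-a_1$ (handled by the first case applied to the sub-arcs in $\calR^{\eps}$), or at some point it is forced around a slit, in which case the second case gives $\int_\g\rho\,ds\geq 1$ outright. A clean way to organize this: decompose $\g$ into the sub-arcs lying in $\calR^{\eps}$ and those lying in the collars; if the $x_1$-lengths of the $\calR^{\eps}$-sub-arcs already sum to $\geq b_1-a_1$ we are done, and otherwise $\g$ must enter some collar from its left face (across the slit region) — which, since the slit blocks the left face, forces a detour of Euclidean length $\gtrsim l(s_{i_k})$ inside that collar, again giving $\int_\g\rho\,ds\geq 1$.

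\textbf{Estimating $\int_I \rho^p$.} By the disjointness of the collars,
\begin{align*}
\int_I \rho^p\,d\calH^n = (b_1-a_1)^{-p}\,\calH^n(\calR^{\eps}) + \sum_{k=1}^{\infty}\Bigl(\frac{c}{l(s_{i_k})}\Bigr)^p \calH^n(s_{i_k}^{\eps}).
\end{align*}
Since $\calH^n(s_{i_k}^{\eps}) = \eps\, l(s_{i_k})^n$, the $k$-th term equals $c^p\,\eps\, l(s_{i_k})^{n-p}$. The remaining task is to bound $\sum_k l(s_{i_k})^{n-p}$. Here I would use uniform relative separation together with $s_{i_k}^{\eps}\subset I$ and their disjointness: the collars have volumes $\eps l(s_{i_k})^n$ summing to at most $\calH^n(I)$, so $\sum_k l(s_{i_k})^n \leq \eps^{-1}\calH^n(I)$; combined with $l(s_{i_k})\leq \diam I$ this controls $\sum_k l(s_{i_k})^{n-p}$ for $p\le n$, and for $p>n$ one instead notes $l(s_{i_k})^{n-p}\le (\text{const})$ times a volume-type sum again via separation — in all cases getting $\sum_k l(s_{i_k})^{n-p}\leq C_n' \calH^n(I)\,(b_1-a_1)^{-(\,?\,)}$ up to absorbing the $(b_1-a_1)^{-p}$ normalization. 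Rolling the constants together yields
\begin{align*}
\m_p\G_{\calS}\leq \int_I \rho^p\,d\calH^n \leq (b_1-a_1)^{-p}\bigl[\calH^n(\calR^{\eps}) + C_n\,\calH^n(I)\,\eps\bigr],
\end{align*}
as claimed.

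\textbf{Main obstacle.} The delicate point is the admissibility argument — specifically making rigorous the claim that a slit-avoiding curve which ``uses'' a collar must accumulate $\int\rho\,ds\geq 1$ there, uniformly across all slit sizes, rather than sneaking in and out of the thin side $\eps l(s_{i_k})$ of the collar cheaply. The resolution is that the slit occupies the \emph{entire} left face of its collar, so a curve entering the collar from the left literally cannot do so; it must enter from a side face or the right face, and then to make net $x_1$-progress past the plane $\pi_1^{-1}(\pi_1(s_{i_k}))$ it must route around $\partial s_{i_k}$, which has diameter $\asymp l(s_{i_k})$, forcing Euclidean path-length $\gtrsim l(s_{i_k})$ where $\rho\asymp 1/l(s_{i_k})$. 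Pinning down the right constant $c=c(n)$ and the precise combinatorics of ``progress vs. detour'' is the part that needs care; everything else is routine measure estimation using \eqref{ineq:modulus-basic-estimate} and the disjointness hypothesis.
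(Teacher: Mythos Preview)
Your approach has a genuine gap in both the mass estimate and the admissibility argument, and the paper's proof proceeds quite differently.

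\textbf{The mass estimate fails.} With your choice $\rho=c/l(s_{i_k})$ on each collar, the collar contribution to $\int_I\rho^p$ is $c^p\eps\sum_k l(s_{i_k})^{n-p}$. You try to bound $\sum_k l(s_{i_k})^{n-p}$ using only $\sum_k \eps\, l(s_{i_k})^n\le \calH^n(I)$ and $l(s_{i_k})\le \diam I$, but this is impossible in general: for $p\ge n$ the sum $\sum_k l(s_{i_k})^{n-p}$ is typically infinite (there are infinitely many slits with $l(s_{i_k})\to 0$), and even for $1\le p<n$ there is no bound depending only on $n$, $\calH^n(I)$ and $(b_1-a_1)$. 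For instance, in $I=(0,1)^n$ one can place $N$ slits of equal sidelength $l=(N\eps)^{-1/n}$ with disjoint $\eps$-collars, giving $c^p\eps\sum_k l^{n-p}=c^p(N\eps)^{p/n}$, which can be made arbitrarily large while $C_n\calH^n(I)\eps$ stays fixed. So your metric cannot yield the inequality \eqref{main_estimate} as stated.

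\textbf{The admissibility sketch is also flawed.} You argue that a curve forced to route around $\partial s_{i_k}$ travels Euclidean length $\gtrsim l(s_{i_k})$ \emph{where} $\rho\asymp 1/l(s_{i_k})$, i.e.\ inside the collar. But the collar $s_{i_k}^{\eps}$ lies entirely on one side of the slit and has $x_1$-width only $\eps\, l(s_{i_k})$; the detour around $\partial s_{i_k}$ takes place in the hyperplane $\pi_1^{-1}(\pi_1(s_{i_k}))$, which is the \emph{boundary} of the collar, and the curve can enter and leave the collar through a side face while spending length only $O(\eps\, l(s_{i_k}))$ inside it. So the collar contribution to $\int_\g\rho\,ds$ from such a crossing is only $O(c\eps)$, not $\ge 1$.

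\textbf{What the paper does instead.} The paper partitions each collar $s_{i_k}^{\eps}$ into an inner \emph{omitted} box $O_{i_k}^{\eps}$ (points at distance $>\eps\, l(s_{i_k})$ from the nonvertical boundary) and a thin \emph{buffer} annulus $B_{i_k}^{\eps}$ around it, and takes
\[
\rho^{\eps}=(b_1-a_1)^{-1}\chi_{I\setminus \calO^{\eps}},\qquad \calO^{\eps}=\bigcup_k O_{i_k}^{\eps}.
\]
The mass bound is then immediate: $\int_I(\rho^{\eps})^p=(b_1-a_1)^{-p}\bigl[\calH^n(\calR^{\eps})+\calH^n(\calB^{\eps})\bigr]$ and $\calH^n(\calB^{\eps})\le C_n\eps\,\calH^n(I)$ because each buffer is a thin shell of relative thickness $\eps$. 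All the work goes into admissibility, which is proved by an inductive curve-modification: for each collar, if $\g$ meets $O_{i_k}^{\eps}$ before the right face $r(s_{i_k}^{\eps})$ it must have crossed the buffer (length $\ge \eps\, l(s_{i_k})$ where $\rho^{\eps}=(b_1-a_1)^{-1}$), and one replaces $\g\cap s_{i_k}^{\eps}$ by a horizontal segment of the same $\rho^{\eps}$-length in the buffer; otherwise one simply deletes $\g\cap O_{i_k}^{\eps}$. One checks that the modified set $\g_i$ still has $\pi_1(\g_i)=[a_1,b_1]$, whence $(b_1-a_1)^{-1}\calH^1(\g_i)\ge 1$, and that $l_i(\g)\ge l_i(\g_i)$. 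This is exactly the ``progress vs.\ detour'' bookkeeping you flagged as the delicate point, but it requires the buffer/omitted decomposition rather than a uniform $c/l$ weight on the whole collar.
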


The proof of Lemma \ref{lemma:main_estimate} in Section \ref{Section:main-estimate-proof} will give more than stated above.
Namely, we will be able to estimate the modulus of a family of curves that is a priori larger than $\G_{\calS}$.

\begin{lemma}\label{lemma:main-estimate-lift}
Suppose the assumptions of Lemma \ref{lemma:main_estimate} are satisfied. Let $\hat{\G}_{\calS}$ be the image under $\tau_0$ of the family of all curves $\g$ in $M(\calS)$ connecting $\mathcal{L}$ and $\mathcal{R}$,
\begin{align*}
\hat{\G}_{\calS} = \{\tau_0(\g) | \, \g \subset M(\calS)  \mbox{ and } \g \mbox{ connects  $\mathcal{L}$ and $\mathcal{R}$} \}.
\end{align*}
Then
\begin{align}\label{main_estimate2}
\m_p (\hat{\G}_{\calS},\calH^n)\lesssim  \calH^n\left(\calR^{\eps}\right) + O(\eps),
\end{align} for every $p\geq 1$.
\end{lemma}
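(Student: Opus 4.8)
The plan is to prove Lemma~\ref{lemma:main-estimate-lift} essentially by the same construction used for Lemma~\ref{lemma:main_estimate}, but pushing the admissible function down from $M(\calS)$ via $\tau_0$ and exploiting the fact that curves in $M(\calS)$ connecting $\mathcal{L}$ and $\mathcal{R}$ project under $\tau_0$ to curves that connect $L$ and $R$ while being allowed to pass \emph{along} the slits of $\calS$ (unlike members of $\G_\calS$, which must avoid all slits). This is why $\hat\G_\calS$ is a priori larger than $\G_\calS$: a curve in $M(\calS)$ may run along the two sides of an opened-up slit and its $\tau_0$-image then touches that slit. So the key point is to build a function $\rho$ on $I$ which is admissible not only for $\G_\calS$ but for the larger family $\hat\G_\calS$, and whose $L^p$-norm is still controlled by $\calH^n(\calR^\eps)+O(\eps)$.

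First I would recall the admissible function from the proof of the Main Estimate: on $\calR^\eps = I\setminus\bigcup_k s_{i_k(\eps)}^\eps$ one takes $\rho = (b_1-a_1)^{-1}$ (the standard metric that forces unit length on horizontal crossings of the box), and on each collar $s_{i_k(\eps)}^\eps$ one modifies $\rho$ to a larger value — of order $(\eps\, l(s_{i_k}))^{-1}$ times a constant — so that any curve which, after projecting, is forced to detour ``around'' the slit $s_{i_k}$ picks up enough length inside the collar to compensate for the horizontal extent $\eps\, l(s_{i_k})$ it fails to traverse directly. The disjointness hypothesis \eqref{condition:disjoint_collars} guarantees these collar modifications do not interact, so the $L^p$-mass added is $\sum_k C_n^p\,\calH^n(s_{i_k(\eps)}^\eps)(\eps\, l(s_{i_k}))^{-p}\cdot(\eps\, l(s_{i_k}))^{p}$-type terms, which sum to $O(\eps)\,\calH^n(I)$ exactly as in \eqref{main_estimate}. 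The one thing to check carefully is admissibility for $\hat\G_\calS$: given $\g\subset M(\calS)$ from $\mathcal{L}$ to $\mathcal{R}$, its projection $\tau_0(\g)$ is a curve in $\bar I$ from $L$ to $R$; decompose $\vartheta\circ\g$'s trajectory and observe that whenever $\tau_0(\g)$ travels along a slit $s_{i_k(\eps)}$ (rather than around it), the curve $\g$ itself — being in the completed path metric space — must traverse a path of $d_{\overline{M}}$-length at least $2\cdot(\text{horizontal shortfall})$ near the two copies of that slit, but what we actually need is a \emph{length lower bound inside} $\tau_0^{-1}$ of the collar, which we get because $\g$ has to go around the opened sphere. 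I would phrase this as: either $\tau_0(\g)$ crosses $s_{i_k(\eps)}^\eps$ horizontally (then it accrues $\int\rho\,ds$ from the collar's enhanced weight), or it avoids the collar on one side, in which case it must reach the far vertical face of the collar and then come back, again accruing enough. Summing the contributions of the collars actually met, together with the base contribution $(b_1-a_1)^{-1}\cdot(b_1-a_1)=1$ from the part of the trajectory in $\calR^\eps$, gives $\int_{\tau_0(\g)}\rho\,ds\ge 1$.

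The main obstacle I anticipate is precisely this admissibility verification for curves in $M(\calS)$ as opposed to curves in the planar slit domain: one must be careful that $\tau_0(\g)$ may ``pass through'' a slit in the sense of touching it from both sides (because $\tau_0$ glues the two copies), and one has to argue that such a passage still costs at least the horizontal width $\eps\,l(s_{i_k})$ worth of $\rho$-length, using that $\g$ in $M(\calS)$ genuinely goes around the peripheral sphere opened from $s_{i_k}$ and hence its projection must traverse the collar region. A clean way to handle this is to use Lemma~\ref{lemma:modulus-under-projections}: note $\tau_0: M(\calS)\to \overline M_0 = \bar I$ is $1$-Lipschitz and, by the measure comparison \eqref{ineq:Merenkov_measures_comparable} in Lemma~\ref{lemma:Merenkov-ahlfors-regular}, satisfies the measure-nondecreasing condition \eqref{ineq:measure-non-decreasing} with a uniform constant $C$. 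Hence $\m_p(\G)\le C\,\m_p(\tau_0(\G))$ is the \emph{wrong} direction; instead I would directly bound $\m_p(\hat\G_\calS)$ as a curve family in $\bar I$ by exhibiting the admissible $\rho$ above, and the content of the lemma is exactly that this $\rho$ — the one from the proof of Lemma~\ref{lemma:main_estimate} — already does the job for the enlarged family, with no change in the estimate beyond absorbing constants into the implied constant in $\lesssim$ and into $O(\eps)$. So concretely: I restate the construction of $\rho$ from Section~\ref{Section:main-estimate-proof}, verify $\int_{\tilde\g}\rho\,ds\ge 1$ for every $\tilde\g\in\hat\G_\calS$ by the case analysis on whether $\tilde\g$ crosses or circumvents each relevant collar, and then conclude $\m_p(\hat\G_\calS,\calH^n)\le\int_I\rho^p\,d\calH^n\le (b_1-a_1)^{-p}[\calH^n(\calR^\eps)+C_n\calH^n(I)\eps]$, which is \eqref{main_estimate2}.
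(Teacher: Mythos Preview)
Your overall strategy---reuse the admissible function from the Main Estimate and verify it works for the larger family $\hat\G_\calS$---is the right idea and matches the spirit of the paper's argument. However, there is a genuine gap: you have misidentified the form of $\rho^\eps$. The function constructed in Section~\ref{section:construction} is \emph{not} enhanced on the collars; it is
\[
\rho^\eps \;=\; (b_1-a_1)^{-1}\,\chi_{I\setminus\calO^\eps}\;=\;(b_1-a_1)^{-1}\,\chi_{\calR^\eps\cup\calB^\eps},
\]
i.e.\ it equals the constant $(b_1-a_1)^{-1}$ on the residual and buffer sets and \emph{vanishes} on the omitted regions $O_j^\eps$ inside each collar. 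There is no ``larger value of order $(\eps\,l(s_{i_k}))^{-1}$'' on the collars. Your mass computation and your admissibility sketch (``the collar's enhanced weight'') are both built on this wrong picture, so neither stands as written. In particular, with the correct $\rho^\eps$ the delicate point is that a curve $\tilde\g\in\hat\G_\calS$ can enter the omitted region $O_j^\eps$ directly through the slit $s_j$ (the left face of the collar), so the key inequality $\calH^1(\g\cap B_j^\eps)\ge \eps\,l(s_j)$ from Lemma~\ref{lemma:breaking-up} can fail; one has to account instead for the length $\tilde\g$ spends on the slit itself (which lies in $\calR^\eps$, hence carries weight $(b_1-a_1)^{-1}$).

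The paper handles this differently and more cleanly: rather than working in $\bar I$, it lifts to $M(\calS)$ and defines $\tilde\rho^\eps$ to equal $\rho^\eps\circ\tau_0$ off the peripheral spheres and a fixed constant on the spheres. Since the peripheral spheres have $\calH^n$-measure zero, Lemma~\ref{lemma:Merenkov-ahlfors-regular} gives $\int_{M(\calS)}(\tilde\rho^\eps)^p\,d\calH^n\lesssim\int_I(\rho^\eps)^p\,d\calH^n$, and the right-hand side is already controlled by the Main Estimate. Admissibility of $\tilde\rho^\eps$ then follows from the admissibility proof for $\rho^\eps$ together with $\tau_0$ being $1$-Lipschitz: the extra weight placed on the spheres is exactly what forces any curve in $M(\calS)$ running along an opened slit to pay for that traversal, neutralising the obstacle you correctly flagged. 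This device---loading the peripheral spheres, which costs nothing in mass---is the step missing from your plan.
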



Combining Lemmas \ref{lemma:main_estimate} and \ref{lemma:main-estimate-lift} we have the following.

\begin{corollary}\label{corollary:main_estimate}
Suppose  $\calS\subset I$ is a uniformly relatively separated sequence of slits for
which there exists a sequence $\eps_j\to0$ such that (\ref{condition:disjoint_collars}) holds for $\eps=\eps_j, \forall j\geq 1$  and
$\calH^n\left(\calR^{\eps_j}\right)\to0$. Then
$\m_p \G_{\calS} = \m_p (\hat{\G}_{\calS})=0$ for every $p\geq 1$.
\end{corollary}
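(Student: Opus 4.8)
The plan is to feed the hypotheses directly into Lemmas \ref{lemma:main_estimate} and \ref{lemma:main-estimate-lift} and then pass to the limit along the sequence $\eps_j\to 0$; there is no genuinely hard step once those two lemmas are in hand, since the corollary is essentially a packaging of them under the extra assumption $\calH^n(\calR^{\eps_j})\to 0$.

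First I would fix $j\geq 1$. By hypothesis $\calS$ is uniformly relatively separated and the right $\eps_j$-collars of the slits in the subsequence $\calS(\eps_j)$ are pairwise disjoint, i.e.\ (\ref{condition:disjoint_collars}) holds with $\eps=\eps_j$; hence the standing assumptions of Lemma \ref{lemma:main_estimate} are met with this choice of $\eps$. Applying the Main Estimate gives, for every $p\geq 1$,
\[
  \m_p \G_{\calS}\leq (b_1-a_1)^{-p}\left[\calH^n(\calR^{\eps_j})+C_n\calH^n(I)\,\eps_j\right],
\]
and applying Lemma \ref{lemma:main-estimate-lift} (legitimate under the same hypotheses) gives
\[
  \m_p(\hat{\G}_{\calS},\calH^n)\lesssim \calH^n(\calR^{\eps_j})+O(\eps_j).
\]

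Next I would let $j\to\infty$. By hypothesis $\eps_j\to 0$ and $\calH^n(\calR^{\eps_j})\to 0$, so the right-hand sides of both inequalities tend to $0$; since the left-hand sides are quantities that do not depend on $j$, we conclude $\m_p \G_{\calS}=\m_p(\hat{\G}_{\calS})=0$ for every $p\geq 1$. One could also note that $\G_{\calS}\subset \hat{\G}_{\calS}$, so by monotonicity of modulus (Lemma \ref{lemma:modulus-properties}) the vanishing of $\m_p\hat{\G}_{\calS}$ already implies that of $\m_p\G_{\calS}$, and only the second estimate is strictly needed.

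The one point I would be careful about — and the place where the actual work sits, namely inside the proofs of Lemmas \ref{lemma:main_estimate} and \ref{lemma:main-estimate-lift} in Section \ref{Section:main-estimate-proof} — is that the constant $C_n$ and the constants implicit in ``$\lesssim$'' and ``$O(\eps)$'' must depend only on $n$, on the box $I$, and on the uniform relative separation constant $\sigma$ of $\calS$, and \emph{not} on the particular $\eps=\eps_j$. This uniformity is precisely what licenses the limiting argument: it allows the $\eps$-independent quantities $\m_p\G_{\calS}$ and $\m_p\hat{\G}_{\calS}$ to be squeezed to $0$ by a family of $\eps$-dependent upper bounds.
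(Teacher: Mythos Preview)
Your proposal is correct and matches the paper's approach exactly: the paper does not even give an explicit proof, merely stating that the corollary follows by ``combining Lemmas \ref{lemma:main_estimate} and \ref{lemma:main-estimate-lift}'', which is precisely the limiting argument you wrote out. Your added remark about the uniformity in $\eps$ of the constants is a fair observation, and your monotonicity shortcut via $\G_{\calS}\subset\hat{\G}_{\calS}$ is a valid alternative for the first equality.
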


Thus, if for every small $\eps>0$ there is a subsequence $\{s_{i_k}\}$ of slits whose $\eps$-collars are disjoint and the union of these $\eps$-collars has full measure in $I$ then $\m_p(\G_{\calS})=0$. The proof of Lemma \ref{lemma:main_estimate} will show that one can have bounded admissible metrics for $\G_{\calS}$ supported essentially on the complement of the disjoint $\eps$-collars of slits $s_{i_k(\eps)}$.

%
%

\subsection{Non-vertical families in slit spaces}
Here we show that under the general condition of previous subsection the collection of all non vertical curves in the slit space  $M(\calS)$ also has vanishing modulus. Recall that we say that a curve $\g$ in $M(\calS)$ or $DM(\calS)$ is \emph{vertical} if $\pi_1(\tau_0(\g))$ is a point in $\mathbb{R}$, otherwise $\g$ is \emph{non-vertical}.

\begin{lemma}\label{lemma:non-vertical0}
Suppose $\calS=\{s_i\}\subset I$ is a uniformly relatively separated sequence of slits for
which there exists a sequence $\eps_j\to0$ such that (\ref{condition:disjoint_collars}) holds for $\eps=\eps_j, \forall j\geq 1$  and
$\calH^n\left(\calR^{\eps_j}\right)\to0$.
Let $\G_{nv}$ be the family of all non-vertical curves in $(M(\calS),d_{\calS},\calH^n)$ or  $(DM(\calS),d_{\calS},\calH^n)$. Then $\m_p(\G_{nv})=0,$
for all $p\geq 1$.
\end{lemma}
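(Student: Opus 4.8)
The plan is to reduce the statement about the curve family $\G_{nv}$ of \emph{all} non-vertical curves to the already-established estimate on the much smaller family $\hat\G_\calS$ (equivalently $\G_\calS$) of curves running from $\mathcal{L}$ to $\mathcal{R}$, via a covering/subadditivity argument combined with a rescaling. By Corollary \ref{corollary:main_estimate}, the hypotheses force $\m_p\G_\calS = \m_p\hat\G_\calS = 0$ for every $p\ge 1$. Moreover the proof of Lemma \ref{lemma:main_estimate} produces, for each small $\eps>0$, a \emph{bounded} admissible metric $\rho_\eps$ for $\G_\calS$ supported essentially on the complement of the disjoint $\eps$-collars, with $\int \rho_\eps^p \, d\calH^n \to 0$. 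The key observation is that a non-vertical curve $\g$ in $M(\calS)$, by definition, has $\overline{\vartheta(\g)} = [a,b]$ for some $a<b$; so $\g$ (or a subcurve of it) connects the two vertical hyperfaces $\{x_1 = a\}$ and $\{x_1 = b\}$ \emph{within} the sub-box $I' = [a,b]\times(a_2,b_2)\times\cdots\times(a_n,b_n)$. Since the slits are dense in $I$ and uniformly relatively separated, the same Main Estimate applies to slits lying in any sub-box, or — more simply — one covers $[a_1,b_1]$ by finitely many intervals and uses monotonicity: a non-vertical curve whose $x_1$-projection has length at least $1/m$ contains a subcurve joining $\{x_1 = c\}$ to $\{x_1 = c + 1/m\}$ for some $c$ in a fixed finite grid.

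Concretely, the steps I would carry out are as follows. First, decompose $\G_{nv} = \bigcup_{m=1}^\infty \G_{nv}^{(m)}$, where $\G_{nv}^{(m)}$ consists of non-vertical curves $\g$ with $\diam(\vartheta(\g)) \ge 1/m$; by subadditivity (Lemma \ref{lemma:modulus-properties}(2)) it suffices to show $\m_p\G_{nv}^{(m)} = 0$ for each fixed $m$. Second, fix $m$ and choose a finite partition $a_1 = t_0 < t_1 < \cdots < t_N = b_1$ with each $t_{j+1} - t_j < 1/m$; then every $\g\in\G_{nv}^{(m)}$ contains a subcurve joining the hyperplane $\{x_1 = t_j\}$ to $\{x_1 = t_{j+1}\}$ inside $M(\calS)$ for some $j$. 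Writing $\G_j$ for the family of curves in $M(\calS)$ joining those two hyperplanes, overflowing (Lemma \ref{lemma:modulus-properties}(3)) gives $\m_p\G_{nv}^{(m)} \le \sum_{j=0}^{N-1} \m_p\G_j$. Third, estimate each $\m_p\G_j$: the projection $\tau_0$ pushes $\G_j$ to a family $\hat\G_j$ of curves in the sub-box $I_j := [t_j,t_{j+1}]\times(a_2,b_2)\times\cdots\times(a_n,b_n)$ joining its left and right faces and avoiding all slits contained in $I_j$ (the slits not contained in $I_j$ are irrelevant for admissibility once we only integrate over $I_j$). The collection $\calS\cap I_j$ of slits is still uniformly relatively separated, still satisfies the disjoint-$\eps_k$-collar condition (restrict the subsequences), and still has $\calH^n(\calR^{\eps_k}\cap I_j)\to 0$; so Lemma \ref{lemma:main-estimate-lift} (or its proof) applied to $I_j$ and Corollary \ref{corollary:main_estimate} yield $\m_p\hat\G_j = 0$, and then $\m_p\G_j \lesssim \m_p\hat\G_j = 0$ using Lemma \ref{lemma:modulus-under-projections} together with the measure comparison (\ref{ineq:Merenkov_measures_comparable}) (since $\tau_0$ is Lipschitz and measure-nonincreasing on $M(\calS)$). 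Summing over the finitely many $j$ and then over $m$ finishes the $M(\calS)$ case. Finally, for $DM(\calS)$: a non-vertical curve in the double either lies in one of the two copies of $M(\calS)$ — handled above — or crosses the gluing locus $\tau_0^{-1}(\partial I)$ finitely or countably often, in which case we split it at crossing points into at most countably many subcurves each lying in a single copy, and apply subadditivity once more; alternatively, one observes $DM(\calS)$ is itself covered by two isometric copies of $M(\calS)$ and the projection $\vartheta$ is defined consistently across the seam, so the identical covering argument applies verbatim.

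The main obstacle I anticipate is the rescaling/restriction step: verifying that the Main Estimate of Lemma \ref{lemma:main_estimate} genuinely applies to the slits lying in a sub-box $I_j$ rather than in the original box $I$. This requires checking that (a) $\calS\cap I_j$ inherits uniform relative separation (clear, with the same constant $\sigma$, possibly after noting $\partial I_j$ now includes two "artificial" faces $\{x_1 = t_j\}$, $\{x_1 = t_{j+1}\}$ and slits could in principle crowd those — but since the slits are vertical hypercubes and $t_j, t_{j+1}$ can be chosen to avoid the countably many values $\pi_1(s_i)$, this is fine), (b) the disjoint-collar subsequences for $\eps_k$ restrict to subsequences for $\calS\cap I_j$ with disjoint collars (immediate, since we only throw slits away and collars of the survivors stay disjoint), and (c) $\calH^n(\calR^{\eps_k}\cap I_j)\to 0$, which follows from $\calH^n(\calR^{\eps_k})\to 0$ and monotonicity of measure. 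A cleaner alternative that avoids sub-boxes entirely: prove directly that the proof of Lemma \ref{lemma:main_estimate} produces, for each $\eps_k$, a globally bounded admissible metric $\rho_k$ on $I$ for \emph{every} family $\G_j$ simultaneously (the admissible metric constructed there is essentially $(b_1-a_1)^{-1}$ times an indicator, suitably rescaled, and rescaling it by $(t_{j+1}-t_j)^{-1}$ on $I_j$ keeps it bounded with small $L^p$ norm), so that $\sum_j \m_p\G_j$ is controlled by a single integral $\int_I \rho_k^p\,d\calH^n \to 0$. I would also double-check the harmless measure-zero issue that some exceptional non-vertical curves might have $\overline{\vartheta(\g)}$ a degenerate interval or touch $\partial I$; these contribute nothing since any such curve still contains a non-degenerate crossing subcurve once $\diam\vartheta(\g)>0$.
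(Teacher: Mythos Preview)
Your overall strategy---decompose $\G_{nv}$ by the size of the $x_1$-oscillation, then by which sub-interval of $[a_1,b_1]$ the projection covers, use overflowing to reduce to ``left-to-right'' families $\G_j$ in sub-boxes, push down via $\tau_0$ and apply the Main Estimate, then lift back via Lemma~\ref{lemma:modulus-under-projections} and (\ref{ineq:Merenkov_measures_comparable})---is exactly the route the paper takes. The paper uses dyadic sub-intervals $J_{k,j}$ of length $2^{-(k+1)}(b_1-a_1)$ rather than an arbitrary partition, but this is cosmetic.

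The one genuine soft spot is your item (a). Choosing $t_j,t_{j+1}$ outside the countable set $\{\pi_1(s_i)\}$ does \emph{not} restore uniform relative separation from the new faces: a slit $s_i$ with $\pi_1(s_i)$ very close to $t_{j+1}$ has $\Delta(s_i,\{x_1=t_{j+1}\})\approx |\pi_1(s_i)-t_{j+1}|/l(s_i)$, which need not be bounded below. The paper handles this with a $\delta$-shrinking trick: for $J=(\alpha,\beta)$ it keeps only the slits lying in $I_\delta=I\cap\pi_1^{-1}(\alpha+\delta,\beta-\delta)$, applies the Main Estimate there (now the retained slits \emph{are} uniformly separated from $\partial(I\cap\pi_1^{-1}(J))$ with constant depending on $\delta$), picks up an extra error term $\calH^n(I_0\setminus I_\delta)\asymp\delta$, and sends $\delta\to 0$ after $\eps_j\to 0$. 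Your ``cleaner alternative''---restricting the global admissible metric $\rho^{\eps}$ to the slab and rescaling by $(t_{j+1}-t_j)^{-1}$---also works and is arguably tidier, since the admissibility proof in Section~\ref{Section:main-estimate-proof} never actually uses separation from the \emph{left/right} faces of the box, only that the $\eps$-collars are contained in $I$.
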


\begin{proof}


Let $k\geq0$ and let $\G_k$ be the family of curves $\g$ in $\G_{nv}$ such that $\calH^{1}(\pi_1(\tau_0(\g)))\geq 2^{-k}(b_1-a_1)$, i.e. those whose image $\tau_0(\g)$ oscillates in the first coordinate by at least $2^{-k}(b_1-a_1)$. Then $\G_{nv}=\cup_{k=1}^{\infty} \G_k$. Furthermore, for every $\g\in\G_k$ the projection $\pi_1(\tau_0(\g))$ contains an interval
$$J_{k,j}:=\left(a_1+\frac{b_1-a_1}{2^{k+1}} \cdot j,\, a_1+\frac{b_1-a_1}{2^{k+1}} \cdot (j+1) \right)$$
for some $j\in\{0,\ldots,2^{k+1}-1\}$. Denoting
\begin{align*}
  \G_{k,j} = \{ \g\in\G_{nv} : \pi_1(\tau_0(\g)) \supset J_{k,j} \}
\end{align*}
we can write $\G_k = \bigcup_{j=1}^{2^{k+1}} \G_{k,j}$ and thus
$\G_{nv}=\bigcup_{k=1}^{\infty}\bigcup_{j=1}^{2^{k+1}}\G_{k,j}.$
Therefore, by subadditivity of modulus it is enough to show that $\m_p(\G_{k,j})=0$ for all $k\geq 1$ and $0\leq j <2^k$.
We will show a more general fact. Namely, for every interval $J:=(\alpha,\beta)\subset(a_1,b_1)$ denoting $\G(J)=\{ \g\in \G_{nv} \, | \, \pi_1(\tau_0(\g))\supset J\}$
we will show that $\m_p(\G(J))=0$.
For this we would like to use Lemma \ref{lemma:main-estimate-lift}. However some care has to be taken since we do not know that the relative distance between the slits $s_i\in\calS$ which are contained in $I\cap\pi_1^{-1}(J)$ and the boundary of this box is bounded from below.

Let $G(J)=\tau_0(\G(J))$. We first observe that $\m_p G(J) =0$.

Suppose  $\d\geq0$ is small enough so that $J_{\d}:=(\alpha+\d,\beta-\d)\subset J$. Let
$I_{\d}= I\cap\pi_1^{-1}(\alpha+\d,\beta-\d),$
and
$\calS'=\{s\in \calS : s\in I_\delta\}.$
%
%
Let $G_{\d}(J)$ be the family of curves $\g'$ in $I\cap\pi_1^{-1}(J)$ connecting the vertical sides of that box, such that $\g'\cap I_{\d}$ is connected and $\g'\cap I_{\d}=\tau_0(\g)$ for some $\g\in\G_{nv}$. Thus $\g'$ essentially avoids the slits in $\calS'$. In other words, we disregard the slits contained in the $\d$ neighborhoods of  the left and right faces of $I_0=I\cap\pi_1^{-1}(J)$.

By overflowing property of modulus we have $\m_p(G(J))\leq \m_p(G_{\d}(J)).$
But now we may apply Lemma \ref{lemma:main_estimate} to $G_{\d}(J)$, since the collection of slits $S'$ is uniformly relatively separated in $I\cap\pi^{-1}(J)$. Therefore, if $\eps_j\to0$ is such that $\calH^n(\calR^{\eps_j})\to0$ then
\begin{align*}
\m_p(G_{\d}(J))
&\lesssim 
\calH^n\left(\calR^{\eps_j}\cap I_{\d}\right) + \calH^n(I_0\setminus I_{\d}) + C \eps_j \underset{j\to\infty} \longrightarrow 2\d \prod_{i=2}^n(b_i-a_i).
\end{align*}
Taking $\d\to0$ it follows that for every interval $J\subset\pi_1(I)$ we have $\m_p(G(J))=0$.  From Lemma \ref{lemma:Merenkov-ahlfors-regular} and the fact that  $\vartheta:DM(\mathcal{S})\to[0,1]^n$ is $1$-Lipschitz it follows that  we may apply Lemma \ref{lemma:modulus-under-projections}. Since $G(J)= \vartheta(\G(J))$, it follows that $\m_p(\G(J))=0$, for every $J\subset(a_1,b_1)$. As explained before, subadditivity implies that $\m_p\G_{nv}=0$.
\end{proof}


\subsection{Slits occuring on all locations and scales}

Recall, that we say that the slits $s_i\in I$ \emph{occur on all locations and scales in} $I\subset\mathbb{R}^n$ if there is a constant $C\geq 1$ such that for every ball $B=B(x,r)\subset I$ there is a slit $s_i\subset B$ such that $\diam s_i \geq r/C$.

\begin{lemma}\label{lemma:curves_in_porous_carpets}
  Suppose $\calS=\{s_i\}\subset I$ is a sequence of slits in $I$ which is uniformly relatively separated and occurs on all locations and scales. Then, for every $p\geq 1$, the following holds
  $$\m_p \G_{\calS} = \m_p (\hat{\G}_{\calS})=\m_p(\G_{nv})=0,$$ 
where $\G_{nv}$ is the family of non-vertical curves in $M(\calS)$ of $DM(\calS)$.
\end{lemma}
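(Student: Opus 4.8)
\textbf{Proof plan for Lemma \ref{lemma:curves_in_porous_carpets}.}

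The strategy is to show that the hypothesis (slits uniformly relatively separated plus occurring on all locations and scales) implies, for every $\eps>0$, the existence of a disjoint-collar subsequence $\calS(\eps)$ as in Lemma \ref{lemma:main_estimate} whose collars exhaust $I$ up to a set of measure as small as we like; then Corollary \ref{corollary:main_estimate} gives $\m_p\G_{\calS}=\m_p(\hat\G_{\calS})=0$, and Lemma \ref{lemma:non-vertical0} gives $\m_p(\G_{nv})=0$. Since Lemma \ref{lemma:non-vertical0} already deduces the vanishing of $\m_p(\G_{nv})$ from exactly the same ``disjoint collars with $\calH^n(\calR^{\eps_j})\to0$'' condition, the entire content of the present lemma reduces to establishing that condition from porosity.

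First I would fix $\eps>0$ (to be taken along a sequence $\eps_j\to0$) and run a Vitali-type greedy selection. Using $(\calS_4)$, inside any box one finds a slit of comparable diameter; I would iterate this on a fine grid, at each stage picking a slit $s_i$ whose $\eps$-collar $s_i^\eps$ is disjoint from all previously chosen collars and which captures a definite proportion of the local measure — here the uniform relative separation $(\calS_1)$ is what guarantees that $s_i^\eps\subset I$ and that a chosen slit $s_i$ blocks off a region of measure $\gtrsim \eps\,l(s_i)^n\asymp \eps\,\calH^n(B)$ around it, while $(\calS_4)$ guarantees at every scale and location a candidate of comparable size exists. A standard covering argument (subdivide the uncovered part $\calR^\eps$ into small boxes, find in each a large disjoint-collar slit, repeat) then shows that after finitely many rounds the uncovered set has measure at most a fixed fraction $\theta<1$ of what it was, so $\calH^n(\calR^\eps)\to 0$ as the rounds proceed; relabeling, for each $j$ we obtain a subsequence $\calS(\eps_j)$ with pairwise disjoint $\eps_j$-collars and $\calH^n(\calR^{\eps_j})\to 0$. (One must be slightly careful that the same index can be reused for different $\eps_j$; this is harmless since Lemma \ref{lemma:main_estimate} only asks for \emph{a} subsequence for \emph{each} $\eps$.)

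With the disjoint-collar exhausting subsequences in hand, Corollary \ref{corollary:main_estimate} immediately yields $\m_p\G_{\calS}=\m_p(\hat\G_{\calS})=0$ for every $p\ge1$, and Lemma \ref{lemma:non-vertical0} — whose hypotheses are now verified — yields $\m_p(\G_{nv})=0$ for the non-vertical family in $M(\calS)$ and in $DM(\calS)$. That completes the proof. The main obstacle I anticipate is the covering/greedy argument: one needs to choose the slits so that their $\eps$-collars are genuinely pairwise disjoint (not merely the slits themselves) while still capturing a fixed proportion of the measure at each scale, and to control how the constant $C$ from $(\calS_4)$ and the separation constant $\sigma$ from $(\calS_1)$ enter the proportion $\theta$ — in particular one should check that $\theta$ can be taken bounded away from $1$ uniformly in $\eps$ (small), so that finitely many rounds suffice to push $\calH^n(\calR^\eps)$ below any prescribed threshold. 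Everything else is bookkeeping and an appeal to the already-proved estimates.
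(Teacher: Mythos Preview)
Your overall architecture is correct and matches the paper: verify the disjoint-collar hypothesis of Lemma~\ref{lemma:main_estimate} with $\calH^n(\calR^\eps)=0$, then invoke Corollary~\ref{corollary:main_estimate} and Lemma~\ref{lemma:non-vertical0}. The difference is in how the key step --- showing $\calH^n(\calR^\eps)=0$ --- is carried out.

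You propose an iterative Vitali-type covering: at each round, subdivide the uncovered region, insert disjoint collars capturing a fixed proportion $1-\theta$ of the measure, and iterate. As you yourself note, the delicate point is ensuring the new collars are disjoint from all previously selected ones while still capturing a uniform proportion. The paper sidesteps this obstacle entirely with a cleaner one-shot argument. It first reorders the slits by decreasing sidelength, then runs a single greedy pass: $i_1=1$, and $i_{k+1}$ is the \emph{smallest} index whose $\eps$-collar is disjoint from those already chosen. The payoff of ordering by size is that if some slit $s_i$ was \emph{not} selected, its collar must intersect the collar of a selected slit $s_{i_k}$ with $i_k<i$, hence $l(s_{i_k})\ge l(s_i)$. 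Now take any ball $B(p,r)\subset I$: condition $(\calS_4)$ produces a slit $s_i\subset B(p,r/2)$ with $l(s_i)\ge r/C$; either $s_i$ was selected, or it overlaps a selected collar of at least the same size. In either case $B(p,r)$ contains a ball of radius $\gtrsim \eps r/C$ lying in $(\calR^\eps)^c$. Thus $\calR^\eps$ has no Lebesgue density points and $\calH^n(\calR^\eps)=0$ for \emph{every} $\eps\in(0,\sigma)$ --- no iteration, no tracking of $\theta$, and the constants never enter beyond this porosity statement.

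So your plan would work, but the paper's route is shorter and avoids precisely the bookkeeping you flagged as the main obstacle.
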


\begin{proof}
First, we enumerate the sequence $s_i$ so that $l(s_i)\geq l(s_{i+1})$ for $i\geq 1$. Next, fix $0<\eps<\sigma(\calS)$ and define the sequence $i_k=i_k(\eps)$ inductively as follows. Let $i_1=1$. For $k\geq1$ assume $i_1,\ldots,i_k$ have been defined so that the collars  $s_{i_1},\ldots s_{i_k}$ are pairwise disjoint. Note, that there is a slit $s_j$ which does not intersect the $\eps$-collars $s_{i_1}^{\eps},\ldots s_{i_k}^{\eps}$. Indeed, since the slits occur at all locations and scales we may pick a ball $B\in I\setminus(s_{i_1}^{\eps}\cup\ldots\cup s_{i_k}^{\eps})$ and a slit $s_j\subset 2^{-1}B$ such that $s_j^{\eps}\subset B$ and thus has a collar that is disjoint from the previously chosen ones. We let $i_{k+1}$ to be the smallest index, satisfying this property. More precisely we define
\begin{align*}
 i_{k+1}= i_{k+1}(\eps) = \min\{ j \, | \,\, s_j^{\eps} \cap s_{i_l}^{\eps} = \emptyset, \forall l\leq k \}.
\end{align*}
Thus, by definition condition (\ref{condition:disjoint_collars}) of Lemma \ref{lemma:main_estimate} is satisfied.

Next, we wish to estimate the $\calH^n$-measure of $\calR^{\eps}$. Fix $p\in I$ and $r>0$. Since the slits $s_i$ appear on all scales and locations there is a slit $s_i$ such that $s_i\subset B(p,r/2)$ and $l(s_i)\geq r/C$. Now, if $i={i_k(\eps)}$ for some $k$ then $s_i^{\eps}\subset (\calR^{\eps})^c.$ On the other hand if $i\neq i_{k}(\eps)$ for any $k\geq 1$ then $s_i^{\eps}$ intersects one of the collars $s_{i_k}^{\eps}$ for some $i_k<i$ and therefore $l(s_{i_k})\geq l(s_k)$. But this means that $s_{i_k}\subset (\calR^{\eps})^c$. Since $s_i^{\eps}$ has a nontrivial intersection with $s_{i_k}^{\eps}$, it follows that there is a ball $B'\subset B(p,r)$ of radius $(\eps l(s_i)/2)$ which is  contained in the collar $s_{i_k}^{\eps}$ and as such is in the complement of $\calR^{\eps}$. Thus for every ball $B(x,r)\subset I$ there is a ball $B'\subset (\calR^{\eps})^c$ of radius
$$\frac{\eps l(s_i)}{2}\geq \frac{\eps } {2C} \cdot r. $$
Since $\eps$ and $C$ are fixed constants it follows that $\calR^{\eps}$ has  no density points and therefore has zero $\calH^n$-measure (in fact $\calR^{\eps}$ is porous, but we do not need this fact). Applying Corollary \ref{corollary:main_estimate} and Lemma \ref{lemma:non-vertical0} completes the proof.
\end{proof}

\subsection{Standard non-self-similar slits}\label{Section:non-self-similar}

%

Let $\textbf{r}=\{r_i\}$ be a sequence of real numbers such that $0\leq r_i<1$. In Section \ref{Section:non-self-similar} we defined a collection $\calS_{\textbf{r}}$ of slits in $[0,1]^n$ which was used in the construction of standard slit Sierpi\'nski spaces. We will denote by $\G_{\textbf{r}}=\G_{\calS_{\textbf{r}}}$, i.e. the family of curves connecting the vertical sides of the unit cube in $\mathbb{R}^n$ which avoid $\calS_{\textbf{r}}$. We will also let $\hat{\G}_{\mathbf{r}}=\hat{\G}_{\calS_{\mathbf{r}}}$. Here we will apply Lemma \ref{lemma:main_estimate} to obtain the following result.


\begin{lemma}\label{lemma:curves-in-diadic-carpets}

If $\calS_{\textbf{r}}\subset\mathbb{R}^n$ is the standard non-self-similar collection of slits  such that $\textbf{r}\notin\ell^n$ then for every $p\geq 1$, the following holds
  $$\m_p \G_{\calS} = \m_p (\hat{\G}_{\calS})=\m_p(\G_{nv})=0,$$ 
where $\G_{nv}$ is the family of non-vertical curves in $M(\calS)$ of $DM(\calS)$.
\end{lemma}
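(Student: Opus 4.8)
The plan is to apply Lemma~\ref{lemma:main_estimate} (the Main Estimate) with a judicious choice, for each small $\eps>0$, of a subsequence of diadic slits whose $\eps$-collars are pairwise disjoint, and then to show that the leftover set $\calR^\eps$ has vanishing $\calH^n$-measure provided $\textbf{r}\notin\ell^n$. Once $\m_p\G_{\calS_{\textbf r}}=\m_p(\hat{\G}_{\calS_{\textbf r}})=0$ is established via Corollary~\ref{corollary:main_estimate}, the statement $\m_p(\G_{nv})=0$ follows immediately from Lemma~\ref{lemma:non-vertical0}, since $\calS_{\textbf r}$ is automatically uniformly relatively separated (the slit $s(Q)$ sits at the center of the diadic cube $Q$, so its distance to $\partial Q$ and to the slits of sibling cubes is a fixed fraction of $l(Q)\asymp l(s(Q))$, as long as $r_i<1$). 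So the whole content is the measure estimate for $\calR^\eps$.

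First I would fix $0<\eps<\sigma(\calS_{\textbf r})$ and, working down the diadic tree generation by generation, select slits whose $\eps$-collars fit inside the ``free'' part of their cube. Concretely, the $\eps$-collar $s(Q)^\eps$ of the slit in a cube $Q\in\D_i$ is a box of dimensions $\eps r_i 2^{-i}\times (r_i2^{-i})^{n-1}$ attached to the face $s(Q)$; since collars of slits in distinct cubes of the \emph{same} generation are automatically disjoint (they live in disjoint cubes, up to the shared center plane — one checks the center placement keeps them inside $Q$ for $\eps$ small), and a collar in a child cube is contained in that child and hence disjoint from the collar of a strictly higher-generation ancestor only if the ancestor's collar does not reach down — so the careful bookkeeping is to choose, within each cube $Q$, to ``use'' the slit of $Q$ itself, and then inside each of the $2^n-1$ children's subregions not yet covered, recurse. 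This produces a subsequence $\calS(\eps)$ satisfying the disjoint-collar hypothesis (\ref{condition:disjoint_collars}) of Lemma~\ref{lemma:main_estimate}.

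The heart of the argument is then to show $\calH^n(\calR^\eps)\to 0$ along a sequence $\eps_j\to 0$. I would estimate, for a fixed cube $Q_0\in\D_N$, the fraction of $Q_0$ \emph{not} covered by the collars of the selected slits in $Q_0$ and its descendants. Each selected slit in a cube $Q\in\D_i$ (with $Q\subseteq Q_0$) contributes a collar of volume $\eps\, (r_i 2^{-i})^n$, i.e.\ a fraction $\eps r_i^n$ of the volume of $Q$. Summing a geometric-type series over generations, the total uncovered fraction of $[0,1]^n$ is bounded above by something like $\prod_{i\ge 0}\bigl(1-c\,\eps\, r_i^n\bigr)$ for a dimensional constant $c>0$ (possibly with small corrections from the finitely many collars discarded in each cube for disjointness, which only change $c$). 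This product is zero precisely when $\sum_i \eps r_i^n = \infty$, i.e.\ when $\sum_i r_i^n=\infty$, which is exactly the hypothesis $\textbf{r}\notin\ell^n$. Hence for each fixed $\eps$ we get $\calH^n(\calR^\eps)=0$, and in particular $\calH^n(\calR^{\eps_j})\to 0$ trivially. Then Corollary~\ref{corollary:main_estimate} gives $\m_p\G_{\calS_{\textbf r}}=\m_p(\hat{\G}_{\calS_{\textbf r}})=0$ for all $p\ge 1$, and Lemma~\ref{lemma:non-vertical0} upgrades this to $\m_p(\G_{nv})=0$.

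The main obstacle I anticipate is the bookkeeping in the collar-selection step: one must verify that the chosen $\eps$-collars are genuinely pairwise disjoint across \emph{all} generations (not merely within a single generation), and that enough collars are retained so that the surviving-mass product still diverges to zero under $\sum r_i^n=\infty$. The worry is the interaction between a collar in a deep cube and collars of its ancestors — but since the slit of $Q$ is centered in $Q$ and has sidelength $r_{\operatorname{gen}(Q)}l(Q)<l(Q)$, its collar stays well inside $Q$, so it is disjoint from any collar associated to a cube not containing $Q$ and, by construction, we never select two nested cubes' slits whose collars overlap; formally this is an induction on generation exactly as in the proof of Lemma~\ref{lemma:curves_in_porous_carpets}, and I would mirror that argument. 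The estimate $\prod(1-c\eps r_i^n)=0 \iff \sum r_i^n=\infty$ is the standard infinite-product criterion and is routine once the geometric setup is in place.
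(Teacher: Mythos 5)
Your overall plan matches the paper's: apply Lemma~\ref{lemma:main_estimate} via Corollary~\ref{corollary:main_estimate} by selecting a subsequence of diadic slits with pairwise disjoint $\eps$-collars (mirroring Lemma~\ref{lemma:curves_in_porous_carpets}), show that the residual set $\calR^\eps$ is $\calH^n$-null whenever $\sum_i r_i^n=\infty$ by a telescoping volume estimate of the form $\calH^n(\calR^\eps_k)\leq\prod_{i\leq k}(1-\eps r_i^n)$, and then invoke Lemma~\ref{lemma:non-vertical0} to get $\m_p(\G_{nv})=0$. That is the correct skeleton.

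However, there is a genuine gap in the middle of your argument, exactly at the point you flagged as ``the worry.'' You write that the total uncovered fraction is bounded by $\prod_i(1-c\,\eps\, r_i^n)$ ``possibly with small corrections\ldots which only change $c$.'' That claim is not justified and is in fact the only delicate step. The telescoping inequality $\calH^n(\calR_k^\eps\cap Q)\leq(1-\eps r_k^n)\calH^n(\calR_{k-1}^\eps\cap Q)$ requires a dichotomy: for every $Q\in\D_k$, the collar $s^\eps(Q)$ is either entirely contained in the residual $\calR_{k-1}^\eps$ (so its full volume $\eps r_k^n\calH^n(Q)$ is newly removed), or the cube $Q$ is already entirely outside $\calR_{k-1}^\eps$ (so both sides vanish). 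Without this, a deep collar could straddle the boundary of a previously removed collar, and only a fraction of $\eps r_k^n\calH^n(Q)$ is removed; the ``correction constant $c$'' is then not manifestly uniform in $k$, and the product need not vanish. The paper resolves this by an additional reduction you omit: one may assume each $r_i$ is a power of $1/2$ (replacing $r_i$ by the largest $t_i=2^{-m}\leq r_i$, which only shrinks the slits and hence only enlarges the curve family, while keeping $\sum t_i^n=\infty$), and one takes $\eps=2^{-j}$ as well. With everything dyadically aligned, the dichotomy holds exactly, and the clean factor $(1-\eps r_k^n)$ is correct with no correction needed. You should add this dyadic reduction before the collar-selection and the volume recursion; otherwise the inductive step is unsupported.

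One further small point: you assert that $\calS_{\mathbf r}$ is ``automatically uniformly relatively separated as long as $r_i<1$.'' That is true if the $r_i$ are bounded away from $1$, which the dyadic reduction above also conveniently enforces (after replacing $r_i$ by $t_i\le 1/2$); without such a bound the relative distance between sibling slits could degenerate.

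Subject to inserting the dyadic-reduction step, your argument tracks the paper's proof.
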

\begin{proof}
First, note that we may assume that $r_i$ is a power or $1/2$ for every $i\geq 0$. Indeed, if $t_i$ is the largest number of the form $1/2^{m}$ which is less than $r_i$, $i\geq 1$ then $\G_{\textbf{r}} \subseteq \G_{\textbf{t}}$ and therefore
$\m_p \G_{\textbf{r}} \leq \m_p \G_{\textbf{t}}$. Thus, if  $\textbf{t}\in \ell^{2}$ then $\textbf{r}\in \ell^{2}$, since $t_i\asymp r_i$ and if we show that $\m_p \G_{\textbf{t}}=0$ then also $\m_p \G_{\textbf{r}}=0$.

Next, we choose $s_{i_j(\eps)}$ in the same way as in Lemma \ref{lemma:curves_in_porous_carpets}, thus guaranteeing that condition (\ref{condition:disjoint_collars}) is satisfied.

To estimate the measure of the residual set $\calR^{\eps}=[0,1]^n\setminus \bigcup_{k=1}^{\infty}s^{\eps}_{i_j(\eps)}$, let
\begin{align*}
  \calR_k^{\eps} = [0,1]^n \setminus \bigcup_{i=0}^{k} \bigcup_{s^{\eps}_{i_j(\eps)}\in\D_i} s^{\eps}_{i_j(\eps)}.
\end{align*}
By the disjointness property of the $\eps$ collars we have that
\begin{align*}
   \calR_k^{\eps} = [0,1]^n \setminus \bigcup_{i=0}^{k} \bigcup_{Q\in\D_i} s^{\eps}(Q).
\end{align*}
Next, we estimate the measure of $\calR^{\eps}_{k+1}$. Note that for $Q_0\in\D_0$ we have
\begin{align*}
  \calH^n(\calR^{\eps}_0) = 1-\calH^n(s^{\eps}(Q_0))=1-\eps l(s(Q_0))^n =1-\eps r_0^n.
\end{align*}
Now, if $Q\in\D_k$ for some $k>1$ then
\begin{align}\label{equality:measure-of-nsss}
  \calR^{\eps}_k \cap Q  =(\calR^{\eps}_{k-1} \cap Q) \setminus s(Q),
\end{align}
where either $s(Q)$ is contained in a previously removed collar, or it does not intersect any such collar.
Now, if $s(Q)$ is contained in a removed collar then, since $\eps$ is a power of $1/2$, $Q$ is also in the complement of $\calR_k^{\eps}$ and both sides of (\ref{equality:measure-of-nsss}) are empty. On the other hand if $s(Q)\cap\calR_{k-1}^{\eps}\neq \emptyset$ then $s(Q)\subset\calR_{k-1}^{\eps}$ and
%
%
we have
\begin{align*}
  \calH^n(\calR^{\eps}_k \cap Q)=\calH^n(\calR^{\eps}_{k-1} \cap Q)-\calH^n(s^{\eps}(Q)).
\end{align*}
But
\begin{align*}
  \calH^n(s^{\eps}(Q))=\eps l(s(Q))^n = \eps \left(\frac{r_k}{2^k}\right)^n = \eps r_k^n \calH^n(Q) \geq \eps r_k^n \calH^n(\calR^{\eps}_{k-1} \cap Q)
\end{align*}
and therefore if $s(Q)\cap \calR_{k-1}^{\eps}\neq \emptyset$ we have
\begin{align*}
  \calH^n(\calR^{\eps}_k \cap Q)\leq(1-\eps r_k^n) \calH^n(\calR^{\eps}_{k-1}\cap Q).
\end{align*}
Moreover, as explained before if $s(Q)\cap \calR_{k-1}^{\eps} = \emptyset$ then both sides of the inequality are $0$. Therefore
summing over all diadic cubed of generation $k$ we obtain $\calH^n(\calR^{\eps}_k)\leq(1-\eps r_k^n) \calH^n(\calR^{\eps}_{k-1}).$
By induction we have
$$\calH^n(\calR^{\eps}_k)\leq\prod_{i=0}^k(1-\eps r_i^n).$$
So if  $\sum_i r_i^n=\infty$ then
\begin{align*}
  \calH^{n}(\calR^{\eps}) \leq  \calH^{n}\left(\bigcap_{k=1}^{\infty}\calR_k^{\eps}\right)\leq \lim_{k\to\infty} \prod_{i=0}^k(1-\eps r_i^n)=0.
\end{align*}
Taking $\eps_j=1/2^j$ and applying Corollary \ref{corollary:main_estimate} and Lemma \ref{lemma:non-vertical0} we obtain the needed equalities.
\end{proof}

\subsection{Proof of Theorem \ref{thm:main1}}

\begin{proof}
Suppose $\calS$ satisfies $(i)$ or $(ii)$. By Lemmas \ref{lemma:curves_in_porous_carpets} and \ref{lemma:curves-in-diadic-carpets} we have that $\m_3(\G_{nv})=0$.  By Theorem \ref{thm:mod0-co-hopf} the space $DM(\mathcal{S})$ is QS co-Hopfian.
\end{proof}

\section{Proof of the main modulus estimate: Lemma \ref{lemma:main_estimate}}\label{Section:main-estimate-proof}

The idea is to construct a one parameter family of Borel subsets of $I$ such that the characteristic functions of these subsets will be admissible for $\G_{\calS}$. In Subsection \ref{section:construction} we construct a one parameter family of metrics
$\rho^{\eps}_i$ and $\rho^{\eps}$ and prove Lemma \ref{lemma:main_estimate} assuming that these metrics are admissible for $\G_i$ and $\G$. In Subsection \ref{section:admissibility} we show the admissibility of the metrics.

\subsection{Construction of the metric $\rho^{\eps}$}\label{section:construction}
Fix $0<\eps<\sigma(\calS)$. Define a subsequence $s_{i_k(\eps)}$ of
$\{s_i\}_{i=1}^{\infty}$ inductively as follows. Let $s_{i_1(\eps)}$ be a slit of the
largest length in $\calS$. Suppose $s_{i_1(\eps)},\ldots, s_{i_{k-1}(\eps)}$ have been
chosen. Let $s_{i_{k}(\eps)}$ be a slit of the largest length among all those
slits $s_i$ whose $\eps$-collars $s_i^{\eps}$ are essentially disjoint from
the $\eps$-collars of the chosen slits, i.e.
\begin{align}\label{def:allowable-sequence}
  \calH^n\left(s_{i_k(\eps)}^{\eps} \cap \bigcup_{j=1}^{k-1} s_{i_j(\eps)}^{\eps}\right) = 0.
\end{align}
Note, that in some situations the sequence $i_k(\eps)$ may be finite.
However, if the slits are dense in $I$ and $\diam(s_i)\to0$ then for small $\eps$'s  the
sequence $i_k(\eps)$ will be infinite. Next we fix $\eps$ and assume that the
sequence $s_{i}$ was chosen so that condition (\ref{def:allowable-sequence})
was satisfied to start with, i.e. $i_k(\eps)=k, \forall k\geq 1$. Thus, below
we assume that $s_{i+1}^{\eps}$ is essentially disjoint from
$\bigcup_{j=1}^{i}s_i^{\eps}$ and $l(s_{i+1})\leq l(s_{i})$ for all $i\geq
1$.

\begin{figure}[htb]
\centerline{
\psfig{figure=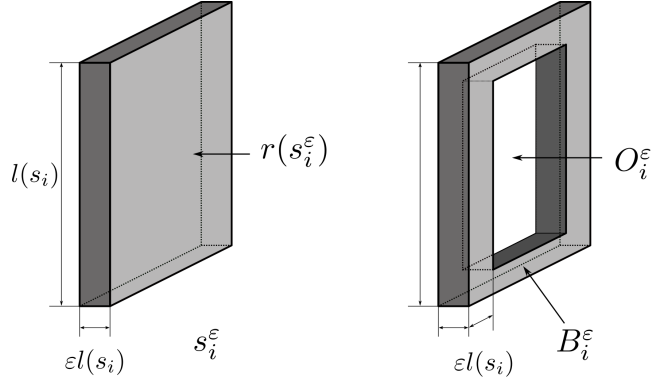,height=5cm}
}
\caption{A slit with its $\eps$-collar in $\mathbb{R}^3$ and its $\eps$-buffer and omitted regions $B_i^{\eps}$ and $O_i^{\eps}$. The dark grey part of the boundary of $s_i^{\eps}$ is the non vertical boundary $\partial_{nv}s_i^{\eps}$ while the set $O_i^{\eps}\subset s_i^{\eps}$ is the collection of points in the collar which are more than $\eps l(s_i)$ away from $\partial_{nv}s_i^{\eps}$.}
\end{figure}

Next, define two disjoint subsets of $s_i^{\eps}$. First, let $r(s_i^{\eps})$ be the right face of the $\eps$-collar $s_i^{\eps}$, or the translate of the slit $s_i$ by $\eps l(s_i)$,
$$ r(s_i^{\eps}) = s_i + \eps l(s_i).$$

We denote by $\partial_{nv} s_i^{\eps}$ the collection of all the nonvertical faces of $s_i^{\eps}$,
\begin{align*}
\partial_{nv} s_i^{\eps} = \partial s_i^{\eps}\setminus(s_i\cup r(s_i^{\eps})).
\end{align*}

Finally we let $B_i^{\eps}$ be the set of points $x$ in the $\eps$-collar of $s_i$, the distance of which from the nonvertical boundary of the collar is less than or equal to the width of the $\eps$-collar, i.e.
\begin{align*}
B_i^{\eps} & = \{x\in s_i^{\eps}\,|\, \dist(x,\partial_{nv} s_i^{\eps})\leq\eps l(s_i)\}. 
\end{align*}

Thus, $B_i^{\eps}$ is a ``rectangular annulus around the thin edge" of the collar $s_i^{\eps}$. We will call $B^{\eps}$ the \emph{$\eps$-buffer of $s_i$}. Note that if $n=2$ then $B_i^{\eps}$ is disconnected and we denote by $B_i^{\eps,+}$ and
$B_i^{\eps,-}$ the uppermost and lowermost largest ``buffer" squares
contained in $s_i^{\eps}$. More precisely, these are the squares in $I$ of
side-length $\eps l(s_i)$, whose left faces are contained in the vertical
slit $s_i$ and which contain the top and bottom endpoints of $s_i$,
respectively.

Next,  we denote
\begin{align*}
  O_i^{\eps}=s_i^{\eps}\setminus B_i^{\eps}=\{x\in s_i^{\eps}\,|\, \dist(x,\partial_{nv} s_i^{\eps})>\eps l(s_i)\}.
\end{align*} and call it the
\textit{``$\eps$-omitted region"} of $s_i$. Thus, $O_i^{\eps}$ is the open
box in $\mathbb{R}^n$ with dimensions
 $$\eps l(s_i) \times (1-2\eps)l(s_i) \times \ldots \times (1-2\eps)l(s_i),$$
whose left face is contained in $s_i$ and which is disjoint from $B_i^{\eps}$ and in particular, $O_i^{\eps}$ has the same center as $s_i^{\eps}$.

Furthermore, we let $R_i^{\eps}=I\setminus s_i^{\eps}$ and
\begin{align*}
\calB_i^{\eps} &= \bigcup_{j=1}^i B_j^{\eps},
  \quad
  \calO_i^{\eps} = \bigcup_{j=1}^i O_j^{\eps},
  \quad
  \calR_i^{\eps} = I \setminus (\calB_i\cup\calO_i).\\
\calB^{\eps}
  &= \bigcup_{j=1}^{\infty} B_j^{\eps},
  \quad
  \calO^{\eps} = \bigcup_{j=1}^{\infty} O_j^{\eps},
  \quad
  \calR^{\eps} = I \setminus (\calB\cup\calO) = \bigcap_{i=1}^{\infty}\calR_i^{\eps}.
\end{align*}

\begin{figure}[t]
\centerline{
{\psfig{figure=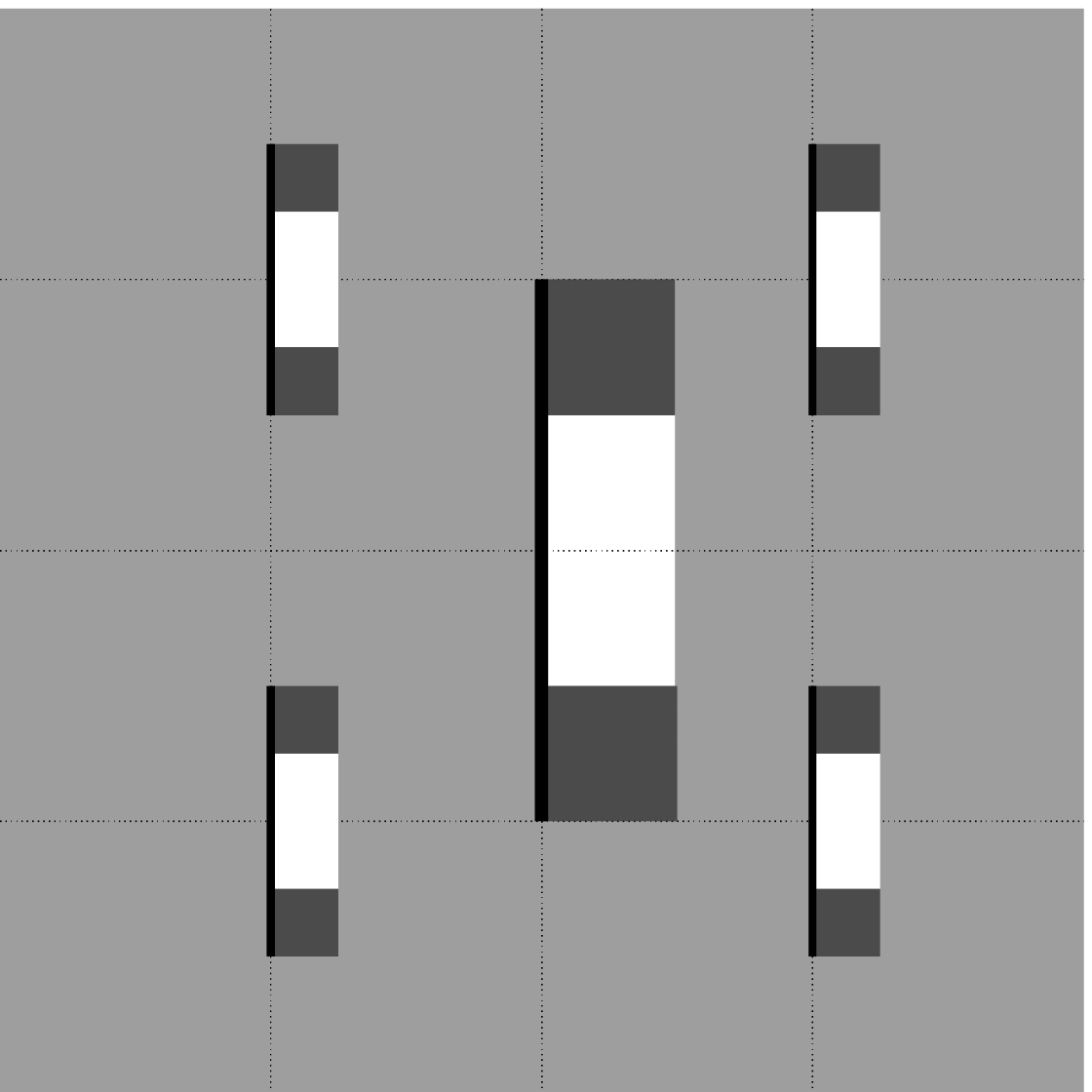,height=4cm} \qquad\qquad \psfig{figure=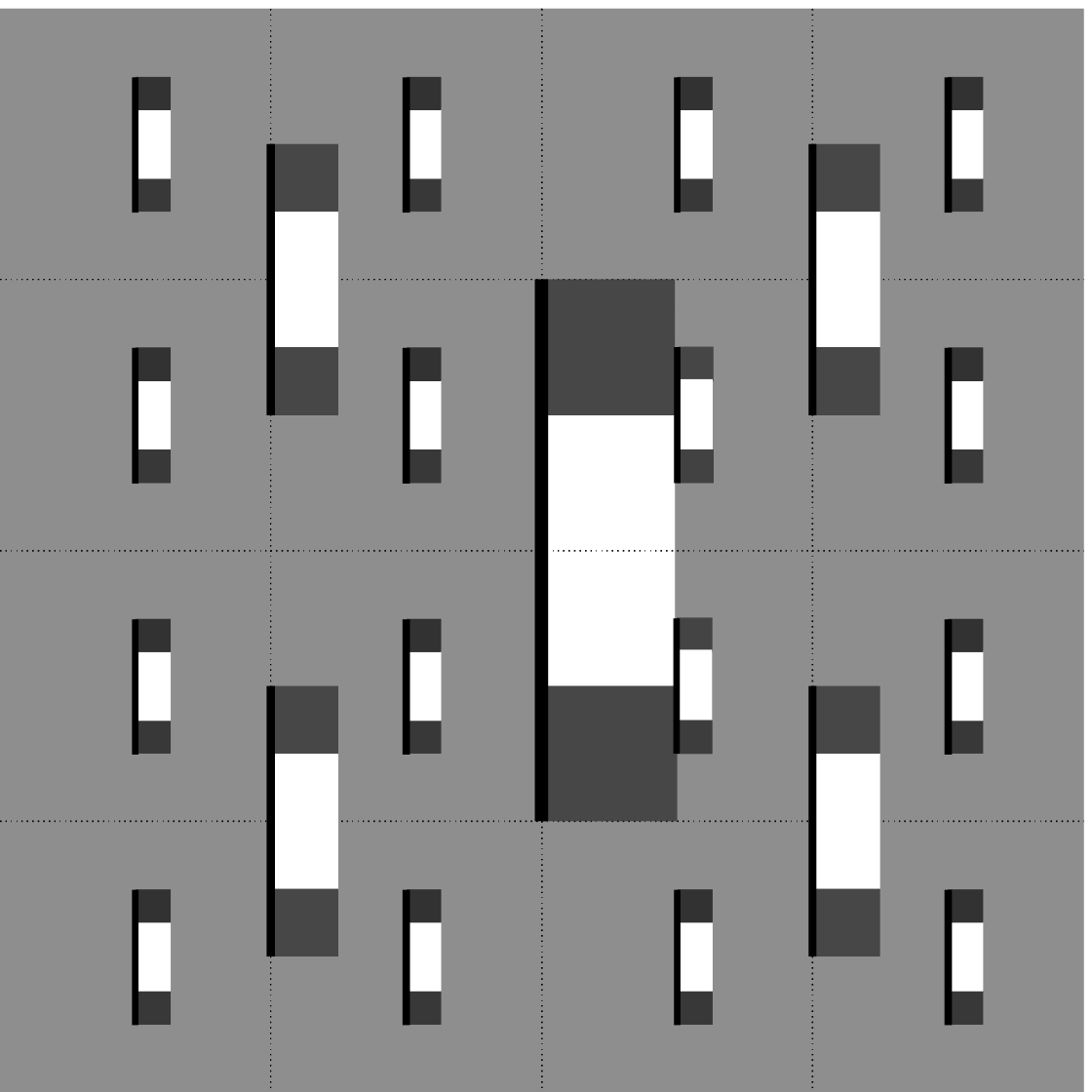,height=4cm}}
}
\caption{{Planar slit domains $S_5$ and $S_{21}$  corresponding to the standard (diadic) placement of slits with $r_i=1/2$ for all $i\geq 0$. Dark grey, light grey and white regions represent the corresponding $\eps$-buffer, omitted and residual sets, respectively for $\eps=1/4$. The sets $\calB^{\eps},\calO^{\eps}\subset [0,1]^2$ are the unions of all light grey and white regions, while the residual set $\calR^{\eps}$ is the intersection of all the light grey regions, respectively. The metric $\rho^{\eps}$ is supported on the complement of all the white rectangles, appearing in all generations.}}
\end{figure}

We will call $\calR^{\eps}$, $\calB^{\eps}$ and $\calO^{\eps}$ the
$\eps$-\textit{residual, buffer and omitted sets} corresponding to the
sequence $\{s_i\}$, respectively. Note that for every $\eps>0$ we have that
the $\eps$-residual set is the complement of all the $\eps$-collars
\begin{align*}
      \calR^{\eps} = I \setminus \bigcup_{k=1}^{\infty}  s_{i_k(\eps)}^{\eps}.
\end{align*}
Moreover, the sets $\calR^{\eps}$, $\calB^{\eps}$ and $\calO^{\eps}$  partition $I$, i.e. they are
pairwise disjoint and
\begin{align}\label{union-is-all}
\calR^{\eps}\cup \calB^{\eps} \cup \calO^{\eps} = I.
\end{align}
Finally, we define the Borel functions $\rho^{\eps}$ and $\rho_{i}^{\eps}$
for $x\in I$ by
\begin{align*}
\begin{split}
    \rho_i^{\eps}(x) &= \frac{1}{b_1-a_1}\chi_{\tiny{\calR_i^{\eps}\cup \calB_i^{\eps}}}(x) = \frac{1}{b_1-a_1}\chi_{I\setminus \calO_i^{\eps}}(x)\\
  \rho^{\eps}(x) &=\frac{1}{b_1-a_1} \chi_{\tiny{\calR^{\eps}\cup \calB^{\eps}}}(x) = \frac{1}{b_1-a_1}\chi_{I\setminus\calO^{\eps}}(x),
\end{split}
\end{align*}
where $\chi_E$ denotes the characteristic function of a set $E\subset\mathbb{R}^n$.

Below we will show that $\rho^{\eps}$ is admissible for $\G_{\calS}$ for every $\eps>0$. Next, we
assume this is true and complete the proof of Lemma
\ref{lemma:main_estimate}.

\begin{proof}[Proof of Lemma \ref{lemma:main_estimate}]
By Lemma \ref{lemma:admissible} it is enough to show that $\int_{I}
(\rho^{\eps})^p d\calH^n \to 0$ as $\eps\to0$. From the definition of
$\rho^{\eps}$ we have
\begin{align}\label{mass_estimate1}
  \int_{I}
(\rho^{\eps})^p d\calH^n =  \int_{I}
(\chi_{\calR^{\eps} \cup \calB^{\eps}})^p d\calH^n
&= (b_1-a_1)^{-p}(\calH^n (\calR^{\eps}) + \calH^{n} (\calB^{\eps})).
\end{align}
%
%
On the other hand, since
$$\calH^{n}(O_i^{\eps}) = \eps l(s_i) \times (1-2\eps)^{n-1} l(s_i)^{n-1} = (1-2\eps)^{n-1} \calH^n(s_i^{\eps}),$$
we have
\begin{align*}
   \calH^{n}(B_i^{\eps})
   &= \calH^n(s_i^{\eps}) - \calH^{n}(O_i^{\eps})
   = (1-(1-2\eps)^{n-1})\calH^n(s_i^{\eps})
\end{align*}
and therefore, since $s_i^{\eps}$'s are pairwise essentially disjoint, we obtain
\begin{align}\label{ineq:mass-of-buffers}
  \begin{split}\calH^{n}(\calB^{\eps}) &=\sum_{i=1}^{\infty}\calH^n(B_i^{\eps})\\
  &=
  (1-(1-2\eps)^{n-1}) \calH^n \left(\bigcup_{i=1}^{\infty} s_i^{\eps}\right) \\
  &\leq (2(n-1)\eps+o(\eps)) \calH^n(I)\\
  &\leq C(n)\calH^n(I) \eps
  \end{split}
\end{align}
as $\eps\to0$. Combining (\ref{mass_estimate1}) and (\ref{ineq:mass-of-buffers}) we obtain the main estimate (\ref{main_estimate}).
\end{proof}

\subsection{Admissibility of $\rho^{\eps}$.}\label{section:admissibility}

In this section we prove that $\rho^{\eps}$ is an admissible metric for the family of curves connecting the left and right faces of $I$ and which avoid the vertical slits $\{s_i\}\subset I$.

\begin{lemma}
  For every $x\in[0,1]^d$ we have
  \begin{align}\label{convergence-metrics}
    \lim_{i\to\infty}\rho_{i}^{\eps}(x) = \rho^{\eps}(x).
  \end{align}
\end{lemma}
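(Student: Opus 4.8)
The plan is to reduce the statement to an elementary fact about pointwise limits of characteristic functions. First I would use the defining formulas themselves: by construction $\rho_i^{\eps}=\tfrac{1}{b_1-a_1}\chi_{I\setminus\calO_i^{\eps}}$ and $\rho^{\eps}=\tfrac{1}{b_1-a_1}\chi_{I\setminus\calO^{\eps}}$, where $\calO_i^{\eps}=\bigcup_{j=1}^{i}O_j^{\eps}$ and $\calO^{\eps}=\bigcup_{j=1}^{\infty}O_j^{\eps}$. Since the prefactor $\tfrac{1}{b_1-a_1}$ is a fixed positive constant, it is enough to prove $\chi_{I\setminus\calO_i^{\eps}}(x)\to\chi_{I\setminus\calO^{\eps}}(x)$ for every $x\in I$. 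I would deliberately work with this ``$\calO$-form'' of the metrics rather than the ``$\calR\cup\calB$-form'', because the omitted sets are manifestly nested, and that nestedness is precisely what makes the limit transparent — no disjointness properties of the collars $s_i^{\eps}$ or of the \emph{buffers} are needed here (those enter only later, in the measure estimate).

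Next I would record the monotonicity: $\{\calO_i^{\eps}\}_{i\ge 1}$ is a nondecreasing sequence of sets with $\bigcup_{i=1}^{\infty}\calO_i^{\eps}=\calO^{\eps}$, so the complements $I\setminus\calO_i^{\eps}$ form a nonincreasing sequence with $\bigcap_{i=1}^{\infty}(I\setminus\calO_i^{\eps})=I\setminus\calO^{\eps}$. In particular $\chi_{I\setminus\calO_i^{\eps}}$ is a nonincreasing sequence of $\{0,1\}$-valued functions.

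Then I would verify the pointwise limit by a two-case split. Fix $x\in I$. If $x\in\calO^{\eps}$, choose $j$ with $x\in O_j^{\eps}$; then $x\in\calO_i^{\eps}$ for all $i\ge j$, hence $\chi_{I\setminus\calO_i^{\eps}}(x)=0$ for all $i\ge j$, which agrees with $\chi_{I\setminus\calO^{\eps}}(x)=0$. If $x\notin\calO^{\eps}$, then $x\notin\calO_i^{\eps}$ for every $i$ (since $\calO_i^{\eps}\subset\calO^{\eps}$), hence $\chi_{I\setminus\calO_i^{\eps}}(x)=1=\chi_{I\setminus\calO^{\eps}}(x)$ for all $i$. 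In either case the sequence is eventually constant and equal to its limit, and multiplying through by $\tfrac{1}{b_1-a_1}$ yields (\ref{convergence-metrics}).

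There is essentially no obstacle in this proof; the only point worth stating carefully is that one should phrase everything through the nested omitted sets $\calO_i^{\eps}$, so that the argument uses nothing beyond $\calO_i^{\eps}\uparrow\calO^{\eps}$. (If one insisted on arguing directly with $\calR_i^{\eps}\cup\calB_i^{\eps}$, one would have to invoke the essential disjointness of the collars to identify this set with $I\setminus\calO_i^{\eps}$, which is an unnecessary detour for a purely pointwise statement.)
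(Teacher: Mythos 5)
Your proof is correct and takes essentially the same route as the paper: both rest on the monotonicity of the defining sets in $i$, so that the characteristic functions stabilize pointwise. The only difference is cosmetic — the paper splits into the three cases $x\in\calR^{\eps}$, $x\in\calB^{\eps}$, $x\in\calO^{\eps}$, whereas you collapse the first two by working with the single decreasing sequence $I\setminus\calO_i^{\eps}$, which is a slightly tidier way to package the same observation.
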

\begin{proof}
If $x\in\calR^{\eps}$ then $x\in\calR^{\eps}_i$ and $\rho_{i}^{\eps}(x) = (b_1-a_1)^{-1}$ for every $i\geq
0$. If $x\in\calB^{\eps}$ then there is an $i_0$ such that $x\in\calB_i^{\eps}$ for all
$i\geq i_0$, since $\calB^{\eps}_i$ is an increasing sequence of open sets. Therefore
$\rho_{i}^{\eps}(x)=(b_1-a_1)^{-1}$ for $i\geq i_0$. Thus if $x\in \calR^{\eps}\cup\calB^{\eps}$  then
$\rho_{i}^{\eps}(x)\to (b_1-a_1)^{-1}=\rho^{\eps}(x)$ as $i$ approaches $\infty$. The case, when
$x\in\calO^{\eps}$ is done the same way as $x\in\calB^{\eps}$ and
(\ref{convergence-metrics}) follows.
\end{proof}
By dominated convergence theorem we immediately obtain the following approximation result.
\begin{corollary} With the notation as above we have the following.
\begin{itemize}
\item[\textit{i}.] For every $p>0$ we have
\begin{align*}
\lim_{i\to\infty}\int_{I} (\rho^{\eps}_{i})^p d\calH^n = \int_{I} (\rho^{\eps})^p d\calH^n.
\end{align*}

\item[\textit{ii}.] For every locally rectifiable curve $\g\subset I$ we
    have
\begin{align*}
\lim_{i\to\infty}\int_0^1 \rho_{i}(\g(t))|\g'(t)| dt = \int_0^1 \rho(\g(t))|\g'(t)| dt.
\end{align*}
\end{itemize}
\end{corollary}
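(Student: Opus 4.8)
The plan is a routine application of the dominated convergence theorem, with the pointwise convergence $\rho_i^{\eps}(x)\to\rho^{\eps}(x)$ for every $x$ from the preceding lemma as the input, together with the uniform bound $0\le\rho_i^{\eps}\le (b_1-a_1)^{-1}$ that is immediate from the definition of $\rho_i^{\eps}$ as $(b_1-a_1)^{-1}$ times a characteristic function.

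For part \textit{i}, I would first note that for all $i$ and all $x\in I$ one has $0\le(\rho_i^{\eps}(x))^p\le (b_1-a_1)^{-p}$, and since $I$ is a bounded box, $\calH^n(I)<\infty$, so the constant function $(b_1-a_1)^{-p}$ is $\calH^n$-integrable over $I$; thus the sequence $(\rho_i^{\eps})^p$ is dominated by a fixed integrable function. The preceding lemma gives $(\rho_i^{\eps}(x))^p\to(\rho^{\eps}(x))^p$ for every $x\in I$, so the dominated convergence theorem yields $\int_I(\rho_i^{\eps})^p\,d\calH^n\to\int_I(\rho^{\eps})^p\,d\calH^n$.

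For part \textit{ii}, let $\g\colon[0,1]\to I$ be the curve in question. Any locally rectifiable curve on the compact parameter interval $[0,1]$ is rectifiable, so $t\mapsto|\g'(t)|$ is integrable on $[0,1]$ with $\int_0^1|\g'(t)|\,dt=\mathrm{length}(\g)<\infty$, and hence $t\mapsto(b_1-a_1)^{-1}|\g'(t)|$ is a fixed integrable dominating function. For every $i$ and almost every $t$ we have $\rho_i^{\eps}(\g(t))|\g'(t)|\le (b_1-a_1)^{-1}|\g'(t)|$, and by the preceding lemma $\rho_i^{\eps}(\g(t))\to\rho^{\eps}(\g(t))$ for every $t$, whence $\rho_i^{\eps}(\g(t))|\g'(t)|\to\rho^{\eps}(\g(t))|\g'(t)|$ for almost every $t$. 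A second application of the dominated convergence theorem completes the proof.

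The only points needing any care — the closest thing to an obstacle — are the measurability hypotheses: $\rho_i^{\eps}$ and $\rho^{\eps}$ are Borel, being countable combinations of characteristic functions of the boxes $s_i^{\eps}$, $B_i^{\eps}$, $O_i^{\eps}$, and $t\mapsto\rho_i^{\eps}(\g(t))$ is then Borel as the composition of a Borel function with a continuous curve. Together with the integrability of $|\g'|$ in part \textit{ii}, these are all immediate, so the argument is genuinely short.
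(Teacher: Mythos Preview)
Your proof is correct and follows exactly the approach of the paper, which simply states that the corollary is an immediate consequence of the dominated convergence theorem. You have merely supplied the details the paper leaves implicit---the uniform bound $(b_1-a_1)^{-1}$ coming from the definition of $\rho_i^{\eps}$ as a multiple of a characteristic function, and the integrability of the dominating functions over the finite-measure box $I$ in part~\textit{i} and with respect to arclength in part~\textit{ii}.
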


Next lemma in the main result of this section.

\begin{lemma}\label{lemma:admissible}
Suppose $\calS=\{s_i\}\subset I$ is a collection of slits, which are uniformly relatively separated.
Then for every $0<\eps<\sigma(\calS)$, where $\s(\calS)$ is the separation constant in \ref{uniform-rel-separation}, the metric $\rho^{\eps}$ is admissible for $\G_{\calS}$.
\end{lemma}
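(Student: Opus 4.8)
The plan is to verify admissibility directly from the definition: for every locally rectifiable $\g\in\G_{\calS}$ we must show $\int_{\g}\rho^{\eps}\,ds\ge 1$, and since $\rho^{\eps}=\tfrac1{b_1-a_1}\chi_{I\setminus\calO^{\eps}}$ this is exactly the inequality $\calH^{1}(\g\setminus\calO^{\eps})\ge b_1-a_1$, i.e. the part of $\g$ lying outside all the omitted boxes has length at least the width of $I$. (Non-locally-rectifiable $\g$ are excluded from the admissibility requirement, and if $\calH^1(\g\setminus\calO^\eps)=\infty$ there is nothing to prove.) Parametrize $\g$ by arclength on $[0,L]$ and set $\phi=\pi_1\circ\g$, so $\phi(0)=a_1$, $\phi(L)=b_1$. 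Using that the chosen collars $s_i^{\eps}$ are (essentially) pairwise disjoint, decompose $[0,L]$ into the countably many maximal intervals on which $\g$ lies inside a single open collar $s_i^{\eps}$ — call these \emph{passes} — together with the complementary arcs $A_q$. Each $A_q$ lies in $\calR^{\eps}\subset I\setminus\calO^{\eps}$, and a pass $P$ into $s_i^{\eps}$ meets $I\setminus\calO^{\eps}$ precisely in $P\cap B_i^{\eps}$; thus $\calH^1(\g\setminus\calO^\eps)=\sum_q\calH^1(A_q)+\sum_P\calH^1(P\cap B_{i(P)}^{\eps})$.

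The heart of the argument is a per-piece estimate: for each such piece the first-coordinate oscillation is dominated by the recorded length. For an arc $A_q$ this is trivial, $\calH^1(A_q)\ge|\Delta\phi(A_q)|$. For a pass $P$ into $s_i^{\eps}$, the point is that $\g$ avoids the slit $s_i$, so both endpoints of $P$ lie on $\partial s_i^{\eps}\setminus s_i=\overline{r(s_i^{\eps})}\cup\partial_{nv}s_i^{\eps}$; hence $|\Delta\phi(P)|\le\eps\,l(s_i)$. If both endpoints lie on $r(s_i^{\eps})$ then $\Delta\phi(P)=0$. Otherwise $P$ has an endpoint on $\partial_{nv}s_i^{\eps}$: if $P$ stays in $B_i^{\eps}$ then $\calH^1(P\cap B_i^{\eps})=\calH^1(P)\ge|\Delta\phi(P)|$, and if $P$ enters $O_i^{\eps}$ then the subarc of $P$ from that endpoint to the first hit of $O_i^{\eps}$ runs through $B_i^{\eps}$ while $\dist(\cdot,\partial_{nv}s_i^{\eps})$ grows from $0$ to $\eps\,l(s_i)$, so it has length $\ge\eps\,l(s_i)\ge|\Delta\phi(P)|$. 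In every case $\calH^1(P\cap B_i^{\eps})\ge|\Delta\phi(P)|$. Since the pieces tile $[0,L]$ in order, telescoping gives
$b_1-a_1=|\phi(L)-\phi(0)|\le\sum_q|\Delta\phi(A_q)|+\sum_P|\Delta\phi(P)|\le\sum_q\calH^1(A_q)+\sum_P\calH^1(P\cap B_i^{\eps})=\calH^1(\g\setminus\calO^{\eps})$,
and therefore $\int_{\g}\rho^{\eps}\,ds=\tfrac1{b_1-a_1}\calH^1(\g\setminus\calO^{\eps})\ge1$.

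The main obstacle — and the only place that uses more than ``$\g$ crosses $I$'' — is precisely the pass estimate: a curve could in principle save arbitrarily much first-coordinate progress by coasting through an omitted box $O_i^{\eps}$, so one must exploit that the cheap ``left face'' of $O_i^{\eps}$ sits on the slit $s_i$ and is off-limits, forcing any such excursion to pay back its skipped progress by traversing the buffer $B_i^{\eps}$ of width $\eps\,l(s_i)$, which is recorded because $\rho^{\eps}=\tfrac1{b_1-a_1}$ there rather than cut off. The remaining points are routine: the measurability and countability of the decomposition, curves that oscillate into a collar infinitely often (the sums still converge, being dominated by $\calH^1(\g)$), the essential-disjointness of the chosen collars (perturb slightly, or observe that curves meeting a collar overlap form a family of zero $p$-modulus), and the case $n=2$ where $B_i^{\eps}$ is disconnected causes no trouble since the distance-to-$\partial_{nv}s_i^\eps$ argument is dimension-free. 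Alternatively one may avoid passes altogether by first checking that each $\rho_i^{\eps}$ is admissible for $\G_i$ and then letting $i\to\infty$, using $\G_{\calS}\subset\G_i$ together with the convergence $\int_{\g}\rho_i^{\eps}\,ds\to\int_{\g}\rho^{\eps}\,ds$ already established.
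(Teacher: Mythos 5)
Your proof is correct and takes a genuinely different, more direct route than the paper's. The paper constructs, for each $i$, a ``broken curve'' $\g_i$ by surgically replacing the excursions of $\g$ into the collars $s_1^\eps,\dots,s_i^\eps$ with horizontal shortcuts $t(s_j^\eps)$, then proves $l_i(\g)\ge l_i(\g_i)$ (Lemma \ref{lemma:breaking-up}) and $\pi_1(\g_i)=[a_1,b_1]$ (Lemma \ref{lemma:projection}), and finally lets $i\to\infty$. You avoid both the curve surgery and the limiting argument by working with the arclength parametrization of $\g$ itself: you decompose $[0,L]$ into passes $P$ through collars and the complementary set, and bound the first-coordinate oscillation $|\Delta\phi(P)|\le\eps\,l(s_i)$ of each pass by the recorded length $\calH^1(P\cap B_i^\eps)$. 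Both arguments rest on the same geometric fact---the slit $s_i$, which is the cheap left face of $O_i^\eps$, is forbidden to $\g$, so any excursion through $O_i^\eps$ must pay by crossing the buffer whose width $\eps\,l(s_i)$ equals the maximal first-coordinate shortcut---but your version is shorter and purely local, while the paper's decouples the estimate into two clean self-contained lemmas. Two small fixes are needed. First, the complement $K$ of the passes in $[0,L]$ is closed but need not be a countable union of intervals (a positive-measure Cantor set can occur), so the splitting $\calH^1(\g\setminus\calO^\eps)=\sum_q\calH^1(A_q)+\sum_P\calH^1(P\cap B_i^\eps)$ and the telescoping over ``arcs $A_q$'' are not right as stated; instead write $b_1-a_1=\int_K\phi'\,dt+\sum_P\int_P\phi'\,dt$, bound $\int_K\phi'\le|K|$ by the $1$-Lipschitz property of $\phi$, and bound $\int_P\phi'\le|\Delta\phi(P)|\le\calH^1(P\cap B_i^\eps)$ as you do. Second, for the pass decomposition to be unambiguous you want the selected collars to be genuinely pairwise disjoint rather than only essentially so---this is in fact how the subsequence is chosen in Lemmas \ref{lemma:curves_in_porous_carpets} and \ref{lemma:curves-in-diadic-carpets}---and your modulus-zero fallback does not by itself yield the pointwise admissibility the lemma asserts (admissibility must hold for every locally rectifiable curve, not almost every). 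Neither issue touches the per-pass estimate, which is the heart of the argument and which you have right.
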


\begin{proof}
To simplify the notation we let $\rho$ and $\rho_{i}$ denote the metrics $\rho^{\eps}$ and
$\rho^{\eps}_i$, respectively. Furthermore, for a (locally-rectifiable) curve
$\g\in\G$ we denote by $l(\g)$ and $l_i(\g)$ the $\rho$ and $\rho_i$-length of $\g$, i.e.
\begin{align*}
l_i(\g) : = \int_0^1 \rho_{i}(\g(t))|\g'(t)| dt, \quad i=1,2,\ldots.
\end{align*}
From the construction of $\rho_i^{\eps}$'s it follows that $l_i(\g)$ is a
decreasing (non-increasing) sequence. Given a curve $\g\in\G_{\calS}$ we want to show
that $l(\g)\geq1$. By (\ref{convergence-metrics}) it is enough to
show that $l_{i}(\g)\geq 1$ for all $i\geq 1$. We prove this by induction.

We
will assume that $\g:[0,1]\to I$ is oriented so that $\g(0)\in L$ and
$\g(1)\in R$, i.e. $\g$ ``starts" on the left face of $I$ and ``ends" on
the right face. Thus, given disjoint subsets $E$ and $F$ of $I$ we will say
that \emph{$\g$ meets $E$ before $F$} if there exists $t\in(0,1)$ such that
$\g(t)\in E$ and $\g(s)\cap F =\emptyset$ for any $0<s<t$. In particular, if
$\g$ intersects $E$ but not $F$ we will still say that $\g$ meets $E$ before
$F$.

Recall, that $r(s_i^{\eps})$ is the right face of $s_i^{\eps}$. Now, for every $\g\in\G_{\calS}$ let
\begin{align*}
\begin{split}
N_1^{\g}& = \{ i\in\mathbb{N} : \g\cap O_i^{\eps} \neq \emptyset\},\\
N_2^{\g}& = \{ i\in\mathbb{N} : \g \mbox{ meets $O_i^{\eps}$ before $r(s_i^{\eps})$} \},\\
N_3^{\g}& = \{ i\in\mathbb{N} : \g \mbox{ meets  $r(s_i^{\eps})$ before $O_i^{\eps}$}\}.
\end{split}
\end{align*}
Note, that if $i\in N_2^{\g}$ then $\g\cap B_i^{\eps}$ has a connected
component connecting $\partial_{nv}s_i^{\eps}$ and $\partial(O_i^{\eps})$ in the buffer region $B_i^{\eps}$ (for $n=2$ there is a component connecting the top of a buffer square $B_j^{\eps,+}$ or
$B_j^{\eps,-}$ to its bottom). Therefore,
\begin{align}
  l_i(\g\cap s_j^{\eps}) = \calH^1(\g\cap B_i^{\eps}) \geq \dist(\partial_{nv}s_i^{\eps},O_i^{\eps}) = \eps l(s_i).
\end{align}

Next, we denote by $t(s_j^{\eps})$ any ``horizontal" interval, i.e. one which is parallel to the first coordinate axis, which is contained in the top face of the $\eps$ collar $s_j^{\eps}$ and connects the vertical faces of $s_j^{\eps}$. For every $\g\in\G_{\calS}$ we inductively define a sequence of not necessarily
connected subsets $\g_i\subset I$ as follows. Since, $t(s_j^{\eps})\subset \calB^{\eps}$ we have
\begin{align}
  l_j (t(s_j^{\eps})) = \eps l(s_i), i=1,2,\ldots.
\end{align}

\begin{figure}[htb]
\centerline{
{\psfig{figure=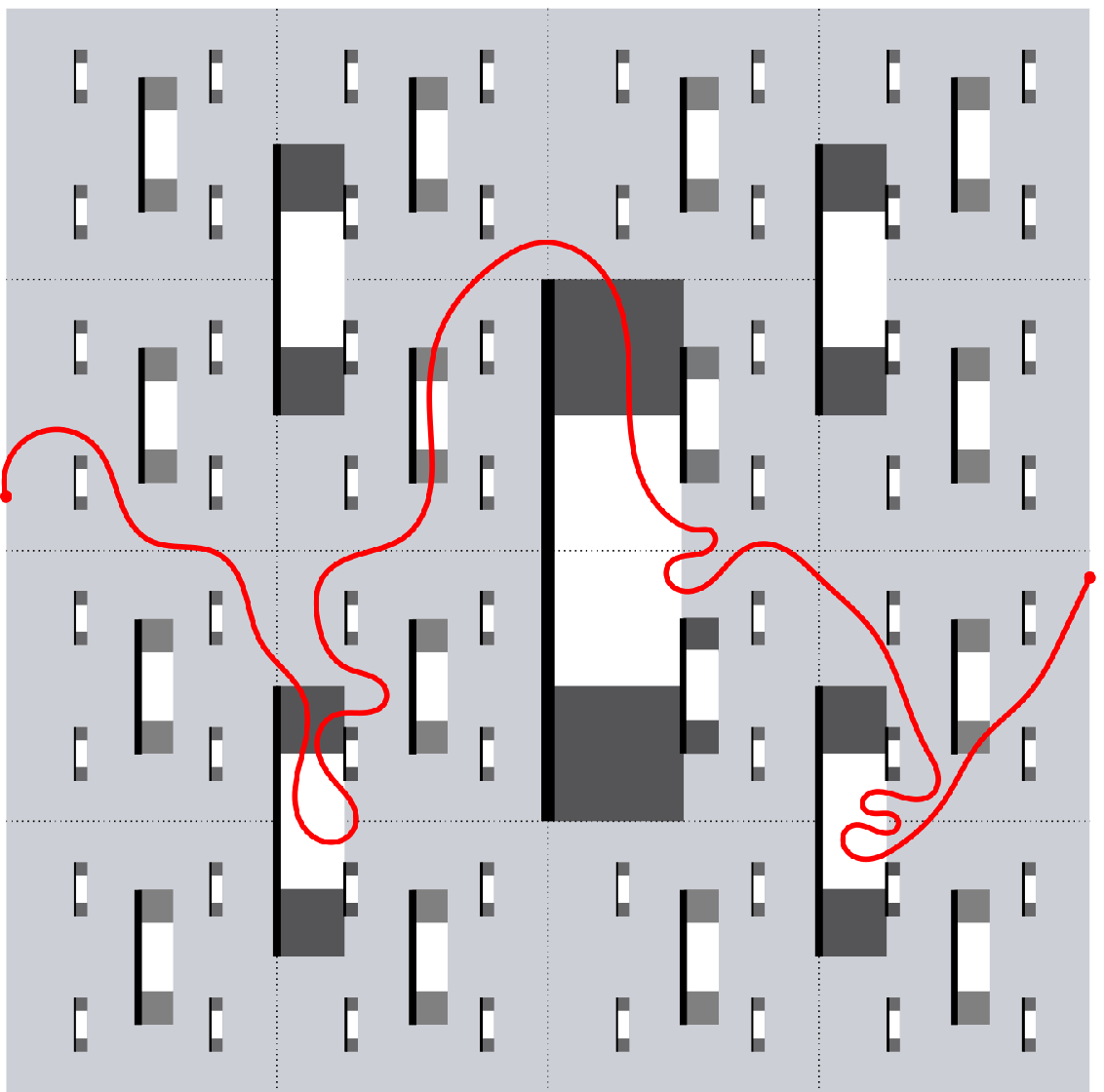,height=3cm} \qquad  \psfig{figure=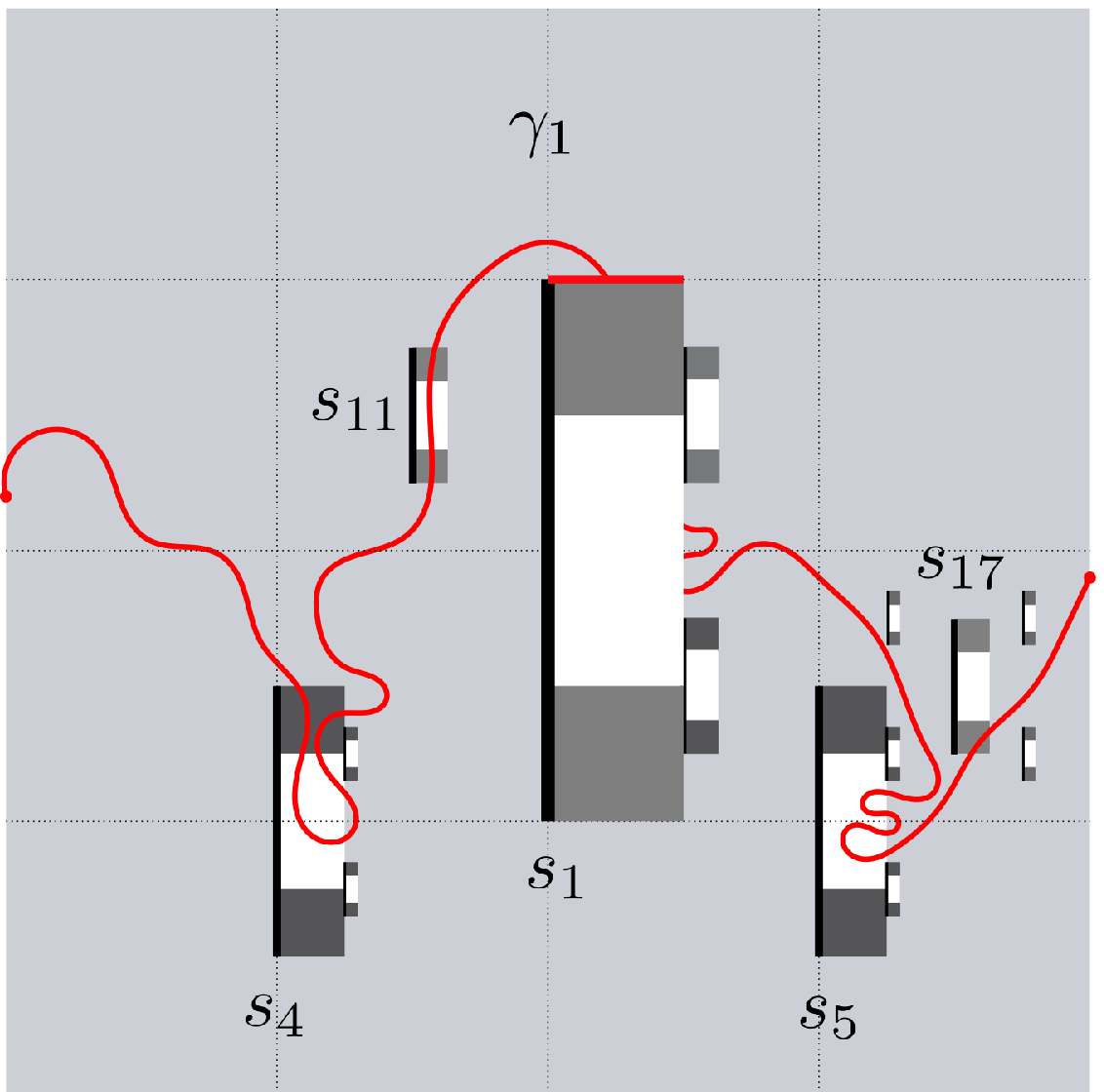,height=3cm} \quad
\psfig{figure=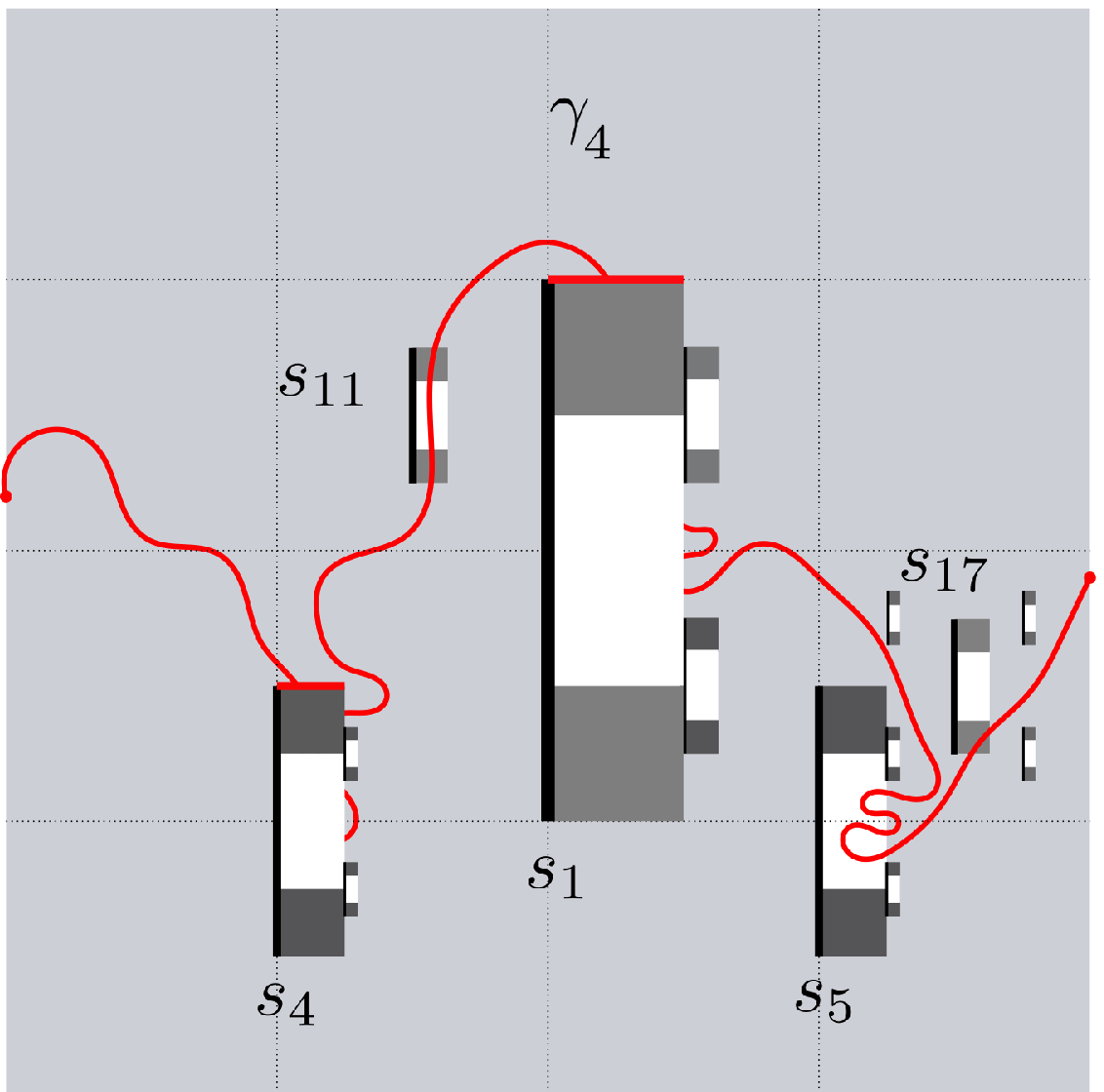,height=3cm} \quad \psfig{figure=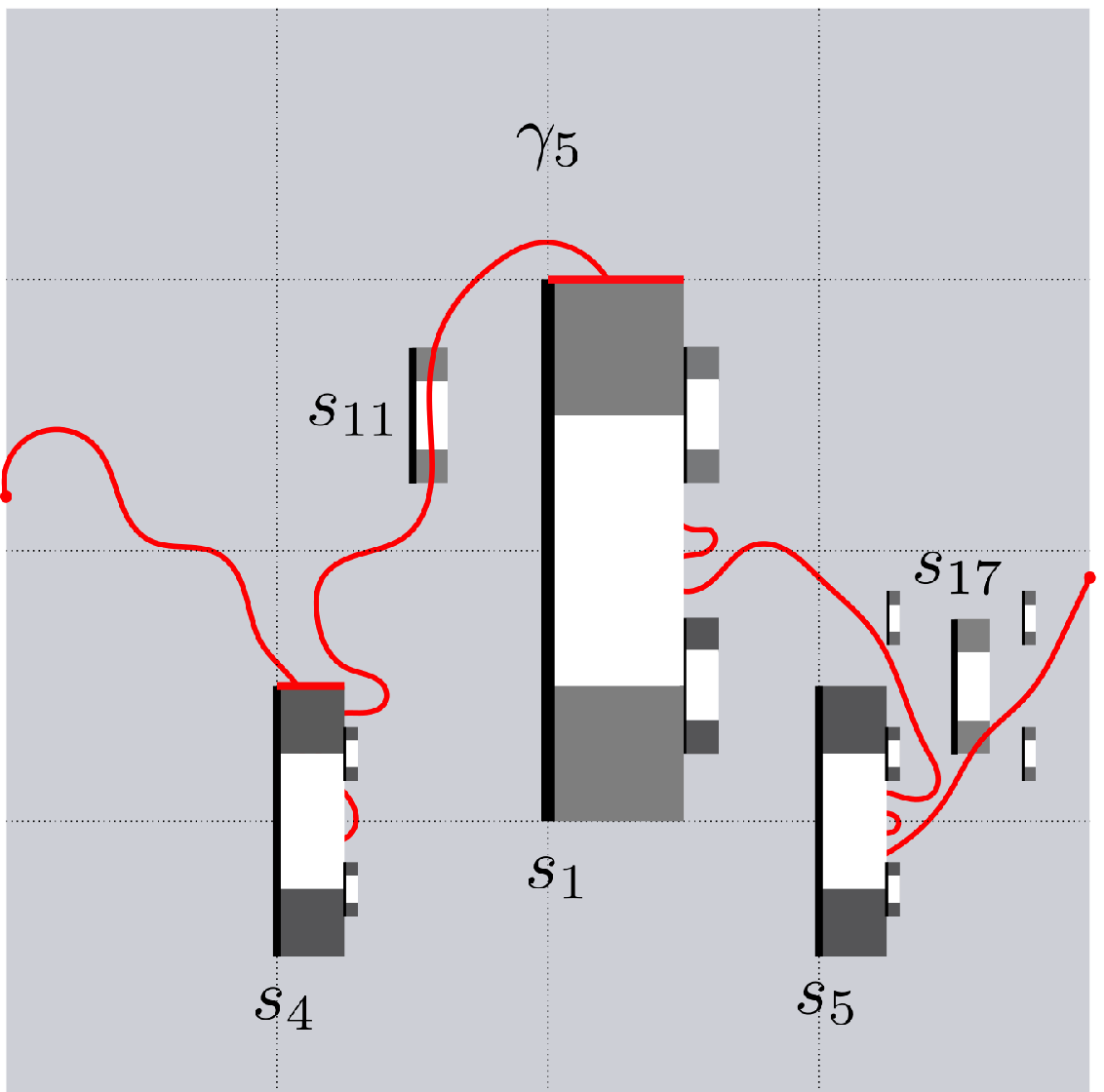,height=3cm} \quad 
}
}
\caption{On the left $\g\in\G_{\calS}$ is a curve in a slit domain. and on the right only the collars of the slits which $\g$ intersects are drawn (the numbers correspond to the number of the diadic square in which the slit is located). Since $\g$ meets the omitted region in the first collar $s_1^{\eps}$ before meeting its right edge, $\g_1$ is obtained by removing from $\g$ the collar $s_1^{\eps}$ and attaching the top $t(s_1^{\eps})$. Further modifications of the curve $\g$ show that if $\g_k$ does not intersect any of the omitted regions it follows that $\rho_k$-length of $\g_k$ is a multiple of its Euclidean length.}
\end{figure}
Let $\g_0 = \g$. Suppose, for
$i\geq 1$ the set $\g_{i-1}$ has been defined. Then, let
\begin{align*}
\g_i =
\begin{cases}
\g_{i-1} \quad & \mbox{ if } i\in N_1^{\g},  \\
\g_{i-1} \setminus s_i^{\eps} \cup t(s_i^{\eps}) & \mbox{ if } i\in N_2^{\g},\\
\g_{i-1} \setminus O_i^{\eps} & \mbox{ if } i\in N_3^{\g}.
\end{cases}
\end{align*}
Equivalently, $\g_i$ may be defines as follows,
\begin{align}
\g_i =
\begin{cases}
\g_{i-1}, \quad & \mbox{ if } \g\cap O_i^{\eps} = \emptyset,  \\
\g_{i-1} \setminus s_i^{\eps} \cup t(s_i^{\eps}), & \mbox{ if $\g$ meets $O_i^{\eps}$ before $r(s_i^{\eps})$},\\
\g_{i-1} \setminus O_i^{\eps}, & \mbox{ if $\g$ meets $O_i^{\eps}$ after $r(s_i^{\eps})$}.
\end{cases}
\end{align}


%
%
Thus, $\g_i$ is obtained from $\g$ by removing all the omitted rectangles
$O_j^{\eps}$ which $\g$ meets after $r(s_j)$ and if $\g\cap B_i^{\eps}$ has a
component connecting $\partial_{nv}s_i^{\eps}$ and $\partial(O_i^{\eps})$  we replace
$\g\setminus s_j^{\eps}$ with a horizontal interval in the buffer of length $\eps l(s_j)$ . From
this description it follows that $\g_i \subset \calR_i^{\eps}\cup\calB_i^{\eps}, \forall
i\geq1$ and in particular
\begin{align}\label{lengths-of-broken-curves}
\begin{split}
l_i(\g_i) &= (b_1-a_1)^{-1}\calH^1(\g_i\cap(\calR_i^{\eps}\cup\calB_i^{\eps})) \\
&= (b_1-a_1)^{-1}\calH^1(\g_i), \forall i\geq 1.
\end{split}
\end{align}

Given all the definitions above, Lemma \ref{lemma:admissible} follows easily from the two lemmas below.
\begin{lemma}\label{lemma:breaking-up}
If $\g\in \G$ is a locally-rectifiable curve then
\begin{align}\label{admissibility:breaking-up}
l_{i}(\g)
\geq l_{i}(\g_{i}), \quad \forall i\geq 1.
\end{align}
\end{lemma}
\begin{lemma}\label{lemma:projection}
  If $\g\in\G$ then
  \begin{align}\label{admissibility:projection}
\pi_1(\g_i)=[a_1,b_1], \quad \forall i\geq 1.
  \end{align}
\end{lemma}
Before proving these results, we first complete the proof of Lemma \ref{lemma:admissible}.
%
For this we estimate the $\rho_i$ length of $\g$ from below
as follows,
\begin{align}
l_i(\g)
& \geq l_i(\g_i) \tag{by (\ref{admissibility:breaking-up})}\\
&= (b_1-a_1)^{-1}\calH^1(\g_i) \tag{$\g_i\subset(\calO_i^{\eps})^c$}\\
&\geq (b_1-a_1)^{-1}\calH^1(\pi_1(\g_i)) \tag{$\pi_1$ is $1$-Lipschiz} \\
&= (b_1-a_1)^{-1}\calH^1([a_1,b_1]) = 1. \tag{by (\ref{admissibility:projection})}
\end{align}
As was noted in the beginning of the proof, the last estimate implies that $l(\g)\geq 1$ whenever $\g\in\G_{\calS}$.
\end{proof}

Next, we prove Lemmas \ref{lemma:breaking-up} and \ref{lemma:projection}.

\begin{proof}[Proof of Lemma \ref{lemma:breaking-up}]
By the definition of $\rho_i$ we have
  \begin{align*}
    l_i(\g) \geq \calH^1 (\g\cap \calO_i^{\eps}) = \calH^1(\g\cap \calR_i^{\eps}) + \calH^1(\g\cap \calB_i^{\eps}).
  \end{align*}
Since $\g\cap\calR_i^{\eps}=\g_i\cap\calR_i^{\eps}$, we have
\begin{align*}
  l_i(\g) &\geq \calH^1(\g\cap \calR_i^{\eps}) + \calH^1(\g\cap \calB_i^{\eps})
  \geq \calH^1(\g_i\cap \calR_i^{\eps}) + \sum_{j=1}^i \calH^1(\g\cap B_j^{\eps}).
\end{align*}
Next, we note that
\begin{align*}
\calH^1(\g\cap B_j^{\eps}) \geq \calH^1(\g_i \cap B_j^{\eps}), \quad \forall j\leq i.
\end{align*}
Indeed, we have the following three cases:

\begin{itemize}
  \item[-] If $\g$ does not meet $O_j$ then $\g$ remains unchanged in $B_j^{\eps}$
      and $\g\cap B_j^{\eps} = \g_i\cap B_j^{\eps}$. In particular $\calH^1(\g\cap B_j^{\eps}) =
      \calH^1(\g_i\cap B_j^{\eps})$.
  \item[-] If $\g$ meets $O_j^{\eps}$ before $r(s_j^{\eps})$ then $\g$ connects $\partial_{nv}s_i^{\eps}$ and $\partial(O_i^{\eps})$
   and therefore $\calH^1(\g\cap B_j^{\eps}) \geq \eps l(s_j^{\eps}) = \calH^1(\g_i\cap B_j^{\eps}).$
  \item[-] If $\g$ meets $O_j^{\eps}$ after $r(s_j^{\eps})$ then $\calH^1(\g\cap B_j^{\eps})\geq
      0 = \calH^1(\g_i\cap B_j^{\eps})$.
\end{itemize}
Therefore,
\begin{align*}
 l_i(\g)
 & \geq \calH^1(\g_i\cap \calR_i^{\eps}) + \sum_{j=1}^i \calH^1(\g\cap B_j^{\eps})\\
 & \geq \calH^1(\g_i\cap \calR_i^{\eps}) + \sum_{j=1}^i \calH^1(\g_i\cap B_j^{\eps})\\
 & = \calH^1(\g_i\cap\calR_i^{\eps}) + \calH^1(\g_i\cap\calB_i^{\eps})
  = l_i(\g_i). \qedhere
\end{align*}
\end{proof}
%

\begin{proof}[Proof of Lemma \ref{lemma:projection}]
Since all the omitted regions $O_i^{\eps}$ are compactly contained in
$I$ and since $\g(0)\in L$ and $\g(1)\in R$, it follows that
$\{a_1\},\{b_1\}\in\pi_1(\g_i)$ for every $i\geq 1$.

For the rest of the proof fix $x\in(a_1,b_1)$. We need to show that
$x\in\pi_1(\g_i)$ for all $i\geq 1$. For this, define $t_x = \min\{t \,|\, \pi_1(\g(t)) = x \}$
and note that if $y > x$ then $\g([0,t_x])\cap \pi_1^{-1}(y) =\emptyset.$

Now, let $i\geq 1$. Then, either $\g(t_x)\in I$ does not belong to an
omitted region $O_j^{\eps}$ for any $j\leq i$ or it belongs to exactly one
such region, since the omitted regions are pairwise disjoint. If
$\g(t_x)$ does not belong to an omitted region then $\g(t_x)\in\g_j$ for
all $j\leq i$ and in particular $x\in\pi_1(\g_i)$. On the other hand, if
$\g(t_x)\in O_{j_0}^{\eps}$ for some $j_0\leq i$ then $\g([0,t_x])$ does not
intersect the vertical hyperplane containing $r(s_{j_0}^{\eps})$, since it is located ``to
the right of $x$". In particular $\g$ meets $O_{j_0}^{\eps}$ before
$r(s_{j_0}^{\eps})$. It follows then from the definition of $\g_j$'s that we
have $t(s_{j_0}^{\eps})\subset \g_{j_0}$. Moreover, since
$t(s_{j_0}^{\eps})$ belongs to a buffer region it remains in the curves $\g_j$ for all $j\geq j_0$ once it
is added. Therefore,
\begin{align*}
x\in\pi (O_{j_0}^{\eps}) = \pi_1(t(s_{j_0}^{\eps})) \subset \pi_1(\g_j)
\end{align*}
for every $j\geq j_0$. Thus $x\in\pi_1(\g_i)$ which completes the proof.
%
%
\end{proof}

\begin{proof}[Proof of Lemma \ref{lemma:main-estimate-lift}] Let $\eps$ and $\rho^{\eps}$ be defined as in Subsection \ref{section:construction}. Define the Borel function $\tilde{\rho}^{\eps}: M(\calS)\to[0,\infty)$ as follows:
\begin{align*}
\tilde{\rho}^{\eps}(x) = 
\begin{cases}
\rho\circ \pi_1(x), & \mbox{ if } x \mbox{ belongs to a peripheral sphere of } M(\calS),\\
1, & \mbox{ otherwise.}
\end{cases}
\end{align*}
From the proof of admissibility of $\rho^{\eps}$ above, and the fact that $\pi_1$ is $1$-Lipschitz it follows that $\tilde{\rho}^{\eps}$ is admissible for $\hat{\Gamma}_{\calS}$. Moreover, since the $\calH^{n}$-measure of all the peripheral spheres is $0$, it follows from Lemma \ref{lemma:Merenkov-ahlfors-regular} and  inequality (\ref{main_estimate}) that 
\begin{align*}
\int_{M(\calS)} (\tilde{\rho}^{\eps})^p d\calH^{n} \lesssim \int_{[0,1]^n} (\rho^{\eps})^p d \calH^n \lesssim \calH^n(\calR^{\eps}) + C_n \calH^n(I) \eps, 
\end{align*} 
where the constants depend only on the constant in (\ref{ineq:Merenkov_measures_comparable}) and $p\geq 1$. 
\end{proof}

\section{Slit Menger curve: Definition and Properties}\label{section:slit-menger-definition}

In this section we construct a metric space $\mathfrak{M}$, which we call a \textit{slit Menger curve}, and establish some of its properties. In particular we use a classical theorem of Anderson to show that $\mathfrak{M}$ is homeomorphic to the Menger curve.

\subsection{Standard Menger curve and Anderson's theorem}




Recall that the classical Menger curve is the compact subset $\mathscr{M}\subset \mathbb{R}^3$ which is constructed as follows. Let $E_0$ be the unit cube $[0,1]^3\subset\mathbb{R}^3$. To define $E_1$ divide $E_0$ into $3^3$ disjoint cubes of sidelength $1/3$ and remove those which do not intersect the one dimensional edges of the boundary of $E_0$. Thus, we remove the interiors of the central cube and the $6$ cubes which intersect the middle squares of the $6$ faces of $\partial{E_0}$. In particular, $E_1$ is the union of $20=3^3-7$ triadic cubes of generation $1$. Continuing by induction, suppose that $E_n$ has been defined for some $n\geq 1$ and is a union of triadic cubes of $[0,1]^3$ of generation $n$. To obtain $E_{n+1}$, from every triadic cube $T\subset E_n$ we remove the central subcube of $T$ and the $6$ subcubes that intersect the middle squares of the faces of $\partial{T}$ of generation $n+1$. The Menger curve is defined as $\mathscr{M}:=\bigcap_{n=0}^{\infty} E_n$.

The following theorem of Anderson provides a characterization of the Menger curve and will be used below. Before formulating it we recall some topological definitions.

A topological space $X$ is \emph{locally connected} if for every $p\in X$ and every open set $U\supset p$ there is an open connected set $N\subset U$ such that $p\in N$. The point $p\in X$ is a \emph{local cut point} of $X$ if there is an open subset $U$ containing $p$ such that $U\setminus p$ is not connected.

A covering $\mathcal{B}$ of a space $X$ is said to be of order at most $m+1$ if every point of $X$ belongs to at most $m+1$ elements of $\mathcal{B}$. The \emph{topological dimension} of $X$, denoted by $\dim_{top}(X)$, is the smallest number $m$ such that every open cover $\mathcal{B}$ of $X$ has a refinement $\mathcal{B}'$ of order at most $m+1$. Recall, that a cover $\mathcal{B}'$ is a \emph{refinement} of $\mathcal{B}$ if for every $B'\in\mathcal{B}'$ there is a $B\in\mathcal{B}$ such that $B'\subset B$.

\begin{theorem}[Anderson \cite{Anderson-homogeneity}]\label{thm:Anderson}
A compact connected space $X$ of topological dimension $1$ is homeomorphic to the Menger curve $\mathscr{M}$ if and only if
\begin{itemize}
  \item[1.] $X$ is locally connected,
  \item[2.] $X$ has no local cut points,
  \item[3.] no open subset of $X$ is planar.
\end{itemize}
\end{theorem}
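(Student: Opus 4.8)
\emph{Remark on strategy.} The final statement is Anderson's topological characterization of the Menger curve; in this paper it is used as a black box (cited as \cite{Anderson-homogeneity}) to identify the slit Menger curve $\mathfrak{M}$, and I would not reprove it in full. Nevertheless, here is the shape a proof takes, and where the difficulty lies.

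For the necessity direction one checks directly from the construction that $\mathscr{M}=\bigcap_n E_n$ has the three listed properties. It is a Peano continuum: it is a nested intersection of compact connected polyhedral sets whose mesh tends to $0$ while each piece $E_n\cap T$ (for $T$ a surviving triadic cube of generation $n$) stays connected with uniformly connected overlaps, so $\mathscr{M}$ is compact, connected, one-dimensional and locally connected. It has no local cut points because every point lies, at every scale, inside infinitely many surviving cubes whose one-dimensional edge-lattice keeps the complement of the point connected. Finally, no nonempty open subset of $\mathscr{M}$ embeds in the plane, since after a few further subdivisions each basic piece $E_n\cap T$ already contains a homeomorphic copy of a non-planar graph (say $K_5$ or $K_{3,3}$), hence so does any nonempty open set. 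These verifications are routine.

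The substantive direction is sufficiency. Given $X$ compact, connected, one-dimensional, locally connected, with no local cut points and no planar open subset, one builds a homeomorphism $X\to\mathscr{M}$ by a Cantor-type back-and-forth. The plan is to produce simultaneously, for $X$ and for $\mathscr{M}$, sequences of finite closed covers by connected ``bricks'' $\mathcal{U}_0,\mathcal{U}_1,\dots$ and $\mathcal{V}_0,\mathcal{V}_1,\dots$, with $\mathcal{U}_{k+1}$ refining $\mathcal{U}_k$, mesh tending to $0$, bricks meeting only in ``small'' sets, together with bijections $\mathcal{U}_k\leftrightarrow\mathcal{V}_k$ that are compatible with the refinement from stage $k$ to stage $k+1$ and preserve the incidence (nerve) pattern of which bricks touch. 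Local connectedness together with the absence of local cut points lets one choose the bricks connected, with connected-enough overlaps, and so that no single brick separates; the non-planarity hypothesis is precisely what forces $X$ to exhibit, at all locations and scales, the branching complexity of $\mathscr{M}$ (otherwise a small neighborhood would be realizable in the plane). Intersecting nested bricks produces points, and the compatible bijections glue to a homeomorphism $X\to\mathscr{M}$.

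The main obstacle is the inductive construction and matching of these brick decompositions: one must refine on the $X$-side while keeping pieces connected with a controlled and \emph{prescribed} incidence pattern, and then realize the same abstract pattern on the $\mathscr{M}$-side (and conversely, to guarantee the limit map is a bijection continuous in both directions). Formalizing the correct notion of ``partition'' and proving that dimension one, no local cut points, and non-planarity always provide enough flexibility to extend the matching one more step is the technical heart of Anderson's argument. For the purposes of this paper all of this is encapsulated in Theorem \ref{thm:Anderson} and applied directly to $\mathfrak{M}$.
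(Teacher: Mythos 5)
The paper does not prove this theorem; it is Anderson's 1958 characterization of the Menger curve, cited as an external result (\cite{Anderson-homogeneity}) and used as a black box to identify $\mathfrak{M}$ and $D\mathfrak{M}$ as Menger curves via Lemmas \ref{lemma:slit-menger-top-properties}--\ref{lemma:slit-menger-nonplanar}. You correctly recognize this, and your high-level sketch of Anderson's back-and-forth partition argument (and of the easy ``necessity'' verification for the standard $\mathscr{M}$) is a fair description of the classical proof, but there is no proof in the paper to compare it against.
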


Note that, to show that a compact space $X$ is homemorphic to the Menger curve one has to show that $\dim_{top}X=1$. For that we will need the following fact.

\begin{lemma}\label{lemma:top-dimension-estimate}
Suppose $X$ is a compact topological space. If for every $\eps>0$ there is a covering $\mathcal{B}$ of $X$ such that every point $p\in X$ belongs to at most $m+1$ elements of $\mathcal{B}$ then $\dim_{top}X\leq m$.
\end{lemma}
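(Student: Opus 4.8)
The plan is to reduce the statement to the standard definition of topological dimension by showing that the hypothesis produces, for every open cover, a refinement of order at most $m+1$. Recall that $\dim_{top}X\le m$ means precisely that every open cover of $X$ admits an open refinement of order at most $m+1$. So fix an arbitrary open cover $\mathcal{U}$ of $X$. Since $X$ is compact, by the Lebesgue number lemma (applied after fixing any metric, or more intrinsically using compactness directly) there is $\eps>0$ such that every subset of $X$ of diameter less than $\eps$ is contained in some member of $\mathcal{U}$. If $X$ is only assumed to be a compact topological space with no metric, one should instead phrase this via a Lebesgue-type argument: by compactness $\mathcal{U}$ has a finite subcover $U_1,\dots,U_N$, and one can still extract the needed "fineness" parameter; but in the context of this paper $X$ will always be a (subset of a) metric space, so I would simply invoke the Lebesgue number lemma. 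The hard part, conceptually, is just making sure the covering $\mathcal{B}$ supplied by the hypothesis can be taken fine enough and then \emph{refined into} $\mathcal{U}$ without increasing its order.

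Next I would apply the hypothesis with this $\eps$: there is a covering $\mathcal{B}=\{B_\alpha\}$ of $X$ such that every point of $X$ lies in at most $m+1$ of the $B_\alpha$. A priori the $B_\alpha$ need not be open, so one should either note that the hypothesis as used in the paper yields open covers (which is the intended reading, since it will be applied to honestly open covers built from balls), or replace each $B_\alpha$ by a slightly larger open set while keeping the order bounded by $m+1$ — this is possible when $X$ is compact and metric by a standard shrinking/swelling argument, but it is cleaner to simply assume (as the applications do) that the $\mathcal{B}$ in the hypothesis are open covers by sets of diameter less than $\eps$. Granting that, each $B_\alpha$ has diameter less than $\eps$, hence $B_\alpha\subset U_{\phi(\alpha)}$ for some choice function $\phi$ into the index set of $\mathcal{U}$.

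Then $\mathcal{B}$ is itself an open refinement of $\mathcal{U}$: every $B_\alpha$ sits inside some $U_{\phi(\alpha)}\in\mathcal{U}$, and $\mathcal{B}$ covers $X$. Moreover the order of $\mathcal{B}$ is at most $m+1$ by construction. Since $\mathcal{U}$ was an arbitrary open cover of $X$, this shows every open cover of $X$ has an open refinement of order at most $m+1$, which is exactly the statement $\dim_{top}X\le m$. The only genuine subtlety is the passage from "diameter less than $\eps$" covers to refinements of a prescribed cover, handled by the Lebesgue number lemma, and (a bookkeeping point rather than a real obstacle) the openness of the covers $\mathcal{B}$; in the paper's applications the relevant $\mathcal{B}$ are manifestly open and arbitrarily fine, so the lemma applies directly.
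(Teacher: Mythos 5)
Your proposal is correct and matches what the paper intends: the paper simply remarks that the lemma ``easily follows from the definition of topological dimension given above and the Lebesgue number lemma,'' and that is exactly the argument you have written out. One small simplification: the definition of $\dim_{top}$ given in the paper only requires the refinement $\mathcal{B}'$ of an open cover to be a cover of order at most $m+1$, not an \emph{open} cover, so the openness of the $\mathcal{B}$ supplied by the hypothesis (the ``bookkeeping point'' you flagged) is actually irrelevant under the paper's conventions; you only need the hypothesis to guarantee arbitrarily small mesh, which, as you correctly observe, must be the intended (though unstated) reading, since otherwise $\mathcal{B}=\{X\}$ would trivialize the statement.
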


The statement above is well known and easily follows from the definition of topological dimension given above and the Lebesgue number lemma, cf. \cite{HurWal,Munkres}.

\subsection{Constructing the Slit Menger curve and its doubles} As a first step we construct a sequence of domains $W_i, i=0,1,\ldots,$ in $W_0=(0,1)^3$, such that $W_{i+1}\subset W_i$. Each $W_{i+1}$ is obtained from $W_{i}$ by removing a compact subset $E_i\subset W_{i}$, where $E_i$ is a union of scaled copies of a fixed subset $E_0$ of $W_0$.

Just like before, let $\pi_j:\mathbb{R}^3\to \mathbb{R}$ be the projection onto the $j$-th coordinate axis, $j\in\{1,2,3\}$.
Recall that $\D_i^{(n)}$ and $\D^{(n)}$ (or simply $\D_i$ and $\D$ if the dimension $n$ is clear from the context) denoted the collections of all dyadic cubes in $(0,1)^n$ of generation $i\geq 0$ and of all generations, respectively. Furthermore, if  $Q\subset\mathbb{R}^3$ is a {dyadic cube of generation $n$} such that
\begin{align*}
  \bar{Q} = \left[\frac{a}{2^n},\frac{a+1}{2^{n}}\right]\times\left[\frac{b}{2^n},\frac{b+1}{2^{n}}\right]\times
  \left[\frac{c}{2^n},\frac{c+1}{2^{n}}\right],
\end{align*}
for some $a,b,c\in\mathbb{Z}$ we define the similarity transformation $T_Q$ of $\mathbb{R}^3$ as follows
\begin{align*}
  T_Q(\textbf{x}) = \frac{\textbf{x}}{2^n} +\frac{(a,b,c)}{2^n},
\end{align*}
for $\textbf{x}\in\mathbb{R}^3$. Note, then that $Q=T_Q([0,1]^3)$ and $T_Q(0,0,0)=(a,b,c)$.


Next, we let $p:=(\frac{1}{2},\frac{1}{2},\frac{1}{2})$ and define two closed subset of $[0,1]^3$ as follows:
\begin{align*}
\begin{split}
  E' &= \left\{(x,y,z)\in[0,1]^3 : \left|z-\frac{1}{2}\right|\leq \frac{1}{2^{2}}\right\} \cap \pi_2^{-1}(p),\\
   E''&= \left\{(x,y,z)\in[0,1]^3 : |y-\frac{1}{2}|\leq \frac{1}{2^{2}} \mbox{ or } |x-\frac{1}{2}|\leq\frac{1}{2^{2}}\right\} \cap \pi_1^{-1}(p),
\end{split}
\end{align*}
\begin{figure}[htb]
\centerline{
\psfig{figure=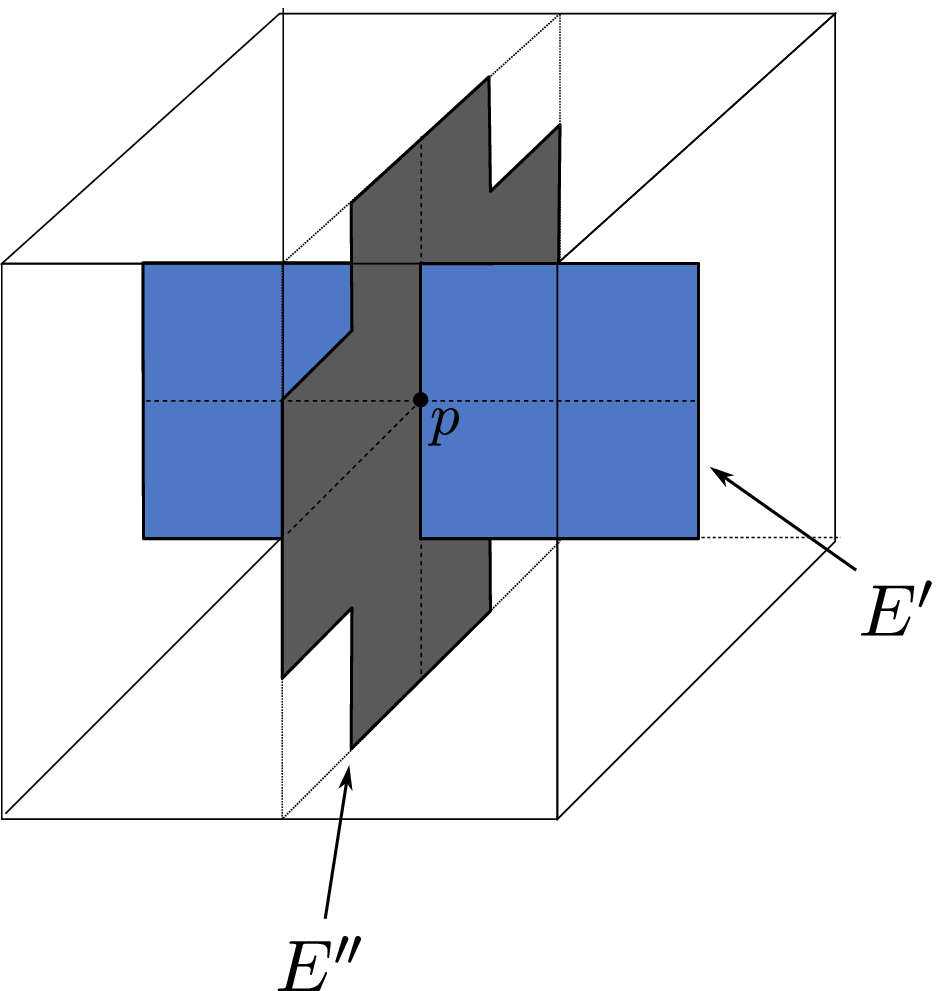,height=6cm}
\quad
\psfig{figure=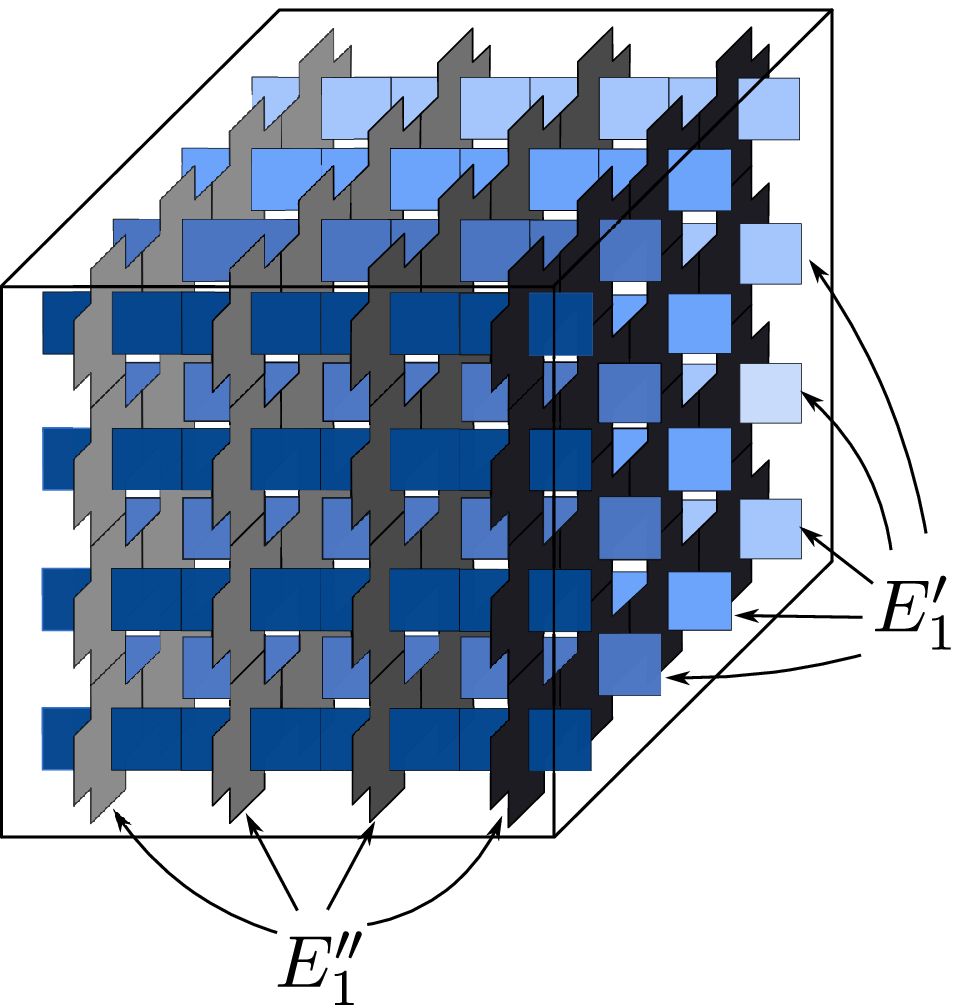,height=6cm}
}
\caption{Construction of the sets $E'$, $E''$, $E_1'$ and $E_1''$. The domains $W_1=[0,1]^3\setminus(E'\cup E'')$, and $W_2=W_1\setminus(E_1'\cup E_1'')$ are the first two steps in the construction of the slit Menger curve $\mathfrak{M}$.}
\end{figure}
Thus, $E'$ is a ``flat tube" containing $p$ that is parallel to the $XZ$-plane, while $E''$ is a ``flat cross" containing $p$ which is contained in the plane $\pi_1^{-1}(p)$ perpendicular to the $X$-axis. Finally, let
\begin{align*}
E_0 &:= E =  E'\cup E'' \subset \overline{W}_0=[0,1]^3.
\end{align*}

We define the domains $W_i$ by induction. Let $W_0=(0,1)^3$ and  $W_1 = W_0\setminus E_0.$
Thus $W_1$ is an open connected subset of the unit cube $[0,1]^3$. Given a dyadic cube $Q\in\D$ we let $E_Q$ be the rescaled copy of $E_0$ in $Q$, i.e.
\begin{align*}
E_Q = T_Q(E_0).
\end{align*}

To define the open set $W_2\subset W_1$ we will remove a union of certain smaller copies of $E$. However, in order to avoid ``non-transversal intersections" it is convenient to ``skip" one generation of dyadic cubes. Thus, for $i\geq 1$ we let
\begin{align*}
  E_i = \bigcup_{Q\in \D_{2i}} T_Q(E_0), \quad E'_i = \bigcup_{Q\in \D_{2i}} T_Q(E'), \quad E''_i = \bigcup_{Q\in \D_{2i}} T_Q(E'')
\end{align*}
and
\begin{align*}
  W_{i+1}&= W_{i} \setminus E_i = W_0 \setminus \bigcup_{i=0}^n E_i
  = (0,1)^3 \setminus \left[\bigcup_{i=0}^i \left(\bigcup_{Q\in\D_{2i}} T_Q(E_0) \right) \right].
\end{align*}

Note that for every $i\geq0$ the set $E'_i$ intersects the $yz$ plane $\{x=0\}\subset\mathbb{R}^3$ and in fact
\begin{align*}
  E'_i\cap\{x=0\} = \bigcup_{Q\in\Delta_{2i}^{yz}} s(Q),
\end{align*}
where the union is over all the diadic squares of generation $2i$ contained in the unit square in the $zy$ plane, denoted by $\Delta_{2i}^{zy}$, and $s(Q)\subset Q$ denotes the vertical slit of length $l(Q)/2=\frac{4^{-n}}{2}$ with the midpoint at the center of $Q$.

Similarly, we have also
\begin{align*}
  E''_i\cap\{y=0\} = \bigcup_{Q\in\Delta_{2i}^{xz}} s(Q),\quad
  E''_i\cap\{z=0\} = \bigcup_{Q\in\Delta_{2i}^{xy}} s(Q),
\end{align*}
where the families $\Delta_{2i}^{xz}$, $\Delta_{2i}^{xy}$ and slits  $s(Q)$ are defined as above, but in the $xz$ and $xy$ planes, respectively.

Let $\overline{W}_i$ be the completion of $W_i$ in the path metric $d_{W_i}$. Note that if $x\in E_i$ then it can ``split" into two or four points in $\overline{W}_{i+1}$, where the latter happens only if $x\in E_i'\cap E_i''$. For each $0\leq i< j$ let
$$\omega_{i,j}:\overline{W}_j\to \overline{W}_i$$ be the map which identifies the points in $W_j$ which correspond to the same point in $W_i$. Similarly to the case of the Sierpi\'nski spaces we obtain an inverse system $(W_i,\omega_{i,j})$ and define the topological space
\begin{align*}
  \mathfrak{M}=\lim_{\leftarrow}(\overline{W}_i,\omega_{i,j}).
\end{align*}
Note also, that for every $p=(p_0,p_1,\ldots)\in\mathfrak{M}$ and every $i\geq 0$ there is a natural projection of $p$ to $\overline{W}_i$ defined as follow:
\begin{align*}
\begin{split}
 \omega_i:\mathfrak{M} &\to \overline{W}_i\\
  p &\mapsto p_i.
\end{split}
\end{align*}
We will denote $\omega_0$ simply by $\omega$. Thus, since $W_0=(0,1)^3$,  we have the projection
\begin{align*}
\begin{split}
  \omega:\mathfrak{M} &\to [0,1]^3,\\
  p &\mapsto p_0.
\end{split}
\end{align*}

The subsets of $\mathfrak{M}$ corresponding the top and bottom faces of the boundary of the unit cube in $\mathbb{R}^3$, i.e.
\begin{align}\label{top&bottom}
\begin{split}
  \mathcal{T} &= \omega^{-1}(\{(x,y,1) \,|\, 0\leq x,y \leq 1\}),\\
 \mathcal{B} &= \omega^{-1}(\{(x,y,0) \,|\, 0\leq x,y \leq 1\}),
\end{split}
\end{align}
will be called the \emph{top} and \emph{base} (or bottom) of $\mathfrak{M}$, respectively.

Note that both $\mathcal{T}$ and $\mathcal{B}$ are homeomorphic to the Sierpi\'nski carpet. In fact, as we show next, equipped with the metric induced from  $d_{\mathfrak{M}}$, $\mathcal{T}$ and $\mathcal{B}$ are isometric to the slit carpet $M(\calS)$ corresponding to a very particular sequence of slits $\mathcal{S} = \{s(Q)\}_{Q\in\Delta^{xy}_{2i}} $ in the unit square $[0,1]\times[0,1]\times\{0\}$ in the $xy$ plane.

\begin{figure}[t]
\centerline{
\psfig{figure=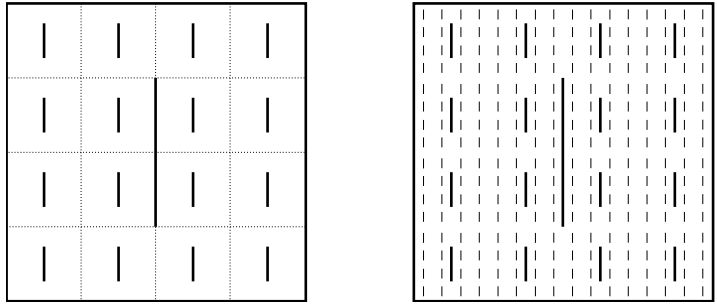,height=4cm}
}
\caption{The ``top" and ``bottom" of the slit Menger curve $\mathfrak{M}$, denoted by $\mathcal{T}$ and $\mathcal{B}$, respectively, are isometric to the slit carpet corresponding to the sequence of slits $\mathcal{S}=\{s_{kl}^i\}$ defined by (\ref{def:slits}). Every slit $s\in\mathcal{S}$ is centered at the center of a diadic square $Q$ of side-length $4^{-i}$ and the length of $s$ is equal to half the side length of $Q$.}
\label{fig:base-carpet}
\end{figure}

More concretely we can write $\mathcal{S}=\{s^i_{kl}\}$, where
$s^i_{kl}$ is the vertical slit of length $\frac{4^{-i}}{2}$ in the diadic square
$$Q^i_{kl}:=\left[\frac{k}{4^i},\frac{k+1}{4^i}\right)\times\left[\frac{l}{4^i},\frac{l+1}{4^i}\right) \in\Delta_{2i}.$$

Equivalently,
\begin{align}\label{def:slits}
\begin{split}
  s_{kl}^{i}&:=\left\{\left(\frac{2k+1}{2\cdot 4^{i}},y \right) \,: \, \left|y-\frac{2l+1}{2\cdot4^{i}}\right|\leq \frac{1}{4^{i+1}}\right\}, \mbox{ where } i\geq0, \mbox{ and } k,l\in\{0,\ldots,4^i-1\}.
\end{split}
\end{align}
To show that $\mathcal{B}$ is isometric to the slit carpet pick two points $p,q\in\mathcal{B}$. Then for every curve $\g\subset\mathfrak{M}$ connecting $p$ and $q$ there is a curve $\g'\subset\mathcal{B}$ which is no longer than $\g$. Indeed, without loss of generality we may assume that $\g$ does not intersect $\omega^{-1}(\bigcup_i E_i)$, since any other curve can be approximated by such curves. Then, denoting by $\pi_{xy}$ the orthogonal projection of $\mathbb{R}^3$ to the $xy$ plane, we can take $\g'=\omega^{-1}\circ\pi_{xy}\circ\omega\circ\g$, i.e. the ``projection of $\g$ to $\mathcal{B}$". Therefore, for $p,q\in\mathcal{B}$ we have $d_{\mathfrak{M}}(p,q) \geq \inf\{l(\g'): \g'\subset \mathcal{B}\}$ and in particular these two quantities are equal. Thus, the restriction of the path metric $d_{\mathfrak{M}}$ to $\mathcal{B}$ gives the path metric defined for the slit carpets.
We will call the slits in $\mathcal{B}$ corresponding  to $s^{i}_{kj}$the \emph{slits of generation} $i\geq 0$.

Define the \textit{double of the slit Menger curve $\mathfrak{M}$ along} $\mathcal{T}\cup\mathcal{B}$, denoted by $D\mathfrak{M}$, by identifying the two copies of $\mathcal{T}\cup\mathcal{B}$ (by the identity map) for two copies of $\mathfrak{M}$. More concretely, if $\mathfrak{M}_1$ and $\mathfrak{M}_2$ are the two copies of the slit Menger curve, $id:\mathfrak{M}_1\to\mathfrak{M}_2$ is the identity map, and $\mathcal{T}_i$ and $\mathcal{B}_i$ are the top and bottom slit carpets in $\mathfrak{M}_i$, $i=1,2$, then we can define the double of $\mathfrak{M}$ as follows:
\begin{align*}
  D\mathfrak{M} = (\mathfrak{M}_1 \sqcup \mathfrak{M}_2) / \sim,
\end{align*}
where, if $x\in\mathfrak{M}_1$ and $y\in\mathfrak{M}_2$ then
\begin{align*}
  x\sim y \quad \Longleftrightarrow \quad y=id(x) \mbox{ and } x\in \mathcal{T}_1\cup\mathcal{B}_1.
\end{align*}

Next we equip $\mathfrak{M}$ and $D\mathfrak{M}$ with metrics. Given $p=(p_0,p_1,\ldots),q=(q_0,q_1,\ldots)\in \mathfrak{M}$ we define
\begin{align*}
  d_{\mathfrak{M}}(p,q)=\lim_{i\to\infty} d_{\overline{W}_i}(p_i,q_i).
\end{align*}
The limit above exists and is finite since the sequence $\{ d_{\overline{W}_i}(p_i,q_i)\}$ is non-decreasing and bounded.

The metric space $(\mathfrak{M},d_{\mathfrak{M}})$ will be called the \textit{slit Menger curve}. The metric $d_{\mathfrak{M}}$ will be called the \textit{path metric on the slit Menger curve $\mathfrak{M}$}.

The path metric on $\mathfrak{M}$ induces a path metric on $D\mathfrak{M}$, which we will denote by $d_{D\mathfrak{M}}$. Indeed, every curve $\g$ in $D\mathfrak{M}$ can be written as a disjoint union of two (not-necessarily connected) curves, one of which the image in one of the copies of $\mathfrak{M}$ (we can denote it $\mathfrak{M}_1$) and the other in $D\mathfrak{M}\setminus\mathfrak{M}_1$. Clearly $d_{D\mathfrak{M}}$ restricts to $\d_{\mathfrak{M}}$ on each of the copies of $\mathfrak{M}$ in the double.

The space $D\mathfrak{M}$ will be called the \textit{double of the slit Menger curve}. We will show below that $D\mathfrak{M}$ is homeomorphic to the Menger curve (cf. Theorem \ref{cor:slit-menger-homeo-menger}). Therefore, Theorem \ref{thm:menger-co-Hopf} follows from the following result.

\begin{theorem}\label{thm:slit-menger-co-Hopf}
  Every quasisymmetric mapping $f:D\mathfrak{M}\to D\mathfrak{M} $ is surjective.
\end{theorem}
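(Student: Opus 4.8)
The plan is to follow the pattern of Theorem~\ref{thm:mod0-co-hopf}, replacing the interval $[a_1,b_1]$ by the base carpet $\mathcal{B}$ and the vertical $(n-1)$-spheres by the fibers of the projection $\Pi\colon D\mathfrak{M}\to\mathcal{B}$ constructed in Section~\ref{Sec:menger-fibered-over-slit-carpet}; call a curve in $D\mathfrak{M}$ \emph{vertical} if $\Pi$ collapses it to a point, and note that for every $b$ in the complement of the slits of $\mathcal{B}$ (a dense open set of full $\calH^{2}$-measure) the fiber $\Pi^{-1}(b)$ is a topological circle --- a \emph{vertical circle}, two copies of a fiber interval of $\mathfrak{M}$ glued along their endpoints. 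First I would verify that $f(D\mathfrak{M})$ is Ahlfors $3$-regular: as in Lemma~\ref{lemma:Merenkov-ahlfors-regular}, $(D\mathfrak{M},d_{D\mathfrak{M}},\calH^{3})$ is compact, connected and Ahlfors $3$-regular, and every ball $B(x,r)\subset D\mathfrak{M}$ contains a family of subarcs of vertical circles of diameter $\gtrsim r$ whose $3$-modulus is bounded below (from the estimate $\m_3\ge1$ for vertical segments in a Euclidean box, transported via a comparability of measures as in~(\ref{ineq:Merenkov_measures_comparable})). Then Lemma~\ref{lemma:lower-Ahlfors-regular} applies, and by Tyson's Theorem~\ref{thm:Tyson} $f$ quasi-preserves the $3$-modulus of every curve family in $D\mathfrak{M}$.

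Next I would show that the family $\G_{nv}$ of non-vertical curves in $D\mathfrak{M}$ (those whose $\Pi$-image is nondegenerate) satisfies $\m_3(\G_{nv})=0$. Since $\mathcal{B}$ is isometric to the slit carpet $M(\calS)$ with $\calS=\{s^{i}_{kl}\}$ as in~(\ref{def:slits}), whose slits are uniformly relatively separated and occur at all locations and scales, Lemma~\ref{lemma:curves_in_porous_carpets} supplies vanishing-modulus estimates for non-vertical curves inside $\mathcal{B}$; lifting the corresponding admissible metrics through the $1$-Lipschitz map $\Pi$ and using Lemma~\ref{lemma:modulus-under-projections} with~(\ref{ineq:Merenkov_measures_comparable}) produces admissible metrics for $\G_{nv}$ of arbitrarily small $L^{3}$-norm, the $1$-dimensionality of the fibers accounting for the change of exponent. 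Some care is needed here, since a curve of $D\mathfrak{M}$ can be non-vertical in more than one direction, so $\G_{nv}$ should be split into sub-families treated by separate projections. Granting $\m_3(\G_{nv})=0$: the family of vertical circles that $f$ sends to non-vertical curves maps into $\G_{nv}$, so it has $3$-modulus zero by Tyson; since $f$ is a homeomorphism, the image of a circle is a circle, so $f$ sends $\m_3$-a.e.\ vertical circle to a vertical circle, and then, just as in Lemma~\ref{lemma:vertical-spheres} (continuity of $\Pi\circ f$ plus density of the good vertical circles), $f$ sends \emph{every} vertical circle to a vertical circle.

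It follows that $\Pi\circ f$ is constant on each vertical circle, so $f$ descends to a continuous injection $\bar f\colon\mathcal{B}\to\mathcal{B}$ with $\Pi\circ f=\bar f\circ\Pi$; it is injective because $f$ is and because $\mathbb{S}^{1}$ is topologically co-Hopfian, so $f$ carries each fiber circle \emph{onto} the fiber circle over its image. Hence, once $\bar f$ is shown to be onto, $f(D\mathfrak{M})$ contains the vertical circle over every point of $\mathcal{B}$ off the slits, so it is dense and, being closed, equals $D\mathfrak{M}$; thus the theorem reduces to: $\bar f$ is surjective.

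This last step is the main obstacle. The mechanism should be that $\Pi$ detects the peripheral circles of $\mathcal{B}$ topologically: over a point of $\mathcal{B}$ off the slits the fiber is a circle, whereas over a point $b$ on a peripheral circle of $\mathcal{B}$ --- an opened-up slit $s(Q)$, arising from a flat tube $E'$ or a flat cross $E''$ --- the fiber $\Pi^{-1}(b)$ is a genuinely different continuum (the doubled, opened-up tube or cross with the smaller slits inside it deleted), and one should be able to read off from the local topology of $D\mathfrak{M}$ near $\Pi^{-1}(b)$ that $b$ lies on a peripheral circle, and on which generation. Since $f$ is a homeomorphism onto its image it cannot interchange these two kinds of fibers, so $\bar f$ must map peripheral circles of $\mathcal{B}$ to peripheral circles of $\mathcal{B}$, respecting their nesting; since $\bar f$ is injective and peripheral circles are dense in $\mathcal{B}$, one concludes that $\bar f(\mathcal{B})$ cannot be contained in a proper subcarpet, whence $\bar f(\mathcal{B})=\mathcal{B}$. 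Turning this into a proof --- a careful analysis of the continua $\Pi^{-1}(b)$ for peripheral circles $b$ of all generations and of the way these fibers accumulate --- is the heart of the matter and is carried out in Section~\ref{Section:menger-coHopf-proof}.
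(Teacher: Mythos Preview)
Your outline is the paper's strategy: establish $\m_3(\G_{nz})=0$, use Tyson's theorem to show $f$ maps fibers onto fibers, descend to an embedding $f_{\mathcal{B}}\colon\mathcal{B}\to\mathcal{B}$, and prove $f_{\mathcal{B}}$ is onto by reading the slit generation off the fiber topology. Two points where your description diverges from the paper deserve comment.

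For the modulus step the paper does \emph{not} project to the $2$-dimensional base $\mathcal{B}$; it works in $[0,1]^3$ via $\omega$ and applies Lemma~\ref{lemma:curves_in_porous_carpets} directly to the families $\mathcal{F}',\mathcal{F}''$ of $2$-dimensional square slits in $\mathbb{R}^3$ contained in $E',E''$ (Lemma~\ref{lemma:menger-non-z-parallel}), then splits $\G_{nz}\subset\G_1\cup\G_2$ according to which coordinate projection is nondegenerate. Your route through $\Pi$ and~(\ref{ineq:Merenkov_measures_comparable}) does not fit as written: that inequality compares $\calH^n$ on a slit space with $\calH^n$ on the ambient box of the \emph{same} dimension, whereas $\Pi$ drops dimension, so the hypothesis of Lemma~\ref{lemma:modulus-under-projections} is not the one you cite. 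Working with the $3$-dimensional slit families avoids this bookkeeping entirely.

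For the surjectivity of $f_{\mathcal{B}}$ the paper's mechanism is more specific than ``respecting nesting''. By Corollary~\ref{lemma:double-menger-fibres}, over a slit of generation $i$ every non-circle fiber is homeomorphic to $Y_j$ for some $j\ge i$, and exactly four of them are homeomorphic to $Y_i$. Since $f$ is a homeomorphism on \emph{each} fiber (Lemma~\ref{lemma:surject-on-fibers}, which handles the non-circle fibers as well by an approximation argument, not only the $\mathbb{S}^1$ case you invoke), a slit carrying a $Y_i$-fiber can only map to a slit of generation at most $i$; induction on $i$ (one slit of generation $0$, then sixteen of generation $1$, and so on) forces $f_{\mathcal{B}}$ to permute the slits of each fixed generation, and density of the slits yields surjectivity. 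That peripheral circles go to peripheral circles is Whyburn's theorem, automatic once $f_{\mathcal{B}}$ is a topological embedding of a carpet into a carpet.
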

Theorem \ref{thm:slit-menger-co-Hopf} is proved in Section \ref{Section:menger-coHopf-proof} by combining the results of Sections \ref{Sec:menger-fibered-over-slit-carpet} and \ref{Section:QS-maps-fiber-preserving}. The rest of this section is devoted to the proof of  the following result.

\begin{theorem}\label{cor:slit-menger-homeo-menger}
The metric spaces $\mathfrak{M}$ and $D\mathfrak{M}$ are homeomorphic to the Menger curve $\M$.
\end{theorem}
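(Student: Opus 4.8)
The plan is to invoke Anderson's characterization of the Menger curve, Theorem~\ref{thm:Anderson}: it suffices to prove that $\mathfrak{M}$, and separately $D\mathfrak{M}$, is a compact connected metric space of topological dimension $1$ that is locally connected, has no local cut points, and has no planar open subset. I will describe the argument for $\mathfrak{M}$; since $D\mathfrak{M}$ is obtained from two copies of $\mathfrak{M}$ by identifying the closed sets $\mathcal{T}\cup\mathcal{B}$ via the identity, the same verification applies to it, with only the cosmetic changes indicated at the end.

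Compactness and connectedness come from the inverse limit presentation $\mathfrak{M}=\lim_{\leftarrow}(\overline{W}_i,\omega_{i,j})$: each $\overline{W}_i$ is the completion of the connected domain $W_i$, hence a compact connected metric space, the bonding maps $\omega_{i,j}$ are continuous and surjective, so the inverse limit is a nonempty compact connected Hausdorff space; that $d_{\mathfrak{M}}$ induces its topology is checked exactly as for the slit carpet in \cite{Mer:coHopf}. Topological dimension at least $1$ is automatic, since $\mathfrak{M}$ is a nondegenerate continuum. For the opposite inequality I would use Lemma~\ref{lemma:top-dimension-estimate}: the portion of $\mathfrak{M}$ lying over a dyadic cube of generation $2i$ has diameter $O(2^{-2i})$ and is (for cubes in the interior of the construction) a rescaled copy of $\mathfrak{M}$ itself, so the pieces over generation-$2i$ cubes cover $\mathfrak{M}$ by small sets; this cover has order at most $8$ a priori, but it can be refined to one of order at most $2$ by the same device as in the classical proof that $\dim_{top}\M=1$, namely by shrinking the pieces away from the holes that the crosses and ribbons create in every face at every scale — the top and bottom faces $\mathcal{T},\mathcal{B}$, and likewise the side faces, all inherit a slit-carpet type structure. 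Hence $\dim_{top}\mathfrak{M}\le 1$.

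The remaining three conditions rest on the \emph{local self-similarity} of $\mathfrak{M}$: if $Q$ is a dyadic cube of generation $2i$ whose closure is disjoint from $\partial[0,1]^3$ and from the ribbons and crosses of the finitely many earlier generations, then $\mathfrak{M}\cap\omega^{-1}(\overline{Q})$ (with $\omega=\omega_0$) is, after the similarity $T_Q$, isometric to $\mathfrak{M}$; for the remaining cubes it is one of finitely many rescaled ``collar'' pieces of $\mathfrak{M}$, and along $\mathcal{T}$ and $\mathcal{B}$ it is described by the slit-carpet structure established above together with attached ``vertical'' strands. Since such pieces form a neighbourhood basis at every point of $\mathfrak{M}$, it suffices to verify local connectedness, the absence of local cut points, and the non-planarity of open subsets at the scale of the whole space, plus for the finitely many boundary types. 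Local connectedness is then immediate because each of these pieces is path connected and they shrink to points. For the absence of local cut points one verifies that through every point of $\mathfrak{M}$ there run at least two independent strands — this is the purpose of excising the cross $E''$ and the ribbon $E'$ at every location and scale — so that removing a single point cannot disconnect a small neighbourhood; near $\mathcal{T}$ and $\mathcal{B}$ this combines the analogous property of the slit carpet with the vertical strands. Finally, every nonempty open $U\subset\mathfrak{M}$ contains a piece $\mathfrak{M}\cap\omega^{-1}(\overline{Q})$ homeomorphic to a copy of (a neighbourhood in) $\mathfrak{M}$, so it is enough to embed one non-planar graph in $\mathfrak{M}$; the branching created by the crosses $E''$ in the first few generations does this — opening them up as in the proof of Lemma~\ref{lemma:homeo} exhibits a topological copy of $K_{3,3}$ (equivalently of $K_5$) inside $\mathfrak{M}$.

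For $D\mathfrak{M}$ the same five conditions hold by the same arguments; identifying along $\mathcal{T}\cup\mathcal{B}$ only helps, since points of those faces then have two-sided neighbourhoods, making local connectedness and the absence of local cut points there transparent, while non-planarity of open sets is inherited from the copies of $\mathfrak{M}$. \textbf{The main obstacle} is the local topological analysis behind the last two conditions: one must understand the homeomorphism type of a neighbourhood of an arbitrary point of $\mathfrak{M}$ — especially near the outer faces and near the points on the images of the crosses and ribbons — precisely enough to exclude local cut points and to locate a non-planar subset in every open set. Once the local self-similarity reduction is set up, this becomes a finite, if intricate, inspection of the first two or three generations of the construction, entirely parallel to the classical verification of Anderson's conditions for the Menger curve $\M$.
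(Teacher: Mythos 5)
Your overall strategy — appeal to Anderson's characterization (Theorem~\ref{thm:Anderson}), verify compactness, connectedness, $\dim_{top}=1$, local connectedness, absence of local cut points, and non-planarity of open subsets, reducing the last three to a finite local inspection via self-similarity — is exactly the route the paper takes, and most of your outline matches the paper's Lemmas~\ref{lemma:slit-menger-top-properties}, \ref{lemma:slit-menger-dimension1}, and~\ref{lemma:slit-menger-nonplanar}. The non-planarity step is essentially the paper's argument: the paper produces an explicit $K_5$ with vertices at five corners of $[0,1]^3$, six edges along the $1$-skeleton of the cube, three more along the slit-carpet faces, and one interior vertical path; your description of embedding a non-planar graph via the branching that $E''$ creates is the same idea, and local self-similarity then makes every open set non-planar.

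The genuine gap is in the dimension bound, which you yourself flag as the main obstacle. Your proposed ``device'' — cover $\mathfrak{M}$ by the pieces over generation-$2i$ dyadic cubes, then ``shrink the pieces away from the holes'' to pass from order $8$ to order $2$ — does not work as stated, and it is not what the classical Menger argument does. Shrinking open cubes of a grid leaves the boundaries of the cubes uncovered, and covering those gaps with thin neighborhoods of faces, edges, and corners reproduces order $\ge 3$ at the edges and corners. Moreover, in the \emph{standard} dyadic grid $\Delta_{2i}$ the slits $E_i'\cup E_i''$ sit at the \emph{centers} of the cubes, so the corners and edges of that grid are precisely the places where the inner metric gives you \emph{no} separation between non-adjacent cubes. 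The paper's key move, Lemma~\ref{lemma:cubes-adjacent-faces}, is to use an \emph{offset} grid $\mathcal{Q}_n=\{Q(\mathbf{x},4^{-n}/2):\mathbf{x}\in(4^{-n}\mathbb{Z})^3\}$, whose corners land on the slit centers; it then shows that two cubes of $\mathcal{Q}_n$ either share a full $2$-face or are $d_{W_{n+1}}$-separated by at least $4^{-(n+1)}$, because any path connecting them must detour around a slit (the proof projects to a coordinate plane and produces a slit $s$ with $s\times(0,1)\subset E''_i$ that the projected curve must avoid). Taking $\eps$-neighborhoods of the cubes in $\mathcal{Q}_n$ with $\eps<4^{-(n+1)}/2$ then gives an open cover of $\mathfrak{M}$ of mesh $\lesssim 4^{-n}$ and order $2$, since no three cubes in $\mathbb{R}^3$ share a $2$-face. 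Without identifying this offset placement and the accompanying inner-metric separation estimate, the ``shrink away from holes'' heuristic does not produce an order-$2$ cover, and the bound $\dim_{top}\mathfrak{M}\le 1$ is not established.
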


\begin{proof}
Theorem \ref{cor:slit-menger-homeo-menger} follows from Lemmas \ref{lemma:slit-menger-top-properties}, \ref{lemma:slit-menger-dimension1}, \ref{lemma:slit-menger-nonplanar} below and Anderson's Theorem \ref{thm:Anderson}.
\end{proof}

\begin{lemma}\label{lemma:slit-menger-top-properties}
The spaces $\mathfrak{M}$ and $D\mathfrak{M}$ are path connected, locally connected topological spaces with no local cut points.
\end{lemma}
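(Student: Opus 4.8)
The plan is to verify each of the three asserted properties — path connectedness, local connectedness, and absence of local cut points — by working with the approximating domains $\overline{W}_i$ and passing to the inverse limit, exactly as one does for the standard Menger curve. First I would handle \emph{path connectedness}: each $W_i$ is an open connected subset of $(0,1)^3$ (one checks inductively that removing the sets $E_0$ and its rescaled copies $E_Q$ leaves the complement connected, since $E'$ is a thin tube and $E''$ a thin cross and the ``edge skeleton'' of every dyadic cube survives), hence $\overline{W}_i$ is path connected in the path metric $d_{\overline W_i}$. Given $p=(p_i)$ and $q=(q_i)$ in $\mathfrak{M}$, one produces compatible paths $\g_i$ in $\overline W_i$ joining $p_i$ to $q_i$ whose lengths stay uniformly bounded (using that the diameters of the removed pieces $E_i$ shrink geometrically, as in Lemma \ref{lemma:homeo}), and passes to a limit path in $\mathfrak{M}$; the double $D\mathfrak{M}$ is then path connected because it is the union of two path connected pieces glued along the nonempty set $\mathcal{T}\cup\mathcal{B}$.

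For \emph{local connectedness}, the key observation is that for each $i$ the space $\overline{W}_i$ is locally connected (it is a finite CW-type subset of Euclidean space with nice boundary), and that the projections $\omega_i:\mathfrak{M}\to\overline W_i$ are open maps between compact metric spaces with the property that preimages of connected sets under $\omega_i$ are ``almost'' connected. Concretely, given $p\in\mathfrak M$ and a basic neighborhood $\omega_i^{-1}(U)$ with $U\subset\overline W_i$ open and connected containing $p_i$, I would show $\omega_i^{-1}(U)$ is connected by an inductive argument on the finitely many rescaled copies $E_Q$ of generation between $i$ and $i+1$ — each such copy is a tube or cross whose two (or four) ``sides'' in $\overline W_{i+1}$ are joined through the surrounding domain — so that the connectedness of $U$ is inherited. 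Since every open set in $\mathfrak M$ contains such a basic connected neighborhood of each of its points (the sets $\omega_i^{-1}(U)$ form a neighborhood basis), $\mathfrak M$ is locally connected; local connectedness of $D\mathfrak M$ follows since it is obtained by gluing two locally connected spaces along a closed locally connected subset.

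\emph{No local cut points} is the main obstacle and requires the most care. Here I would argue that for any $p\in\mathfrak M$ and any small basic neighborhood $N=\omega_i^{-1}(U)$ of $p$, the punctured set $N\setminus\{p\}$ is still connected. The point is that $p$ corresponds to a sequence $(p_j)$ and at every scale $j\geq i$ the point $p_j$ lies in a region of $\overline W_j$ that has ``many independent directions of escape'': because of the cross-shaped pieces $E''$ and the repeated dyadic subdivision, no single point disconnects any small piece — one can always route around $p$ through the edge skeleton at a finer scale, and the distances involved are controlled since removed pieces have geometrically decaying diameter. Formally, I would fix a scale $j$ large enough that the ``cell'' of the dyadic decomposition containing $p_j$ has diameter $<\eps$, observe that $\overline W_{j}$ minus that cell is connected and within $N$, and that the cell minus $\{p_j\}$ is itself connected (a cube with a thin tube and cross deleted, minus an interior point, is connected in dimension $3$), then glue these along their overlap; pulling back via $\omega_j$ gives connectedness of $N\setminus\{p\}$. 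For $D\mathfrak M$ one notes that a point $p\in\mathcal T\cup\mathcal B$ has neighborhoods meeting both copies of $\mathfrak M$, and the doubling only adds more routes around $p$, so the double also has no local cut points. The delicate bookkeeping — making the ``route around $p$'' explicit and length-controlled at the level of the inverse system, and treating the special points lying on the slits where fibers split into two or four — is where the real work lies, but it is entirely parallel to the corresponding verification for the classical Menger curve.
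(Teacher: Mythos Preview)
Your approach is workable in spirit but misses the key simplification the paper exploits, and as written has a genuine gap in the local-connectedness step.

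The paper's proof is essentially three lines, resting on the \emph{self-similarity} of $\mathfrak{M}$: for every $n\geq 1$ and every dyadic cube $Q\in\Delta_{2n}$, the set $\omega^{-1}(Q)\subset\mathfrak{M}$ is isometric to a $4^{-n}$--scaled copy of $\mathfrak{M}$. Path connectedness is obtained by routing through vertical segments to the top carpet $\mathcal{T}$; local connectedness is immediate because every point has arbitrarily small neighborhoods isometric to scaled copies of $\mathfrak{M}$ (hence connected); and absence of local cut points reduces, again by self-similarity, to the single global statement that $\mathfrak{M}\setminus\{p\}$ is connected, which follows from path connectedness.

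Your inverse-limit argument for local connectedness asserts that for connected open $U\subset\overline W_i$ the preimage $\omega_i^{-1}(U)$ is connected, because ``the two (or four) sides in $\overline W_{i+1}$ are joined through the surrounding domain.'' This is false for small $U$. If $U$ is a ball of radius much smaller than $4^{-i}$ centered at an interior point of one of the crosses $T_Q(E'')\subset E_i$, then $\omega_{i,i+1}^{-1}(U)$ splits into two components (one for each side of the cross), because the boundary of the cross---where the two sheets merge---lies outside $U$. The fix is to take $U$ to be a full dyadic cell $Q\in\Delta_{2i}$, so that $U$ contains the entire cross and tube and the two sides are genuinely joined around them; but that is exactly the self-similarity observation, and once you make it the inductive machinery becomes unnecessary.

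Your no-local-cut-points argument inherits the same issue: the step ``pulling back via $\omega_j$ gives connectedness of $N\setminus\{p\}$'' presumes the connectedness-of-preimages claim above. It also tacitly assumes $\omega_j^{-1}(p_j)=\{p\}$, which holds only once $j$ exceeds the (at most two) generations for which $\omega(p)\in E_i$; you should say this. More importantly, the intuition ``a cube with a thin tube and cross deleted, minus an interior point, is connected in dimension $3$'' is true at each finite stage but does not by itself transfer to the $1$-dimensional limit $\mathfrak{M}$; you still need the preimage-connectedness lemma, which as noted requires the cell-sized choice of neighborhood. Once you restrict to dyadic cells you are again back to the paper's self-similarity argument.
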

\begin{proof}
To show that $\mathfrak{M}$ is path connected, note that for every two points $p$ and $q$ in $\mathfrak{M}$ one can connect them by ``vertical" paths $\g_p$ and $\g_q$ to the top $\mathcal{T}$, which is a path connected slit carpet, cf. \cite{Mer:coHopf}. By a ``vertical path" $\g_p$ here we mean the connected component containing $p$ of the set $\omega^{-1}(v_{\omega(p)})$, where $v_{\omega(p)}$ is the vertical (i.e. parallel to $z$-axis) interval through $\omega(p)$ in $[0,1]^3$.  In fact, just like the slit carpets one may show $\mathfrak{M}$ is a geodesic space, cf. \cite{Mer:coHopf}.

To see that $\mathfrak{M}$ is locally connected, note that for every point $p\in\mathfrak{M}$ and every $\eps>0$ there is a homeomorphic copy of $\mathfrak{M}$ (which is connected) of diameter less than $\eps$ containing $p$, indeed, there is such a subset of $\mathfrak{M}$ isometric to $(\mathfrak{M},4^{-n}d_{\mathfrak{M}})$ (and hence homeomorpchic to $\mathfrak{M}$) for any $n\geq1$.

Just like in the proof of local connectivity, absence of local cut points follows from the fact that $\mathfrak{M}$ is self-similar. Indeed, since removing a point does not disconnect $\mathfrak{M}$, the same holds locally around every point.
\end{proof}

\begin{lemma}\label{lemma:slit-menger-dimension1}
The spaces  $\mathfrak{M}$ and $D\mathfrak{M}$ have topological dimension equal to $1$.
\end{lemma}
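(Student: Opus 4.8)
The plan is as follows. The lower bounds $\dim_{top}\mathfrak{M}\geq 1$ and $\dim_{top}D\mathfrak{M}\geq 1$ are immediate from Lemma \ref{lemma:slit-menger-top-properties}: both spaces are connected and have more than one point, hence are not $0$-dimensional (equivalently, being path connected they contain an arc, whose topological dimension is $1$). So the work is in the two upper bounds. Moreover, once $\dim_{top}\mathfrak{M}\leq 1$ is established, the bound for the double is automatic: $D\mathfrak{M}=\mathfrak{M}_1\cup\mathfrak{M}_2$ is a union of two closed subsets, each isometric to $\mathfrak{M}$, and the sum theorem for topological dimension (valid for covers of a normal space by closed sets) gives $\dim_{top}D\mathfrak{M}=\max\{\dim_{top}\mathfrak{M}_1,\dim_{top}\mathfrak{M}_2\}=1$. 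Thus everything reduces to proving $\dim_{top}\mathfrak{M}\leq 1$, and for this I would apply Lemma \ref{lemma:top-dimension-estimate}: it suffices to produce, for every $\eps>0$, a finite closed cover of $\mathfrak{M}$ by sets of diameter less than $\eps$ in which no point lies in more than two members.

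To construct such a cover I would fix $n$ so large that $4^{-n}\diam(\mathfrak{M},d_{\mathfrak{M}})<\eps$, which is possible since $(\mathfrak{M},d_{\mathfrak{M}})$ is compact. For each dyadic cube $Q$ of generation $2n$ let $\mathfrak{M}(Q)$ denote the part of $\mathfrak{M}$ lying over $\overline{Q}$ under $\omega_n$. A direct computation shows that the generation $2n$ cube decomposition is compatible with the construction: the cuts $E'_i,E''_i$ with $i<n$ all lie on the generation $2n$ grid of dyadic planes, while for $i\geq n$ the sets $E_i$ restrict inside each $Q$ to the $T_Q$-rescaled copies of $E_0,E_1,\dots$. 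Consequently each $\mathfrak{M}(Q)$ is a closed subset of $\mathfrak{M}$ which, up to slits along $\partial Q$, is the $4^{-n}$-scaled copy of $\mathfrak{M}$; in particular $\diam\mathfrak{M}(Q)\leq 4^{-n}\diam(\mathfrak{M})<\eps$ by self-similarity (cf. the proof of Lemma \ref{lemma:slit-menger-top-properties}), and the $\mathfrak{M}(Q)$ cover $\mathfrak{M}$. Two pieces $\mathfrak{M}(Q),\mathfrak{M}(Q')$ with $Q\neq Q'$ can meet only over $\overline{Q}\cap\overline{Q'}$, a common face, edge, or vertex of the grid; the defect is that this crude cover has multiplicity up to $8$ (along grid edges up to four pieces meet, at grid vertices up to eight).

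The heart of the argument is to repair the cover along the $1$-skeleton of the generation $2n$ grid, using that $\mathfrak{M}$ is one-dimensional transversally to the grid faces. Indeed, for any axis-parallel dyadic plane $\Pi$ the cuts of all later generations meet $\Pi$ exactly in a configuration of slits $s(Q)$ — this is precisely what the formulas for $E'_i\cap\{x=0\}$, $E''_i\cap\{y=0\}$ and $E''_i\cap\{z=0\}$ express — so $\omega_n^{-1}(\Pi)\cap\mathfrak{M}$ is homeomorphic to a subspace of a slit carpet and hence has topological dimension at most $1$; similarly $\omega_n^{-1}(\ell)\cap\mathfrak{M}$ has topological dimension at most $1$ for every dyadic edge $\ell$ of the grid. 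Using this, one covers the closed part of $\mathfrak{M}$ lying over the $1$-skeleton by small sets of multiplicity at most $2$, and then covers the remainder of each $\mathfrak{M}(Q)$ — the part over the open cube and open face interiors, which for the slit construction is again one-dimensional in the transverse direction — by small sets that overlap the first collection, and one another, with total multiplicity still at most $2$. Shrinking and reindexing gives the required cover, and Lemma \ref{lemma:top-dimension-estimate} yields $\dim_{top}\mathfrak{M}\leq 1$.

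The main obstacle is exactly this last step: arranging that the global multiplicity of the assembled cover is at most $2$ rather than merely finite. This forces one to run, with the flat tubes and crosses $E'_i,E''_i$ playing the role of the removed cubes, an inductive argument of the same kind that shows the standard Menger curve $\M$ has topological dimension $1$; the one-dimensionality of the ``seam'' sets $\omega_n^{-1}(\Pi)\cap\mathfrak{M}$ and $\omega_n^{-1}(\ell)\cap\mathfrak{M}$ is what makes such an argument go through.
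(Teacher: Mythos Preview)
Your overall strategy --- reduce $D\mathfrak{M}$ to $\mathfrak{M}$ via the closed sum theorem, and then invoke Lemma~\ref{lemma:top-dimension-estimate} --- is sound, and the lower bound is fine. But the construction of the order-$2$ cover has a genuine gap, which you yourself name in the last paragraph. Starting from the dyadic pieces $\mathfrak{M}(Q)$, $Q\in\Delta_{2n}$, gives multiplicity up to $8$ at grid vertices, and your proposed repair along the $1$-skeleton is only sketched: you assert one can cover the seam sets and the cube interiors so that the \emph{combined} multiplicity stays at most $2$, but this is precisely the hard step, and knowing that the seam sets $\omega_n^{-1}(\Pi)\cap\mathfrak{M}$ are one-dimensional does not by itself produce such a cover. (For the solid cube $[0,1]^3$ with the Euclidean metric the seams are equally low-dimensional, yet the space is $3$-dimensional; what must be used is something specific to the slit geometry, and your argument never isolates that ingredient.)

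The paper sidesteps the repair entirely by choosing a different cover. Rather than dyadic cubes, it takes the \emph{offset} cubes $Q(\mathbf{x},4^{-n}/2)$ with $\mathbf{x}\in(4^{-n}\mathbb{Z})^3$, i.e.\ cubes of sidelength $4^{-n}$ centered at the \emph{corners} of the generation-$2n$ dyadic grid, so that their faces pass through the centers of the dyadic cubes --- exactly where the slits sit. The key fact (Lemma~\ref{lemma:cubes-adjacent-faces}) is that in the path metric $d_{W_n}$, any two offset cubes that do not share a face are at distance at least $4^{-(n+1)}$: diagonally adjacent offset cubes lie on opposite sides of a generation-$\leq n$ slit, and projecting to a coordinate plane shows any path between them must travel at least half the slit length. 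Taking $\eps$-neighborhoods with $\eps<4^{-(n+1)}/2$ then forces overlapping neighborhoods to come only from face-adjacent cubes, and since no three cubes share a face the multiplicity is automatically $2$. This is the missing idea: the slits separate diagonally adjacent offset cubes in the path metric, and it is this separation --- not a post-hoc repair --- that makes the space one-dimensional.
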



\begin{proof}
We will prove the lemma for $\mathfrak{M}$. The case of $D\mathfrak{M}$ can be done the same way.

Since $\mathfrak{M}$ contains subsets homeomorphic to the interval $[0,1]$ we have that $\dim_{top}\mathfrak{M}\geq 1$. To see that $\dim_{top}\mathfrak{M}\leq 1$, by Lemma \ref{lemma:top-dimension-estimate} it is enough to show that for every $\eps>0$ there is an open cover of $\mathfrak{M}$ with sets of diameter less than $\eps$, and such that every point $p\in\mathfrak{M}$ belongs to at most two elements of that cover.

The coverings we are going to construct will consist of (preimages in $\mathfrak{M}$ of) neighborhoods of (intersections of) diadic cubes in $[0,1]^3$.

Given $\eps>0$ and a set $E\subset\overline{W_n}=(W_n,d_{W_n})$ we will denote by $E^{\eps}$ the $\eps$-neighborhood of $E$, i.e.
$$E^{\eps}=\{x\in \overline{W_n} : \dist_{W_n}(x,E)<\eps\},$$

For every $\mathbf{x}=(x_1,x_2,x_3)\in\mathbb{R}$ and $r>0$ we denote
$$Q(\mathbf{x},r) = \{\mathbf{y} \in\mathbb{R}^3: \max_{i\in\{1,2,3\}}|x_i-y_i|<r\},$$
i.e. the ball centered at $\mathbf{x}$ of radius $r>0$ in the $L_{\infty}$ norm on $\mathbb{R}^3$. Note that $Q(\mathbf{x},r)$ is a cube centered at $x$ of sidelength $2r$.

For $n\geq 0$ let
$$(4^{-n}\mathbb{Z})^3 : = \left\{\left(\frac{a}{4^n},\frac{b}{4^n},\frac{c}{4^n}\right) : a,b,c\in\mathbb{Z} \right\}.$$

%
%
%
%

Consider the following family $\mathcal{Q}_n$ of pairwise disjoint cubes in $[0,1]^3$:
\begin{align}
  \mathcal{Q}_n = \left\{Q\left(\mathbf{x},\frac{4^{-n}}{2}\right) \cap[0,1]^3 : \mathbf{x}\in (4^{-n}\mathbb{Z})^3 \right\}.
\end{align}

Note, that even though $\mathcal{Q}_n$ is not an open cover of $\overline{W_n}$, since the boundaries of these cubes are not covered, the family consisting of $\eps$-neighborhoods of the elements of $\mathcal{Q}_n$ is a cover for every $\eps>0$. The families $\mathcal{Q}_n$ have the following property.

\begin{lemma}\label{lemma:cubes-adjacent-faces}
Suppose $n\geq 1$. If  $Q,Q'\in\mathcal{Q}_n$ then
\begin{align}
  \mbox{either } \,& \mbox{$Q$ and $Q'$ share a face},\\
  \mbox{or } \,& \dist_{W_n}(Q,Q')\geq 1/4^{n+1}.
\end{align}
\end{lemma}
Lemma \ref{lemma:cubes-adjacent-faces} will be proved momentarily. Before that we use it to show that $\dim_t\mathfrak{M}\leq1$.

Let $\eps_n=1/4^{n+1}$ and $0<\eps<\eps_n/2$. Consider the family 
$$\tilde{\mathcal{Q}}_n^{\eps} : = \{\omega^{-1}(Q^{\eps}) : Q \in\mathcal{Q}_n \},$$ which is a covering  of $\mathfrak{M}$, whenever $\eps>0$.
Moreover, if $p\in \omega^{-1}(Q^{\eps})\cap \omega^{-1}((Q')^{\eps})$
then
\begin{align*}
  \dist_{W_n}(Q,Q')\leq \dist_{W_n}(Q,\{\omega(p)\})+\dist_{W_n}(\{\omega(p)\},Q') \leq 2\eps <\eps_n.
\end{align*}
Therefore by Lemma \ref{lemma:cubes-adjacent-faces} $Q$ and $Q'$ are adjacent, i.e. share a $2$ dimensional face. Since no three cubes in $\mathbb{R}^3$ can share the same face it follows that $p$ belongs to at most two elements of $\tilde{\mathcal{Q}}_n^{\eps}$ simultaneously.
\end{proof}

\begin{proof}[Proof of Lemma \ref{lemma:cubes-adjacent-faces}]
Suppose $Q=Q(\mathbf{x},4^{-n}/2)\cap(0,1)^3$ and $Q'=Q(\mathbf{x'},4^{-n}/2)\cap(0,1)^3$ are distinct cubes from $\mathcal{Q}_n$. Note, that
$\dist_{\infty}(\mathbf{x},\mathbf{x'})\in\{{4^{-n}},{2}\cdot{4^{-n}},\ldots\}.$

If $\dist_{\infty}(\mathbf{x},\mathbf{x'})\geq 2\cdot 4^{-n}$, then
\begin{align*}
  \dist_{W_{n+1}}(Q,Q')\geq\dist_{\infty}(Q,Q') \geq \dist_{\infty}(\mathbf{x},\mathbf{x'}) - {4^{-n}}\geq 4^{-n}.
\end{align*}

If $\dist_{\infty}(\mathbf{x},\mathbf{x'})< 2\cdot 4^{-n}$ then $\dist_{\infty}(\mathbf{x},\mathbf{x'})= 4^{-n}$  and therefore
\begin{align}
  x_i'\in\{x_i-4^{-n},x_i,x_i+4^{-n}\}, \quad i\in\{1,2,3\}.
\end{align}

In this case, we would like to estimate $\dist_{W_{n+1}}(Q,Q')$ from below. Suppose $Q$ and $Q'$ are not adjacent then the segment $[\mathbf{x},\mathbf{x'}]$ is not parallel to any of the coordinate axes and therefore $x_i\neq x_i'$ for at least two indices $i\in\{1,2,3\}$.

Suppose
\begin{align}\label{non-adjacent-cubes}
  x_1\neq x'_1 \mbox{ and } x_2\neq x_2'.
\end{align}
Let $\g\subset W_{n+1}$ be a curve connecting $Q$ and $Q'$. Let $\pi_{xy}$ be the orthogonal projection of $\mathbb{R}^3$ to the $xy$-plane and $\g'=\pi_{xy}(\g)\subset\{z=0\}$. Then $\g'$ connects the squares $S=\pi_{xy}(Q)$ and $S'=\pi_{xy}(Q')$ in the slit domain $(0,1)\times(0,1)\times\{0\} \setminus E_{n}''$. Moreover, since $\pi_{xy}$ is $1$-Lipschitz we have $l(\g)\geq l(\g')$.

\begin{figure}[t]
\centerline{
\psfig{figure=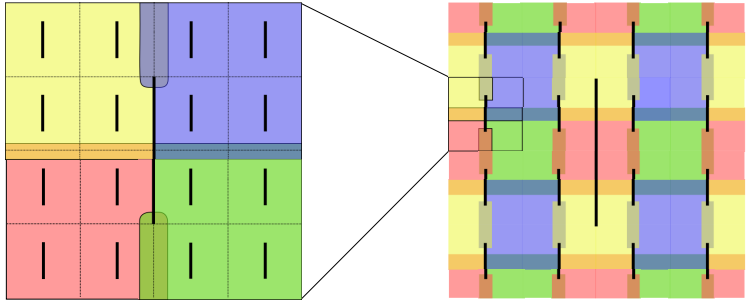,height=4cm}
}
\caption{Coverings of the slit carpet $\mathcal{B}$ of order $1$. The figure on the left gives an initial covering. Scaling the initial covering and using reflections one may obtain finer coverings of $\mathcal{B}$. The coverings pictured above are the two-dimensional analogues of the coverings constructed in the proof of Lemma \ref{lemma:slit-menger-dimension1}.}
\label{fig:non-planarity}
\end{figure}

Since $x_1,x_2,x_1',x_2' \in 4^{-n}\mathbb{Z}$, from (\ref{non-adjacent-cubes}) it follows that there is a square $S\subset(0,1)^2\times\{0\}$ which has the interval from $(x_1,x_2)$ to $(x_1',x_2')$ as a diagonal. Thus, $S$ is a diadic square of sidelength $4^{-n}$, i.e. $S\in \Delta^{xy}_{2n}$. Denoting by $s$ the vertical slit of length $4^{-n}/2$ through the center of $S$ we have that $s\subset \overline{E''}\cup\{z=0\}$. Since $s\times(0,1)\subset E_2''$ it follows that  $\g\subset(0,1)^3\setminus(s\times(0,1))$ and therefore
$$\g'\in(0,1)^2\times\{0\}\setminus s \subset \mathbb{R}^2\setminus s.$$
Finally, there is an isometry $T$ of $\mathbb{R}^2$ mapping $s$ to a vertical interval centered at the origin and such that $Q\subset\{(x,y): x<0,y<0\}$. Then $T(Q')\subset\{(x,y): x>0,y>0\}$ and $T(\g')$ is a curve of the same length as $\g'$, connecting the quadrants $\{(x,y): x<0,y<0\}$ and $\{(x,y): x>0, y>0\}$ in $\mathbb{R}^2\setminus s$. Clearly the length of $T(\g')$ is at least half the length of the slit $s$, since $T(\g')$  has to ``cross" one of the strips $\{-l(s)/2 < y < 0\}$ or $\{0 < y < l(s)/2 \}$. Therefore
$$l(\g)\geq l(\g') = l(T(\g')) \geq \frac{l(s)}{2}= \frac{4^{-n}}{4}.$$
Note that in the remaining cases, when $x_2\neq x_2'$ and $x_3\neq x_3'$ or if $x_2\neq x_2'$ and $x_3\neq x_3'$ then the same proof as above works, if one uses the projections to the $yz$ or $xz$ planes, respectively.
\end{proof}

\begin{remark}
An alternative proof of the fact that $\mathfrak{M}$ has topological dimension $1$ can be given using a different definition of dimension. Namely, a metric space $X$ has topological dimension $1$ if for every $x\in X$ and $\eps>0$ there is an open subset $U\subset B(x,\eps)$ containing $x$ such that $\partial{U}$ is $0$ dimensional (e.g. homeomorphic to a Cantor set). It may seem counter intuitive that there are such open sets in $\mathfrak{M}$, however one can construct them by using graphs of certain piecewise constant functions in $[0,1]^3$. The first step in that direction would be to note that for almost every $t\in(0,1)$ the set $\omega^{-1}(\{z=t\}\cap[0,1]^3)$ is a Cantor set (of Hausdorff dimension $2$) in $\mathfrak{M}$. ``Cutting" and "pasting" such Cantor sets one can construct small neighbourhoods with Cantor set boundaries in $\mathfrak{M}$. We do not provide details, since the proof above seems simpler.
\end{remark}

\begin{lemma}\label{lemma:slit-menger-nonplanar}
The space $\mathfrak{M}$ has no nonplanar open subsets.
\end{lemma}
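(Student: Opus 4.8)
The plan is to deduce this from the self-similarity of $\mathfrak{M}$ together with the presence of a single non-planar finite graph inside $\mathfrak{M}$; recall that the assertion to be proved is that no nonempty open subset of $\mathfrak{M}$ is homeomorphic to a subset of $\mathbb{R}^2$, which is hypothesis (3) of Anderson's Theorem~\ref{thm:Anderson}. First I would reduce to the non-planarity of $\mathfrak{M}$ itself. Let $U\subseteq\mathfrak{M}$ be nonempty and open, and pick $p\in U$ with $\omega(p)$ in the interior of some dyadic cube (such points are dense in $U$). Since $\diam\,\omega^{-1}(Q)\to 0$ as the generation of $Q$ tends to $\infty$, there is an even generation $2n$ and a cube $Q\in\D_{2n}$ with $\omega(p)\in\mathrm{int}(Q)$ and $\omega^{-1}(Q)\subseteq U$. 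By the construction, the removed set $E_i$ intersected with $Q$ equals $T_Q$ applied to $E_{i-n}$ of the whole construction, so $\omega^{-1}(Q)$, with the metric induced from $d_{\mathfrak{M}}$, is isometric to $(\mathfrak{M},4^{-n}d_{\mathfrak{M}})$ and hence homeomorphic to $\mathfrak{M}$ (this is the observation already used in Lemma~\ref{lemma:slit-menger-top-properties}). Thus every nonempty open subset of $\mathfrak{M}$ contains a homeomorphic copy of $\mathfrak{M}$, and since a subspace of a planar space is planar, it suffices to show that $\mathfrak{M}$ is not homeomorphic to any subset of $\mathbb{R}^2$. By Kuratowski's theorem this will follow as soon as we exhibit one subset of $\mathfrak{M}$ homeomorphic to the complete bipartite graph $K_{3,3}$ (or to $K_5$); no converse is needed, although the fact that $\mathfrak{M}$ is a Peano continuum, together with Claytor's planarity theorem, would show this criterion is also sharp.

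To produce such a graph I would work inside the first completion $\overline{W}_1=\overline{W_0\setminus E_0}$, where $E_0=E'\cup E''$ and the closure is taken in the path metric. Passing to this completion opens the flat rectangle $E'$ into two rectangular sheets and the flat cross $E''$ into two cross-shaped sheets, each attached to the body of the cube along its rim. The full-width wall $E'$ separates the cube into a ``front'' part and a ``back'' part joined only through the two slots $\{z>3/4\}$ and $\{z<1/4\}$, while the transverse cross $E''$ introduces an additional branching; using these I would route, near the edges of $[0,1]^3$, three pairwise disjoint arcs forming one side of the bipartition and three near the opposite edges forming the other, joined by nine arcs, three of which pass through the slots of $E'$ and across a sheet of $E''$ so that in any planar shadow three of the strands would pairwise cross. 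This yields a subset of $\overline{W}_1$ homeomorphic to $K_{3,3}$. I would then check that the whole configuration can be chosen in the part of $\overline{W}_1$ that is untouched by all later removed sets $E_i=\bigcup_{Q\in\D_{2i}}T_Q(E_0)$, $i\ge 1$: these lie in dyadic cubes of generation at least $2$ and are concentrated near their centers, away from the edges of $[0,1]^3$ and off the two central hyperplanes $\{x=1/2\}$ and $\{y=1/2\}$ carrying $E''$ and $E'$, so the bonding maps $\omega_{1,j}$ are injective along the configuration for every $j\ge 1$. Consequently it determines a subset of $\mathfrak{M}=\lim_{\leftarrow}(\overline{W}_i,\omega_{i,j})$ homeomorphic to $K_{3,3}$.

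Combining the two steps, $\mathfrak{M}$ contains a topological $K_{3,3}$ and therefore is not planar, and hence neither is any nonempty open subset of $\mathfrak{M}$. The same reasoning applies verbatim to $D\mathfrak{M}$, since every nonempty open subset of $D\mathfrak{M}$ likewise contains a homeomorphic copy of $\mathfrak{M}$ (take $\omega^{-1}(Q)$ for a deep dyadic cube $Q$ inside one of the two copies); this supplies condition (3) of Anderson's Theorem~\ref{thm:Anderson} for both spaces, as needed in the proof of Theorem~\ref{cor:slit-menger-homeo-menger}.

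The step I expect to be the main obstacle is the explicit realization of the $K_{3,3}$ inside $\overline{W}_1$. One must verify both that the routing through the slots of $E'$ and around the sheet of $E''$ genuinely produces a topological $K_{3,3}$ rather than a planar graph — this is exactly the feature that distinguishes $\mathfrak{M}$ from a Sierpi\'nski carpet, which is planar — and that the configuration can be isotoped off every subsequently removed set $E_i$, $i\ge 1$, so that it survives the passage to the inverse limit. Everything else, namely the self-similar reduction to a single graph and the appeal to Kuratowski's planarity theorem, is routine.
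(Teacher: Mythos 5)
Your overall strategy — use self-similarity to reduce the statement to exhibiting a single non-planar finite graph inside $\mathfrak{M}$, then invoke Kuratowski — is exactly the one the paper uses, and the self-similarity reduction (every open set contains a scaled copy of $\mathfrak{M}$) is sound. The difference is in the graph and, more importantly, in whether the embedding is actually produced. The paper embeds $K_5$ explicitly: the five vertices are (lifts of) five corners of $[0,1]^3$; six of the ten arcs run along cube edges or concatenations of cube edges; three more ($\g_{b,c},\g_{c,d},\g_{d,b}$) are drawn inside the face slit carpets using Whyburn-type routing between peripheral-circle points; and the last arc $\g_{a,e}$ is the decisive one — it enters the interior along the vertical line $\{(1/4,1/4,z)\}$, which misses every $E_i$ because $(1/4,1/4)$ has a coordinate that is not a dyadic rational of the right depth, and so lifts injectively to $\mathfrak{M}$. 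All lifts are checked to be pairwise disjoint except at endpoints. That vertical interior arc together with the boundary $2$-sphere of the cube is what breaks planarity, and it is the piece of the argument that has to be done carefully.

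Your $K_{3,3}$ proposal, by contrast, is not carried out. You do not name the six vertices, you do not specify the nine arcs, and the description — route three arcs near the edges, three near the opposite edges, join by nine arcs, three of which pass through the slots of $E'$ and across a sheet of $E''$ ``so that in any planar shadow three of the strands would pairwise cross'' — is not a construction. The last clause in particular is circular: the non-planarity of the embedded graph must come from it being a topological $K_{3,3}$ (nine pairwise internally disjoint arcs with the correct incidence pattern), not from asserting that some projection has crossings. You also still owe the verification that the whole configuration avoids $E_i$ for every $i\geq 1$ so that it survives to the inverse limit — the paper handles the analogous issue by a careful choice of the point $(1/4,1/4)$. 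You flag this yourself as ``the main obstacle,'' and you are right: this is the substantive content of the lemma, and without it the proof is not complete. If you want to push your route through, I would suggest following the paper's template: put three vertices on the bottom face and three on the top, connect within each face using arcs in the respective slit carpets, and connect across by three vertical arcs through the interior chosen at points like $(1/4,1/4)$, $(1/4,3/4)$, $(3/4,1/4)$ that miss every $E_i$; this gives a concrete $K_{3,3}$ and is only a mild variant of the paper's $K_5$.
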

\begin{proof} We will show that every open subset of $\mathfrak{M}$ contains a homeomorphic copy of $K_5$, the complete graph on $5$ vertices. Since $K_5$ is non-planar, this will imply the theorem. We will first show that there is a copy $K$ of $K_5$ in $\mathfrak{M}$ with vertices at $p_1,\ldots,p_5\in\mathfrak{M}$ which are mapped by $\omega_0$ to the following vertices of $[0,1]^3$: $a=(0,0,0)$, $b=(1,0,0)$, $c=(0,1,0)$, $d=(0,0,1)$ and $e=(1,1,1)$. Consider the following curves connecting the points $a,\ldots,e$:
\begin{align*}
  \g_{a,b} &: = \{(x,0,0): x\in[0,1] \},\quad \g_{b,e} : = \{(1,y,0): y\in[0,1]\} \cup \{(1,1,z): z\in[0,1]\},\\
  \g_{a,c} &: = \{(0,y,0): y\in[0,1] \}, \quad \g_{c,e} : = \{(0,1,z): z\in[0,1]\} \cup \{(x,1,1): x\in[0,1]\},\\
  \g_{a,d} &: = \{(0,0,z): z\in[0,1] \}, \quad \g_{d,e} : = \{(x,0,1): x\in[0,1]\} \cup \{(1,y,1): y\in[0,1]\}.
\end{align*}

\begin{figure}[h]
\centerline{
\psfig{figure=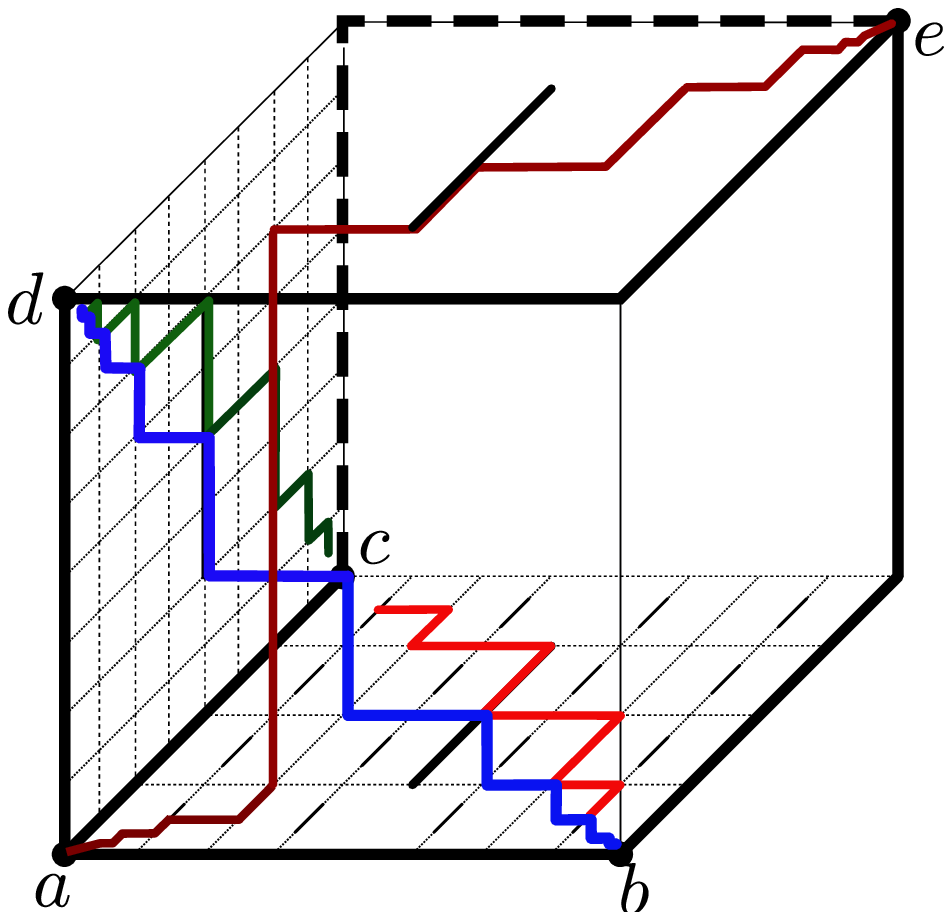,height=5cm} \qquad \qquad \qquad
\psfig{figure=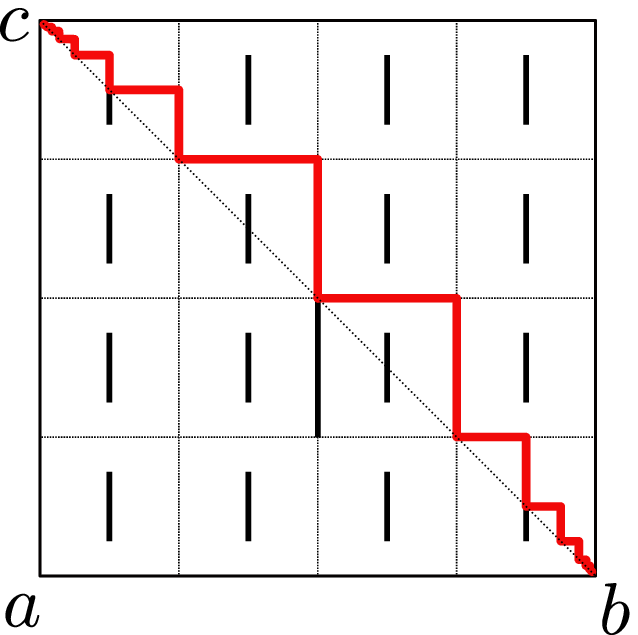,height=5cm}
}
\caption{\small{On the left are the $\binom{5}{2} = 10$ non-intersecting curves connecting the $5$ points $a,\ldots,e\subset[0,1]^3$. The preimage of the union of these curves in $\mathfrak{M}$ is homeomorphic to $K_5$. Six of these curves are just the edges or concatenations of the edges of $[0,1]^3$. Three more, $\g_{bc},\g_{cd},\g_{db}$, are contained in the slit carpets contained in the preimages of the faces of the cube (like the curve connecting $b$ to $c$ on the right). Finally, $\g_{ae}$  connects $a$ and $e$ to two endpoints of a vertical interval connecting the top and bottom faces of the cube.}}
\label{fig:non-planarity}
\end{figure}

To construct a curve $\g_{b,c}$ note that the ``face" of $\mathfrak{M}$ corresponding to the bottom face of $Q_0$, i.e. the set
$$\omega_{0}^{-1}(\{(x,y,0) : 0\leq x,y \leq 1 \})\subset \mathfrak{M},$$ is a metric carpet (in fact it is a slit carpet). From Whyburn's theorem one may easily conclude that given two distinct points $p,q$ on a peripheral circle $\g$ of a carpet there is a curve $\g_{p,q}$ connecting $p$ and $q$ such that $\g\cap\g_{p,q}=\{p\}\cup \{q\}$. In our case $\g_{b,c}$ can be constructed more concretely by connecting the opposite vertices of the square by a piecewise linear curve each piece of which is parallel to one of the coordinated axes, for instance as shown in Figure \ref{fig:non-planarity}. Similarly, we can construct curves $\g_{d,c},\g_{d,b},\g_{b,e}$ which are disjoint from all the previously defined curves except for the endpoints. Finally, let $\g_{a,e}=\g_1\cup\g_2\cup \g_3$ where $\g_1\subset[0,1/4]\times[0,1/4]\times\{0\}$ connects $a$ to $(1/4,1/4,0)$, $\g_{2}=\{(1/4,1/4,z) | 0\leq z\leq 1\}$ and $\g_3\subset[1/4,3/4]\times[1/4,3/4]\times\{1\}$ connects $(1/4,1/4,1)$ to $e$. Thus, each of the points $a,\ldots,e$ is connected to every other point and the connecting curves are pairwise disjoint.

Next, note that for the curves $\g_{a,b},\ldots,\g_{a,e}$ there are (not necessarily unique) lifts $\tilde{\g}_{a,b}, \ldots,\tilde{\g}_{a,e}$ in $\mathfrak{M}$ which connect the points $\omega_0^{-1}(a), \ldots, \omega_0^{-1}(e)$ and which are pairwise disjoint except for these endpoints.  Therefore the set $K=\tilde{\g}_{a,b}\cup\ldots\cup\tilde{\g}_{a,e}\subset \mathfrak{M}$ is homeomorphic to $K_5$. Finally, since for every point $p\in\mathfrak{M}$ and every $\eps>0$ there is an open subset $U\subset B(p,\eps)$ homeomorphic to $\mathfrak{M}$, it follows that $\mathfrak{M}$ has no non-planar open subsets.
%
%

%
%
%
\end{proof}

Just like the slit Sierpi\'nski spaces the spaces defined  in this section are also Ahlfors regular. We again omit the proof of this result since it is very similar to that of the corresponding results in \cite{Mer:coHopf}, cf. Lemmas $2.2$, $2.3$ and Proposition $2.4$ in \cite{Mer:coHopf}.

\begin{lemma}\label{lemma:menger-properties}
  The metric spaces $(\mathfrak{M},d_{\mathfrak{M}})$ and $(D\mathfrak{M},d_{D\mathfrak{M}})$ are Ahlfors $3$-regular metric measure spaces when equipped with the Hausdorff $3$-measure.
\end{lemma}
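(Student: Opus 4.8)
The plan is to run the three-step argument Merenkov uses for the slit carpet, cf.\ \cite[Lemmas~2.2,~2.3 and Proposition~2.4]{Mer:coHopf} and Lemma \ref{lemma:Merenkov-ahlfors-regular} above, adapting it from the planar slit carpet to the three–dimensional inverse–limit construction of $\mathfrak{M}$. The only structural difference is that one generation of $\mathfrak{M}$ now consists of the rescaled ``flat tubes'' and ``flat crosses'' $T_Q(E')$, $T_Q(E'')$, $Q\in\Delta_{2i}$, instead of a single slit; since each is centred in $Q$ with sidelength a fixed fraction of $l(Q)$, the whole family of removed pieces (viewed as ``slits'' in the sense of Section \ref{Section:slit-spaces}, now allowing the cross shape) is uniformly relatively separated, and it is this property that will carry the proof. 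For the double $D\mathfrak{M}$ I will reduce to the statement for $\mathfrak{M}$ at the end.

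\emph{Step 1: comparison of $\mathcal{H}^3$ with its projections.} The core estimate, and the step I expect to be the only genuinely delicate one, is the exact analogue of inequality (\ref{ineq:Merenkov_measures_comparable}): there is a constant $C\geq1$, independent of $i$, with
\begin{align*}
C^{-1}\mathcal{H}^3(\omega_i(E))\leq\mathcal{H}^3(E)\leq C\,\mathcal{H}^3(\omega_i(E))
\end{align*}
for all Borel $E\subseteq\mathfrak{M}$. The left inequality is immediate since $\omega_i$ is $1$-Lipschitz. For the right inequality I would repeat the argument of \cite[Lemma~2.2]{Mer:coHopf}: away from the removed surfaces the projections $\omega_{i,j}$ are bi-Lipschitz with constants tending to $1$ and leave $\mathcal{H}^3$ essentially unchanged; near a piece $T_Q(E_0)$ the completion only re-metrizes a neighbourhood of it, and crossing a tube or a cross yields only a cone of bounded total angle, so the measure of such a neighbourhood changes by a bounded factor; finally, uniform relative separation guarantees that the cumulative effect of opening up all generations of removed pieces stays bounded rather than diverging. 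Passing to the inverse limit then gives the displayed inequality. In particular $0<\mathcal{H}^3(\mathfrak{M})<\infty$: finiteness is the right inequality with $E=\mathfrak{M}$, $i=0$, while $\mathcal{H}^3(\mathfrak{M})\geq\mathcal{H}^3(\omega_0(\mathfrak{M}))>0$ because $\omega_0(\mathfrak{M})\supseteq\bigcap_iW_i$, a full-measure subset of $[0,1]^3$ (each $E_i$ being $\mathcal{L}^3$-null).

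\emph{Step 2: uniform regularity of the finite stages.} As in \cite[Lemma~2.3]{Mer:coHopf}, I would show that $\mathcal{H}^3(B_{d_{\overline{W}_i}}(x,\rho))\asymp\rho^3$ with constants independent of $i$, $x$ and $0<\rho<\diam\overline{W}_i$. The upper bound uses that $\omega_{0,i}$ is $1$-Lipschitz, so it carries $B_{d_{\overline{W}_i}}(x,\rho)$ into the Euclidean ball $B_{\mathbb{R}^3}(\omega_{0,i}(x),\rho)$, and is at most four-to-one (a point of $[0,1]^3$ lies on at most one of the pairwise disjoint removed surfaces, and crossing a ``cross'' creates at most four local sheets). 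The lower bound uses that, the finitely many removed pieces of $\overline{W}_i$ being uniformly relatively separated, from $x$ one may reach within path-distance $\rho$ a Euclidean region of diameter $\gtrsim\rho$, hence of $\mathcal{H}^3$-measure $\gtrsim\rho^3$.

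\emph{Step 3: passage to the limit, and the double.} For $p\in\mathfrak{M}$ and $0<r<\diam\mathfrak{M}$ the upper bound is immediate from Step~1 and $1$-Lipschitzness of $\omega_0$:
\begin{align*}
\mathcal{H}^3(B_{d_{\mathfrak{M}}}(p,r))\ \leq\ C\,\mathcal{H}^3\!\big(\omega_0(B_{d_{\mathfrak{M}}}(p,r))\big)\ \leq\ C\,\mathcal{H}^3\!\big(B_{\mathbb{R}^3}(\omega_0(p),r)\big)\ \lesssim\ r^3 .
\end{align*}
For the lower bound, choose $i$ so large that every removed piece of generation greater than $i$ has sidelength less than $\eps r$; then $\omega_i$ distorts distances inside $B_{d_{\mathfrak{M}}}(p,r)$ by at most an additive $\eps r$ (one only ever detours around such small pieces), so $\omega_i(B_{d_{\mathfrak{M}}}(p,r))\supseteq B_{d_{\overline{W}_i}}(\omega_i(p),(1-\eps)r)$, and Steps~1 and~2 yield $\mathcal{H}^3(B_{d_{\mathfrak{M}}}(p,r))\geq C^{-1}\mathcal{H}^3(B_{d_{\overline{W}_i}}(\omega_i(p),(1-\eps)r))\gtrsim r^3$. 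This proves $(\mathfrak{M},d_{\mathfrak{M}},\mathcal{H}^3)$ is Ahlfors $3$-regular. For $D\mathfrak{M}$, recall it is two isometric copies of $(\mathfrak{M},d_{\mathfrak{M}})$ glued along $\mathcal{T}\cup\mathcal{B}$, that $d_{D\mathfrak{M}}$ restricts to $d_{\mathfrak{M}}$ on each copy, and that $\mathcal{T}$ and $\mathcal{B}$ are isometric to a slit carpet $M(\calS)$, hence Ahlfors $2$-regular by Lemma \ref{lemma:Merenkov-ahlfors-regular} and therefore $\mathcal{H}^3$-null. A ball in $D\mathfrak{M}$ is then either contained in one copy or comparable to a union of two balls of radius $\asymp r$ in the two copies, so Ahlfors $3$-regularity of $D\mathfrak{M}$ follows at once from that of $\mathfrak{M}$. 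The whole argument is, as the rest of the section, a routine transcription of \cite{Mer:coHopf}; the only point requiring real care is Step~1, and there the necessary input is already the uniform relative separation built into the construction of $\mathfrak{M}$.
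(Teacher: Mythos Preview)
Your proposal is correct and follows essentially the same route as the paper: the paper itself omits the proof entirely, stating only that it ``is very similar to that of the corresponding results in \cite{Mer:coHopf}, cf.\ Lemmas~2.2, 2.3 and Proposition~2.4 in \cite{Mer:coHopf},'' which are exactly the three steps you outline. If anything, you have supplied more detail than the paper does, in particular the observations that $\omega_{0,i}$ is at most four-to-one (because of the cross pieces) and that $\mathcal{T}\cup\mathcal{B}$ is $\mathcal{H}^3$-null, both of which are the natural adaptations needed in dimension three.
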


%

\begin{remark}
We could also define the double of $\mathfrak{M}$ along all of its \textit{outer boundary} $\partial_o\mathfrak{M}$ of $\mathfrak{M}$, where
\begin{align}
  \partial_o\mathfrak{M}:=\left\{ p\in\mathfrak{M} : \omega(p)\in\partial ([0,1]^3)\right\}.
\end{align}
We denote the resulting space
\begin{align}
  D_o\mathfrak{M} = (\mathfrak{M} \sqcup \mathfrak{M}) / \partial_o\mathfrak{M}.
\end{align}
From the proof of Theorem \ref{thm:menger-co-Hopf} given below it will become clear that the theorem holds for $D_{o}\mathfrak{M}$ as well. The reason we formulated the theorem for the double along $\mathcal{T}\cup\mathcal{B}$ is to emphasize that the ``extra" identifications are not needed to conclude QS co-Hopficity in these cases.
\end{remark}

\section{$\mathfrak{M}$ and $D\mathfrak{M}$ as fibred spaces over a slit carpet}\label{Sec:menger-fibered-over-slit-carpet}

Recall from (\ref{top&bottom}) that the base $\mathcal{B}$ of the slit Menger curve $\mathfrak{M}$ was defined as follows,
$$\mathcal{B} = \omega^{-1}{(B)},\mbox{ where } B=\{(x,y,0)\, | \, 0\leq x,y\leq 1 \}.$$ As noted above, cf. the discussion after (\ref{def:slits}), with the metric induced from  $d_{D\mathfrak{M}}$, the base $\mathcal{B}$ is isometric to the slit carpet $M(\calS)$ corresponding to the sequence of slits $\calS=\{s^i_{kl}\}$ as in (\ref{def:slits}), see Figure \ref{fig:base-carpet}.

Let $\pi_{xy}:\mathbb{R}^3\to\mathbb{R}^2$ denote the orthogonal projection of $\mathbb{R}^3$ onto the $xy$-plane, i.e.
$\pi_{xy}(x,y,z)=(x,y).$ Let
\begin{align}\label{def:projection-to-base}
  \Theta:=\pi_{xy}\circ \omega:\mathfrak{M}\to [0,1]^2\subset\mathbb{R}^2.
\end{align}
In an analogous way we may define the ``projection" map, denoted again by $\Theta$, on $D\mathfrak{M}$ as well.

We say that a subset $E$ of $\mathfrak{M}$ or $D\mathfrak{M}$ is \emph{vertical or $z$-parallel} if $\omega(E)$ is contained in a line parallel to the $z$-axis, or equivalently if $\Theta(E)$ is a point in $\mathbb{R}^2$.

For a point $p$ in $\mathfrak{M}$ we define the \textit{fiber through $p$} as the largest connected $z$-parallel subset containing $p$ in $\mathfrak{M}$ and denoted it by $\g_p$.
Equivalently,
\begin{align*}
  \g_p&:= \mbox{the connected component of $\Theta^{-1}(\Theta(p))\subset \mathfrak{M}$ containing $p$}.
\end{align*}

From the definition it follows that given $p$ and $q$ in $\mathfrak{M}$ we have that the subsets $\g_p$ and $\g_q$ either coincide or are disjoint. It is easy to see that there are points $p,q\in \mathfrak{M}$ with disjoint fibers such that $\omega(p)=\omega(q)$, and therefore $\Theta(p)=\Theta(q)$, e.g. points corresponding to (two ways of approaching) the same point on a slit in $[0,1]^2$. Thus, $\Theta$ does not induce a one-to-one correspondence between the fibers $\g_p$ and the square $[0,1]^2$.

On the other hand, there is a natural one-to-one correspondence between  the fibers $\g_p$ and the points of the base slit carpet $\mathcal{B}$. Indeed, for every $p\in \mathfrak{M}$ the fiber $\g_p$ intersects the set $\mathcal{B}$ at a single point. Therefore there is a well defined mapping (``projection" of $\mathfrak{M}$ onto $\mathcal{B}$)
  \begin{align*}
  \begin{split}
  \Pi :\mathfrak{M} &\to\mathcal{B}\\
   p \,\, &\mapsto \g_p \cap \mathcal{B}.
   \end{split}
  \end{align*}

Using the map $\Pi$ the fiber $\g_p\subset \mathfrak{M}$ can then be written as
\begin{align*}
  \g_{p}
  &= \{ q \in \mathfrak{M} \, | \, \Pi(q)=\Pi(p) \} = \Pi^{-1}(\Pi(p)).
\end{align*}
Note also, that $\Theta = \omega|_{\mathcal{B}} \circ \Pi.$
Thus $\mathfrak{M}$ can be thought of as a metric Menger curve which is ``fibred over the slit carpet $\mathcal{B}\subset\mathfrak{M}$", i.e.
\begin{align*}
  \mathfrak{M}=\bigcup_{p\in\mathcal{B}} \g_{p},
\end{align*}
where $\g_{p}\cap\g_{q}=\emptyset$ if $p\neq q$, with $p,q\in\mathcal{B}$.

For this reason we will call the fiber $\g_p$ the \textit{fiber over $\Pi(p)$}. If $p\in\mathcal{B}\subset\mathfrak{M}$ then $\Pi(p)=p$ and we say that $\g_{p}$ is \textit{the fiber over $p$ in $\mathfrak{M}$}.

Similarly to the discussion above  we may define the projection map $\Theta:D\mathfrak{M}\to[0,1]^2$ and for any point $p\in D\mathfrak{M}$ the fiber through $p$, as the largest connected $z$-parallel subset of $D\mathfrak{M}$ containing $p$, which we will denote by $\hat{\g}_p$. Moreover, there is a one-to-one correspondence between the fibers $\hat{\g_p}$ and the points of the slit carpet $\mathcal{B}$ and we can write the double of the slit Menger curve as follows:
\begin{align*}
  D\mathfrak{M}=\bigcup_{p\in\mathcal{B}} \hat{\g}_{p}.
\end{align*}

 From the construction of the double $D\mathfrak{M}$ it follows that if fibers $\g\subset\mathfrak{M}$ and $\hat{\g}\subset D\mathfrak{M}$ correspond to the same point $p\in\mathcal{B}$, and therefore can be written as $\g_p$ and $\hat{\g}_p$, respectively, then $\hat{\g}_p$ is homeomorphic to the double of $\g_p$ along the two end points. If we identify the endpoints with, say $\{0\}$ and $\{1\}$ then we can write that for every $p\in\mathcal{B}$ we have
\begin{align*}
\hat{\g}_p \cong (\g_p\sqcup\g_p)/\{0,1\}.
\end{align*}

\begin{remark}
Note that if the point $p\in{\mathcal{B}}$ is such that $\omega(p)$ does not belong to a slit then $\hat{\g}_{p}$ consists of two intervals of length $1$, corresponding to the two copies of $\mathfrak{M}$, joined at the endpoints. Therefore in this case $\hat{\g}_{p}$ is homeomorphic to $\mathbb{S}^1$.
\end{remark}


Next, we study the topological type of the fibres $\g_p$ where $p\in\mathcal{B}$ belongs to a slit of $\mathcal{B}$. For this we first define a sequence of metric graphs as follows.

Let $L$ be the graph with six edges, each isometric to $[0,1/4]$, such that four of them are attached cyclically by identifying pairs of endpoints and the remaining two edges are attached to the cycle at two non-consecutive vertices, see Figure \ref{fig:fibers}.

Next, let $L_n, n\geq 1,$ be the graph obtained by dividing the interval $[0,1]$ into $4^{n-1}$ equal length subintervals and replacing each interval by a copy of $L$ scaled by $1/4^{n-1}$, so that every edge of the scaled graph if of length $4^{-n}$. Note that $L_1=L$. For $n>1$, two copies of $L$ in $L_n$ are either disjoint or have a common vertex. Note that there is a natural projection of $L_n$ onto $[0,1]$ which maps two of the vertices to $0$ and $1$. To simplify the notations we will denote these vertices of $L_n$ by $0$ and $1$ as well.

\begin{figure}[htb]
\centering
\psfig{figure=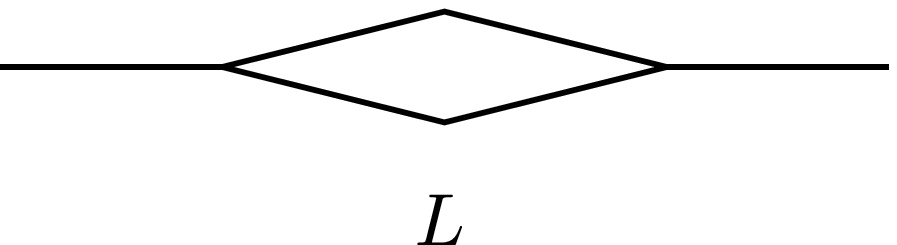,width=4cm} \qquad \qquad
\psfig{figure=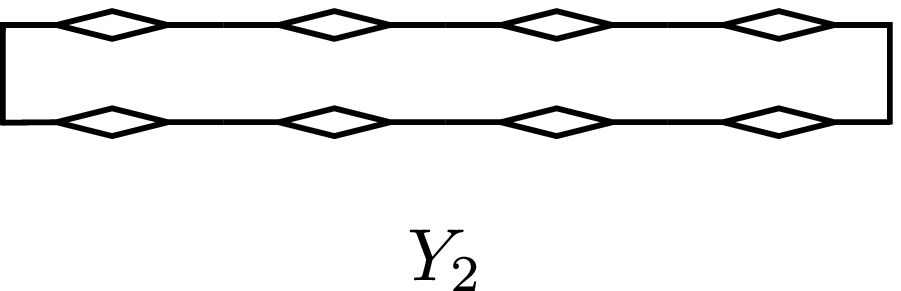,width=4cm}
\caption{\label{fig:fibers} Topology of $\g_p$.}
\end{figure}

Finally, let $Y_n$ be the metric graph obtain by taking two copies of $L_n$ and gluing them along $\{0\}\cup \{1\}$. Thus, $Y_n$ is a ``closed chain" of $2\cdot 4^{n-1}$ scaled copies of $L$.

Note that $Y_n$ and $Y_m$ are homeomorphic if and only if $m=n$.

Suppose $p=(p_0,p_1,\ldots)\in\mathcal{B}$ and $\omega(p)=p_0=(x,y,0)\in[0,1]^2\times\{0\}.$
Note that for every $p\in\mathcal{B}$ the fiber $\g_p\subset\mathfrak{M}$ is the inverse limit of the sequence of fibers over $p_i\in\overline{W}_i, i\geq0$:
\begin{align*}
  (\g_p)_i
  :&=\omega_i(\g_p)\subset\overline{W}_i.
\end{align*}
We will show that the sequence $(\g_p)_i$ can be constructed as a sequence of graphs inductively. Note, that for every $p\in\mathcal{B}$ the fibre
$$\omega(\g_p)=(\g_p)_0=\{(x,y,z):0\leq z\leq 1\}\subset\overline{W}_0$$
is isometric to the interval $[0,1]$, i.e. as a graph has two vertices and one edge connecting them. The topology of $(\g_p)_1$ depends on $p\in\mathcal{B}$ and and can be described  as follows
\begin{lemma}\label{lemma-fibers-0}
Suppose $p=(p_0,p_1,\ldots)\in\mathcal{B}$ belongs to the central slit of $\mathcal{B}$, or equivalently
$$\omega(p)\in s^0_{00} = \left\{ \left(\frac{1}{2},y,0\right): \frac{1}{4}\leq y \leq \frac{3}{4} \right\}.$$ If $\omega(p)=p_0=(1/2,y,0)$ then
\begin{align}\label{gen-1-fibres}
  (\g_p)_1 \cong
  \begin{cases}
    [0,1], &\mbox{ if } y\notin\{1/4,1/2,3/4\},\\
    L_1,  &\mbox{ if } y\in\{1/4,1/2,3/4\}.
  \end{cases}
\end{align}
Moreover, the subsets in $W_1$ corresponding to $\omega(p)$ can be described as follows
\begin{align}
  \omega^{-1}_{01}((\g_p)_0) \cong
  \begin{cases}
    [0,1]\sqcup[0,1], &\mbox{ if } y\in(1/4,1/2)\cup(1/2,3,4),\\
    L_1, &\mbox{ if } y\in\{1/4,3/4\},\\
    L_1\sqcup L_1, &\mbox{ if } y=1/2.
  \end{cases}
\end{align}
\end{lemma}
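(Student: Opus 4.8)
The plan is to reduce the whole statement to understanding how the vertical segment $I_y := \omega((\g_p)_0) = \{(1/2,y,z) : 0\leq z\leq 1\}$ ``unfolds'' in passing from $\overline{W}_0 = [0,1]^3$ to $\overline{W}_1$, the completion of $W_1 = (0,1)^3\setminus E_0$ in its path metric, where $E_0 = E'\cup E''$ with $E'\subset\{y=1/2\}$ the flat ``tube'' and $E''\subset\{x=1/2\}$ the flat ``cross''. Since $(\g_p)_0 = I_y$ (the fibre through a base point runs from bottom to top) and $\g_p\subset\omega_0^{-1}(I_y)$, the set $\omega^{-1}_{01}((\g_p)_0)$ is literally $\omega^{-1}_{01}(I_y)\subset\overline{W}_1$, and $(\g_p)_1 = \omega_1(\g_p)$ is the connected component of $\omega^{-1}_{01}(I_y)$ containing $\omega_1(p)$; so it is enough to describe $\omega^{-1}_{01}(I_y)$ as a metric graph, together with its connected components, for $y\in[1/4,3/4]$.

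First I would record the elementary mechanism: for $q\in I_y$ the fibre $\omega^{-1}_{01}(q)$ is a single point when $q\notin E_0$, and otherwise its cardinality equals $\lim_{r\to0}$ of the number of path components of $B_{W_1}(q,r)$ --- this is the ``splitting into two or four points'' already noted in the construction of $\mathfrak{M}$. Because $E'$ and $E''$ are flat pieces of the coordinate planes $\{y=1/2\}$ and $\{x=1/2\}$, this limit stabilizes for small $r$ and is read off from the local planar picture. Along $I_y$ only four local regimes arise, and I would treat each: (a) $q$ in the relative interior of $E''$ and off $E'$, where $E_0$ is locally a single plane and $q$ unfolds to two points; (b) $q$ in the relative interior of $E'\cap E''$, where $E_0$ is locally a pair of transverse planes and $q$ unfolds to four points; (c) $q$ on a free edge of the vertical bar of $E''$ and off $E'$, where $E_0$ is locally a half-plane in $\mathbb{R}^3$, whose complement is connected, so $q$ does not unfold at all; and (d) the corner points $(1/2,y,1/4)$ and $(1/2,y,3/4)$ where these regimes change, which will be the branch points of the graph. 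In each regime it is clear which local sheets join across $q$, so one recovers the full incidence structure of $\omega^{-1}_{01}(I_y)$.

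Carrying this out would give the three cases of the lemma. When $y\in(1/4,1/2)\cup(1/2,3/4)$ every point of $I_y\cap E_0 = I_y\cap E''$ is of type (a), so $\omega^{-1}_{01}(I_y) \cong [0,1]\sqcup[0,1]$ and the component through $\omega_1(p)$ is an arc. When $y\in\{1/4,3/4\}$, $I_y$ meets the relative interior of $E''$ only over $z\in(1/4,3/4)$ (type (a) there), is of type (c) over $z\in[0,1/4)\cup(3/4,1]$, and has type (d) branch points at $z=1/4,3/4$; the graph is then one edge over $[0,1/4]$, two parallel strands over $[1/4,3/4]$, and one edge over $[3/4,1]$, joined at $z=1/4$ and $z=3/4$, i.e.\ exactly the graph $L_1 = L$, and it is connected. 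When $y=1/2$, $I_{1/2}$ is of type (a) over $z\in[0,1/4)\cup(3/4,1]$ and of type (b) over $z\in(1/4,3/4)$, with type (d) branch points at $z=1/4,3/4$; moreover at those branch points a sheet on the side $\{x<1/2\}$ joins only sheets on $\{x<1/2\}$, and likewise for $\{x>1/2\}$, so $\omega^{-1}_{01}(I_{1/2})$ falls apart into two disjoint copies of the graph just described, giving $\omega^{-1}_{01}(I_{1/2}) \cong L_1\sqcup L_1$ and $(\g_p)_1\cong L_1$. To finish I would justify the identification of $(\g_p)_1$ with the component of $\omega^{-1}_{01}(I_y)$ through $\omega_1(p)$: one inclusion is immediate from continuity of $\omega_1$, and the other follows by lifting a path inside that component back to $\mathfrak{M}$ from $p$ and noting that such a lift is automatically $z$-parallel and connected, hence lies in $\g_p$.

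I expect the main obstacle to be case (d): the careful local count at the corner points $(1/2,y,1/4)$ and $(1/2,y,3/4)$ of how many complementary sheets of $E_0$ meet there and which of them are joined. This is the only place where the precise shapes of $E'$ and $E''$ genuinely enter, and it is exactly what distinguishes the connected answer $L_1$ (for $y\in\{1/4,3/4\}$) from the disconnected answer $L_1\sqcup L_1$ (for $y=1/2$), and hence underpins the rest of the topological bookkeeping in this section.
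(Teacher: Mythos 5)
Your proposal is correct and follows essentially the same approach as the paper: analyze the preimage $\omega_{01}^{-1}(I_y)$ of the vertical segment $I_y=\{(1/2,y,z):0\leq z\leq 1\}$ in $\overline{W}_1$ by classifying points according to their position in $E'$ and $E''$, identify the corner points $z=1/4,3/4$ as the branch vertices, and read off the graph and its components. Your local-regime classification (a)--(d) is a slightly more uniform packaging of the paper's three cases (midpoint, endpoint, interior non-midpoint of the slit), but the mechanism and the case analysis are the same.
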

\begin{proof} 

There are three cases to consider: $(i)$ $\omega(p)$ is the midpoint of $s$; $(ii)$  $\omega(p)$ is one of the two endpoints of $s$; $(iii)$  $\omega(p)$ is neither a midpoint nor an endpoint of $s$.

\textit{\underline{Case {(i)}.}} Suppose $\omega(p)=\left(\frac{1}{2},\frac{1}{2},0\right)$. Then the vertical ($z$-parallel) line through $\omega(p)$ intersects $E'$ as well as $E''$ in $W_0$. For every $0\leq z\leq 1$ the point  $(1/2,1/2,z)\in(\g_p)_0$ has two preimages in $\overline{W}_1$, corresponding to the two sides, namely $x>1/2$ and $x<1/2$, of the ``flat cross" $E''$. In fact the distance between these preimages in $\overline{W}_1$ is at least $1/2$, the length of the slit. Hence, $\omega_{01}^{-1}(\{(1/2,1/2,z): 0\leq z\leq 1\})$ is the union of two connected subsets of $\overline{W_1}$ corresponding to the two sides of the ``flat cross" $E''$. One of these components is $(\g_p)_1$ (say corresponding to $x>1/2$).

If $z\in[0,1/4]\cup[3/4,1]$, then every point $(1/2,1/2,z)\in(\g_p)_0$ corresponds to a single point in $(\g_p)_1$, since it is not in $E'$ or is on the ``lower" or ``upper" edge of $E'$. On the other hand, for every $z\in(1/4,3,4)$ the point $(1/2,1/2,z)\in(\g_p)_0$ has two preimages in $(\g_p)_1$, corresponding to the sides of the ``flat tube" $E'$ (i.e. the half-spaces $y>1/2$ and $y<1/2$), or to the two ways of converging to $(1/2,1/2,y)$ in $W_0\setminus (E'\cup E'')$ while still staying in the region $x>1/2$. Therefore, $\omega_{01}{\big|}_{(\g_p)_1}:(\g_p)_1\to(\g_p)_0$,  is $1$-to-$1$ on the points corresponding to
$$(\g_p)_1\cap\omega_{01}^{-1}(\{(1/2,1/2,z):  z\in[0,1/4]\cup[3/4,1]\})$$
and is $2$-to-$1$ and onto on
$$(\g_p)_1\cap\omega_{01}^{-1}(\{(1/2,1/2,z):  z\in(1/4,3/4)\}).$$ Therefore $(\g_p)_1$ is homeomorphic to $L_1$, while $\omega^{-1}_{01}((\g_p)_0)$ is homeomorphic to $L_1\sqcup L_1$.\\

\textit{\underline{Case (ii)}}. If $\omega(p)=\left(\frac{1}{2},\frac{1}{4},0\right)$ (the same proof will work for $\omega(p)=\left(\frac{1}{2},\frac{3}{4},0\right)$), then the vertical line through $\omega(p)$ does not intersect $E'$, while the intersection with $E''$ is the segment
$$(\g_p)_0\cap E''=\left\{\left(\frac{1}{2},\frac{1}{4},z\right): \frac{1}{4}\leq z \leq \frac{3}{4}\right\}.$$
Just like in Case $(i)$, $\omega_{01}{\big|}_{(\g_p)_1}:\overline{W_1}\to\overline{W}_0$ is $1$-to-$1$ on  $$(\g_p)_1\cap\omega_{01}^{-1}(\{(1/2,1/4,z):  z\in[0,1/4]\cup[3/4,1]\})$$ and is $2$-to-$1$ and onto on
$$(\g_p)_1\cap\omega_{01}^{-1}(\{(1/2,1/4,z):  z\in(1/4,3/4)\}).$$ Therefore $(\g_p)_1$ is homeomorphic to $L_1$. Note that in this case $\omega^{-1}_{01}((\g_p)_0)=(\g_p)_1=L_1$.\\

\underline{\textit{Case (iii)}}. Suppose $\omega(p)=\left(\frac{1}{2},y,0\right)$, with $y\notin\{1/4,1/2,3/4\}$. Equivalently, $\omega(p)$ is not the midpoint, the top or the bottom endpoint of the corresponding slit in $[0,1]^2\times\{0\}$, then the  vertical line through $\omega(p)$ intersects $E''$ but not $E'$ in $W_0$. For every point $(1/2,y,z)$, with $0\leq z\leq 1$, there are two distinct points in $\overline{W}_1$, corresponding to the two sides (i.e. $x>1/2$ and $x<1/2$) of the ``flat cross" $E''$. Moreover, $\omega_{01}$ is continuous, and $1$-to-$1$ near these preimages and therefore  $\omega_{01}^{-1}(\{(1/2,y,z):  z\in[0,1]\})$ is a union of two copies of $[0,1]$, with one of the components being $(\g_p)_1$. Thus, $(\g_p)_1$ is homeomorphic to $[0,1]$.
\end{proof}

From the proof above we obtain an algorithm for inductively constructing the sequence $(\g_p)_i, i=0,1,\ldots$,  provided $\omega(p)$ belongs to a slit of the base slit carpet $\mathcal{B}$. In fact, the sets $(\g_p)_{i}$ are graphs which are constructed inductively starting from $(\g_p)_0=[0,1]$ as follows. For $i\geq0$, $(\g_p)_{i+1}$ is obtained from $(\g_{p})_i$ by dividing every edge of $(\g_p)_i$ into $4$ equal parts (equivalently, by replacing it with a linear graph with $5$ vertices and $4$ edges of length $1/4^{i+1}$), and by replacing each edge either by a (scaled) copy of $L_1$ (with edges of length $4^{-(i+1)}$) or by a scaled copy of $[0,1]$ (i.e. not changing the topology of $(\g_p)_i$). This leads to a topological characterization of the fibers $\g_p$. In fact, one may obtain a topological model of  $\g_p$ for every $p\in\mathcal{B}$, but we will only need the points $p$ which belong to a slit of $\mathcal{B}$.

\begin{lemma}[Topology of fibres in $\mathfrak{M}$]\label{lemma:menger-fibres}
Suppose $p\in\mathcal{B}$ belongs to a slit $s\subset\mathcal{B}$ of generation $i\geq1$. Then
\begin{align}\label{fibers-over-slits}
  (\g_p) \cong
  \begin{cases}
    L_i, &\mbox{ if } \omega(p)\in\{s^{+},s^{-}\},\\
    L_j, &\mbox{ if } \exists j\geq i \mbox{ s.t. } \omega(\g_p)\cap E_j'\neq \emptyset \\
    [0,1],  &\mbox{otherwise.}
  \end{cases}
\end{align}
Where $s^{-}$ and $s^{+}$ are the bottom and top endpoints of the slit $s=\{x\}\times[c,d]$ in $[0,1]^2\subset\mathbb{R}^2$, i.e. $s^{-} = (x,c)$ and $ s^{+} = (x,d)$.
\end{lemma}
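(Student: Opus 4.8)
The plan is to identify $\g_p$ with the inverse limit $\lim_{\leftarrow}(\g_p)_n$ of the metric graphs $(\g_p)_n=\omega_n(\g_p)\subset\overline W_n$ and to run the inductive description of these graphs recorded right after the proof of Lemma~\ref{lemma-fibers-0}: each edge of $(\g_p)_n$ is a $z$-parallel segment of length $4^{-n}$ lying inside one generation-$2n$ cube $Q$, and in passing to $(\g_p)_{n+1}$ it is replaced either by its subdivision into four subedges (so, unchanged topologically) or by a scaled copy of the graph $L$ --- the latter exactly when $\omega$ of the segment crosses transversally, through its interior, the flat tube $E'$ of $Q$ or the horizontal arm of the flat cross $E''$ of $Q$. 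This is precisely the trichotomy of Cases~(i)--(iii) in the proof of Lemma~\ref{lemma-fibers-0}, transported to $Q$ by the similarity $T_Q$ (a lens appears in Cases~(i) and~(ii), none in Case~(iii)). Writing $\ell_p=\omega(\g_p)$ for the vertical line through $\omega(p)$, the computation thus reduces to determining, generation by generation, which generation-$2n$ cubes $Q$ in the column over $\ell_p$ satisfy $\ell_p\cap T_Q(E')\ne\emptyset$ or $\ell_p\cap T_Q(E'')\ne\emptyset$, and of which type the intersection is.

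The input is purely arithmetic. Suppose $p$ lies on a slit $s=s(Q_0)$ of generation $i$, so $Q_0\in\D_{2i}^{xy}$; then the first coordinate $x_0$ of $\omega(p)$ is the midpoint of the generation-$2i$ dyadic interval $\pi_1(Q_0)$, and an elementary parity check shows that $x_0$ is the midpoint of a generation-$2m$ dyadic interval only for $m=i$. The second coordinate $y_0$ of $\omega(p)$ lies in $s$, hence within a quarter of the sidelength of $Q_0$ of the midpoint of $\pi_2(Q_0)$, and a similar check shows that $y_0$ is the midpoint of a generation-$2m$ interval for at most one value of $m$, which (if it exists) satisfies $m\ge i$; call it $j$. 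Since $T_Q(E'')$ lies in the hyperplane $\{x=\text{midpoint of }\pi_1(Q)\}$ and $T_Q(E')$ in $\{y=\text{midpoint of }\pi_2(Q)\}$, it follows that $\ell_p$ can meet translated flat crosses only at generation $i$ and translated flat tubes only at generation $j$. Moreover, at every generation other than $i$ the line $\ell_p$ together with the side of the fibre split off from it still projects into $\{x=x_0\}$, a hyperplane strictly between those of the neighbouring flat crosses; hence no further $x$-splitting is ever triggered and the construction stabilises after generation $\max(i,j)$ has been processed.

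It remains to read off the homeomorphism type of $\g_p$ at generation $i$ (and, if relevant, at $j$) by applying Cases~(i)--(iii) of Lemma~\ref{lemma-fibers-0} cube by cube along the column over $Q_0$. If $\omega(p)\in\{s^+,s^-\}$, then in each cube of this column $\ell_p$ sits on the boundary of the vertical arm of $E''$ and crosses its horizontal arm transversally over the middle portion of the cube (Case~(ii)): a lens is inserted in every cube, no flat tube is ever met, and assembling the graph gives $\g_p\cong L_i$. If instead $\omega(\g_p)\cap E_j'\ne\emptyset$ for some $j\ge i$ (which includes the midpoint of $s$, where $j=i$), then at generation $j$ the line meets the flat tube of each cube of its column transversally (Case~(i)): a lens is inserted in every one of them, one checks that no lens appears at any other generation, and $\g_p\cong L_j$. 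In every remaining case $\ell_p$ meets only interiors of vertical arms of flat crosses (Case~(iii)) and no flat tube, so $(\g_p)_n$ never changes topological type and $\g_p\cong[0,1]$. I expect the main obstacle to be the local analysis at the critical generation: one must verify, by the opening-up argument of Lemma~\ref{lemma-fibers-0} but now at a second coordinate $y_0$ that need not be central, that a transversal passage through the \emph{vertical} arm of $E''$ merely selects one of the two sides of the opened slit and does not alter the homeomorphism type, whereas a passage through $E'$ or through the \emph{horizontal} arm of $E''$ genuinely creates a lens; the other delicate point is making the stabilisation rigorous, i.e.\ checking that no removal at a generation finer than $\max(i,j)$ disturbs the graph already built, which is exactly what the ``skip a generation'' device in the construction of $\mathfrak{M}$ and the parity computations above are designed for.
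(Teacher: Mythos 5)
Your proof strategy is correct and matches the paper's own argument: both identify $\g_p$ with the inverse limit of the projected graphs $(\g_p)_n$, use a parity computation on $x_0$ and $y_0$ to conclude that $\omega(\g_p)$ can meet $E''_j$ only at $j=i$ and $E'_j$ for at most one $j\ge i$, and then run the cube-by-cube opening-up analysis of Lemma~\ref{lemma-fibers-0} (transported via $T_Q$) along the column above $Q_0$ at the critical generation(s), observing that the fiber stabilizes thereafter. The two ``obstacles'' you flag at the end --- that passage through the interior of the vertical arm of $E''$ merely selects a side while passage through $E'$ or the horizontal arm of $E''$ inserts a lens, and that no removal at a finer generation disturbs the graph --- are precisely what the paper's Lemma~\ref{lemma-fibers-0} and the parity argument already supply, so your plan is sound and essentially coincides with the paper's proof.
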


\begin{figure}[htb]
\centering
\psfig{figure=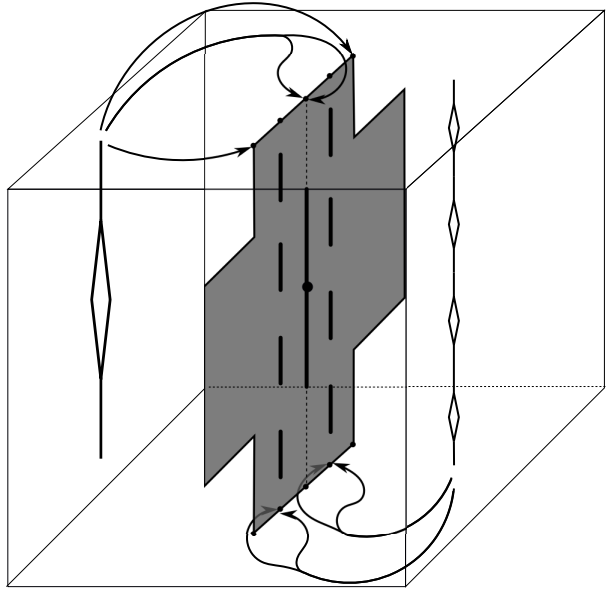,width=8cm}
\caption{\label{fig:fibers-in-Menger}\small{ As  $p$ runs along the largest slit $s$ in $\mathcal{B}$ corresponding to the interval $\{1/2\} \times[1/4,3/4]$ the fiber $\g_p$ takes on infinitely many topologically distinct types. To understand the topology of these fibers one needs to consider only those components of $E_i'$ which intersect $E_1''$ above the slit. Thus, the fibers over the endpoints $(1/2,1/4)$ and $(1/2,3/4)$ are connected and homeomorphic to the graph $L_1$. If $y\in[1/4,3/4]$ is not a diadic rational number then the fibers over $(1/2,y)$ in $\mathfrak{M}$ are homeomorphic to the interval $[0,1]$. For all the other points of the form $(1/2,y)$ in the slit $s$ there are two corresponding points in the base carpet $\mathcal{B}$ and thus two fibers $\g_{y}^{-}$ and $\g_{y}^{+}$ which are homeomorphic to each other. If $y$ is a diadic point of generation $k$, i.e. $y=\frac{l}{2^k}$ in the reduced form, then the two fibers over $y$ are homeomorphic to $L_k$. Thus, the two fibers over $(1/2,1/2)$ are homeomorphic to $L_1$, the four fibers over $(1/2,7/16)$ and $(1/2,9/16)$ are homeomorphic to $L_2$, etc.}}
\end{figure}

Note that if $p$ belongs to a generation $i$ slit then $\omega(\g_p)\cap E_j' = \emptyset$ for $j<i$. Thus, in Lemma \ref{lemma:menger-fibres} we may assume $j\geq i$. Moreover, if $p$ is the center of the slit $s$, denoted by $s^0$, then $\omega(\g_p)\cap E_i \neq \emptyset$ and therefore $\g_p\cong L_i$. Since there are exactly two point in $\mathcal{B}$ projecting to the center of $s$ we obtain the following corollary of Lemma \ref{lemma:menger-fibres}.

\begin{corollary}[Topology of fibres in $D\mathfrak{M}$.]\label{lemma:double-menger-fibres}
Suppose the point $p\in \mathcal{B} \subset D\mathfrak{M}$ belongs to a slit of $\mathcal{B}$ of generation $i\geq1$ (i.e. of diameter $2^{-1}\cdot 4^{-i}$). Then the fiber over $p$ in $D\mathfrak{M}$, denoted by $\hat{\g_p}$, is either a topological circle or is homeomorphic to $Y_j$ for some $j\geq i$.
Moreover, for every slit $s$ of $\mathcal{B}$ of generation $i$ there are exactly four points $p_l, l=1,\ldots,4$, corresponding to $s^{0},s^{-},s^{+}$, such that $\hat{\g}_{p_l}$ is homeomorphic to $Y_i$.
\end{corollary}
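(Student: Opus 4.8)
The plan is to derive the corollary from Lemma \ref{lemma:menger-fibres} by passing from fibres in $\mathfrak{M}$ to fibres in $D\mathfrak{M}$. Recall that for every $p\in\mathcal{B}$ one has $\hat{\g}_p\cong(\g_p\sqcup\g_p)/\{0,1\}$, where $\{0,1\}$ denotes the pair consisting of the point of $\g_p$ lying on $\mathcal{B}$ and the point of $\g_p$ lying on $\mathcal{T}$ (these are the two endpoints when $\g_p\cong[0,1]$, and the two distinguished vertices $0,1$ when $\g_p\cong L_j$). The elementary observation is that doubling $[0,1]$ along its two endpoints yields a topological circle, while doubling $L_j$ along its two distinguished vertices $0$ and $1$ yields, by the very definition of $Y_j$, a graph homeomorphic to $Y_j$. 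Hence the homeomorphism type of $\hat{\g}_p$ is determined by that of $\g_p$: it is $\mathbb{S}^1$ when $\g_p\cong[0,1]$, and $Y_j$ when $\g_p\cong L_j$.

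First I would establish the dichotomy. Since $p$ lies on a slit of $\mathcal{B}$ of generation $i$, Lemma \ref{lemma:menger-fibres} gives that $\g_p$ is homeomorphic to $[0,1]$, to $L_i$ (in the endpoint case), or to $L_j$ for some $j\geq i$; here necessarily $j\geq i$, since $\omega(\g_p)\cap E'_j=\emptyset$ for $j<i$ when $p$ belongs to a generation-$i$ slit. Combining with the first paragraph, $\hat{\g}_p$ is either a topological circle or is homeomorphic to $Y_j$ for some $j\geq i$, which is the first assertion of the corollary.

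Next I would count the points of $\mathcal{B}$ over a fixed generation-$i$ slit $s\subset\mathcal{B}$ whose fibre in $D\mathfrak{M}$ is homeomorphic to $Y_i$; by the dichotomy this happens precisely when $\g_p\cong L_i$. Over each endpoint $s^-$ and $s^+$ the completion $\mathcal{B}\cong M(\calS)$ contains a single point (near the tip of a slit the two sides cannot be separated in the inner metric), and the first case of Lemma \ref{lemma:menger-fibres} gives $\g_p\cong L_i$ for these. Over the center $s^0$ the completion contains exactly two points (an interior point of a slit splits into its two sides), and for both the vertical segment $\omega(\g_p)$ meets $E'_i$ — as recorded in the remark after Lemma \ref{lemma:menger-fibres} — so $i$ is the least index $j$ with $\omega(\g_p)\cap E'_j\neq\emptyset$, whence $\g_p\cong L_i$. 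Finally I would rule out the remaining points: if $q$ lies over an interior point of $s$ other than $s^0$, then the coordinate of $\omega(q)$ along $s$ is not a center of a dyadic interval of level $2i$ — the extent of $s$ in that direction is half the side of its defining dyadic cube, hence shorter than the spacing of such centers, so it can contain only the one at $s^0$ — and therefore $\omega(\g_q)\cap E'_i=\emptyset$; Lemma \ref{lemma:menger-fibres} then forces $\g_q\cong L_j$ with $j>i$ or $\g_q\cong[0,1]$, so $\hat{\g}_q\not\cong Y_i$. Adding up, exactly $1+1+2=4$ points of $\mathcal{B}$ over $s$ carry a fibre homeomorphic to $Y_i$, which is the second assertion.

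The bulk of the argument is bookkeeping once Lemma \ref{lemma:menger-fibres} is in hand; the two points requiring genuine care are the count of preimages in the completion $\mathcal{B}$ of the center and of the endpoints of $s$ (i.e.\ that interior slit points split into two while endpoints do not), and the verification that among all interior points of $s$ only $s^0$ has its vertical line meeting $E'_i$. I would record these as short remarks about, respectively, the metric completion defining the slit carpet and the placement of the tubes $E'_j$ in the inductive construction of $\mathfrak{M}$.
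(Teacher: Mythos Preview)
Your proof is correct and follows essentially the same approach as the paper: use $\hat{\g}_p\cong(\g_p\sqcup\g_p)/\{0,1\}$ together with Lemma~\ref{lemma:menger-fibres} to get the dichotomy, then count preimages in $\mathcal{B}$ of $s^{-},s^{+},s^{0}$ to obtain the four points. Your version is slightly more detailed than the paper's in justifying why no interior point of $s$ other than $s^{0}$ yields $\g_p\cong L_i$, but this is harmless expansion of what the paper leaves to the ``discussion above''.
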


\begin{proof}
Since $\hat{\g}_p\cong(\g_p\sqcup \g_p)/\{0,1\}$, from Lemma \ref{lemma:menger-fibres} we have that either
\begin{align*}
\hat{\g}_p&\cong ([0,1]\sqcup [0,1])/\{0,1\} \cong \mathbb{S}^1, \mbox{ or }\\
\hat{\g}_p&\cong (L_j\sqcup L_j)/\{0,1\} = Y_j, \mbox{ for some } j\geq i.
\end{align*}

Moreover, by the discussion above, we have $\g_p$ is homeomorphic to $L_i$ if and only if $\omega(p)\in\{s^0,s^{-},s^{+}\}$. The endpoints $s^-$ and $s^{+}$ correspond to unique points in $\mathcal{B}$, while there are two distinct points in $\mathcal{B}$ corresponding to $s^{0}$. Thus, there are four points in $\mathcal{B}$ such that $\hat{\g}_p\cong Y_i$.
\end{proof}

\begin{proof}[Proof of Lemma \ref{lemma:menger-fibres}]
Let $p\in\mathcal{B}$ and $(x,y,0)=\omega(p)\in\overline{W}_0$. Since $p$ belongs to a slit of $\mathcal{B}$ we have that $\omega(p)\in s^i_{kl}$ for some $i\geq0$ and $k,l\in\{0,\ldots,4^i-1\}$, see (\ref{def:slits}). In particular,
$$x=\frac{2k+1}{2\cdot 4^i} \mbox{ for some } k\in\{0,\ldots,4^i-1\}.$$
For simplicity we let $s:=s^i_{kl}$. We also let $s^0$, $s^+$ and $s^-$ denote the midpoint, top and bottom end points of $s$, respectively, thinking of $s$ as a subset of $\mathbb{R}^2$.
Recall, that $\omega(\g_p)$ is the vertical interval through $(x,y,0)$. Since $s$ is a slit of generation $i$, from the construction for the sets $E_i'$ and $E_i''$, it follows that $ \omega(\g_p)\cap E''_j=\emptyset$ for $j\neq i$, and moreover
\begin{align}\label{intersecting-flat-tubes}
\omega(\g_p)\cap E_j'=\emptyset \mbox{ for }  j<i.
\end{align}
Therefore,
\begin{align}\label{intersecting-flat-tubes-and-crosses}
  \omega(\g_p)\cap \left[\bigcup_{j<i} E_j' \cup E_j''\right] = \emptyset.
\end{align}
It follows that $\omega_{0,j}$ is $1$-to-$1$ on the preimage of $\g_0$ in $\overline{W}_j$, and $(\g_p)_j$ is homeomorphic to the interval $[0,1]$ for every $j<i$.

For every diadic cube $Q\in\D_{2i}$ which intersects $\omega(\g_p)$ we have $(\g_p)_{i-1}\cap Q \simeq [0,1]$. Therefore we may apply the proof of (\ref{gen-1-fibres}) from Lemma \ref{lemma-fibers-0} and conclude that for a cube $Q\in\Delta_{2i}$ either $(\g_p)_i\cap Q =\emptyset$ or
\begin{align*}
  (\g_p)_i\cap Q
  \simeq
  \begin{cases}
    [0,1] &\mbox{ if } p\notin\{s^0,s^+,s^-\} \\
    L_1 &\mbox{ if } p\in \{s^0,s^+,s^-\}.
  \end{cases}
\end{align*}
Since $(\g_p)_i$ is obtained by consecutively attaching $4^i$ copies (one for every $Q\in\D_{2i}$ intersecting $(\g_p)_{i-1}$) of either $[0,1]$ or $L_1$ one after the other, it follows that
\begin{align}\label{fiber-classification}
  (\g_p)_i
  \simeq
  \begin{cases}
    [0,1] &\mbox{ if } p\notin\{s^0,s^+,s^-\} \\
    L_i &\mbox{ if } p\in \{s^0,s^+,s^-\}.
  \end{cases}
\end{align}

To prove (\ref{fibers-over-slits}) we consider three cases.\\

\textit{\underline{Case (i)}}. Suppose $\omega(p)\in\{s^+,s^-\}$. Then $\omega(\g_p)\cap E_j'=\emptyset$ for $j>i$ and therefore by (\ref{fiber-classification}) we have
$$(\g_p)_j\simeq(\g_p)_i\simeq L_i, \mbox{ for every } j>i.$$
Since $\g_p$ is the inverse limit of the sequence $(\g_p)_i$ it follows that
$\g_p\simeq L_i \mbox{ if } p\in\{s^0,s^+,s^-\}.$\\

\textit{\underline{Case (ii)}}.
If $\omega(\g_p)\cap  E'_{m}\neq\emptyset$ then $(\g_p)_j\simeq [0,1]$ and $\omega(\g_p)\cap  E_i =\emptyset$ for every $n<j<m$. Therefore,  $(\g_p)_{m-1}$ is homeomorphic to $[0,1]$. In particular for $Q\in\Delta_{2m}$ the set $(\g_p)_{m-1}\cap Q$ is either empty or is homeomorphic to $[0,1]$.

By the proof of Lemma \ref{lemma-fibers-0} again we have that the fiber $(\g_p)_{m}$ is obtained from $(\g_p)_{m-1}$ by replacing every nonempty intersection $(\g_p)_{m-1}\cap Q$, with a copy of $L_1$. Therefore $(\g_p)_{m}$ is homeomorphic to $L_{m}$. Since for $j>m$ we have that $\omega(\g_p)\cap E_j'=\emptyset$ it follows that $(\g_p)_j\simeq(\g_{m})$ and therefore
\begin{align}
  \g_p \simeq L_{m}.
\end{align}

\textit{\underline{Case (iii)}}. Suppose $\omega(p)$ is not an endpoint of $s$ and $\omega(\g_p)\cap  E_j' =\emptyset, \,\, \forall j\geq0.$ By (\ref{fiber-classification}) we have $(\g_p)_n\cong [0,1]$. Moreover, since $\omega(\g_p)\cap  E_j' =\emptyset, \,\, \forall j\geq0$, it follows that $\omega_{jm}:\overline{W}_j\to\overline{W}_i$ is injective on $(\g_p)_j$ and therefore $(\g_p)_j\simeq(\g_p)_m \simeq [0,1]$ for all $j>n$. Thus $\g_p\simeq[0,1]$.
\end{proof}

\section{QS maps of $D\mathfrak{M}$ are fiber-preserving}\label{Section:QS-maps-fiber-preserving}

In this section we show that a QS mapping of $D\mathfrak{M}$ into itself maps fibers onto fibers, where fibers are understood like in Section  \ref{Sec:menger-fibered-over-slit-carpet}. This will imply that a QS mapping of $D\mathfrak{M}$ induces a mapping $f_{\mathcal{B}}$ of the base slit carpet $\mathcal{B}$ into itself, thus reducing the question of surjectivity of $f$ to the same question for $f_{\mathcal{B}}$.

Recall that a subset $\g\subset\mathfrak{M}$ is $z$-parallel (vertical) if $\omega(\g)\subset[0,1]^3$ is a subset of a line which is parallel
to the $z$-axes.

\begin{lemma}\label{lemma:menger-non-z-parallel}
  Let $\G_{nz}$ be the collection of non $z$-parallel curves in $(\mathfrak{M},d_{\mathfrak{M}},\calH^3)$ or $(D\mathfrak{M},d_{D\mathfrak{M}},\calH^3)$. Then
  $\m_p \G_{nz} = 0$
  for every $p\geq 1$.
\end{lemma}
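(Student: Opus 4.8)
The plan is to reduce the statement to the modulus estimates already established for slit carpets, specifically Lemma \ref{lemma:curves_in_porous_carpets}. Recall that $\mathcal{B}$, the base of $\mathfrak{M}$, is isometric to the slit carpet $M(\calS)$ where $\calS = \{s^i_{kl}\}$ is the particular sequence of slits in \eqref{def:slits}; these slits occur at all locations and scales (every diadic square of side $4^{-i}$ carries a slit of length $\tfrac12 4^{-i}$) and are uniformly relatively separated. Hence by Lemma \ref{lemma:curves_in_porous_carpets} the family $\hat{\G}_{\calS}$ of projected curves connecting the left and right faces of $[0,1]^2$ and avoiding $\calS$ has vanishing $p$-modulus for every $p\geq 1$, and more generally, arguing as in Lemma \ref{lemma:non-vertical0}, the family of all non-vertical curves in $M(\calS)\cong\mathcal{B}$ has vanishing $p$-modulus. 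The point is that a non $z$-parallel curve $\g$ in $\mathfrak{M}$ projects under $\Pi:\mathfrak{M}\to\mathcal{B}$ (or $\Theta = \omega|_{\mathcal B}\circ\Pi$) to a non-vertical curve in the base carpet.

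First I would set up the projection estimate. The map $\Pi:\mathfrak{M}\to\mathcal{B}$ is $1$-Lipschitz: indeed, $\Pi$ is realized at each finite stage by the orthogonal projection $\pi_{xy}$ composed with $\omega$, which is $1$-Lipschitz in the path metrics, and this passes to the inverse limit (this is exactly the computation showing $\mathcal{B}$ is isometric to the slit carpet, given after \eqref{def:slits}). Next I would verify the measure-comparison hypothesis \eqref{ineq:measure-non-decreasing} of Lemma \ref{lemma:modulus-under-projections}: I claim there is $C<\infty$ such that $\calH^3(E)\leq C\,\calH^2(\Pi(E))$ for every Borel $E\subset\mathfrak{M}$. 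This follows because $\mathfrak{M}$ is Ahlfors $3$-regular (Lemma \ref{lemma:menger-properties}), $\mathcal{B}$ is Ahlfors $2$-regular (Lemma \ref{lemma:Merenkov-ahlfors-regular}), the fibers $\g_p = \Pi^{-1}(p)$ have uniformly bounded diameter ($\leq 1$), and $\Pi$ restricted to $\Pi^{-1}(B)$ over a ball $B\subset\mathcal B$ maps into a set of controlled size — so a covering of $\Pi(E)$ by balls of radius $r_i$ lifts to a covering of $E$ by sets of diameter $\lesssim r_i$, and Ahlfors regularity converts this into the measure inequality. (Alternatively one invokes a coarea-type inequality for the $1$-Lipschitz map $\Pi$ with $1$-dimensional fibers into a $2$-regular target, which gives the $3$-regular bound on the source.)

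With these two ingredients, Lemma \ref{lemma:modulus-under-projections} gives $\m_p\G \leq C\,\m_p\Pi(\G)$ for any curve family $\G$ in $\mathfrak{M}$. Taking $\G = \G_{nz}$, every $\g\in\G_{nz}$ has $\omega(\g)$ not contained in a vertical line, hence $\Theta(\g)=\pi_{xy}(\omega(\g))$ is not a point and $\Pi(\g)$ is a non-vertical curve in $\mathcal{B}\cong M(\calS)$; thus $\Pi(\G_{nz})$ is contained in the family $\G_{nv}$ of non-vertical curves in the base carpet, which has $\m_p=0$ by Lemma \ref{lemma:curves_in_porous_carpets} (combined with Lemma \ref{lemma:non-vertical0}, whose hypotheses hold since the slits of $\calS$ appear on all scales and locations). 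Monotonicity of modulus then yields $\m_p\G_{nz}\leq C\,\m_p\G_{nv} = 0$. The case of $D\mathfrak{M}$ is identical: the double is fibered over the same base carpet $\mathcal{B}$ via $\Theta$, the doubled projection is still $1$-Lipschitz with fibers of uniformly bounded diameter, $D\mathfrak{M}$ is Ahlfors $3$-regular by Lemma \ref{lemma:menger-properties}, and a non $z$-parallel curve again projects to a non-vertical curve in $\mathcal{B}$.

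The main obstacle I expect is the measure-comparison inequality $\calH^3(E)\leq C\,\calH^2(\Pi(E))$. Lipschitzness of $\Pi$ alone only gives $\calH^2(\Pi(E))\leq C\calH^2(E)$, the wrong direction; one genuinely needs that fibers are ``small" (bounded diameter, which they are) together with both Ahlfors regularity statements to push a covering of the image up to a covering of the source with comparable total $\calH^3$-mass. This is the analogue of inequality \eqref{ineq:Merenkov_measures_comparable} for the fibration $\mathfrak{M}\to\mathcal B$ rather than for $\tau_i$, and it should be provable by the same covering/Ahlfors-regularity bookkeeping used there; once it is in hand the rest is a routine application of Lemmas \ref{lemma:modulus-under-projections} and \ref{lemma:curves_in_porous_carpets}.
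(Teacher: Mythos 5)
Your proposal contains a genuine gap in the step ``a non $z$-parallel curve $\g$ in $\mathfrak{M}$ projects under $\Pi$ to a non-vertical curve in the base carpet.'' A curve $\g$ is non $z$-parallel if and only if $\Theta(\g)=\pi_{xy}(\omega(\g))$ is not a single point, i.e.\ at least one of the $x$- or $y$-coordinates of $\omega(\g)$ varies. But the slits of $\mathcal{B}\cong M(\calS)$ from \eqref{def:slits} are all perpendicular to the $x$-axis, so ``non-vertical'' for $\mathcal{B}$ in the sense of Section \ref{section:mod0-co-hopf} means precisely that $\pi_1\circ\tau_0(\Pi(\g))$, the $x$-coordinate, is non-constant. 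A curve $\g\subset\mathfrak{M}$ with constant $x$-coordinate but varying $y$ and $z$ (e.g.\ a curve in $\omega^{-1}(\{x=1/3\})$, which is an unobstructed slice of $\mathfrak{M}$) is non $z$-parallel, yet projects under $\Pi$ to a \emph{vertical} curve in $\mathcal{B}$. The family of vertical curves in a slit carpet has strictly positive modulus, as the paper notes after Theorem \ref{thm:mod0-co-hopf}, so your argument says nothing about these curves and the estimate $\m_p\Pi(\G_{nz})=0$ fails.

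The approach also cannot be rescued just by replacing $\mathcal B$ with another face: the fibration $\Pi:\mathfrak{M}\to\mathcal{B}$ works because every fiber is a vertical ($z$-parallel) connected component and the slits of $\mathfrak{M}$ respect the $z$-projection, and there is no analogous fibration over the side faces. The paper sidesteps this entirely by working directly in $[0,1]^3$ rather than through the base carpet: it introduces two families of $2$-dimensional slits, $\mathcal{F}''\subset\{T_Q(F'')\}$ perpendicular to the $x$-axis and $\mathcal{F}'\subset\{T_Q(F')\}$ perpendicular to the $y$-axis, each uniformly relatively separated and occurring on all locations and scales, applies Lemma \ref{lemma:curves_in_porous_carpets} in $\mathbb R^3$ to conclude $\m_p\G_{\mathcal F'}=\m_p\G_{\mathcal F''}=0$, and then uses the $1$-Lipschitz map $\omega$ (via Lemma \ref{lemma:modulus-under-projections}) to pull these estimates back to the two subfamilies $\G_1$ (curves with $\pi_1\circ\omega$ non-constant) and $\G_2$ (curves with $\pi_2\circ\omega$ non-constant), finally noting $\G_{nz}\subset\G_1\cup\G_2$. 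You would need to produce the corresponding estimate for $\G_2$; your reduction only covers $\G_1$.

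As a secondary note, your measure-comparison inequality $\calH^3(E)\leq C\,\calH^2(\Pi(E))$ is plausible but needs care. The cleanest route is not a covering of abstract cylinders but the chain $\calH^3(E)\asymp\calH^3(\omega(E))\leq\calH^2\bigl(\pi_{xy}(\omega(E))\bigr)\asymp\calH^2(\Pi(E))$, where the first and last comparisons use the analogue of inequality \eqref{ineq:Merenkov_measures_comparable} for $\mathfrak{M}$ and $\mathcal{B}$, and the middle inequality is Fubini in $[0,1]^3$. This is essentially what the paper does, except it applies the argument to $\omega$ directly rather than to $\Pi$, which also avoids the need for this chain.
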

\begin{proof}

Let
\begin{align*}
\begin{split}
  F' &= \left\{(x,y,z)\in[0,1]^3 : \max\left(|z-\frac{1}{2}|,|y-\frac{1}{2}|\right)\leq \frac{1}{2^{2}} \right\} \cap \pi_2^{-1}(p) \subset E',\\
  F'' &= \left\{(x,y,z)\in[0,1]^3 : \max\left(|x-\frac{1}{2}|,|y-\frac{1}{2}|\right)\leq \frac{1}{2^{2}}\right\} \cap \pi_1^{-1}(p) \subset E''.
\end{split}
\end{align*}
Thus $F'$ and $F''$ are the largest squares in $E'$ and $E''$, respectively, centered at $p$.
Next we consider the families of slits in $[0,1]^3$,
\begin{align*}
  \mathcal{F}' &= \{T_{Q}(F')\}_{Q\in\D_{2i}, i\geq 0},\\
  \mathcal{F}'' &= \{T_{Q}(F'')\}_{Q\in\D_{2i}, i\geq 0}.
\end{align*}
Clearly the families of slits $\mathcal{F}'$ and $\mathcal{F''}$ are uniformly relatively separated and occur in all locations and scales. By Lemma \ref{lemma:curves_in_porous_carpets} we have that
$\m_p \G_{\mathcal{F}''}=0.$ Considering slits in planes perpendicular to the $y$-axis, (an analogue of) Lemma \ref{lemma:curves_in_porous_carpets} would also show that $\m_p \G_{\mathcal{F}'}=0$. Moreover, the proof of the modulus estimate in Section \ref{Section:main-estimate-proof} shows that if 
for $i\in\{1,2\}$ we define 
\begin{align*}
\G_i:= \{ \g \in \G : |\pi_i(\omega(\g)| >0 \}, 
\end{align*}
then $\m_p \omega(\G_1)=\m_p \omega(\G_2)=0$. Since $\omega$ is $1$-Lipschitz 
by (the proof of) Lemma \ref{lemma:modulus-under-projections} we have that 
$\m_p \G_1=\m_p \G_2=0$. From the definitions we have that $\G_{nz}\subset \G_{1}\cup \G_{2}$ and therefore $\m_p \G_{nz}=0$.
\end{proof}

%
%
%
%
%
%

The following result follows from the lemma above just like in the case of Lemma \ref{lemma:vert-to-vert}.

\begin{lemma}\label{lemma:menger-parallel-to-parallel}
Let $X$ be $\mathfrak{M}$, or $D\mathfrak{M}$. If $f$ is a quasisymmetric embedding of $X$ into itself, then it maps every  $z$-parallel simple closed curve in $X$ to a $z$-parallel simple closed curve.
\end{lemma}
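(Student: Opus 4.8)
The plan is to mimic the proof of Lemma \ref{lemma:vert-to-vert} in the Sierpi\'nski setting, where the key inputs are: (a) a curve family with vanishing modulus, here $\G_{nz}$ from Lemma \ref{lemma:menger-non-z-parallel}; (b) Ahlfors regularity of the image of $f$, which allows us to invoke Tyson's theorem; and (c) monotonicity of modulus. First I would record the analogue of Corollary \ref{lemma:image-Ahlfors-regular} for $\mathfrak{M}$ and $D\mathfrak{M}$: since these spaces are Ahlfors $3$-regular by Lemma \ref{lemma:menger-properties}, and since small balls in $\mathfrak{M}$ (resp. $D\mathfrak{M}$) contain curve families of definite modulus (one can use the $z$-parallel curves through a sub-box, exactly as in the proof of Corollary \ref{lemma:image-Ahlfors-regular} using Lemma \ref{lemma:lower-Ahlfors-regular}), the image $f(\mathfrak{M})$ or $f(D\mathfrak{M})$ is Ahlfors $3$-regular. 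Hence $f$ quasi-preserves the $3$-modulus of all curve families by Theorem \ref{thm:Tyson}.

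The main argument then runs as follows. Let $\Sigma$ be a $z$-parallel simple closed curve in $X$; equivalently (see Corollary \ref{lemma:double-menger-fibres}) $\Sigma$ is a fiber $\hat{\g}_p$ over a point $p\in\mathcal{B}$ with $\omega(p)$ not on any slit, so that $\Sigma$ is a topological circle. Let $\G_{z\to nz}$ be the family of all $z$-parallel simple closed curves in $X$ which $f$ maps to non-$z$-parallel curves. Then $f(\G_{z\to nz})\subset\G_{nz}$, so by monotonicity $\m_3(f(\G_{z\to nz}))\le\m_3(\G_{nz})=0$; since $f$ quasi-preserves $3$-modulus this gives $\m_3(\G_{z\to nz})=0$. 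Thus $\m_3$-almost every $z$-parallel closed curve is mapped to a $z$-parallel closed curve. To upgrade ``almost every'' to ``every,'' I would use a continuity/density argument: the $z$-parallel circles $\hat\g_p$ over non-slit points $p\in\mathcal{B}$ fill a full-$\calH^3$-measure subset of $X$ and are parametrized (via $\Theta$) by a full-measure, hence dense, set of points in $[0,1]^2$; if $\hat\g_p$ were mapped to a non-$z$-parallel curve then, since $\Theta\circ f$ is continuous, the same would hold for all $\hat\g_q$ with $q$ in a neighborhood of $p$, contradicting that $\m_3(\G_{z\to nz})=0$ forces $\Theta(f(\hat\g_q))$ to be a point for a set of $q$ of full measure. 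Therefore $f$ takes every $z$-parallel simple closed curve to a $z$-parallel simple closed curve.

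The one genuinely delicate point — and the place where this differs from the Sierpi\'nski case — is passing from ``almost every fiber is a round circle'' to a statement about \emph{every} $z$-parallel simple closed curve, because the fibers $\hat\g_p$ over slit points are not circles (they are the graphs $Y_j$ of Corollary \ref{lemma:double-menger-fibres}), so a $z$-parallel simple closed curve need not be an entire fiber. However a $z$-parallel simple closed curve is always contained in a single fiber $\hat\g_p$ (since it is connected and $z$-parallel, hence lies in one connected component of $\Theta^{-1}(\Theta(p))$), and for such a curve to be closed one can show $\omega(p)$ must be off all slits or, if on a slit, the curve lies in the circle part of $Y_j$; in either case the argument above (applied to curves that approximate or overflow the given one, using the overflowing property of modulus from Lemma \ref{lemma:modulus-properties}) yields that its image is $z$-parallel. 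I would phrase the conclusion so that it suffices to know $\Theta\circ f$ is constant on the circle in question, which follows from continuity together with the full-measure conclusion just obtained.

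For brevity, in the write-up I would simply state that the proof is identical to that of Lemma \ref{lemma:vert-to-vert}, with $\G_{nv}$ replaced by $\G_{nz}$, Corollary \ref{lemma:image-Ahlfors-regular} replaced by its evident analogue for $\mathfrak{M}$ and $D\mathfrak{M}$ (proved the same way via Lemmas \ref{lemma:lower-Ahlfors-regular} and \ref{lemma:menger-properties}), and the density/continuity step carried out as above; this is indeed what the sentence ``follows from the lemma above just like in the case of Lemma \ref{lemma:vert-to-vert}'' signals, so the reader is expected to supply only these routine substitutions.
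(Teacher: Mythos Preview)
Your approach is essentially the same as the paper's: both use Tyson's theorem (via Ahlfors regularity of the image) together with $\m_3(\G_{nz})=0$ to conclude $\m_3(\G_{z\to nz})=0$, then pass through the product structure of the $z$-parallel fibers to deduce that fibers over a full-$\calH^2$-measure (hence dense) set of base points map to $z$-parallel sets, and finally use continuity of $\Theta\circ f$ to upgrade to all fibers. The paper makes the product-family step explicit, writing $\m_3(\G^3_{z\to nz})\gtrsim \calH^2(\Theta(\G^3_{z\to nz}))$, whereas you invoke it implicitly when you say the modulus-zero condition forces the ``good'' base points to have full measure; either formulation is fine.

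One remark on your ``delicate point'': the paper handles it more directly than you do. Rather than worrying about which $z$-parallel simple closed curves sit inside non-circle fibers $Y_j$, the paper simply shows that for \emph{any} two points $p,q$ in the same fiber one has $\Theta(f(p))=\Theta(f(q))$, by approximating $p,q$ with points $p_i,q_i$ on nearby circle fibers $\hat\g_i$ already known to map vertically. This immediately gives that $f$ takes every fiber---and hence every $z$-parallel set, closed curve or not---to a $z$-parallel set, so your discussion of ``circle part of $Y_j$'' and overflowing is unnecessary.
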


\begin{proof}
Let $B'$ be the subset of $[0,1]^2\subset\mathbb{R}^2$ such that the $z$-parallel lines through the points in $B'$ do not intersect the set $\cup_{i=0}^{\infty} E_n \subset [0,1]^3$. Note that $B'$ is the collection of points $(x,y)\in[0,1]^2$ such that at least one of the coordinates $x$ or $y$ is not a dyadic rational. In particular $B'$ is of full Lebesgue measure in $[0,1]^2$.

Let $\G^3_{z\to nz}$ be the collection of $z$-parallel simple closed curves $\g$ in $D\mathfrak{M}$ such that the segment $\omega_0(\g)$ connects the top and bottom faces of the cube $[0,1]^3$, passes through a point in $B'$ and $f(\g)\notin\G_{nz}$. Since $\G^{3}_{z\to nz}\subset \G_{z\to nz}$, by Lemma \ref{lemma:menger-non-z-parallel} and by Tyson's theorem we have $\m_3 (\G^3_{z\to nz})=0$. On the other hand since $\G^{3}_{z\to nz}$ is the preimage of a product family under $\omega$, which is $1$-Lipschitz, we have that $\m_3(\G^3_{z\to nz}) \gtrsim \calH^{2}(\Theta(\G^{3}_{z\to nz}))$. Therefore, $\calH^{2}(\Theta(\G^{3}_{z\to nz}))=0$, which means that for $\calH^2$ almost every point $\alpha:=(x,y)\in[0,1]^2$ the curve $\Theta^{-1}(\alpha)$ is mapped by $f$ to a vertical curve in $\mathfrak{M}$. By Lemma \ref{lemma:menger-properties} it follows that such curves form a full measure set and therefore a dense subset of $\mathfrak{M}$.

Now, if $p$ and $q$ are two distinct points in $\mathfrak{M}$ belonging to the same fiber, say $\g$, then there is a sequence of fibers $\g_i$ and points $p_i,q_i\in\g_i$ in $\mathfrak{M}$ such that $p_i\to p$, $q_i\to q$ and such that the $f(\g_i)$ is a vertical set. Since $f$ and $\Theta$ are continuous it follows that $\Theta(f(p))=\Theta(f(q))$. Since $p$ and $q$ were arbitrarily two points in $\g$ it follows that $f(\g_p)$  is a $z$-parallel set in $\mathfrak{M}$ for every fiber $\g_p$.
\end{proof}

%
%

%

By Lemma \ref{lemma:menger-parallel-to-parallel} we have that a quasisymmetric mapping $f$ of $D\mathfrak{M}$ into itself maps fibers into fibers. The next result shows that $f$ is in fact surjective on each or these fibres.

\begin{lemma}[Surjectivity on fibres]\label{lemma:surject-on-fibers}
If $f$ is a quasisymmetric mapping of $D\mathfrak{M}$ into itself then for every $p\in D\mathfrak{M}$ the restriction of
$$f|_{\hat{\g}_p} : \hat{\g}_p \to \hat{\g}_{f(p)}$$ is surjective, i.e.
\begin{align}\label{equality:circles-to-circles}
  f(\hat{\g}_p) = \hat{\g}_{f(p)}.
\end{align}
\end{lemma}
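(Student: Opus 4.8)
The plan is to argue that the fiber $\hat\g_p$ over $p$ and the fiber $\hat\g_{f(p)}$ over $f(p)$ have the same topological type, and then to use this together with the fact that $f$ maps $\hat\g_p$ homeomorphically into $\hat\g_{f(p)}$ to conclude surjectivity. First, by Lemma \ref{lemma:menger-parallel-to-parallel} we know $f(\hat\g_p)\subset\hat\g_{f(p)}$, and since $f$ is a homeomorphism onto its image, $f|_{\hat\g_p}$ is an embedding of $\hat\g_p$ into $\hat\g_{f(p)}$. By Corollary \ref{lemma:double-menger-fibres} every fiber in $D\mathfrak{M}$ is either a topological circle $\mathbb{S}^1$ or is homeomorphic to $Y_j$ for some $j\geq 1$. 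So the key point is to rule out that $f$ maps a fiber into a strictly larger (more complicated) fiber --- e.g. an $\mathbb{S}^1$ properly into some $Y_j$, or a $Y_i$ properly into a $Y_j$ with $j<i$, etc. --- and also to rule out that a circle maps onto a proper subarc of a $Y_j$-fiber.

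The mechanism I would use is a \emph{counting/measure} argument on the base slit carpet $\mathcal{B}$, parallel to what was done for vertical spheres in Lemma \ref{lemma:vertical-spheres}. By Lemma \ref{lemma:menger-parallel-to-parallel}, $f$ descends to a map $f_{\mathcal B}:\mathcal B\to\mathcal B$ with $\Theta\circ f=f_{\mathcal B}\circ\Theta$ in the appropriate sense; moreover $f_{\mathcal B}$ is continuous and injective (distinct fibers have distinct images since $f$ is injective and fibers are disjoint). Now a fiber $\hat\g_p$ fails to be a circle precisely when $p$ lies on a slit of $\mathcal B$ and $\omega(\g_p)$ meets some $E'_j$, i.e. only for a countable set of points $p$ (the slit endpoints $s^\pm$, the two lifts of each slit center $s^0$, and the countably many $p$ with $\omega(\g_p)\cap E'_j\neq\emptyset$). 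In particular, for $\mathcal H^2$-almost every $p\in\mathcal B$ the fiber $\hat\g_p$ is a circle. Since $f_{\mathcal B}$ is injective and continuous, and since --- by Corollary \ref{lemma:image-Ahlfors-regular} and the modulus machinery --- $f$ cannot collapse the full-measure family of circle-fibers, the preimage under $f_{\mathcal B}$ of the ``bad'' (non-circle) points of $\mathcal B$ is again countable. Consequently, for a circle-fiber $\hat\g_p$ (generic $p$), the target $\hat\g_{f(p)}$ is again a circle, and an embedding of $\mathbb{S}^1$ into $\mathbb{S}^1$ is automatically onto, giving \eqref{equality:circles-to-circles} on a dense (full-measure) set of $p$. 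For the remaining (countably many) special $p$, one argues by approximation: take $p_i\to p$ with $\hat\g_{p_i}$ circle-fibers; then $\hat\g_{p_i}\to\hat\g_p$ and $\hat\g_{f(p_i)}\to\hat\g_{f(p)}$ (in the Hausdorff sense, using continuity of $f$ and of the fiber assignment), and since $f(\hat\g_{p_i})=\hat\g_{f(p_i)}$ exhaust their targets, the closed set $f(\hat\g_p)$ must equal $\hat\g_{f(p)}$.

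The one subtlety that needs care --- and which I expect to be the \textbf{main obstacle} --- is the topological matching for the \emph{special} fibers: one must show that $f$ sends a $Y_j$-type fiber exactly to a $Y_j$-type fiber (not to a circle, nor to a $Y_k$ with $k\neq j$), or at least that the approximation argument forces surjectivity without knowing this a priori. Here I would exploit that $Y_j\cong Y_k$ iff $j=k$ (stated in the excerpt), together with the fact that $f_{\mathcal B}$ is a homeomorphism onto its image and that the ``complexity'' of a special fiber is detected by the generation of the slit of $\mathcal B$ it sits over: since $f_{\mathcal B}$ is QS-induced and (by Lemma \ref{lemma:image-Ahlfors-regular}) the image carpet is Ahlfors $2$-regular, $f_{\mathcal B}$ must send peripheral circles of $\mathcal B$ to peripheral circles of $\mathcal B$ (this is the standard carpet rigidity input, used as in \cite{Mer:coHopf}). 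A point $p$ over the center or endpoint of a generation-$i$ slit is carried to a point over the center or endpoint of a peripheral circle of $\mathcal B$, whose associated fiber type is then determined. The cleanest route, though, avoids pinning down $j$ exactly: once we know $f(\hat\g_p)$ is a closed connected subset of $\hat\g_{f(p)}$ that contains the limit of the exhausting circle-fibers $\hat\g_{f(p_i)}$, and the latter converge in Hausdorff distance to all of $\hat\g_{f(p)}$ (because the $\g_{p_i}$ fill out a neighborhood of $\g_p$ in the self-similar fiber structure), closedness of $f(\hat\g_p)$ yields $f(\hat\g_p)=\hat\g_{f(p)}$ in every case. I would write the proof in this second style, with the topological classification of fibers (Lemma \ref{lemma:menger-fibres}, Corollary \ref{lemma:double-menger-fibres}) invoked only to guarantee that circle-fibers are dense and that the fiber assignment $p\mapsto\hat\g_p$ is continuous at every point of $\mathcal B$.
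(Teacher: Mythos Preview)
Your two–step architecture (show surjectivity on a dense family of ``generic'' circle–fibers, then pass to all fibers by approximation) is exactly the paper's scheme, and the approximation step is essentially the paper's argument. The problem is the mechanism you use for the generic step.

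You assert that a fiber $\hat\g_p$ fails to be a circle only for \emph{countably} many $p\in\mathcal B$, citing Corollary~\ref{lemma:double-menger-fibres}. But that corollary is stated only for $p$ lying on a slit of $\mathcal B$; it does not classify fibers over non-slit points, and your extrapolation is false. For instance, take $p\in\mathcal B$ with $\omega(p)=(x_0,1/2,0)$ where $x_0$ is irrational. Then $\omega(p)$ lies on no slit of $\mathcal B$ (all slits have dyadic first coordinate), yet the vertical segment $\{(x_0,1/2,z):z\in[0,1]\}$ meets the flat tube $E'$, so its lift to $\overline{W_1}$ is already homeomorphic to $L_1$, and $\hat\g_p$ is not a circle. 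Since $x_0$ ranges over an uncountable set, the non-circle fibers are uncountable (though they do sit over an $\calH^2$-null subset of $\mathcal B$). Once countability fails, your transfer step ``$f_{\mathcal B}$ injective $\Rightarrow$ preimage of the bad set is countable'' collapses: mere injectivity of a continuous map says nothing about preimages of null sets.

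The paper replaces this by a modulus argument, which is what your parenthetical ``modulus machinery'' is gesturing toward but never actually uses. One sets $\Gamma''=\{\hat\g_p: \hat\g_p\cong\mathbb S^1 \text{ but } \hat\g_{f(p)}\not\cong\mathbb S^1\}$; then $\Theta(f(\Gamma''))$ lies in the null set $[0,1]^2\setminus B'$, so $f(\Gamma'')$ is contained in an $\calH^3$-null subset of $D\mathfrak M$, whence $\m_3 f(\Gamma'')=0$ (bounded length below, null support), and Tyson's theorem (using Corollary~\ref{lemma:image-Ahlfors-regular} for Ahlfors regularity of the image) gives $\m_3\Gamma''=0$. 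This yields ``$\m_3$-almost every circle fiber maps onto a circle fiber,'' after which your approximation argument goes through. So the fix is not to sharpen the counting but to abandon it in favor of the modulus/Tyson step.
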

\begin{proof}
  Let $B'\subset[0,1]^2$ be as in the proof of Lemma \ref{lemma:menger-parallel-to-parallel}. Note that if $\Theta(p)\in B'$ then $\hat{\g}_p\subset D\mathfrak{M}$ is a topological circle. Let
\begin{align*}
  \G'&=\{\hat{\g}_p\subset D\mathfrak{M} \, | \, \Theta(p)\in B' \},\\
  \G''&= \{\hat{\g}_p \in \G' \, | \, \hat{\g}_{f(p)} \mbox { is not a topological circle } \}.
\end{align*}

If $\hat{\g}_p\in\G''$ then
$$\Theta(\hat{\g}_{f(p)})\subset [0,1]^2\setminus B'.$$
Since $\calH^2([0,1]^2\setminus B')=0$ it follows that $\omega(f(\G''))$ is contained in a zero measure set $([0,1]^2\setminus B')\times[0,1]\subset \mathbb{R}^3$ and therefore the family $f(\G'')$ is a subset of zero $\mathcal{H}^3$-measure in $D\mathfrak{M}$. Moreover, there is a lower bound on the length of the curves $f(\g)$ for $\g\in\G''$, since $f$ is uniformly continuous. Therefore  $\m_3 f(\G'') =0$. By Tyson's theorem then also $\m_3 \G''=0$. In particular,  for $\m_3$-almost every $\hat{\g}_p\in\G'$ we have that $\hat{\g}_p\in\G'\setminus\G''$ or that $f(\hat{\g}_p)$ is a subset of a topological circle. Since $f|_{\hat{\g}_p}$ is continuous and one-to-one mapping of $\mathbb{S}^1$ into itself, $f$ is in fact onto, and we have that (\ref{equality:circles-to-circles}) holds for $\m_3$ almost every curve $\hat{\g}_p\in\G'$.

Thus, for almost every $p\in D\mathfrak{M}$ we have that the fiber $\hat{\g}_p$ is a topological circle and $f$ maps it onto the fiber $\hat{\g}_{f(p)}$ which is also a topological circle.

Now, if $p\in D\mathfrak{M}$ is arbitrary then by Lemma \ref{lemma:menger-parallel-to-parallel} we have that $f(\hat{\g}_p)\subset\hat{\g}_{f(p)}$. Pick any point $q'\in\hat{\g}_{f(p)}$. We want to show that there is a point $q\in \hat{\g}_p$ such that $f(q)=q'$. From the discussion above it follows that there is a sequence of topological circles $\hat{\g}_i'\subset f(D\mathfrak{M})$ such that $\Theta(\hat{\g}_i')\to\Theta(\hat{\g}_{f(p)})$ and $\hat{\g}_i:=f^{-1}(\hat{\g}_i')$ is a vertical topological circle. Next if $p_i',q_i'\in\g_i'$ are such that $q_i'\to q'$ and $p_i' \to f(p)$ then since $f^{-1}$ is continuous and $D\mathfrak{M}$ is compact the sequence $f^{-1}(q_i')$ converges to some point $q\in D\mathfrak{M}$ and $f(q)=\lim{f(f^{-1}(q_i'))} = q'$. Moreover, since $\hat{\g}_i$'s are vertical it follows that $q=f^{-1}(q')\in \hat{\g}_p$. Therefore $q'=f(q)\in f(\hat{\g}_{p})$. Since $q'\in\hat{\g}_{f(p)}$ was arbitrary, it follows that $\hat{\g}_{f(p)}\subset f(\hat{\g}_p)$.
\end{proof}

\section{$D\mathfrak{M}$ is QS co-Hopfian}\label{Section:menger-coHopf-proof}

From Lemma \ref{lemma:surject-on-fibers} is follows that every quasisymmetric mapping $f$ of $D\mathfrak{M}$ into itself induces a well defined mapping $f_{\mathcal{B}}$ of the base slit carpet $\mathcal{B}$ into itself as follows
\begin{align}
\begin{split}
  f_{\mathcal{B}}:=& \, \mathcal{B}\longrightarrow \,\, \mathcal{B} \\
  &\, p \, \,\, \mapsto  \,\, \Pi (f(p)).
  \end{split}
\end{align}
Where $\Pi:D\mathfrak{M}\to\mathcal{B}$ is defined as in (\ref{def:projection-to-base}).

\begin{lemma}\label{lemma:induced-map-surjective}
If $f$ is a quasisymmetric mapping of $D\mathfrak{M}$ into itself then the mapping $f_{\mathcal{B}}$ is a homeomorphism of $\mathcal{B}$ onto itself.
\end{lemma}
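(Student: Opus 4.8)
The plan is to verify that $f_{\mathcal B}$ is a continuous bijection of the compact Hausdorff space $\mathcal B$; since a continuous bijection of compact Hausdorff spaces is a homeomorphism, this is all that is needed, and together with the fibration $D\mathfrak M=\bigcup_{p\in\mathcal B}\hat\g_p$ it also yields $f(D\mathfrak M)=D\mathfrak M$, i.e. Theorem \ref{thm:slit-menger-co-Hopf}. First I would record that $f_{\mathcal B}$ is well defined and intertwines the fibrations: by Lemma \ref{lemma:surject-on-fibers} the map $f$ carries each fiber $\hat\g_p$ homeomorphically onto $\hat\g_{f(p)}$, so $\Pi\circ f$ is constant on fibers and $f_{\mathcal B}\circ\Pi=\Pi\circ f$. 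Continuity of $f_{\mathcal B}$ then reduces to continuity of $\Pi$, which holds because the relation ``lies on a common fiber'' is closed in $D\mathfrak M\times D\mathfrak M$ (a point and a nearby point cannot lie on opposite sides of a slit of $\mathcal B$ without their $d_{D\mathfrak M}$-distance being bounded below by the length of that slit, by uniform relative separation of the slits), so $\Pi$ is the quotient map of $D\mathfrak M$ by its fibers, a compact Hausdorff space canonically identified with $\mathcal B$. Injectivity of $f_{\mathcal B}$ is immediate: if $f_{\mathcal B}(p)=f_{\mathcal B}(q)$ then $f(\hat\g_p)=\hat\g_{f(p)}=\hat\g_{f(q)}=f(\hat\g_q)$, and since $f$ is injective and distinct fibers are disjoint, $\hat\g_p=\hat\g_q$; as each fiber meets $\mathcal B$ in a single point, $p=q$.

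The heart of the argument is surjectivity of $f_{\mathcal B}$. Since $f$ maps every fiber homeomorphically onto its image fiber, $f_{\mathcal B}$ preserves the homeomorphism type of fibers; by Corollary \ref{lemma:double-menger-fibres} each fiber of $D\mathfrak M$ is a topological circle or is homeomorphic to $Y_j$ for some $j$, and $Y_j\cong Y_k$ only if $j=k$, so $f_{\mathcal B}$ sends circle-fibered points to circle-fibered points and $Y_i$-fibered points to $Y_i$-fibered points. The main step is to upgrade this to: $f_{\mathcal B}$ maps every peripheral circle (slit) of $\mathcal B$ of generation $i$ onto a peripheral circle of the same generation. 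For this one uses the dictionary of Lemma \ref{lemma:menger-fibres} and Corollary \ref{lemma:double-menger-fibres}: above a generation-$i$ slit $s$ the coarsest fiber type occurring is $Y_i$, it occurs over exactly four points of $\mathcal B$ (the two lifts of the midpoint $s^0$ and the lifts of the endpoints $s^+,s^-$), the two lifts of $s^0$ are at mutual distance comparable to $\diam s$, and every other fiber over $s$ is a circle or a $Y_j$ with $j>i$; combined with porosity of $\mathcal B$ (slits occur on all locations and scales, Lemma \ref{lemma:Merenkov-ahlfors-regular}) this four-point configuration can be carried by $f_{\mathcal B}$ only to the analogous configuration above another generation-$i$ slit, which forces $f_{\mathcal B}(s)$ to be that slit.

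Granting this, the conclusion is short. For each $i$ the set of generation-$i$ slits of $\mathcal B$ is finite; $f_{\mathcal B}$ injects it into itself, because distinct peripheral circles of $\mathcal B$ are disjoint and hence have disjoint $f_{\mathcal B}$-images, so $f_{\mathcal B}$ permutes this set and every slit of $\mathcal B$ lies in $f_{\mathcal B}(\mathcal B)$. Since the slits are dense in $\mathcal B$ and $f_{\mathcal B}(\mathcal B)$ is compact, hence closed, we get $f_{\mathcal B}(\mathcal B)=\mathcal B$. Finally, $f(D\mathfrak M)=\bigcup_{p\in\mathcal B}\hat\g_{f(p)}=\bigcup_{q\in f_{\mathcal B}(\mathcal B)}\hat\g_q=D\mathfrak M$, so $f$ is onto.

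The main obstacle is precisely the step identifying $f_{\mathcal B}(s)$ with a genuine peripheral circle. A continuous injection of a carpet into itself need not be onto (carpets are not topologically co-Hopfian), so the fiber topology is essential, and one must rule out that $f_{\mathcal B}$ sends a peripheral circle to a non-peripheral Jordan curve in $\mathcal B$ — which could separate off a strictly smaller sub-carpet, incompatible with the tube $\Pi^{-1}(s)$ being homeomorphically reproduced. This is the ``careful analysis of the topology of fibers over the peripheral circles of $\mathcal B$'' advertised in the introduction, and it plays here the role that Lemmas \ref{lemma:vertical-spheres} and \ref{lemma:vert-spheres-cohopf} played for the one-dimensional base $[a_1,b_1]$ in the Sierpi\'nski case; the quantitative input is an auxiliary modulus estimate for curve families inside the tubes $\Pi^{-1}(s)$ in the spirit of Lemma \ref{lemma:curves_in_porous_carpets}.
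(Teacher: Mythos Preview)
Your outline --- continuity, injectivity, then surjectivity by showing the slits are permuted and invoking their density --- is exactly the paper's, and the continuity and injectivity parts are fine. The place where your argument stays incomplete is precisely the one you flag in your last paragraph: you have not shown that $f_{\mathcal B}$ carries a peripheral circle (slit) of $\mathcal B$ to a peripheral circle of $\mathcal B$, as opposed to some other embedded Jordan curve. Your speculation that closing this gap requires ``an auxiliary modulus estimate for curve families inside the tubes $\Pi^{-1}(s)$'' is off the mark; no further analysis of that kind is used.

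The paper handles this step in one line, purely topologically: since $f_{\mathcal B}$ is a homeomorphism of the carpet $\mathcal B$ onto its image, Whyburn's theorem gives that peripheral circles go to peripheral circles. With that in hand, the fibre bookkeeping is also simpler than your four-point scheme. One picks a \emph{single} point $p\in s$ with $\hat\gamma_p\cong Y_n$; then $f_{\mathcal B}(p)$ lies on some slit of generation $m$, and Corollary~\ref{lemma:double-menger-fibres} (non-circle fibres over a generation-$m$ slit are $Y_j$ with $j\ge m$) forces $m\le n$. So every slit maps to a slit of generation no larger than its own; since the generation-$0$ slit is unique it is fixed, and by induction on generation together with injectivity of $f_{\mathcal B}$ on the finite set of generation-$k$ slits, the slits of each generation are permuted. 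Note also that your four-point configuration cannot bypass the Whyburn step: fibre type alone does not locate the image on a slit, because there are uncountably many points of $\mathcal B$ \emph{not} on any slit whose fibre is nevertheless $Y_n$ (for instance the point over $(x,\tfrac12)$ has fibre $Y_1$ whenever $x$ is not of the form $(2k{+}1)/(2\cdot 4^i)$).
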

\begin{proof}
Since $f$ and $\Pi$ are continuous, it follows that $f_{\mathcal{B}} = \Pi\circ f|_{\mathcal{B}}$ is a continuous map as well. Moreover, if $p,q\in\mathcal{B}$ are distinct then $\g_p\cap\g_q =\emptyset$. Since $f$ is injective we have that $f(\g_p)=\g_{f(p)}$ and $f(\g_q)=\g_{f(q)}$ are disjoint subsets of $D\mathfrak{M}$. Therefore
$$f_{\mathcal{B}}(p) = \g_{f(p)} \cap \mathcal{B} \neq \g_{f(q)} \cap \mathcal{B}=f_{\mathcal{B}}(q),$$
or equivalently $f_{\mathcal{B}}$ is injective. Since $\mathcal{B}$ is compact it follows that $f$ is a homeomorphism onto its image $f_{\mathcal{B}}(\mathcal{B})$.

By Whyburn's theorem $f_{\mathcal{B}}$ maps peripheral circles or slits of $\mathcal{B}$ to peripheral circles. The slits of $\mathcal{B}$ are in one to one correspondence with the dyadic subsquares of $B=[0,1]^2$ of generations $2n$ with $n\geq 0$. In fact every dyadic subsquare $Q$ of generation $2n$ contains a slit of length $2^{-(2n+1)}=2^{-1}l(Q)$, where $l(Q)$ is the sidelength of $Q$. We will show that $f_{\mathcal{B}}$ permutes the slits of $\mathcal{B}$ of the same diameter and therefore is onto since the slits are dense in $\mathcal{B}$.



Suppose $s$ is a slit of $\mathcal{B}$ of diameter $4^{-n}/2$. By Lemma \ref{lemma:double-menger-fibres} there is a point $p\in s$ such that $\g_{p}$ is homeomorphic to $Y_n$. Then $\hat{\g}_{f(p)}=\hat{\g}_{f_{\mathcal{B}}(p)}$ is also homeomorphic to $Y_n$ by (\ref{equality:circles-to-circles}). Moreover, since $f_{\mathcal{B}}$ is a homeomorphism onto its image, it follows from Whyburn's theorem that $f_{\mathcal{B}}(p)$ belongs to a slit of $\mathcal{B}$ as well. However, if $f_{\mathcal{B}}(p)$ belongs to a slit of generation $m$, i.e. of diameter $4^{-m}/2$, then since $\g_p$ is not a topological circle, $\g_{f_{\mathcal{B}}}(p)$ is homeomorphic to $Y_j$ for some $j\geq m$. Therefore $m\leq n$, or equivalently $f_{\mathcal{B}}(p)$ belongs to a slit of diameter at least $2^{-1}\cdot 4^{-n}$. Thus every slit $s$ of $\mathcal{B}$ is mapped by $f_{\mathcal{B}}$ to a slit of diameter no less than $\diam(s)$. In particular, the largest slit is mapped to itself. Similarly every one of the $16$ slits of diameter $2^{-1}\cdot 4^{-1}$ is mapped to a slit of the same diameter and therefore the slits of generation $2$ are permuted. Continuing by induction we see that $f_{\mathcal{B}}$ permutes the slits of the same diameter. Since slits are dense in $\mathcal{B}$ it follows that $f_{\mathcal{B}}$ is surjective.
\end{proof}


\begin{proof}[Proof of Theorem \ref{thm:slit-menger-co-Hopf}]
Suppose $f:D\mathfrak{M}\to D\mathfrak{M}$ is a quasisymmetric mapping and $p'\in D\mathfrak{M}$. Let $q'=\Pi(p')\in\mathcal{B}$. Then by Lemma \ref{lemma:induced-map-surjective} there is a point $q\in\mathcal{B}$ such that $f_{\mathcal{B}}(q)=\Pi(f(q))=q'$. By Lemma \ref{lemma:surject-on-fibers} we have that $f(\hat{\g}_q)=\hat{\g}_{f(q)}=\hat{\g}_{q'}\ni p'$. Thus there exists $p\in\hat{\g}_q \subset D\mathfrak{M}$ such that $f(p)=p'$.
\end{proof}

\section{Quasisymmetric equivalence classes of Menger curves.}\label{Section:uncountable-menger}

We will show that for every infinite subset $A\subset \mathbb{N}=\{0,1,2,\ldots\}$ we can construct a corresponding slit Menger curve $\mathfrak{M}(A)$ and its double $D\mathfrak{M}(A)$ so that if $A \neq B\subset \mathbb{N}$ then $D\mathfrak{M}(A)$ and $D\mathfrak{M}(B)$ are not quasisymmetrically equivalent. Since there are uncountably many infinite subsets of $\mathbb{N}$ this would imply the theorem.

Suppose $A\subset\mathbb{N}$ is an infinite subset of natural numbers. Consider the decreasing sequence of domains
$$W_{k}(A)  = (0,1)^3 \setminus \bigcup_{\substack{0\leq j\leq k \\ j\in A}} E_{j}.$$
Then, like in Section \ref{section:slit-menger-definition}, we can define the metric measure space $\mathfrak{M}(A)$ as the inverse limit of the sequence of metric measure spaces $(\overline{W_k(A)},d_{W_k(A)})$, where the closures are taken with respect to the corresponding inner metrics. The space $D\mathfrak{M}(A)$ is then defined as the double of $\mathfrak{M}(A)$ along its top $\mathcal{T}(A)$ and bottom $\mathcal{B}(A)$ carpets. It is easy to see that if $A$ is an infinite set then $\mathfrak{M}(A)$ is homeomorphic to the Menger curve. Indeed, all the properties in Anderson's theorem, except for being of topological dimension $1$, hold true for any choice of $A$ the same way as for $\mathfrak{M}$. To show that $\mathfrak{M}(A)$ is $1$-dimensional when $A$ is infinite note that if $i\in A$ then the coverings $\tilde{\mathcal{Q}}_i$ considered in Lemma \ref{lemma:slit-menger-dimension1} are also covering for $\mathfrak{M}(A)$ of order $1$ and the diameter of each element in $\tilde{\mathcal{Q}}_i$ is comparable to $4^{-i}$. Since $A$ is infinite, we can take $i$ to be arbitrarily large and therefore the diameter of the elements in the coverings arbitrarily small. 

Below we will need the following result.
\begin{lemma}\label{lemma:moduli-menger-uncountable}
Suppose $A\subset\mathbb{N}$ is an infinite subset of integers and $\G_{nz}(A)$ is the collection of non $z$-parallel curves in $\mathfrak{M}(A)$. Then $\m_3 \G_{nz}(A)=0$.
\end{lemma}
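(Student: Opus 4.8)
The argument mirrors that of Lemma \ref{lemma:menger-non-z-parallel}, the only difference being that the relevant slit families are no longer porous but are handled by the measure computation of Lemma \ref{lemma:curves-in-diadic-carpets}, which requires precisely the infinitude of $A$. For $i\in\{1,2\}$ set
$$\G_i(A)=\{\g\in\G(\mathfrak{M}(A)) : |\pi_i(\omega(\g))|>0\},$$
so that $\G_{nz}(A)\subset \G_1(A)\cup\G_2(A)$, since $\g$ is non $z$-parallel exactly when $\Theta(\g)=\pi_{xy}(\omega(\g))$ is not a point. Since $\mathfrak{M}(A)$ is Ahlfors $3$-regular (by the proof of Lemma \ref{lemma:menger-properties}) and $\omega:\mathfrak{M}(A)\to[0,1]^3$ is $1$-Lipschitz and satisfies the measure-comparison hypothesis \eqref{ineq:measure-non-decreasing} of Lemma \ref{lemma:modulus-under-projections} (the analogue of \eqref{ineq:Merenkov_measures_comparable} for $\mathfrak{M}(A)$), it suffices by subadditivity and Lemma \ref{lemma:modulus-under-projections} to prove that $\m_p \omega(\G_1(A))=\m_p\omega(\G_2(A))=0$ for every $p\geq1$; taking $p=3$ then gives the claim.

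Consider $\G_1(A)$, i.e. curves moving in the $x$-direction. Let $F''\subset E''$ be the largest square in $E''$ centered at $p$, as in the proof of Lemma \ref{lemma:menger-non-z-parallel}, and set $\mathcal{F}''(A)=\{T_Q(F''):Q\in\D_{2j},\ j\in A\}$; these are pairwise disjoint $2$-dimensional squares of relative sidelength $1/2$, each contained in a plane $\{x=\mathrm{const}\}$. Since $T_Q(F'')\subset T_Q(E_0)=E_Q\subset E_j$ for $Q\in\D_{2j}$ with $j\in A$, any $\g\in\G_1(A)$ has $\omega(\g)$ a curve in $[0,1]^3$ which does not cross any slit of $\mathcal{F}''(A)$; hence $\omega(\G_1(A))$ is contained in the family of such curves. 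Now $\mathcal{F}''(A)$ is a subfamily of the uniformly relatively separated family $\mathcal{F}''$ used in the proof of Lemma \ref{lemma:menger-non-z-parallel}, so it is uniformly relatively separated with a constant $\sigma_0>0$ independent of $A$. Fix a dyadic $\eps=2^{-m}<\sigma_0$ and select, as in Lemmas \ref{lemma:curves_in_porous_carpets} and \ref{lemma:curves-in-diadic-carpets}, a subfamily of $\mathcal{F}''(A)$ with pairwise essentially disjoint $\eps$-collars, so that condition \eqref{condition:disjoint_collars} holds; because $\eps$ is a power of $1/2$, a smaller slit whose $\eps$-collar meets a larger one has its entire defining cube inside that larger collar, exactly as in the proof of Lemma \ref{lemma:curves-in-diadic-carpets}. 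Running the measure estimate of that lemma, each "active" dyadic generation $2j$ with $j\in A$ contracts the residual measure inside each active cube by a factor at most $1-\eps(1/2)^3$, whence
$$\calH^3(\calR^{\eps})\leq \lim_{k\to\infty}\prod_{j\in A,\ j\leq k}\bigl(1-\tfrac{\eps}{8}\bigr)=0,$$
since $A$ is infinite. Applying Corollary \ref{corollary:main_estimate} with $\eps_l=2^{-l}\to0$ and then the argument of Lemma \ref{lemma:non-vertical0} (decomposing the $x$-moving curves by which dyadic interval of $\pi_1([0,1]^3)$ their projection covers and applying the localized form of the main estimate, Lemma \ref{lemma:main_estimate}, to the slits of $\mathcal{F}''(A)$ in the corresponding sub-box) yields $\m_p\omega(\G_1(A))=0$.

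The family $\G_2(A)$, i.e. curves moving in the $y$-direction, is treated identically using $\mathcal{F}'(A)=\{T_Q(F'):Q\in\D_{2j},\ j\in A\}$, where $F'\subset E'$ is the largest square in $E'$ centered at $p$: since $E'\subset\{y=1/2\}$, these are squares in planes perpendicular to the $y$-axis, they are uniformly relatively separated with a constant independent of $A$, and for $A$ infinite the corresponding residual set again has $\calH^3$-measure $0$, so $\m_p\omega(\G_2(A))=0$. Combining the two cases, $\m_3\G_{nz}(A)\leq \m_3\G_1(A)+\m_3\G_2(A)=0$, as desired.

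The only point requiring care is the measure estimate for the residual set $\calR^\eps$ when $A$ has arbitrarily large gaps: one must observe that the contraction argument of Lemma \ref{lemma:curves-in-diadic-carpets} and the disjointness bookkeeping for $\eps$-collars depend only on there being infinitely many active generations, not on their being consecutive, so that $A$ being infinite plays here exactly the role of the hypothesis $\mathbf r\notin\ell^n$ in Lemma \ref{lemma:curves-in-diadic-carpets}. Everything else is as in the proofs of Lemmas \ref{lemma:menger-non-z-parallel}, \ref{lemma:curves_in_porous_carpets}, \ref{lemma:curves-in-diadic-carpets} and \ref{lemma:non-vertical0}.
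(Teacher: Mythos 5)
Your proof is correct and takes essentially the same route as the paper: it reduces $\m_3\G_{nz}(A)$ to the diadic slit families $\mathcal{F}'(A),\mathcal{F}''(A)$, observes that these correspond to a sequence $\mathbf r$ with $r_j=1/2$ on $2A$ and $0$ elsewhere so that $\mathbf r\notin\ell^3$ exactly when $A$ is infinite, and then runs the argument of Lemmas \ref{lemma:curves-in-diadic-carpets} and \ref{lemma:menger-non-z-parallel}. The only cosmetic difference is that the paper invokes Lemma \ref{lemma:curves-in-diadic-carpets} directly rather than re-deriving the contraction estimate $\calH^3(\calR^\eps)\leq\prod_{j\in A}(1-\eps/8)=0$ inline, but the content is identical.
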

\begin{proof}The proof is similar to Lemma \ref{lemma:non-vertical0}
Consider the families of slits in $[0,1]^3$,
\begin{align*}
  \mathcal{F}'(A) &= \{T_{Q}(F')\}_{Q\in\D_{2i}, i\in A},\\
  \mathcal{F}''(A) &= \{T_{Q}(F'')\}_{Q\in\D_{2i}, i\in A}.
\end{align*}
Note, that these are examples of standard non-self-similar collections of slits in $\mathbb{R}^3$ considered in Lemma \ref{lemma:curves-in-diadic-carpets}. From the definition it follows that $\G_{\mathcal{F}'(A)}=\G_{\mathbf{r}}$, where $\G_{\mathbf{r}}$ is defined as in Section \ref{Section:non-self-similar}, for a sequence $\mathbf{r}=\{r_j\}_{j=0}^{\infty}$, where
\begin{align*}
  r_j = 
  \begin{cases}
  1/2 & \mbox{ if } j\in 2A\\
  0 &\mbox{ otherwise.}  
  \end{cases}
\end{align*}
Therefore $\sum_{j=0}^{\infty}{r_j}^3=\infty$ if $A$ is infinite. By Lemma \ref{lemma:curves-in-diadic-carpets} we have that $\m_3\G_{\mathcal{F}'(A)}=0$. Similarly, $\m_3\G_{\mathcal{F}''(A)}=0$. By the same argument as in Lemma \ref{lemma:menger-non-z-parallel} we obtain that $\m_3{\G}_{nz}=0.$
\end{proof}

Finally, Theorem \ref{thm:uncountable-menger} follows from the following.

\begin{lemma}
Suppose $A,B$ are infinite subsets of the natural numbers. If $\mathfrak{M}(A)$ is quasisymmetrically equivalent to $\mathfrak{M}(B)$ then $A=B$.
\end{lemma}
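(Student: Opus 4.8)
The plan is to run, for a quasisymmetric equivalence $f\colon\mathfrak{M}(A)\to\mathfrak{M}(B)$, exactly the scheme used to show that $D\mathfrak{M}$ is QS co-Hopfian: first show $f$ is ``fibre preserving'', and then recover the set $A$ (resp. $B$) from the homeomorphism types of the fibres of $\mathfrak{M}(A)$ (resp. $\mathfrak{M}(B)$). Since there are uncountably many infinite subsets of $\mathbb{N}$, this gives Theorem~\ref{thm:uncountable-menger}.

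\emph{Fibre preservation.} A quasisymmetric equivalence is a \emph{homeomorphism}, hence a bijection, so $f(\mathfrak{M}(A))=\mathfrak{M}(B)$; both spaces are compact, connected and Ahlfors $3$-regular by the analogue of Lemma~\ref{lemma:menger-properties}. By Lemma~\ref{lemma:moduli-menger-uncountable}, $\m_3(\G_{nz}(A))=\m_3(\G_{nz}(B))=0$, so Tyson's Theorem~\ref{thm:Tyson} applies to $f$ and to $f^{-1}$. Following the proof of Lemma~\ref{lemma:menger-parallel-to-parallel}, one gets that $f$ maps $\m_3$-almost every fibre of $\mathfrak{M}(A)$ (for instance each fibre over a base point $(x,y)$ with $x$ or $y$ irrational) to a $z$-parallel set; by density of these fibres and continuity of $f$ and of $\Theta$, the image $f(\g_p)$ is $z$-parallel — hence contained in a unique fibre of $\mathfrak{M}(B)$, fibres being the maximal connected $z$-parallel sets — for \emph{every} fibre $\g_p$ of $\mathfrak{M}(A)$. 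Applying the same to $f^{-1}$ and using that $f$ is a bijection, the partitions of $\mathfrak{M}(B)$ into fibres and into the sets $\{f(\g_p)\}$ refine each other, so they coincide: $f$ carries fibres \emph{onto} fibres and induces a continuous bijection, hence (by compactness) a homeomorphism $f_{\mathcal{B}}\colon\mathcal{B}(A)\to\mathcal{B}(B)$ of the base slit carpets with $f(\g_p)=\g_{f_{\mathcal{B}}(p)}$.

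\emph{Recovering $A$.} By Whyburn's theorem $f_{\mathcal{B}}$ sends peripheral circles of $\mathcal{B}(A)$ (slits and the outer boundary) to peripheral circles of $\mathcal{B}(B)$. The outer boundary is distinguished intrinsically: every fibre over an outer-boundary point is an arc, whereas, by the computations of Lemmas~\ref{lemma-fibers-0} and~\ref{lemma:menger-fibres} together with a parity observation (the vertical line through the midpoint of a generation-$j$ slit meets $E_j$ but no $E_m$ with $m\ne j$), the fibre over the midpoint of a generation-$j$ slit of $\mathcal{B}(A)$ is homeomorphic to $L_{\phi(j)}$ for a fixed injective function $\phi$ of $j\in A$, hence is not an arc; so $f_{\mathcal{B}}$ maps outer boundary to outer boundary and slits to slits. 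Now fix $j\in A$, pick $p\in\mathcal{B}(A)$ over the midpoint of a generation-$j$ slit (so $\g_p\cong L_{\phi(j)}$), and set $q=f_{\mathcal{B}}(p)$; then $q$ lies on a slit of $\mathcal{B}(B)$ of some generation $k\in B$ and $\g_q=f(\g_p)\cong L_{\phi(j)}$. By Lemma~\ref{lemma:menger-fibres} applied in $\mathfrak{M}(B)$ (where only the $E_m$ with $m\in B$ are removed), the fibre over any point of a generation-$k$ slit of $\mathcal{B}(B)$ is homeomorphic to $[0,1]$ or to $L_{\phi(m)}$ for some $m\in B$; since $L_{\phi(j)}\not\cong[0,1]$ and $\phi$ is injective, $j=m\in B$. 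Thus $A\subseteq B$; the symmetric argument for $f^{-1}$ gives $B\subseteq A$, so $A=B$.

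\emph{Main obstacle.} The delicate point is the last step: extracting the generating scale from the bare homeomorphism type of a fibre, and checking that no ``mixed-scale'' fibre (whose vertical line meets $E_{m_1}$ and $E_{m_2}$ with $m_1<m_2$) is accidentally homeomorphic to some $L_{\phi(j)}$ with $j\notin A$. The parity observation keeps this under control — a point on a slit has one coordinate pinned to a single dyadic scale, so at most two scales of removed sets occur along its vertical line, and the two coordinates can never sit at the same scale simultaneously — which is exactly why the relevant fibres stay of the form $L_\bullet$ and Lemma~\ref{lemma:menger-fibres} applies verbatim. Everything else is a routine transcription of the $D\mathfrak{M}$ argument; the same scheme proves the analogous statement for the doubles $D\mathfrak{M}(A)$ and $D\mathfrak{M}(B)$, using the graphs $Y_j$ of Corollary~\ref{lemma:double-menger-fibres} in place of the $L_j$.
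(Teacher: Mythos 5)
Your approach matches the paper's in its two main moves (establish fiber preservation via Tyson's theorem together with the modulus bound of Lemma~\ref{lemma:moduli-menger-uncountable}, then read off $A$ and $B$ from the homeomorphism types of the fibers), and your final step is in fact a bit cleaner than the paper's: where the paper runs an induction on the sorted sequences $\{a_k\},\{b_k\}$ to show $a_k=b_k$, you directly deduce $A\subseteq B$ by observing that a fiber $\cong L_{\phi(j)}$ forces $j\in B$, and then conclude by symmetry. The ``partitions refine each other'' argument for surjectivity on fibers, using that a quasisymmetric equivalence is automatically a bijection, is also a neat simplification of the density argument in Lemma~\ref{lemma:surject-on-fibers}.

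There is one false intermediate claim you should be aware of: it is not true that every fiber over an outer-boundary point of $\mathcal{B}(A)$ is an arc. The slab $E'$ (and its rescaled copies) extends all the way to the faces $\{x=0\}$ and $\{x=1\}$ of the cube, so if $j\in A$ then the fiber over the boundary point of $\mathcal{B}(A)$ lying above $(0,(2k+1)/(2\cdot 4^{j}))$ meets $E'_j$ and, after the inner-metric completion, is homeomorphic to an $L_{\bullet}$ of the same generation, not to $[0,1]$. Thus the fiber topology alone does not distinguish the outer boundary from the slits, and Whyburn's theorem leaves open the possibility that $f_{\mathcal{B}}$ sends a slit onto the outer boundary. (The paper's own proof quietly glosses over the same point when it asserts ``peripheral circles to peripheral circles, i.e.\ slits to slits.'') Fortunately this does not harm your argument: the inference you actually need is that $\g_q\cong L_{\phi(m)}$ forces $m\in B$, and this holds for \emph{every} $q\in\mathcal{B}(B)$ — whether $q$ lies on a slit or on the outer boundary — because only the sets $E_m$ with $m\in B$ are removed and the parity observation shows each vertical line can meet $E'_m$ for at most one $m$. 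So the step should be phrased as ``the fiber over any point of $\mathcal{B}(B)$ is $[0,1]$ or $L_{\phi(m)}$ with $m\in B$'' rather than routing the claim through ``$q$ lies on a slit''; with that rewording the proof is correct.
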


\begin{proof} Just like $\mathfrak{M}$ and $D\mathfrak{M}$ the spaces $\mathfrak{M}(A)$ and $D\mathfrak{M}(A)$ are fibered spaces over a base slit Sierpi\'nski carpet, which we may denote by $\mathcal{B}(A)$. In fact the fibers over the points of the slit carpet $\mathcal{B}(A)$ can be characterized as in Corollary \ref{lemma:double-menger-fibres}. Namely, if a point $p$ belongs to a slit of $\mathcal{B}(A)$ of diameter $2\cdot 4^{-i}$ for some $i\geq 0$ then $\hat{\g}_p$ is either a topological circle or homeomorphic to $Y_j$ for some $j\geq i$.

Moreover, there are exactly $4$ fibers homeomorphic to $Y_i$. The difference of $D\mathfrak{M}(A)$ from the case considered in Corollary \ref{lemma:double-menger-fibres} is that not all natural numbers $i\geq 0$ occur.

Now suppose $f$ is a quasisymmetric mapping of $\mathfrak{M}(A)$ to $\mathfrak{M}(B)$, where $A$ and $B$ are subsets of $\mathbb{N}$. Using Tyson's theorem, Ahlfors regularty of $f(D\mathfrak{M}(A))$ and Lemma \ref{lemma:moduli-menger-uncountable}  just like in Lemmas \ref{lemma:menger-parallel-to-parallel} and \ref{lemma:surject-on-fibers} it follows that for every $p\in\mathcal{B}(A)$ the mapping $f$ takes the fiber $\hat{\g}_p\subset\mathfrak{M}(A)$ onto a fiber $\hat{\delta}_q \subset\mathfrak{M}(B)$ over a point $q\in\mathcal{B}(B)$. Therefore, following the proof of Lemma \ref{lemma:induced-map-surjective}, there is an induced continuous and one-to-one mapping $f_{\mathcal{B}}$ of the slit carpet $\mathcal{B}(A)$ into $\mathcal{B}(B)$, which in particular maps peripheral circles to peripheral circles, i.e. slits to slits.

Now, suppose $A=\{a_0,a_1,\ldots\}, B=\{b_0,b_1,\ldots\},$
where $\{a_i\}$ and $\{b_i\}$ are increasing sequences. We say below that a slit in $\mathcal{B}(A)$ is of generation $i\geq0$ if it is of  diameter $4^{-a_i}/2$.


As explained above, for every slit $s\subset\mathcal{B}(A)$ of generation $0$ there is a point  $p\in s$ such that $\hat{\g}_p \cong Y_{a_0}$. Since $f$ maps fibers to fibers, there is a point $q\in\mathcal{B}(B)$ such that the fiber over $q$ is homeomorphic to $Y_{a_0}$. Which means that $\min B=b_0\leq a_0$, since otherwise all the non-circle fibers in $\mathfrak{M}(B)$ would have been homeomorphic to $Y_i$ for some $i>a_0$.  By considering $f^{-1}$ we conclude that also $a_0\leq b_0$. Thus $a_0=b_0$. In particular, $f_{\mathcal{B}}$ gives a one-to-one correspondence between the the slits of generation $0$ of $\mathcal{B}(A)$ and $\mathcal{B}(B)$.

By induction, assume that $a_j=b_j$ for $0\leq j \leq k$ and that $f_{\mathcal{B}}$ produces a one-to-one correspondence between the slits of these carpets of generations $j\in\{0,\ldots,k\}$. Thus, a slit $s\subset \mathcal{B}(A)$ of generation $(k+1)$  cannot be mapped to a slit of smaller generation in $\mathcal{B}(B)$. On the other hand, $s$ cannot be mapped to a slit of diameter $4^{-i}/2$ with $i>a_{k+1}$ either, since then the points in the image slit $f_{\mathcal{B}}(s)$ would not have fibers homeomorphic to $Y_{a_{k+1}}$. This means that $b_{k+1} \leq a_{k+1}$. Considering $f^{-1}$ we can conclude again that $a_{k+1}=b_{k+1}$. Moreover there is a one-to-one correspondence between the slits of generations $\leq (k+1)$ of $\mathcal{B}(A)$ and $\mathcal{B}(B)$.

Thus, we obtain that $a_k=b_k$ for every $k\geq 0$, or that $A=B$.
\end{proof}

\section{Remarks and Open Problems}\label{section:remarks-problems}

\subsection{Quasisymmetric embeddings of slit spaces} In \cite{MerWildrick} Merenkov and Wildrick showed that the slit carpet considered in \cite{Mer:coHopf} cannot be embedded quasisymmetrically into $\mathbb{R}^2$. The idea behind the proof was that if such an embedding existed then the image of the slit carpet would have to be a porous subset of $\mathbb{R}^2$, since porosity is a quasisymmetrically invariant property for carpets, and therefore would have to have Hausdorff dimension strictly less than $2$. However this would contradict the fact that conformal dimension of the slit carpet is $2$, i.e. every quasisymmetric image of the carpet has Hausdorff dimension at least $2$, which also follows from Lemma \ref{lemma:lower-Ahlfors-regular}.

The argument above would not work if the slit carpet is not porous. This suggests the following question. 

\begin{question}
Is there a non-porous slit carpet which does not embed quasisymmetrically in $\mathbb{R}^2$? More generally, is it possible to characterize slit carpets which can be quasisymmetrically embedded in $\mathbb{R}^2$?
\end{question}

\subsection{Slit Sierpi\'nski spaces admitting Poincare inequalities}
The summability conditions in  Theorem \ref{thm:main1} is analogous to one appearing in the work of Mackay, Tyson and Wildrick \cite{MTW} on non-self-similar square carpets supporting Poincar\'e inequalities.

Recall that a metric measure space $(X,d,\mu)$ is said to support a \emph{$p$
- Poincar\'e inequality}, $p\geq 1$, (or is a PI space) if there are constants $C,\la\geq 1$
such that for every continuous function $u:X\to\mathbb{R}$ and every ball $B\subset X$ the following inequality holds
\begin{equation}
  \dashint_B\left|u -\dashint u\, d\mu\right| d\mu \leq C \diam(B) \left(\dashint_{\la B} \rho^p \, d\mu\right)^{1/p},
\end{equation}
whenever $\rho:X\to[0,\infty]$ is an upper gradient of $u$, i.e. if
$$|u(x)-u(y)| \leq \int_{\g} \rho ds,$$
for every rectifiable curve $\g$ connecting $x$ and $y$. Here we denoted by
$\dashint_B u \, d\mu = \frac{1}{\mu(B)} \int_B u \, d\mu$. See
\cite{Hein:book,HKActa} for further details on PI spaces.

The carpets considered in \cite{MTW} are constructed from the unit square by
dividing it into smaller congruent subsquares, removing the middle one and
repeating the procedure with the remaining ones. Thus, similarly to the diadic slit
carpets, every \textit{non-self-similar square carpet} is associated to a
sequence $\mathbf{a}=(a_1,a_2,\ldots)$, where each $a_i$ is equal to the
ratio of the side-length of the removed square to that of the square from
which it is being removed at step $i\geq 1$. We will denote such a carpet by
$S_{\mathbf{a}}$. The restriction of the Euclidean metric to $S_{\mathbf{a}}$
denoted by $d$, while $\mu$ is the (multiple of) Hausdorff measure in the
dimension of $S_{\mathbf{a}}$. It is not hard to see that $S_{\mathbf{a}}$
has positive area if and only if $\mathbf{a}\in\ell^2$. The following result
is from \cite{MTW}.

\begin{theorem}[Mackay,Tyson,Wildrick]\label{thm:MTW}
The following conditions are equivalent
\begin{itemize}
  \item [$(a)$] $\mathbf{a}\in\ell^2$
  \item [$(b)$] $(S_{\mathbf{a}},d,\mu)$ supports a $p$-Poincar\'e
      inequality for all $p>1$.
  \item [$(c)$] $(S_{\mathbf{a}},d,\mu)$ supports a $p$-Poincar\'e
      inequality for some $p>1$.
\end{itemize}
\end{theorem}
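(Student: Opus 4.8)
The plan is to establish the cycle $(b)\Rightarrow(c)\Rightarrow(a)\Rightarrow(b)$. The implication $(b)\Rightarrow(c)$ is immediate, so the content is in the other two implications.

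For $(c)\Rightarrow(a)$ I would argue by contraposition: assuming $\mathbf a\notin\ell^2$, show that $(S_{\mathbf a},d,\mu)$ supports no $p$-Poincar\'e inequality for any $p>1$. When $\mathbf a\notin\ell^2$ the carpet has zero area, so one first works with the correct intrinsic measure $\mu=\mathcal H^{s}$, $s=\dim_H S_{\mathbf a}<2$, for which $S_{\mathbf a}$ is Ahlfors $s$-regular. A Poincar\'e inequality forces a definite lower bound on a modulus-type quantity -- equivalently, on the $p$-energy required to ``spread'' curves past the removed squares -- and this quantity degenerates precisely as $\sum_i a_i^2$ diverges; this is morally dual to the vanishing-modulus phenomenon driving the present paper (compare Lemma \ref{lemma:curves-in-diadic-carpets}, with the removed central squares of $S_{\mathbf a}$ playing the r\^ole of the slits and the exponent $2$ in place of $n$). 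Concretely, I would test the $p$-Poincar\'e inequality on the coordinate function $u(x,y)=x$ against upper gradients obtained by the collar/buffer construction of Section \ref{Section:main-estimate-proof}, adapted to the intrinsic geometry of $S_{\mathbf a}$: these are bounded, supported off a family of disjoint $\eps$-collars of the removed squares, and their $L^p$-energy is controlled by $\mu(\mathcal R^\eps)$, which tends to $0$ exactly because $\prod_i(1-c\,a_i^2)=0$, while the oscillation of $u$ over $S_{\mathbf a}$ stays bounded below -- contradicting the inequality. That the constructed function is a $p$-weak upper gradient of $u$ is, up to this two-dimensional reinterpretation, the content of Lemmas \ref{lemma:admissible} and \ref{lemma:projection}.

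For $(a)\Rightarrow(b)$, assume $\mathbf a\in\ell^2$. Then $S_{\mathbf a}$ has positive Lebesgue area, $\mu$ is comparable to Lebesgue measure on $S_{\mathbf a}$, and $S_{\mathbf a}$ is Ahlfors $2$-regular with uniformly relatively separated peripheral circles. To obtain a $p$-Poincar\'e inequality for every $p>1$ it suffices, by the curve-family characterisation of Poincar\'e spaces (cf.\ \cite{Hein:book,HKActa}), to construct for every ball $B\subset S_{\mathbf a}$ and every pair of points $x,y\in B$ a sufficiently thick pencil of rectifiable curves in $S_{\mathbf a}\cap CB$ joining $x$ to $y$: a probability measure on such curves whose push-forward of arclength has density $\lesssim 1/\diam B$ with respect to $\mu$. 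One would build this pencil scale by scale, starting from a pencil of nearly straight segments and, at generation $i$, splitting and deflecting it around each removed square it meets; each deflection costs an $L^p$-factor governed by $a_i$, and the hypothesis $\sum_i a_i^2<\infty$ is exactly what makes the total cost finite for $p>1$ (and infinite for $p=1$, which is why only $p>1$ is obtained).

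The hard part is $(a)\Rightarrow(b)$: turning the positive-area/$\ell^2$ information into an honest Poincar\'e inequality via the explicit pencil-of-curves construction, together with the bookkeeping showing that its $p$-energy is summable over scales precisely under $\sum_i a_i^2<\infty$. By contrast $(c)\Rightarrow(a)$ is comparatively routine given the modulus machinery developed in Sections \ref{Section:slit-spaces}--\ref{Section:main-estimate-proof} of this paper; its only delicate point is handling the intrinsic measure of $S_{\mathbf a}$, which is Euclidean-degenerate when $\mathbf a\notin\ell^2$.
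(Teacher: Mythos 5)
This statement is a quoted result of Mackay--Tyson--Wildrick (\cite{MTW}); the paper does not prove it. The paper states the theorem as background and then proves its own (strictly weaker, one-directional) analogue for \emph{slit} carpets, Theorem \ref{thm:no-PI}, whose content is exactly: $\mathbf r\notin\ell^2$ implies no $p$-Poincar\'e inequality, for the slit carpet $S(\mathbf r)$ with its natural $2$-regular measure. There is therefore no proof in this paper to compare your outline against; you are reconstructing an external theorem.

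Taken on its own terms, your outline has a real gap in $(c)\Rightarrow(a)$. You correctly flag that for $\mathbf a\notin\ell^2$ the square carpet $S_{\mathbf a}$ has Lebesgue measure zero and the relevant measure is $\mu=\mathcal H^s$ with $s=\dim_H S_{\mathbf a}<2$; but you then invoke the collar/buffer admissibility machinery of Section \ref{Section:main-estimate-proof} and Lemma \ref{lemma:curves-in-diadic-carpets} as if it transferred. That machinery is built for slit carpets $M(\mathcal S_{\mathbf r})$, which are Ahlfors $n$-regular for the ambient $\mathcal H^n$ \emph{regardless} of whether $\mathbf r\in\ell^n$ (the slits are codimension-one and do not affect the measure), and the vanishing-modulus estimate is an $\mathcal H^n$-modulus estimate. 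In the MTW square-carpet setting with $\mathbf a\notin\ell^2$, the $\mathcal H^2$-modulus of \emph{every} curve family in $S_{\mathbf a}$ is trivially zero (the carpet itself is $\mathcal H^2$-null), so the collar computation says nothing; what one must control is the $\mathcal H^s$-modulus, and the product estimate $\prod_i(1-c\,a_i^2)\to 0$ does not translate into an $\mathcal H^s$-modulus bound. In fact MTW do not run a collar argument here; their obstruction to PI when $\mathbf a\notin\ell^2$ goes through a direct failure of the thick-pencil/quasiconvexity criterion in the degenerate regime, not through a bounded-admissible-metric construction borrowed from a full-measure model. Your sketch of $(a)\Rightarrow(b)$ (pencil-of-curves with scale-by-scale deflection, $\ell^2$ summability as the budget) is the right shape of the MTW proof, but it is acknowledged to be the hard part and is left entirely at the level of a heuristic.
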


For slit carpets we prove one of the analogous implications. Indeed, Lemma \ref{lemma:curves_in_porous_carpets} implies that if $\textbf{r}\notin \ell^2$ then the family of all ``non-vertical" curves in the slit carpet $S(\textbf{r})$ has zero $2$ modulus. This gives the following result.

\begin{theorem}\label{thm:no-PI}
  If $\mathbf{r}\notin\ell^2$ then the slit carpet $(S({\mathbf{r}}),d_{\calS_{\mathbf{r}}},\calH^2)$ does not support a $p$-Poincar\'e inequality for any $p\geq 1$.
\end{theorem}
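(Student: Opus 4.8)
The plan is to show that a $p$-Poincar\'e inequality on $X:=(S(\mathbf{r}),d_{\calS_{\mathbf{r}}},\calH^2)$ would force the non-vertical curve family to carry positive $p$-modulus, contradicting Lemma~\ref{lemma:curves-in-diadic-carpets} (applied with $n=2$), which gives $\m_p(\G_{nv})=0$ for every $p\geq1$ since $\mathbf{r}\notin\ell^2$. I would use throughout that $X$ is compact, Ahlfors $2$-regular (Lemma~\ref{lemma:Merenkov-ahlfors-regular}) and geodesic, and that $\vartheta=\pi_1\circ\tau_0:X\to[0,1]$ is the $1$-Lipschitz projection of Section~\ref{section:mod0-co-hopf}. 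So, assume for contradiction that $X$ supports a $p$-Poincar\'e inequality for some $p\geq1$, with constants $C,\la\geq1$. The idea is that a Poincar\'e inequality forbids a continuous function that jumps from $0$ to $1$ across $X$ while admitting an upper gradient of arbitrarily small $L^p$ norm, whereas the vanishing of $\m_p(\G_{nv})$ lets us build exactly such a function.

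First I would fix the ``slabs'' $\mathcal{L}'=\vartheta^{-1}([0,\tfrac14])$ and $\mathcal{R}'=\vartheta^{-1}([\tfrac34,1])$. Since $\vartheta$ is $1$-Lipschitz, each contains a metric ball of radius $\tfrac1{16}$, so Ahlfors $2$-regularity gives $\mu_0:=\min\{\calH^2(\mathcal{L}'),\calH^2(\mathcal{R}')\}>0$. Let $\G'$ be the family of curves in $X$ with one endpoint in $\mathcal{L}'$ and the other in $\mathcal{R}'$; by the intermediate value theorem for $\vartheta$, each $\g\in\G'$ has $\vartheta(\g)\supseteq[\tfrac14,\tfrac34]$, so $\G'\subseteq\G_{nv}$ and $\m_p(\G')=0$. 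Then for each $\eps>0$ I would pick a bounded Borel function $\rho:X\to[0,\infty)$ admissible for $\G'$ with $\int_X\rho^p\,d\calH^2<\eps$ (boundedness is available since the admissible metrics built in the proof of Lemma~\ref{lemma:main_estimate} are constant multiples of characteristic functions), and define
\begin{align*}
  u(x)=\min\Big\{1,\ \inf_{\g}\int_{\g}\rho\,ds\Big\},
\end{align*}
the infimum over rectifiable curves $\g$ from $\mathcal{L}'$ to $x$. Then $u\equiv0$ on $\mathcal{L}'$; $u\equiv1$ on $\mathcal{R}'$, because any such $\g$ terminating in $\mathcal{R}'$ belongs to $\G'$ so has $\int_{\g}\rho\,ds\geq1$; and $\rho$ is an upper gradient of $u$, since $|u(x)-u(y)|\leq\inf_{\sigma}\int_{\sigma}\rho\,ds$ over curves $\sigma$ joining $x$ to $y$. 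As $\rho$ is bounded and $X$ is geodesic, $u$ is Lipschitz, hence continuous.

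Finally I would apply the $p$-Poincar\'e inequality to the pair $(u,\rho)$ on a ball $B=B(x_0,R)$ with $R>\diam X$, so that $B=\la B=X$, obtaining
\begin{align*}
  \dashint_X\Big|u-\dashint_X u\,d\calH^2\Big|\,d\calH^2\ \leq\ C\,\diam(X)\Big(\frac{\eps}{\calH^2(X)}\Big)^{1/p}.
\end{align*}
On the other hand, $u$ equals $0$ on $\mathcal{L}'$ and $1$ on $\mathcal{R}'$, both of measure at least $\mu_0$, so whichever of $\mathcal{L}',\mathcal{R}'$ is the farther from the mean $\dashint_X u$ contributes at least $\tfrac12$ to $|u-\dashint_X u|$ there; hence the left-hand side is at least $\mu_0/(2\calH^2(X))=:c_0>0$, independently of $\eps$. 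Letting $\eps\to0$ forces $c_0\le0$, a contradiction, which proves Theorem~\ref{thm:no-PI}. The step I expect to require the most care is making the contradiction genuine: the literal left and right faces of $X$ are $\calH^2$-null, so a $0$--$1$ test function supported on them would not contradict a mean-oscillation bound, and the fix is to thicken to the positive-measure slabs and verify that curves between them are still non-vertical (so Lemma~\ref{lemma:curves-in-diadic-carpets} applies); the remaining points — admissibility of a bounded $\rho$ for $\G'$ and continuity of $u$ — are routine given that $M(\calS)$ is geodesic and that the admissible metrics of Section~\ref{Section:main-estimate-proof} are bounded.
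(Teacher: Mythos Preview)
Your proof is correct and is precisely the standard argument the paper leaves implicit: the paper's own justification is the single sentence preceding the theorem, and your construction of a $0$--$1$ Lipschitz test function from a bounded admissible metric of small $L^p$-mass is the expected way to convert $\m_p(\G_{nv})=0$ into a contradiction with the Poincar\'e inequality. (Note that the paper cites Lemma~\ref{lemma:curves_in_porous_carpets} at that point, but for diadic slit carpets with $\mathbf{r}\notin\ell^2$ the correct reference is indeed Lemma~\ref{lemma:curves-in-diadic-carpets}, as you have it.)
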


It would be interesting to know if the full analogue of Theorem \ref{thm:MTW} holds for non-self-similar slit carpets.


\section{Aknowledgements} The author would like to thank Mario Bonk and Sergei Merenkov for drawing his attention to the problem of quasisymmetric co-Hopficity and for several helpful conversations at the early stages of this work.


\bibliographystyle{alpha}

\end{document}